\documentclass{amsart}

\setlength{\textheight}{21cm}
\setlength{\textwidth}{13cm}

\usepackage{mathscinet}
\usepackage{tikz-cd}
\usepackage{tikz}
\usetikzlibrary{decorations.markings,shapes.misc,plotmarks}
\usepackage{caption}
\usepackage{subcaption}
\usepackage{amsthm,amsfonts,amssymb,amsmath} % AMS packages

\usepackage{aliascnt} % Extended theorem numbering
\usepackage{mathrsfs} % Script font
\usepackage{xargs,ifthen} % Macro construction utilities
\usepackage{enumitem}

\newcommand{\mapdef}[5]{
\begin{array}{cccc}
#1 \colon & #2 &\longrightarrow & #3 \\
& #4 &\longmapsto& #5
\end{array}
}

\usepackage{bigints}
\usepackage[Smaller]{cancel}
\usepackage{color}

%%%%%%%%%%%%%%%%%%%%%%%%%%%%%%%%%%%%%%%%%%%%%%%%%
% Margin notes
%%%%%%%%%%%%%%%%%%%%%%%%%%%%%%%%%%%%%%%%%%%%%%%%%
\usepackage{todonotes}

%%%%%%%%%%%%%%%%%%%%%%%%%%%%%%%%%%%%%%%%%%%%%%%%%%%%
% Hyperref
%%%%%%%%%%%%%%%%%%%%%%%%%%%%%%%%%%%%%%%%%%%%%%%%%%%%

\usepackage{hyperref}

\definecolor{urlcolor}{rgb}{.2,.2,.6}
\definecolor{linkcolor}{rgb}{.1,.1,.5}
\definecolor{citecolor}{rgb}{.4,.2,.1}
\hypersetup{colorlinks=true, urlcolor=urlcolor, linkcolor=linkcolor, citecolor=citecolor}

%%%%%%%%%%%%%%%%%%%%%%%%%%%%%%%%%%%%%%%%%%%%%%%%%%%%%%%%%%%%
% Theorem environments
%%%%%%%%%%%%%%%%%%%%%%%%%%%%%%%%%%%%%%%%%%%%%%%%%%%%%%%%%%%%

\newcommandx{\thdef}[2]{
	\newaliascnt{#1}{theorem}  
	\newtheorem{#1}[#1]{#2}
	\aliascntresetthe{#1}  
	\expandafter\newcommand\expandafter{\csname #1autorefname\endcsname}{#2}
}

\newtheorem{theorem}{Theorem}[section]

\thdef{lemma}{Lemma}
\thdef{corollary}{Corollary}
\thdef{conjecture}{Conjecture}
\thdef{proposition}{Proposition}
\thdef{theoremdefn}{Theorem/Definition}

\theoremstyle{definition}
\thdef{definition}{Definition}

\theoremstyle{remark}
\thdef{remarkx}{Remark}
\thdef{examplex}{Example}

\newenvironment{example}
  {\pushQED{\qed}\examplex}
  {\popQED\endexamplex}

\newenvironment{remark}
  {\pushQED{\qed}\remarkx}
  {\popQED\endremarkx}
  
%%%%%%%%%%%%%%%%%%%%%%%%%%%%%%%%%%%%%%%%%%%%%%%%%%%%%%
% Fonts
%%%%%%%%%%%%%%%%%%%%%%%%%%%%%%%%%%%%%%%%%%%%%%%%%%%%%%

\newcommand{\defn}[1]{\textbf{\textit{#1}}} % definition

\newcommand{\CC}{\mathbb{C}}
\newcommand{\KK}{\mathbb{K}}
\newcommand{\RR}{\mathbb{R}}
\newcommand{\QQ}{\mathbb{Q}}
\newcommand{\NN}{\mathbb{N}}
\newcommand{\ZZ}{\mathbb{Z}}
\newcommand{\AF}{\mathbb{A}}
\newcommand{\PP}{\mathbb{P}}

\newcommand{\model}[2]{\halfspc{#1}\times[0)^{#2}}
\newcommand{\halfspc}[1]{[0,\infty)^{#1}}

\newcommand{\logcyl}{\AF^1 \log\, \{0\}}
\newcommand{\logcirc}{\{0\} \log\, \{0\}}

\newcommand{\id}[1]{\mathrm{id}_{#1}}

\renewcommand{\le}{\leqslant}
\renewcommand{\ge}{\geqslant}

\newcommand{\iu}{\mathrm{i}}
\newcommand{\ipi}{\pi\iu }
\newcommand{\tipi}{2\ipi}
\newcommand{\ulog}{\operatorname{\widetilde{\log}}}
\newcommand{\uf}{\widetilde{f}}
\newcommand{\ug}{\widetilde{g}}
\newcommand{\tp}{\tilde p}
\newcommand{\ts}{\tilde s}
\newcommand{\uomega}{\underline{\omega}}

\newcommand{\defas}{\mathrel{\mathop:}=}

\newcommand{\zb}{\overline{z}}
\newcommand{\tf}{\widetilde{f}}

\renewcommand{\epsilon}{\varepsilon}

%%%%%%%%%%%%%%%%%%%%%%%%%%%%%%%%%%%%%%%%%%%%%%%%%%%%
% Brackets
%%%%%%%%%%%%%%%%%%%%%%%%%%%%%%%%%%%%%%%%%%%%%%%%%%%%
\newcommand{\set}[2]{\left\{#1\,\middle|\,#2\right\}}
\newcommand{\rbrac}[1]{\left(#1\right)}
\newcommand{\abrac}[1]{\left\langle#1\right\rangle}
\newcommand{\sbrac}[1]{\left[#1\right]}
\newcommand{\cbrac}[1]{\left\{#1\right\}}

%%%%%%%%%%%%%%%%%%%%%%%%%%%%%%%%%%%%%%%%%%%%%%%%%%%%%%
% Sheaves
%%%%%%%%%%%%%%%%%%%%%%%%%%%%%%%%%%%%%%%%%%%%%%%%%%%%%%

\newcommand{\cO}[1]{\mathcal{O}_{#1}}
\newcommand{\cOx}[1]{\cO{#1}^\times}

\newcommand{\cM}{\mathscr{M}}
\newcommand{\cMgp}{\mathscr{M}^{\mathrm{gp}}}
\newcommand{\cMbas}{\mathscr{M}^{\mathrm{bas}}}
\newcommand{\cMphan}{\mathscr{M}^{\mathrm{phan}}}

\newcommand{\Cinf}[1]{\mathscr{C}^{\infty}_{#1}}
\newcommand{\Cinfpos}[1]{\mathscr{C}^{\infty, > 0}_{#1}}
\newcommand{\Cinfge}[1]{\mathscr{C}^{\infty, \ge 0}_{#1}}
\newcommand{\Cinflog}[1]{\mathscr{C}^{\infty,\log}_{#1}}

%%%%%%%%%%%%%%%%%%%%%%%%%%%%%%%%%%%%%%%%%%%%%%%%%%%%%%
% Forms
%%%%%%%%%%%%%%%%%%%%%%%%%%%%%%%%%%%%%%%%%%%%%%%%%%%%%%

\newcommand{\dd}{\mathrm{d}}
\newcommand{\dlog}[1]{\frac{\dd#1}{#1}}
\newcommand{\cA}[2][\bullet]{\mathscr{A}^{#1}_{#2}} %sheaf of smooth forms
\newcommand{\A}[2][\bullet] {\mathscr{A}^{#1}\rbrac{#2}} % global smooth forms
\newcommand{\Ac}[2][\bullet]{\mathscr{A}_c^{#1}\rbrac{#2}} %global smooth forms with compact support
\newcommand{\cAlog}[2][\bullet]{\mathscr{A}^{#1,\log}_{#2}} %sheaf of forms
\newcommand{\Alog}[2][\bullet]{\mathscr{A}^{#1,\log}(#2)} % global forms
\newcommand{\Alogc}[2][\bullet]{\mathscr{A}^{#1,\log}_c(#2)} % global log forms with compact support

%%%%%%%%%%%%%%%%%%%%%%%%%%%%%%%%%%%%%%%%%%%%%%%%%%%%%%
% Vector fields
%%%%%%%%%%%%%%%%%%%%%%%%%%%%%%%%%%%%%%%%%%%%%%%%%%%%%%

\newcommand{\VFlog}[1]{\mathcal{T}^{\log}_{#1}}
 \newcommand{\coVFlog}[1]{\mathcal{T}^{\vee,\log}_{#1}}
\newcommand{\cvf}[1]{\partial_{#1}}
\newcommand{\logcvf}[1]{#1\partial_{#1}}

%%%%%%%%%%%%%%%%%%%%%%%%%%%%%%%%%%%%%%%%%%%%%%%%%%%%%%
% Regularized limits and residues
%%%%%%%%%%%%%%%%%%%%%%%%%%%%%%%%%%%%%%%%%%%%%%%%%%%%%%

\newcommand{\reg}[1][]{\mathrm{reg}_{#1}}
\DeclareMathOperator{\reglim}{reglim}
\DeclareMathOperator*{\res}{Res}
%%%%%%%%%%%%%%%%%%%%%%%%%%%%%%%%%%%%%%%%%%%%%%%%%%%%%%
% Manifolds
%%%%%%%%%%%%%%%%%%%%%%%%%%%%%%%%%%%%%%%%%%%%%%%%%%%%%%
\newcommand{\Sig}{\Sigma}
\newcommand{\uSig}{\underline{\Sigma}}
\newcommand{\uSigo}{\underline{\Sigma}^\circ}
\newcommand{\uphi}{\underline{\phi}}
\newcommand{\uPsi}{\underline{\Psi}}
\newcommand{\bas}[1]{{#1}^{\mathrm{bas}}}
\newcommand{\phant}{\mathrm{phan}}

\newcommand{\Sigbas}{\bas{\Sig}}
\newcommand{\ubdSig}[1][]{\underline{\partial^{#1}\Sigma}}
\newcommand{\bdSig}[1][]{\partial^{#1}\Sigma}

\newcommand{\bdSigbas}[1][]{\partial^{#1}\Sigbas}

\newcommand{\intSig}{\Sigma^\circ}

\newcommand{\Psibas}{\bas{\Psi}}

\newcommand{\tb}[2][]{T_{#1}#2}
\newcommand{\tbphan}[2][]{T_{#1}^{\phant}#2}
\newcommand{\tbpos}[2][]{T^{>0}_{#1}#2}
\newcommand{\tbposphan}[2][]{T^{\phant,>0}_{#1}#2}
\newcommand{\tbge}[2][]{T^{\ge0}_{#1}#2}
\newcommand{\tbgephan}[2][]{T^{\phant,\ge0}_{#1}#2}
\newcommand{\ctb}[2][]{T^\vee_{#1}#2}

%%%%%%%%%%%%%%%%%%%%%%%%%%%%%%%%%%%%%%%%%%%%%%%%%%%%%%
% Varieties
%%%%%%%%%%%%%%%%%%%%%%%%%%%%%%%%%%%%%%%%%%%%%%%%%%%%%%
\newcommand{\cbdX}{\overline{\partial X}}
\newcommand{\bdX}[1][]{\partial^{#1} X}
\newcommand{\cX}{\overline{X}}
\newcommand{\uX}{\underline{X}}
\newcommand{\uY}{\underline{Y}}

\newcommand{\uD}{\underline{D}}

\newcommand{\KN}[1]{\mathrm{KN}(#1)}
\newcommand{\KNR}[1]{\mathrm{KN}_\RR\rbrac{#1}}
\newcommand{\Spec}[1]{\mathsf{Spec}(#1)}
\newcommand{\Dbl}[1]{\mathrm{Dbl}(#1)}
%%%%%%%%%%%%%%%%%%%%%%%%%%%%%%%%%%%%%%%%%%%%%%%%%%%%%%
% Cohomology
%%%%%%%%%%%%%%%%%%%%%%%%%%%%%%%%%%%%%%%%%%%%%%%%%%%%%%

\newcommand{\sect}[1]{\mathsf{\Gamma}(#1)}
\newcommand{\coH}[2][\bullet]{\mathsf{H}^{#1}\rbrac{#2}}

\newcommand{\HH}[2][\bullet]{\mathbb{H}^{#1}\rbrac{#2}}
\newcommand{\HdR}[2][\bullet]{\mathsf{H}_{\mathrm{dR}}^{#1}(#2)}
\newcommand{\HdRc}[2][\bullet]{\mathsf{H}_{\mathrm{dR},\mathrm{c}}^{#1}(#2)}
\newcommand{\HB}[2][\bullet]{\mathsf{H}_{\mathrm{B}}^{#1}(#2)}
\newcommand{\HlgyB}[2][\bullet]{\mathsf{H}^{\mathrm{B}}_{#1}(#2)}

\newcommand{\forms}[2][\bullet]{\Omega^{#1}_{#2}}
%%%%%%%%%%%%%%%%%%%%%%%%%%%%%%%%%%%%%%%%%%%%%%%%%%%%%%
% Groups
%%%%%%%%%%%%%%%%%%%%%%%%%%%%%%%%%%%%%%%%%%%%%%%%%%%%%%

\newcommand{\End}[2][]{\mathsf{End}_{#1}\rbrac{#2}}
\newcommand{\GL}[2]{\mathrm{GL}_{#1}(#2)}
\newcommand{\symgp}[1]{\mathfrak{S}_{#1}}
\newcommand{\sgn}[1]{\mathrm{sgn}_{#1}}
\newcommand{\unitcirc}{\mathsf{S}^{1}}

%%% Simplicial stuff

% \newcommand{\cosimp}[3]
% {
% \begin{tikzcd}[ampersand replacement=\&]
%      {#1} \ar[r,shift left = 1]\ar[r,shift right=1] \& {#2} \ar[r,shift left = 2]\ar[r,shift left = 0]\ar[r,shift left = -2] \& {#3} \ar[r,shift left = 3]\ar[r,shift left = 1]\ar[r,shift left = -1]\ar[r,shift left = -3] \&  \cdots
%     \end{tikzcd}
% }

% \newcommand{\augcosimp}[4]
% {
% \begin{tikzcd}[ampersand replacement=\&]
%      {#1}\ar[r] \& {#2} \ar[r,shift left = 1]\ar[r,shift right=1] \& {#3} \ar[r,shift left = 2]\ar[r,shift left = 0]\ar[r,shift left = -2] \& {#4} \ar[r,shift left = 3]\ar[r,shift left = 1]\ar[r,shift left = -1]\ar[r,shift left = -3] \&  \cdots
%     \end{tikzcd}
% }

\newcommand{\simp}[3]
{
\begin{tikzcd}[ampersand replacement=\&]
     {#1} \& {#2} \ar[l,shift left = 1]\ar[l,shift right=1] \& {#3} \ar[l,shift left = 2]\ar[l,shift left = 0]\ar[l,shift left = -2] \& \ar[l,shift left = 3]\ar[l,shift left = 1]\ar[l,shift left = -1]\ar[l,shift left = -3] \cdots
    \end{tikzcd}
}

\newcommand{\augsimp}[4]
{
\begin{tikzcd}[ampersand replacement=\&]
     #1 \& #2 \ar[l] \& #3
     \ar[l,shift left = 1]\ar[l,shift right=1] \& #4 \ar[l,shift left = 2]\ar[l,shift left = 0]\ar[l,shift left = -2] \& \ar[l,shift left = 3]\ar[l,shift left = 1]\ar[l,shift left = -1]\ar[l,shift left = -3] \cdots
    \end{tikzcd}
}

\newcommand{\augsimpshort}[4]
{
\begin{tikzcd}[ampersand replacement=\&]
     #1 \& #2 \ar[l] \& #3
     \ar[l,shift left = 1]\ar[l,shift right=1] \& \ar[l,shift left = 2]\ar[l,shift left = 0]\ar[l,shift left = -2]  #4
    \end{tikzcd}
}

%%%% Categories
\newcommand{\scat}{\mathscr{I}}
\newcommand{\ascat}{\mathscr{I}_+}
\newcommand{\op}{\mathrm{op}}
\DeclareMathOperator*{\holim}{holim}
\DeclareMathOperator*{\fibre}{fibre}

\begin{document}

\title{Regularized integrals and manifolds with log corners}

\author{Cl\'ement Dupont}
\address{Institut Montpelli\'erain Alexander Grothendieck, Universit\'e de Montpellier, CNRS,
Montpellier, France}
\email{clement.dupont@umontpellier.fr}

\author{Erik Panzer}
\address{Mathematical Institute, University of Oxford, UK}
\email{erik.panzer@maths.ox.ac.uk}

\author{Brent Pym}
\address{McGill University, Montr\'eal, Canada}
\email{brent.pym@mcgill.ca}
\maketitle

\begin{abstract}
We introduce a natural geometric framework for the study of logarithmically divergent integrals on manifolds with corners and algebraic varieties, using the techniques of logarithmic geometry.  Key to the construction is a new notion of morphism in logarithmic geometry itself, introduced by Howell, which allows us to interpret the ubiquitous rule of thumb ``$\lim_{\epsilon\to 0} \log \epsilon := 0$'' as the restriction to a submanifold.  Via a version of de Rham's theorem with logarithmic divergences, we obtain a functorial characterization of the classical theory of ``regularized integration'': it is the unique way to extend the ordinary integral to the logarithmically divergent context while respecting the basic laws of calculus (change of variables, Fubini's theorem, and Stokes' formula.)
\end{abstract}

\setcounter{tocdepth}{1}
\tableofcontents

\section{Introduction}

\subsection{Motivation}\label{sec:motivation}
The need to regulate logarithmic divergences of  integrals is a recurring theme in geometry, number theory, and mathematical physics.  For instance, to make sense of the divergent integral
\[
I_1 \defas
\int_0^a \dlog{x}
\]
one introduces a cutoff parameter $\epsilon > 0$, computes the integral
\[
\int_{\epsilon}^a \dlog{x} = \log(a) -\log(\epsilon)
\]
and formally discards the term that diverges as $\epsilon \to 0$, to obtain the ``regularized'' value
\begin{align}
I_1 = \int_0^a \dlog{x} \defas \log(a) - \cancelto{0}{\log\epsilon} \ \  = \log(a) \label{eq:I1-regularized}
\end{align}
The cost of assigning  a finite value to such a divergent integral is a dependence on a choice of coordinate: if we make a change of variables $x = g(\tilde x)$, with $g$ a diffeomorphism, the regularized integral changes to $\log(a) - \log(g'(0))$, giving a different value unless $g'(0)=1$.  To eliminate this ambiguity, one can fix a  nonzero tangent vector $\vec{v}$ at $x=0$, which for \eqref{eq:I1-regularized} is $\vec{v}=\partial_x|_{x=0}$, and require the coordinate transformation $g$ to preserve this vector; this forces $g'(0)=1$.  Thus we can imagine that the lower bound of integration is not the point $x=0$ per se, but rather the pair $(0,\vec{v})$, which Deligne calls a  ``tangential basepoint'' \cite{Deligne1989}. 

The notion of tangential basepoint is a useful tool, but it poses a basic challenge, in that it gives a notion of ``basepoint in $X$'' that does not actually correspond to a point in $X$.  As a result, it becomes difficult to interpret the regularized integral in a cohomological fashion, that is, as the result of the natural integration pairing between classes in de Rham cohomology and singular homology. This is reflected, e.g.~in Deligne and Goncharov's indirect construction of some special cases of ``motivic fundamental groups with tangential basepoints'' \cite{DeligneGoncharov}. 
In this paper, we will explain how to view a tangential basepoint as an ``actual point'' in a suitable sense, and use this framework to give a precise cohomological meaning to regularized integrals such as $I_1$, in any dimension.

In a similar vein, consider the integral
\[
I_2 \defas \iint_{\PP^1(\CC)}  \rbrac{\frac{\dd z}{z-a} - \frac{\dd z}{z-1}} \wedge \dlog{\zb}
\]
where $z$ is the standard holomorphic coordinate on the Riemann sphere $\PP^1(\CC)$.  (We have borrowed this example from \cite{BrownDupont}.)   This integral presents us with a subtly different problem.  Namely, despite the apparent singularities of the integrand
\[
\omega \defas  \rbrac{\frac{\dd z}{z-a} - \frac{\dd z}{z-1}} \wedge \dlog{\zb}
\]
at the points $0,1,a,\infty \in \PP^1(\CC)$, the integral 
$I_2$ is absolutely convergent, as can be seen by working in polar coordinates centred at each of these points.   More invariantly, we may pass to the real oriented blowup at these points, which is the compact oriented surface with boundary $\Sig$ obtained from $\PP^1(\CC)$ by replacing each point $0,1,a,\infty$ with a boundary circle; then $\omega$ extends to a smooth form on $\Sig$. But an issue arises when we try to \emph{compute} this integral. To do so, we can observe that the form
\[
\alpha \defas -\log|z|^2  \rbrac{\frac{\dd z}{z-a} - \frac{\dd z}{z-1}}
\]
is a primitive for $\omega$, and attempt to apply Stokes' formula
\[
I_2=\iint_\Sig\omega = \int_{\bdSig}i^*\alpha
\]
where $i\colon \bdSig \to \Sig$ is the inclusion of the boundary.  However, the right-hand side does not makes sense as written, because $\alpha$ has a logarithmic pole along $\bdSig$.  Indeed, if we switch to polar coordinates $z - a = r e^{\iu\theta}$, we find that
\[
\alpha = -\log|a|^2 \rbrac{\dlog{r} + \iu\, \dd \theta} + O(1)\,\dd r + O(r)\,\dd \theta \qquad \textrm{as }r\to 0
\]
so that $\alpha$ has a logarithmic pole $\dlog{r} = \dd\log(r)$ at $r=0$ and its restriction is ill-defined.  So, once again, we introduce a cutoff parameter $\epsilon$, excise a tubular neighbourhood of width $\epsilon$ around the boundary, compute the integral over the boundary of the resulting surface $\Sig_\epsilon$, and take the limit as $\epsilon \to 0$.  With a bit of care, one sees that this boils down to computing the residue of $\alpha$ at $z=a$.  Consequently, the same result can be achieved without introducing a cutoff parameter, by instead  defining a ``regularized restriction'' $\reg\, i^*$, in which we formally set $\dlog{r}$ to zero on the boundary, giving
\[
\iint_{\PP^1(\CC)} \omega = \int_{\bdSig} \reg\, i^*\alpha = -\log|a|^2 \cdot \int_{\bdSig} \left(\cancelto{0}{\reg\, i^* \dlog{r}} + \iu\, \dd\theta\right) = 2\pi\iu \log|a|^2.
\]
(The disappearance of the minus sign is explained by the orientation of $\partial\Sig$.) In this paper, we will explain how this ad hoc construction of $\reg\, i^*\alpha$ can be interpreted, in a precise sense, as the pullback of $\alpha$ along a morphism, resulting in a ``regularized Stokes theorem''.  Once again, the form $\reg\, i^* \alpha$ itself depends on tangential data---this time, a trivialization of the normal bundle of $\bdSig$---but in this case, the value of the integral is ultimately independent.

At first glance, the dependence on choices seems more like a mild nuisance than a serious issue, but the situation becomes more critical when one has \emph{many} integrals and wants to prove nontrivial relations between them using the basic laws of integration: change of variables, Fubini's theorem, and Stokes' formula.  These integrals may diverge (as in the case of $I_1$ above), but even if they converge, one may wish to make use of divergent forms in the calculations (as in the case of $I_2$). For instance, our own interest in the problem stems from Feynman-style integrals in quantum field theory, and related structures in quantum algebra, where one has a whole infinite collection of integrals indexed by graphs; they satisfy many relations amongst themselves, and logarithmic divergences abound.

Going further, one may seek to reformulate such identities between integrals in purely cohomological terms, which grants access to the powerful tools of algebraic geometry and Hodge theory, and reveals a hidden symmetry in the form of the action of a motivic Galois group.  In other words, one may wish to establish such identities in the ring of ``motivic periods''~\cite{KontsevichZagier,BrownMotivicPeriods} (or a logarithmic variant thereof).  The formalism we develop below in this paper provides exactly such an interpretation.  In future work, we will apply it to the integrals appearing in deformation quantization~\cite{KontsevichOperadsMotives,Kontsevich2003,AlekseevRossiTorossianWillwacher:Logs}, thus realizing Kontsevich's vision of a motivic Galois action in this context.

\subsection{Approach and results}\label{sec:results}
In this paper, we solve the problems above by introducing a new class of geometric objects, called \defn{manifolds with log corners}, which serve as the natural domains of integration for logarithmically divergent forms, in the same way that ordinary manifolds with corners are used for the integration of smooth forms.  In particular, there is a natural geometric notion of ``regularization'' for manifolds with log corners, which serves as a replacement for the ad hoc introduction of cutoffs and tubular neighbourhoods in the usual approach, retaining only the essential geometric data (such as tangential basepoints) that are needed to regulate the integral.

In the rest of the introduction, we shall give an overview of the main definitions and ideas.  As a preview of what is to come, let us summarize the main results of the paper as follows:

\begin{theorem}The following statements hold:
\begin{enumerate}
    \item Manifolds with log corners carry functorial sheaves of functions and differential forms with logarithmic divergences, satisfying versions of the Poincar\'e lemma and de Rham isomorphism.
    \item When such manifolds are equipped with the additional data of an orientation and a regularization, logarithmically divergent forms can be naturally integrated, in a way that respects the basic laws of calculus: change of variables, Fubini's theorem and Stokes' formula.
    \item A natural source of manifolds with log corners is provided by the ``Kato--Nakayama'' spaces of a class of logarithmic algebraic varieties, which we call ``varieties with log corners''.  The corresponding regularized integrals provide a cohomological theory of periods for varieties with log corners that is naturally compatible with Deligne's tangential basepoints.
\end{enumerate}
\end{theorem}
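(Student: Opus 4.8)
Since the stated result collects the three principal achievements of the paper, proving it amounts to constructing the theory in three stages, each combining a handful of definitions with one key technical lemma. For part (1), I would first fix the local models for manifolds with log corners---products of half-lines $\halfspc{1}$ (with their standard logarithmic structure), ordinary corners $\halfspc{n}$, and the log-point building blocks such as $\logcyl$ and $\logcirc$---and glue them using the new class of morphisms described in the introduction, in which the rule ``$\log\epsilon \mapsto 0$'' is realized as an honest restriction to a submanifold. On such a space one then defines the structure sheaf $\Cinflog{}$ of log-smooth functions and the complex $\cAlog{}$ of differential forms with logarithmic poles along the boundary strata, generated over the ordinary smooth forms by the sections $\dlog{x}$. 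The crux is a logarithmic Poincar\'e lemma: one contracts a chart along radial rays, but the contracting homotopy must be modified to absorb the $\dlog{x}$ contributions, and it is precisely the convention ``$\log\epsilon \mapsto 0$'' that makes the resulting primitive well defined and the homotopy operator bounded. Once local exactness is in hand, the de Rham isomorphism follows by a standard sheaf-theoretic argument: $\cAlog{}$ is a fine resolution of the relevant constant sheaf, so its hypercohomology computes topological cohomology.

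For part (2), I would define integration over an oriented, regularized manifold with log corners by taking the ordinary Lebesgue integral on the convergent part and the regularized value---prescribed by the regularization datum, that is, a trivialization of the normal bundles of the boundary strata---on the divergent part. Change of variables and Fubini's theorem then reduce to functoriality of pullback and to Fubini for the local product models, both essentially formal once the morphisms are set up correctly. The substantial step is the regularized Stokes formula
\begin{equation*}
\int_\Sig \dd\alpha = \int_{\bdSig} \reg\, i^*\alpha,
\end{equation*}
where $\reg\, i^*$ is the regularized restriction, realized as pullback along the regularization morphism. I would prove this first on the half-space model by the explicit cutoff computation sketched for $I_2$ in the introduction---excising a collar of width $\epsilon$, applying ordinary Stokes, and discarding the $\log\epsilon$ term---verifying that the discarded term is exactly the difference between the naive restriction and the regularized one; a partition of unity then globalizes the identity. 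The accompanying uniqueness (that regularized integration is the \emph{only} extension of the ordinary integral obeying all three laws) follows by showing that the three laws force the value on each local model.

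For part (3), I would define varieties with log corners as log schemes over $\RR$ or $\CC$ carrying the appropriate corner and positivity data, and verify that their Kato--Nakayama spaces $\KN{-}$ are manifolds with log corners, so that $\KN$ becomes a functor into the category built in part (1). The comparison between the logarithmic de Rham cohomology $\HdR{-}$ computed via $\cAlog{}$ and the singular homology of $\KN{-}$, paired through the integration of part (2), then yields the period pairing; compatibility with Deligne's tangential basepoints comes from identifying such a basepoint with a boundary point of $\KN{-}$ equipped with its regularization, so that the regularized integral reproduces the classical period. The main obstacle throughout is the interplay between the logarithmic poles and the integration theory: establishing the logarithmic Poincar\'e lemma of part (1) and the regularized Stokes formula of part (2) in a mutually compatible way is where the new notion of morphism must do all its work, and arranging for the boundary contributions to match the residues globally and functorially---not merely in the model computations---will be the most delicate point.
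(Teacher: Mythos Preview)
Your outline is broadly correct in spirit, but it diverges from the paper's actual strategy at the two technical load-bearing points, and it misses what the paper identifies as the genuinely hard step.

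\textbf{Part (1).} You correctly anticipate a logarithmic Poincar\'e lemma proved via contracting homotopies, and the paper does exactly this (\autoref{thm:deRham-homotopy}). However, the paper's homotopy is not a modified radial contraction with an ad hoc ``$\log\epsilon\mapsto 0$'' convention; rather, the key is that weak morphisms furnish honest sections $s\colon *\to[0)$ and retractions $q\colon\halfspc{}\to[0)$ that do not exist as ordinary morphisms, and the contracting homotopy is then entirely formal. More importantly, you have not identified the actual technical crux of Part~(1): before the Poincar\'e lemma even makes sense, one must prove that the abstractly presented sheaf $\Cinflog{\Sig}$ (generated by symbols $\ulog(f)$ modulo two relations) coincides with the concrete sheaf of functions admitting finite expansions $\sum f_I(r)\log^I(r)$. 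This is \autoref{thm:injectivity-of-j*}, and it is delicate precisely because such expansions are \emph{not unique}---smooth functions infinitely flat along the boundary create ambiguity---so one must show that all relations among such expansions are already captured by the two defining relations. The paper proves this by an induction on boundary depth involving a Borel-lemma-style construction of flat functions (\autoref{prop:j^*-injective-induction}); this is the only place in the paper where asymptotic analysis enters, and it is not visible in your outline. (A minor point: the local models for manifolds with log corners are $\halfspc{n}\times[0)^k$; the objects $\logcyl$ and $\logcirc$ are log \emph{varieties}, relevant only in Part~(3).)

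\textbf{Part (2).} Here your approach is genuinely different from the paper's, and the difference is instructive. You propose to define the regularized integral analytically---convergent part plus regularized divergent part---and to prove Stokes' formula by a cutoff-and-limit argument on the local model, globalized by partitions of unity. The paper does the opposite: it \emph{defines} the regularized integral cohomologically, as the composite
\[
\Alogc[n]{\Sig}\hookrightarrow\Alogc[n]{\Sig,\bdSig[\bullet]}\xrightarrow{s^*_\bullet}\Alogc[n]{\Sigbas,\bdSigbas[\bullet];s}\twoheadrightarrow\HdRc[n]{\Sigbas,\bdSigbas[\bullet];s}\cong\HdRc[n]{\uSig,\ubdSig[\bullet]}\xrightarrow{\int_{\uSig}}\RR,
\]
so that Stokes, Fubini, and change of variables are \emph{immediate} from the corresponding facts in ordinary de Rham theory---no cutoff computation is ever performed. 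Agreement with the classical regularized integral (and with the absolutely convergent integral, \autoref{prop:convergent-integrals}) is then checked \emph{a posteriori}. Your route could be made to work, but it front-loads exactly the asymptotic analysis the paper is designed to avoid, and it makes the uniqueness claim harder: in the paper, uniqueness is essentially built into the definition via the de Rham isomorphism, whereas your approach would need a separate argument that the three laws pin down the value on every local model.

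\textbf{Part (3).} Your sketch here is close to the paper's. The one refinement worth noting is that the identification of Deligne's tangential basepoints with the differential-geometric data is not ``a boundary point equipped with a regularization'' but rather the cleaner statement (\autoref{prop:tangential-basepoints-as-weak-morphisms} and \autoref{prop:basepoints-alg-vs-diff}) that tangential basepoints are precisely the weak morphisms from a point.
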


\subsubsection{Manifolds with log corners}
  The most basic example of a manifold with log corners is provided by a manifold with corners in the classical sense.  More generally, we may consider boundary faces of a manifold with corners, along with the natural data they carry on their normal bundles---specifically, the collection of ``positive'' normal vectors, i.e.~those which point into the interior.  These data play a crucial role in regularizing integrals.

While it is possible to describe manifolds with log corners in purely classical terms (see \autoref{sec:phantom-tangent}), such a description is, in our experience, cumbersome.  For this reason, we adopt the framework of ``positive log geometry'' of Gillam--Molcho \cite{GillamMolcho}---a differential geometry version of the logarithmic algebraic geometry of Fontaine--Illusie--Kato~\cite{Kato1989a, IllusieLogSpaces, Ogus}.  In other words, we consider a manifold with corners $\Sig$ endowed with the additional datum of a sheaf of monoids $\cM_\Sig$ mapping to the sheaf of non-negative smooth functions.  In the basic case of a manifold with corners, $\cM_\Sig$ is simply the sheaf of non-negative smooth functions that are locally monomial in coordinates near the boundary.  For a boundary face, we include additional elements that are ``phantoms'' of such functions on the normal bundle.   See \autoref{sec:log-corners} for the general definition.  For the introduction, it will suffice to have the following examples in mind.  

The first is the ``standard interval'' $\halfspc{}$, which we equip with the sheaf $\cM_{\halfspc{}}$ of functions of the form $g(r)r^j$ where $r$ is the standard coordinate, $g$ is a positive smooth function, and $j \in \NN$; this is a multiplicative submonoid of the sheaf $\Cinfge{\halfspc{}}$ of non-negative smooth functions.   

The second is its boundary, the ``standard end'' $[0) \defas \partial [0,\infty)$, consisting of the point $0$ equipped with the monoid $\cM_{[0)}$ of  monomials $\lambda t^j$ where $\lambda \in \RR_{>0}$ is a positive constant and $t$ is a formal coordinate corresponding to the derivative of $r$ at zero; this coordinate is not really a function, but rather a phantom thereof, in the sense that its ``value'' is defined by formally setting $t=0$.

A general manifold with log corners is then locally isomorphic to an open set in a product $\model{n}{k}$, and thus is covered by local charts consisting of ``basic coordinates'' $r_1,\ldots,r_n$ on the underlying manifold with corners, and ``phantom coordinates'' $t_1,\ldots,t_k$ that keep track of normal directions.

\subsubsection{Log morphisms and tangential basepoints}
Log geometry gives a natural notion of morphism between manifolds with log corners, expressed in terms of sheaves of monoids.  For the basic case of manifolds with corners, such morphisms have a very concrete description: they are the maps that, when written in coordinates, are locally monomial near the boundary.  The importance of such maps in differential geometry and geometric analysis was articulated by Melrose, who called them ``interior $b$-maps''~\cite{Melrose1992}.  A basic feature of these maps, as the name suggests, is that they preserve interiors of manifolds with corners; in particular, morphisms $* \to \Sig$, where $*$ is a single point, are in bijection with points in the interior $\Sig\setminus\bdSig$.

One of our key discoveries, made independently\footnote{In the first arXiv version of this paper, we were working with a notion of morphism that we called ``weak'', see \autoref{rem:virtual-vs-weak}. After learning about Howell's work, we decided to adopt his framework of ``virtual'' morphisms. The difference between the two notions is subtle and ends up being essentially irrelevant for the applications to regularized integrals in this paper. However, virtual morphisms in the sense of Howell are the right notion for other applications, such as formality of the little disks operad; see our article \cite{DPP:WeakMorphisms}.} by Howell \cite{Howell2017} in algebraic geometry, is that by a simple weakening of the axioms for a morphism in log geometry, we obtain a more flexible notion of ``virtual morphism'' which also allows points to land in the boundary; however, when they do so, they automatically come decorated with positive normal vectors---a $C^\infty$ counterpart of Deligne's tangential basepoints. In fact, tangential basepoints are the same thing as virtual morphisms from a point:

\begin{proposition}[see \autoref{prop:tangential-basepoints-as-weak-morphisms}]
For $\Sig$ a manifold with corners viewed as a manifold with log corners, virtual morphisms $*\to \Sig$ are in bijection with tangential basepoints of $\Sig$.
\end{proposition}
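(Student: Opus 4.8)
The plan is to reduce everything to an explicit local computation in basic and phantom coordinates, where both sides of the claimed bijection become concrete, and then to verify that the resulting correspondence is coordinate-independent. Both ``weak morphism $*\to\Sig$'' and ``tangential basepoint'' are local notions around the image point, so I would first unwind the definition of a weak morphism $f\colon *\to\Sig$ into concrete data. Let $p = \underline{f}(*)\in\Sig$ be the underlying point, and choose basic coordinates $(r_1,\dots,r_n)$ near $p$ in which $p$ lies in the interior of the codimension-$k$ boundary face cut out by $r_1=\dots=r_k=0$. The stalk $\cM_{\Sig,p}$ then consists of germs $g\,r_1^{a_1}\cdots r_k^{a_k}$ with $g$ a germ of a strictly positive smooth function and $a_i\in\NN$; by the local normal form of $\cM_\Sig$, its units are exactly the germs with all $a_i=0$. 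Since $*$ carries the trivial log structure we have $\cM_* = \RR_{>0}$, and the morphism is encoded by a monoid homomorphism $f^\flat\colon\cM_{\Sig,p}\to\RR_{>0}$. The place where the \emph{weakening} of the morphism axioms enters is the interaction with the structure maps into non-negative functions: for a unit $g$ one has $g(p)>0$, so compatibility forces $f^\flat(g)=g(p)$; but each boundary monomial $r_i$ satisfies $r_i(p)=0$, and the weak axiom is precisely what decouples $f^\flat(r_i)$ from this vanishing, leaving $\lambda_i\defas f^\flat(r_i)$ free in $\RR_{>0}$ --- in particular automatically positive, since the target monoid is the group $\RR_{>0}$. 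By multiplicativity, $f^\flat$ is then completely determined by $p$ together with the tuple $(\lambda_1,\dots,\lambda_k)\in\RR_{>0}^k$ via $f^\flat(g\,r_1^{a_1}\cdots r_k^{a_k})=g(p)\prod_i\lambda_i^{a_i}$, and conversely any such tuple visibly defines a monoid homomorphism obeying the weak axiom. So, in a fixed chart, weak morphisms landing at $p$ are in bijection with $\RR_{>0}^k$.

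Next I would match $(\lambda_1,\dots,\lambda_k)$ with the intrinsic datum of a tangential basepoint, namely a positive normal vector $v$ to the boundary face at $p$. The conormal covectors $\dd r_i|_p$ span the conormal space, and pairing them with a normal vector gives coordinates $\langle v,\dd r_i|_p\rangle$; I would set $\lambda_i = \langle v,\dd r_i|_p\rangle$. The crucial consistency check is coordinate-independence. Under a change of boundary-defining function $r_i'=h\,r_i$ with $h$ positive, one computes on the one hand $f^\flat(r_i') = f^\flat(h)\,f^\flat(r_i)=h(p)\lambda_i$, and on the other hand $\dd r_i'|_p = h(p)\,\dd r_i|_p$, since $r_i$ vanishes at $p$ kills the term $r_i\,\dd h|_p$; hence $\langle v,\dd r_i'|_p\rangle = h(p)\langle v,\dd r_i|_p\rangle$. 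The two transformation laws agree, so the family $(\lambda_i)$ patches into a well-defined positive normal vector $v$ independent of the chart, i.e.\ an honest tangential basepoint $(p,v)$; the interior case $k=0$ simply recovers ordinary points carrying no extra data. Running the construction backwards assigns to each tangential basepoint $(p,v)$ the homomorphism $f^\flat(r_i)=\langle v,\dd r_i|_p\rangle$ on generators and $f^\flat(g)=g(p)$ on units, and the two assignments are manifestly mutually inverse.

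I expect the main obstacle to be this coordinate-independence step, where one must verify that the a priori chart-dependent numbers $\lambda_i=f^\flat(r_i)$ assemble into an intrinsic geometric object. This is exactly the content that the log/phantom formalism is designed to package cleanly: the fact that $f^\flat$ is a monoid homomorphism forces the $\lambda_i$ to transform by the same cocycle as the normal covectors $\dd r_i|_p$, which is what makes the positive normal vector --- the tangential basepoint --- well defined. A secondary point requiring care is the bookkeeping between the stalk-level computation and the sheaf-theoretic definition of a weak morphism, together with confirming that the units of $\cM_{\Sig,p}$ are precisely the strictly positive germs, so that the constraint $f^\flat(g)=g(p)$ applies to all and only the units; both are routine once the local normal form of $\cM_\Sig$ near a boundary face is in hand.
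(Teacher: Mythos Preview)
Your argument is correct and essentially parallel to the paper's, but the packaging differs in one respect worth noting. The paper first invokes the universal property of the pullback log structure to reduce to weak morphisms $* \to (x,\cM_{\Sig}|_x,\alpha_\Sig|_x)$, and then appeals to the earlier \autoref{cor:normal-monoid}, which identifies $\cM_{\Sig}|_x$ intrinsically with the monoid of non-negative monomial functions on $N^{\ge 0}_x\uSig$; the bijection with tangential basepoints then drops out immediately as ``evaluation at a point of $N^{>0}_x\uSig$'', with no further coordinate check required. You instead work directly in a chart, extract the tuple $(\lambda_1,\ldots,\lambda_k)$, and verify coordinate-independence by hand via the transformation law under $r_i' = h r_i$. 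Your route is more elementary and self-contained, at the cost of doing explicitly what the paper has already absorbed into the symbol map of \autoref{prop:normal-cone}; the paper's route is shorter here precisely because that bookkeeping was front-loaded. One small point you might tighten: your coordinate-independence check treats rescalings $r_i' = h r_i$; it is worth remarking (as you implicitly assume) that any two systems of boundary-defining functions at $p$ differ by exactly such rescalings together with a permutation, which follows from the local normal form of $\cM_{\Sigbas}$.
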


The category of manifolds with log corners and virtual morphisms is the natural venue for our theory of integration. 

\subsubsection{Functions and forms}

In \autoref{sec:log-functions} we develop a theory of functions with logarithmic singularities.  We construct, for each manifold with log corners $\Sig$, a sheaf $\Cinflog{\Sig}$ of ``logarithmic functions''.  This is done purely algebraically, by a simple generators-and-relations presentation, in which we formally adjoin logarithms for the elements of $\cM_\Sig$, subject to two natural relations.  The first relation is the obvious identity $\log(fg) = \log(f)+\log(g)$.  The second relation identifies a formal symbol $f \log(g)$ with the corresponding function on $\Sig$, provided that the latter is everywhere smooth; this ensures that we do not ``overcount'' the smooth functions.

This presentation makes it easy to show that the sheaf $\Cinflog{}$ is functorial for virtual morphisms, which will be crucial for our geometric interpretation of regularized integrals. 
 However, it obscures its analytic meaning.  To this end, we prove that this sheaf has the following classical description:
\begin{theorem}[see \autoref{thm:injectivity-of-j*}]\label{thm:log-functions}
The sheaf $\Cinflog{\Sig}$ is identified with the algebra of functions that in any system of basic coordinates $r_1,\ldots,r_n$ and phantom coordinates $t_1,\ldots,t_k$ on $\Sig$, admit finite expansions of the form
\[
f = \sum_{I,J} f_{I,J}(r)\log^I(r) \log^J(t)
\]
where $I,J$ are multi-indices and $f_{I,J}$ are smooth functions.
\end{theorem}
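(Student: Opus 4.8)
The plan is to build the comparison map explicitly and prove it is an isomorphism onto the algebra of functions admitting log-expansions, the essential point being injectivity---which is why the theorem is packaged as injectivity of the restriction $j^*$ along the interior inclusion $j\colon\intSig\hookrightarrow\Sig$. Since the statement is local I would work in a chart $\model{n}{k}$ with basic coordinates $r_1,\dots,r_n$ and phantom coordinates $t_1,\dots,t_k$. Every local section $m$ of $\cM_\Sig$ factors as $m=g\,r^{a}t^{b}$ with $g$ a positive smooth function and $a\in\NN^{n}$, $b\in\NN^{k}$, so the first defining relation gives $\log(m)=\log(g)+\sum_i a_i\log(r_i)+\sum_j b_j\log(t_j)$, and the second relation identifies $\log(g)$ with the genuine smooth function $\log g$ (smooth because $g>0$). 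Hence the generators reduce to the smooth functions together with the $\log(r_i)$ and $\log(t_j)$, and multiplying out shows every section can be written as $\sum_{I,J}f_{I,J}(r)\log^{I}(r)\log^{J}(t)$. The realization map $\rho$ sends such an expression to the corresponding function on $\intSig$, interpreting each $\log(r_i)$ as an honest singular function and retaining the $\log(t_j)$ as formal variables; $\rho$ is visibly surjective onto functions admitting expansions, and the whole content is that $\rho$ is injective.

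First I would split off the phantom directions. No instance of the second relation can identify a nonconstant polynomial in the $\log(t_j)$ with a smooth function, since for $b\ge 1$ the monomial $t^{b}\log(t)$ fails to be smooth at $t=0$, and the same holds after multiplying by basic smooth functions because the phantom and basic directions are independent. Consequently $\Cinflog{\Sig}$ is a polynomial algebra in the $\log(t_j)$ over its basic subalgebra, and injectivity of $\rho$ reduces to the case $k=0$ of an honest manifold with corners.

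The basic case is the crux. Here one must resist the temptation to prove that $\{\log^{I}(r)\}$ is linearly independent over $\Cinf{\Sig}$, because it is \emph{false}: the function $e^{-1/r}\cdot\log(r)$ is smooth (indeed flat) at $r=0$, so $(-e^{-1/r}\log r)\cdot 1+e^{-1/r}\cdot\log(r)$ vanishes with nonzero coefficients. The second relation is designed to collapse exactly these flat coincidences: whenever $f$ is flat along $\{r_i=0\}$ the product $f\log(r_i)$ is smooth, and the relation rewrites it as a smooth function (so that the element above is already zero in $\Cinflog{\Sig}$). I would therefore prove injectivity by induction on $n$, driven by the following one-variable flatness lemma: if $\sum_{p}\Phi_p(r)(\log r)^{p}=0$ on an interval $(0,\delta)$ with the $\Phi_p$ smooth, then every $\Phi_p$ is flat at $r=0$. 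This is proved by setting $v=\log r$ and matching asymptotic expansions as $v\to-\infty$: the smooth function $\Phi_p(e^{v})$ is asymptotic to $\sum_{k}\tfrac{1}{k!}\partial_r^{k}\Phi_p(0)\,e^{kv}$, the scales $\{v^{p}e^{kv}\}$ are asymptotically linearly independent, and vanishing of the total expansion forces $\partial_r^{k}\Phi_p(0)=0$ for all $k$.

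To run the induction, given $F=\sum_I f_I\log^{I}(r)=0$ on $\intSig$, I would group $F$ by powers of $\log(r_1)$, apply the flatness lemma in the variable $r_1$ to conclude that each coefficient is flat along $\{r_1=0\}$, and then use the second relation (iterated to handle higher powers) to replace each flat multiple of $\log(r_1)^{p}$ by a genuine smooth function, thereby clearing $\log(r_1)$ and lowering the problem to the remaining $n-1$ coordinates, where the inductive hypothesis applies. The hard part will be the bookkeeping of this last step: establishing the flatness lemma rigorously in the mixed setting where the coefficients themselves involve the other logarithms, and checking that the second relation, as presented, generates all the higher-degree identifications needed to clear $\log(r_1)$ across every boundary face simultaneously. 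Everything else---the change of variables $v=\log r$, the asymptotic independence of the scales, and the degree induction---is routine once this is in place.
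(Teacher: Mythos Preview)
Your overall architecture---split off the phantoms, then prove injectivity of $j^*$ in the basic case by a flatness-and-induction argument---is exactly the paper's, and your one-variable flatness lemma is correct and is the core analytic input. However, there is a genuine gap in the inductive step, at precisely the spot you flag as ``the hard part''. After grouping $F=\sum_p \Phi_p\,\ulog^p(r_1)$ and concluding that each $\Phi_p$ is $r_1$-flat, you want to invoke the second relation to absorb $\Phi_p\,\ulog^p(r_1)$ into lower-degree terms. But the second relation only applies to $g\,\ulog(r_1)$ with $g\in\Cinf{\Sig}$, whereas $\Phi_p=\sum_{I'} f_{(p,I')}(r)\,\ulog^{I'}(r')$ involves the other logarithms. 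What you actually need is that each individual smooth coefficient $f_{(p,I')}$ is $r_1$-flat, and your argument does not give this: the relation $\sum_{I'}(\partial_{r_1}^k f_{(p,I')})(0,r')\log^{I'}(r')=0$ on the interior of $\halfspc{n-1}$ does not force the coefficients to vanish, since the monomials $\log^{I'}(r')$ are not linearly independent over $\Cinf{}$ on an open set away from the boundary.

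The paper closes this gap by strengthening the inductive hypothesis to a statement ($\beta$) about individual coefficients: if $f_I=0$ for all $I>J$, then $f_J$ is \emph{$J$-flat}, meaning all partial derivatives in the variables supporting $J$ vanish along the stratum $V(J)$. The induction is on the size of the support of $J$ (peeling off one coordinate at a time, using your one-variable lemma in exactly the way you describe), and it yields flatness of the top coefficients one by one. A separate lemma, proved via a Borel-type construction, then decomposes a $J$-flat function as a sum of $r_i$-flat pieces over $i$ in the support of $J$, so that the second relation can be applied term-by-term to lower the degree. This two-part structure---the strengthened $(\beta)$ plus the $J$-flat decomposition---is what your outline is missing. (A minor remark: your justification for splitting off phantoms is slightly off. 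The point is not that $t^b\log t$ fails to be smooth, but simply that the second relation is stated only for $f\in\cMbas_\Sig$, so no relation touches $\ulog(t_j)$ at all.)
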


The crucial subtlety here---which is why there is something nontrivial to prove---is that the expansion in $\log(r)$ is not unique, due to the possibility of smooth function coefficients that are infinitely flat at the boundary.  The theorem shows that this ambiguity is completely captured by the elementary relation defining $\Cinflog{\Sig}$, so that we can manipulate the formal expansions as functions in the obvious way.

In the absence of phantom coordinates, the sections of $\Cinflog{\Sig}$ are examples of polyhomogeneous functions in the sense of Melrose~\cite{Melrose1992}.  Meanwhile the symbols $\log^J(t)$ are monomials in the formal logarithms in the phantom coordinates, which are related to Melrose's polyhomogeneous symbols; they keep track of the singular terms that arise when attempting to restrict polyhomogeneous functions to strata.

Even though functions may diverge on the boundary, we can assign finite values  at ``points'', simply by pulling them back along virtual morphisms $* \to \Sig$.  As we explain in \autoref{sec:log-functions}, this simple prescription exactly encapsulates the classical approach of choosing a tangential basepoint, and using it to define the regularized limit by discarding divergent terms as in \eqref{eq:I1-regularized} above.

\subsubsection{Differential forms and de Rham cohomology}
In a similar way, we obtain a functorial sheaf of differential forms $\cAlog{\Sig}$ with logarithmic singularities; it is the smallest extension of the dg algebra of smooth forms which contains all logarithmic functions $\cAlog[0]{\Sig} \defas \Cinflog{\Sig}$, and hence also their differentials.  In local coordinates, the latter have a basis given by the elements $\dd\log(r_i)=\dlog{r_i}$ and $\dd\log(t_j) = \dlog{t_j}$.  We prove the following logarithmic version of the Poincar\'e lemma and de Rham isomorphism, giving a topological interpretation for the cohomology of $\cAlog{\Sig}$.

\begin{theorem}[see Sections \ref{sec:poincare} and \ref{sec:relative-cohomology}]\label{thm:intro-deRham}
The natural map $\RR_{\Sig} \hookrightarrow \cAlog{\Sig}$ is a quasi-isomorphism of complexes of sheaves, and hence we have canonical isomorphisms
\[
\mathsf{H}^\bullet_{\mathrm{sing}}(\Sig;\RR)  \cong \HdR{\Sig}
\]
where $\HdR{\Sig}$ is the cohomology of the complex of global logarithmic forms.  Similar isomorphisms hold more generally for relative cohomology, with or without compact supports.  In particular, logarithmic de Rham cohomology is homotopy invariant and satisfies the K\"unneth formula.
\end{theorem}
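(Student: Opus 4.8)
The plan is to prove the quasi-isomorphism $\RR_\Sig \hookrightarrow \cAlog{\Sig}$ locally and then bootstrap to the global statements by standard homological machinery. Since the question is local on $\Sig$ and every manifold with log corners is locally isomorphic to an open subset of a model space $\model{n}{k}$, the core task is a \emph{logarithmic Poincar\'e lemma}: I would show that on a chart with basic coordinates $r_1,\dots,r_n$ and phantom coordinates $t_1,\dots,t_k$, the complex $\cAlog{}$ has cohomology concentrated in degree $0$, where it equals the constants $\RR$. Using \autoref{thm:log-functions}, every local logarithmic form is a finite sum of terms $f_{I,J}(r)\,\log^I(r)\,\log^J(t)\,\omega$ with $\omega$ a monomial in the generators $\dd r_i$, $\dlog{r_i}$, $\dd t_j$, $\dlog{t_j}$, so the complex is built by iterated tensor products over the coordinate directions. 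Concretely I would factor the model complex as a (completed, degreewise finite) tensor product of the one-variable complexes attached to each $r_i$, each $t_j$, and the remaining smooth directions, and invoke a K\"unneth/contraction argument to reduce to the one-dimensional building blocks.

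The heart of the matter is therefore the three elementary one-variable computations. First, the ordinary smooth variable contributes the usual Poincar\'e lemma. Second, for a \emph{basic} coordinate $r$ I must compute the cohomology of the two-term complex generated over $\Cinflog{}$ by $\dd r$ and $\dlog{r}$; the point is to construct explicit primitives, integrating smooth coefficients in the $\dd r$ direction and using the relation $\dd(f\log r)=(\dd f)\log r+f\,\dlog{r}$ to absorb the logarithmic generator, being careful that antiderivatives of polyhomogeneous functions stay polyhomogeneous and that the homotopy respects the subtle flatness ambiguity flagged after \autoref{thm:log-functions}. Third, for a \emph{phantom} coordinate $t$ the only generator is $\dlog{t}$ (there is no $\dd t$, since $t$ is not a genuine function), so the local complex in that direction is the two-term Koszul-type complex $\Cinflog{}\xrightarrow{\wedge\,\dlog{t}}\Cinflog{}\cdot\dlog{t}$, whose cohomology I would read off directly from the monomial basis $\log^j(t)$, checking that it again leaves only the constants.

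I would then assemble these local results into the global statement in the standard way. The quasi-isomorphism of complexes of sheaves follows from the stalkwise computation together with the fact that $\cAlog{}$ is a complex of soft (indeed fine, being a sheaf of modules over $\Cinflog{}$ which admits partitions of unity inherited from $\Cinf{}$) sheaves, so that $\HdR{\Sig}$ computes the hypercohomology $\HH{\Sig;\RR}=\mathsf{H}^\bullet_{\mathrm{sing}}(\Sig;\RR)$. Homotopy invariance is immediate from the quasi-isomorphism, and the K\"unneth formula follows from the tensor-product structure of the local models together with the identification of cohomology with a topological invariant. The relative and compactly-supported variants I would obtain formally from the long exact sequences of a pair $(\Sig,\bdSig)$, using functoriality of $\cAlog{}$ under the (weak) inclusion of the boundary and the compatibility of regularized restriction with the local primitives constructed above.

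The step I expect to be the main obstacle is the one-variable \emph{basic} computation, specifically controlling the polyhomogeneous coefficients under integration. The difficulty is exactly the non-uniqueness of the $\log(r)$-expansion caused by smooth functions that are infinitely flat at $r=0$: one must verify that the chosen contracting homotopy is well-defined on $\Cinflog{}$ and not merely on formal expansions, which is where \autoref{thm:log-functions} does the essential work of guaranteeing that formal manipulations correspond to honest functions. A related subtlety is ensuring the tensor-product decomposition of the model complex is compatible with the completion implicit in allowing infinitely many flat terms, so that the K\"unneth reduction is legitimate; I would handle this by working with the explicit finite monomial bases in $\log(r)$ and $\log(t)$ and treating the smooth coefficients as the only analytically delicate piece.
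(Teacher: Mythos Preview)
Your proposal is broadly on the right track—the proof does reduce to a local Poincar\'e lemma established via explicit contracting homotopies—but it differs from the paper's argument in organization, and contains one structural confusion that would need fixing.

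The confusion: the model complex $\cAlog{\model{n}{k}}$ is \emph{not} a tensor product of one-variable complexes, completed or otherwise, because smooth functions of several variables do not factor. What does work is an \emph{iterated} contraction: one proves that the projections $\Sig\times\halfspc{}\to\Sig$ and $\Sig\times[0)\to\Sig$ induce isomorphisms on $\HdR{}$, and then peels off one coordinate at a time. This is what the paper actually does, and your ``K\"unneth/contraction'' phrasing should be read this way. (You also list $\dd t_j$ among the generators early on, though you correctly retract this later; the logarithmic cotangent sheaf has local basis $\dlog{r_i},\dlog{t_j}$ only, with $\dd r_i=r_i\,\dlog{r_i}$ already in the span.)

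The genuine difference in approach lies in the basic-coordinate step, and the paper's trick is worth knowing. Rather than directly constructing a primitive for $g(r)\,\dlog{r}$—which, as you correctly anticipate, is delicate when $g(0)\neq 0$ since $\int_0^r g(r')/r'\,\dd r'$ diverges—the paper exploits \emph{weak morphisms} to split the problem. The inclusion $i\colon\Sig\times[0)\hookrightarrow\Sig\times\halfspc{}$ admits a weak retraction $q$ defined by $q^*(t)=r$, giving a decomposition
\[
\Alog{\Sig\times\halfspc{}}\;\cong\;\Alog{\Sig\times[0)}\;\oplus\;\ker i^*.
\]
The first summand is handled by the purely algebraic phantom contraction (your third computation, which the paper does identically). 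The second summand consists of forms with \emph{no poles} in $r$, on which the naive homotopy $h'\omega=\int_0^r\omega_1(r')\,\dd r'$ converges absolutely because $\log^j(r)\,\dd r$ is integrable near $0$. This cleanly sidesteps the flatness bookkeeping you flag: the weak morphism $q$ pushes the polar part into the phantom direction where it is harmless, and what remains is honestly integrable. Your proposed alternative—writing $g(r)=g(0)+r\tilde g(r)$ and using $g(0)\,\dlog{r}=\dd(g(0)\log r)$—would also work after some induction on the $\log^I(r)$ degree, but the paper's splitting is more conceptual and makes the role of weak morphisms (advertised in the introduction as essential to the proof) explicit.
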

The case of relative cohomology is particularly important, since in practice, domains of integration often have a boundary.

\subsubsection{Integration}

In \autoref{sec:integration} we explain how our de Rham isomorphism above can be used to construct a natural and cohomologically meaningful theory of integration on manifolds with log corners.  

Integration of forms in ordinary differential geometry requires an orientation to sort out the signs. In the logarithmic setting, we require the additional data of a \defn{regularization} of the manifold with log corners to control the divergences.  In classical terms, this amounts to the data $s$ of a mutually compatible collection of non-negative, locally monomial sections of the normal bundles of the faces, which allows for quite a lot of variation in the qualitative behaviour.  We package this succinctly as a collection of virtual morphisms respecting the natural combinatorics of the boundary (that of a ``symmetric semi-simplicial set''); see \autoref{def:regularization}. It is then immediate that if $(\Sig,s)$ is a regularized manifold with log corners, its boundary $\partial \Sig$ comes with a canonical regularization $\partial s$.  We establish the following.

\begin{theoremdefn}[see \autoref{sec:integration}]
There is a unique collection of functionals on compactly supported log forms
\[
\int_{(\Sig,s)} \colon \Alogc[n]{\Sig} \to \RR,
\]
one for each oriented and regularized manifold with log corners $(\Sig,s)$ whose underlying manifold has dimension $n \ge 0$, which reduces to the ordinary integral whenever the latter converges absolutely, and which satisfies the regularized Stokes formula
\[
\int_{(\Sig,s)}\dd \alpha = \int_{(\bdSig,\partial s)}\alpha.
\]
\end{theoremdefn}

We then show by a straightforward calculation in coordinates that $\int_{(\Sig,s)}$ reduces to the classical regularized integral, defined in terms of regularized limits of asymptotic expansions relative to tangential basepoints.  Calculations such as the integrals $I_1$ and $I_2$ (from \autoref{sec:motivation} above) then become direct applications of the definition and Stokes' formula, as desired.

By construction, the regularized integral descends to logarithmic de Rham cohomology, where it induces the Alexander--Lefschetz--Poincar\'e duality pairing between absolute cohomology, and cohomology relative to the boundary.  Note that this pairing is purely topological, and hence independent of the choice of regularization; this is reflected in the fact that the relative cohomology can be computed using logarithmic forms that vanish on the boundary.  Such forms are absolutely integrable, even if they are not smooth, so that their integral is independent of regularization.

\subsubsection{Regularized periods in algebraic geometry}

We conclude in \autoref{sec:periods} by connecting our setup with the study of periods in algebraic geometry, i.e.~integrals of algebraic differential forms over topological cycles.

There are natural algebro-geometric analogues of manifold with log corners, which we call \defn{varieties with log corners}; these are the logarithmic algebraic varieties over $\CC$ that arise as strata of normal crossing divisors in smooth varieties. The connection between these two worlds is obtained via Kato--Nakayama's construction~\cite{KatoNakayama} of a topological space $\KN{X}$ associated to a log scheme $X$ over the complex numbers.  In \emph{op.~cit.}, $\KN{X}$ is viewed as a topological space with no additional structure, but as explained by Gillam--Molcho~\cite{GillamMolcho}, it actually comes equipped with a natural positive log structure.

For the basic case in which $X = Y \log D$ is the log scheme associated to a smooth variety $Y$ equipped  with a normal crossing divisor $D$, the resulting space $\KN{X}$ is the real-oriented blowup of (the analytification of) $Y$ along $D$.  The result is a manifold with corners whose boundary components are circle bundles over the irreducible components of $D$, viewed as a basic example of a manifold with log corners with no phantoms.  

In general, one may also have phantom directions in a variety with log corners; the prototype is the ``standard log point'' given by the induced log structure on the origin in $\logcyl$. Its Kato--Nakayama space is the manifold with log corners $\unitcirc \times [0)$, with basic and phantom coordinates corresponding to the pullback of angular and radial coordinates on the complex line $\AF^1(\CC) = \CC$, respectively.    We have the following.

\begin{theorem}[see Sections \ref{sec:KN} and \ref{sec:real-KN}]
The Kato--Nakayama space construction defines a functor $X \mapsto \KN{X}$ from the category of varieties with log corners over $\CC$ and virtual morphisms, to the category of manifolds with log corners and virtual morphisms.  If, in addition, $X$ is defined over $\RR$, then its set of real points lifts to a natural embedded submanifold with log corners $\KNR{X}\subset \KN{X}$.
\end{theorem}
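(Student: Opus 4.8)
The plan is to establish the two assertions in turn, reducing each to a computation on local models followed by a globalization argument. First I would recall the Kato--Nakayama space together with its Gillam--Molcho positive log structure and check, on local models, that $\KN{X}$ is a manifold with log corners. By definition a variety with log corners is a stratum of a normal crossing divisor in a smooth variety, so in the analytic topology it is locally a product of the three basic pieces $\AF^1$ (with trivial log structure), $\logcyl$, and the standard log point $\logcirc$; their Kato--Nakayama spaces are $\CC$, the real oriented blowup $\halfspc{}\times\unitcirc$, and $\unitcirc\times[0)$, each a manifold with log corners of the expected local form $\model{n}{k}$. The essential point is to match the positive log structure on $\KN{X}$ with the monoid sheaf $\cM$ defining the manifold-with-log-corners structure: I would do this factor by factor, verifying in particular that the phantom radial coordinate attached to a log point is reproduced by the construction and that $\cM$ maps to the non-negative smooth functions as prescribed in \autoref{sec:results}.

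For functoriality, I would produce from each weak morphism $X\to X'$ the induced continuous map $\KN{X}\to\KN{X'}$ together with the comparison of log structures --- either directly in local coordinates, or by appealing to the general functoriality of the Kato--Nakayama space for weak morphisms established in \cite{DPP:WeakMorphisms} --- and then verify that this map is a weak morphism of manifolds with log corners. Using the monoid-sheaf characterization of weak morphisms and the local identifications above, the latter reduces to checking that the (weakened, locally monomial) defining conditions are preserved, which follows from the corresponding property of the algebraic weak morphism.

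For the real structure I would use the description of points of $\KN{X}$ as pairs $(x,h)$ with $x\in X^{an}$ and $h\colon\cM^{\mathrm{gp}}_{X,x}\to\unitcirc$ a compatible argument homomorphism. When $X$ is defined over $\RR$, complex conjugation is an anti-holomorphic involution of $X^{an}$ preserving the log structure, and hence lifts to a smooth involution $\tau$ of $\KN{X}$, namely $(x,h)\mapsto(\bar x,\bar h)$, compatible with the corner structure. I would define $\KNR{X}$ to be its fixed locus: concretely the set of $(x,h)$ with $x\in X(\RR)$ and $h$ valued in $\{\pm1\}=\unitcirc\cap\RR$, which maps onto the real points $X(\RR)$, the torus fibre over a real boundary point being replaced by the finite set of real directions. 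On the three local models the fixed locus is $\RR\subset\CC$, the two half-lines $\halfspc{}\times\{\pm1\}$, and the two standard ends $\{\pm1\}\times[0)$. Since the fixed locus of a smooth involution compatible with the corners is an embedded submanifold with corners --- obtained by linearizing $\tau$ near its fixed set --- and the positive log structure restricts on each model to the expected one, $\KNR{X}$ is an embedded submanifold with log corners.

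The main obstacle I anticipate lies in the identification of the Gillam--Molcho positive log structure on $\KN{X}$ with the manifold-with-log-corners structure in the presence of phantoms: one must track how the formal radial coordinate at a log point is produced by the Kato--Nakayama construction, and confirm that the resulting monoid sheaf has exactly the generators and relations of the model $\model{n}{k}$, with the correct map to non-negative smooth functions. Once this matching is in place, both the functoriality statement and the reality statement become essentially formal, the only remaining subtlety being the correct identification of the real directions at the boundary strata.
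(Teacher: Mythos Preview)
Your outline is correct and matches the paper's approach closely: local models for $\KN{X}$, functoriality for weak morphisms deferred to \cite{DPP:WeakMorphisms}, and the real locus obtained as the fixed set of the conjugation involution, checked on the basic local models. The only noteworthy difference is in packaging: rather than arguing the real-locus step ad hoc by linearizing the involution and inspecting models, the paper invokes its general fixed-locus theorem for compact Lie group actions on manifolds with log corners (\autoref{thm:fixed-locus}), proved earlier via an adapted Bochner linearization. That theorem simultaneously shows the fixed set is an embedded submanifold with corners and \emph{defines} its positive log structure as the monoid quotient $(i^*\cM_{\KN{X}})/G$, so $\KNR{X}$ acquires its structure from a single application rather than a model-by-model check; your sketch leaves the global definition of the log structure on the fixed locus implicit, though in this particular case (conjugation acts only on angular coordinates, trivially on the radial and phantom ones) the naive restriction already gives the right answer.
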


As an immediate consequence, there is a natural isomorphism
\[
\HdR{X}  \to \HdR{\KN{X}}\otimes_\RR \CC
\]
where $\HdR{X}$ is the \emph{algebraic} de Rham cohomology of Kato--Nakayama \cite{KatoNakayama} defined using logarithmic algebraic differential forms, and $\HdR{\KN{X}}$ is the cohomology of the forms with logarithmic singularities on the manifold with log corners $\Sig=\KN{X}$ as above.

More generally, we may consider the cohomology of a diagram $X_\bullet$ of varieties with log corners and virtual morphisms---for instance the diagram of boundary inclusions of a normal crossing divisor, which gives rise to a relative cohomology group.  Using this one can define an algebra of \defn{logarithmic periods} as the numbers obtained by the natural pairing between singular  (a.k.a.~Betti) homology and algebraic de Rham cohomology for diagrams of varieties with log corners defined over $\QQ$.  We expect that this algebra (and its ``motivic'' counterpart) equals the ring of ordinary (non-regularized) periods defined by Kontsevich--Zagier~\cite{KontsevichZagier}.

For example, as we explain in \autoref{sec:period-examples}, the integrals $I_1$ and $I_2$ above can both be viewed as periods of varieties with log corners $X$ and $Y$ where the domains of integration are cells of the real loci $\KNR{X}$ and $\KNR{Y}$.  We further explain an alternative approach to the single-valued integration and the double-copy formula from \cite{BrownDupont}, which gives a complementary viewpoint on the resulting motivic relation between $\log a$ and $2\pi\iu \log |a|^2$.

\subsection{Future work}

In the article \cite{DPP:WeakMorphisms} we develop further the notion of virtual morphism in logarithmic algebraic geometry, proving that it retains many good functoriality properties and applying it to the formality of the little disks operad.

In future work we will explain how the construction of real Kato--Nakayama spaces from log structures on the moduli space of stable genus zero curves can be used to give motivic meaning to the integrals appearing in deformation quantization, and their evaluation in terms of multiple zeta values~\cite{BPP}. In this way, we will explain that the subtle ``weight drop'' phenomenon observed analytically in \emph{op.~cit.} is, in fact, a consequence of the geometry (via mixed Hodge theory), and realize the aforementioned motivic Galois action.

\subsection{Relation to other work}

Many authors have approached regularized integration from various points of view.  For instance, Felder--Kazhdan~\cite{Felder2017,Felder2018} and Li--Zhou~\cite{Li2021,Li2023} have studied such integrals in the presence of a suitable conformal structure, by introducing cutoff functions and examining the asympotics of the integral as the cutoff tends to zero; they show explicitly that the ``finite part'' of the integral only depends on a trivialization of the outward-pointing normal bundle, via residues.  Our regularization procedure gives the same definition and dependence on choices without the need for cutoff functions or direct asymptotic analysis of the integral.  (The only place where asymptotics appear in our approach is to prove \autoref{thm:log-functions}, which establishes the structure of the sheaf $\Cinflog{}$ of logarithmic functions.) 

Meanwhile, in \cite{RuddatSiebert}, Ruddat--Siebert develop a formalism of regularized integration along the fibers of certain log smooth morphisms, which should also be compatible with ours, although we did not try to relate the two approaches. 

Regularized integrals also abound in the theory of automorphic periods (see, e.g. \cite{Zydor} and the references therein), for which the appearance of local toric models \cite{Sakellaridis} suggests that our approach via logarithmic geometry could apply.

In \cite{Brown2009}, Brown studied period integrals on the moduli space of genus zero curves. In the process, he proved a version of Stokes' theorem for certain forms with logarithmic singularities; it corresponds to the special case of our regularized Stokes' theorem in which the forms have no poles, and directly inspired our approach.

In \cite{AlekseevRossiTorossianWillwacher:Logs}, Alekseev--Rossi--Torossian--Willwacher formulated a regularized version of Stokes' theorem for manifolds with corners, equipped with suitable torus actions near the boundary strata.  This can be understood as the special case of our regularized Stokes formula, in which the regularization of $\Sig$ is chosen to be torus-invariant, and the form satisfies their ``regularizability'' criterion.  They then applied their approach to the integrals appearing in deformation quantization mentioned above; as we will explain in future work, the torus action in these examples is the natural phase rotation on the corresponding Kato--Nakayama spaces.

While our paper was in preparation, Kato--Nakayama--Usui posted an interesting preprint~\cite{KatoNakayamaUsui}, in which they study $C^\infty$ logarithmic functions on log complex analytic spaces by combining the Kato--Nakayama spaces with an additional ``space of ratios''.  While there are some formal similarities (e.g.~a Poincar\'e lemma), the aims, results, and approach appear to be quite different.

In \cite{turdorvault}, Tur-Dorvault used the formalism of the present paper to identify the periods of the motivic fundamental groupoids based at tangential basepoints.
 
Finally, our logarithmic de Rham theorem (\autoref{thm:intro-deRham}) has subtly different analogues in other contexts.  Firstly, there is an analogous result for holomorphic logarithmic de Rham complexes, due to Kato--Nakayama~\cite{KatoNakayama}.  Their proof is essentially algebraic, relying on the fact that $\log(z)$ is algebraically independent from holomorphic functions, which fails in the $C^\infty$ context. In contrast, our proof of \autoref{thm:intro-deRham} is quite close to the classical argument in differential geometry via contracting homotopies; this is made possible by the notion of virtual morphism.   Secondly, in a different direction, Mazzeo--Melrose~\cite[\S2.16]{Melrose1993} computed the cohomology of smooth forms on a manifold with corners with log poles on the boundary (but no log divergences in the coefficient functions). This gives a different answer, since absent the function $\log r$, the form $\dlog r$ is not exact near the boundary.

\subsection*{Conventions and notation}

All monoids are implicitly commutative and with a monoid law written multiplicatively. The main exception is the set $\NN$ of natural numbers (i.e.~non-negative integers, including zero), which we view as a monoid under addition.
    
\subsection*{Acknowledgements} We thank Piotr Achinger, Federico Binda, Damien Calaque, Danny Gillam, Samouil Molcho, Sam Payne, Yiannis Sakellaridis, and Matt Satriano for helpful discussions and correspondence, and the referee for their valuable comments. This work grew out of discussions held by the authors during a Research in Pairs activity at the Mathematisches Forschungsinstitut Oberwolfach (MFO).  We thank the MFO for providing excellent hospitality and working conditions.
C.~D.\ was supported by the Agence Nationale de la Recherche through grants ANR-18-CE40-0017 PERGAMO and ANR-20-CE40-0016 HighAGT.
E.~P.\ was funded as a Royal Society University Research Fellow through grant {URF{\textbackslash}R1{\textbackslash}201473}.
B.~P.\ was supported by Discovery Grant RGPIN-2020-05191 from the Natural Sciences and Engineering Research Council of Canada (NSERC), \'Etablissement de la rel\`eve professorale grant 313101 from the Fonds de recherche du Qu\'ebec -- Nature et technologies (FRQNT), and a startup grant at McGill University.

\section{Manifolds with corners}
There are several approaches to manifolds with corners in the literature; in this section we review the basic definitions and conventions used in this paper.  

\subsection{Definitions}

For subsets $A\subset \RR^m$ and $B\subset \RR^n$ we say that a map $\phi\colon A\to B$ is \defn{smooth} if it extends to a smooth $\RR^m$-valued function on an open neighbourhood of $A\subset \RR^n$.
 
A \defn{diffeomorphism} is a smooth map $\phi\colon A\to B$ that admits a smooth inverse.

A \defn{chart (with corners)} on a topological space $W$ is a pair $(U,\phi)$ where $U\subset W$ is an open set and $\phi\colon U\to \halfspc{n}$ is a continuous map that induces a homeomorphism from $U$ to an open subset of $\halfspc{n}$, for some $n\in\NN$.  We typically denote the coordinate functions in such a chart by $\phi=(r_1,\ldots,r_n)$, as they measure the distance from the origin in each direction.

Two charts $(U,\phi)$ and $(V,\psi)$ are said to be \defn{compatible} if the transition function $\psi\circ \phi^{-1}\colon \phi(U\cap V)\to \psi(U\cap V)$ is a diffeomorphism. An \defn{atlas (with corners)} on $W$ is a set of charts that are pairwise compatible and whose union is $W$.

\begin{definition}\label{def:mfld-with-corners}
A \defn{manifold with corners} is a second countable Hausdorff space equipped with a maximal atlas (with corners).
\end{definition}
For any point $x$ in a manifold with corners $W$, there is a unique $j \ge0$ such that $x$ lies in a chart $(U,\phi)$ for which $\phi(x) \in \{0\}^j \times (0,\infty)^{n-j}$, i.e.
\begin{align}
r_1(x)=\cdots=r_j(x)=0 \qquad \textrm{and} \qquad r_{j+1}(x),\ldots,r_n(x) > 0. \label{eq:good-coords}
\end{align}
We call this integer $j$ the \defn{depth} of $x$.

The \defn{interior} is the open set $W^\circ \subset W$ consisting of points of depth zero, i.e.~for which all coordinates are positive.  It is a smooth manifold (without corners).
\begin{remark}
Every manifold with corners is covered by open sets diffeomorphic to $\halfspc{j}\times\RR^{n-j}$ for some $0\le j\le n$, since $\halfspc{n}$ has a basis of such open sets.
\end{remark}

\begin{definition}
A \defn{smooth map} between manifolds with corners is a map that is smooth in every chart.  We denote by $\Cinf{W}$ the sheaf of smooth $\RR$-valued functions on $W$, and by $\Cinfpos{W}\subset\Cinfge{W}\subset\Cinf{W}$ the subsheaves of functions whose values are strictly positive, and non-negative, respectively.
\end{definition}
\begin{remark}\label{rem:Ck-topology}
There are different notions of smooth maps between manifolds with corners.  (For instance, the notion we use here is called \emph{weakly smooth} in \cite{Joyce:ManifoldsWithCorners}.)
By a theorem of Seeley~\cite{Seeley}  (see also \cite{Melrose1996}),  a function $f\colon [0,\infty)^n \to \RR$ is smooth in the present sense if and only if it is smooth in the interior $(0,\infty)^n$, with all partial derivatives continuous on $\halfspc{n}$.  Equipped with the Fr\'echet topology, the set of such functions is  a complete, locally convex topological vector space. 
\end{remark}

\subsection{Tangent structure}\label{sec:tangent-structure-manifolds-with-corners}

    For a manifold with corners $W$ and a point $x\in W$, the tangent space $\tb[x]{W}$, the cotangent space $\ctb[x]{W}$ and the differential $\dd f|_x \in \ctb[x]{W}$ of a smooth function are defined in the usual way, via derivations of $\Cinf{W}$.  On the boundary, only some vectors actually point into $W$; we single them out as follows.  

    The \defn{non-negative tangent space} of $W$ at $x$ is the closed subset
    \[
    \tbge[x]{W} \defas \set{v \in \tb[x]{W}}{\abrac{v,\dd f|_x} \ge 0 \textrm{ for all } f \in \Cinfge{W,x} \textrm{ such that $f(x)=0$}}
    \]
    Its boundary lies on a union of hyperplanes, called the \defn{boundary tangent hyperplanes}, defined by $\abrac{v,\dd r|_x}=0$ for $r$ a non-negative local coordinate at $x$. Its interior is the \defn{positive tangent space} $\tbpos[x]{W} \subset \tbge[x]{W}$.  In coordinates $(r_1,\ldots,r_n)$ satisfying \eqref{eq:good-coords}, a vector $v =a_1\cvf{r_1}|_x+\cdots +a_n\cvf{r_n}|_x \in \tb[x]{W}$ is non-negative (resp.~positive) if and only if  the first $j$ coefficients $a_1,\ldots,a_j$ are non-negative (resp.~positive), so that the coordinate basis gives identifications
    \[
    T_xW\cong \RR^n, \qquad T_x^{\ge 0}W \cong \halfspc{j}\times \RR^{n-j}, \qquad T_x^{>0}W\cong (0,\infty)^j\times \RR^{n-j}.
    \]
    The boundary tangent hyperplanes are identified with the first $j$ coordinate hyperplanes of $\RR^n$, i.e.~the vanishing sets of the linear functionals $\dd r_1|_x,\ldots,\dd r_j|_x\in \ctb[x]{W}$.

    The \defn{tangent face} of $W$ at $x$ is the intersection of all boundary tangent hyperplanes at $x$, given by
    \[
    F_x W = \set{v \in \tb[x]{W}}{\abrac{v,\dd f|_x} = 0\textrm{ for all } f\in  \Cinfge{W,x}\textrm{ such that }f(x)=0}
    \]
    The quotient $N_x W\defas T_x W/F_x W$ is called the \defn{normal space} of $W$ at $x$.  The \defn{positive} and \defn{non-negative normal spaces} $N_x^{>0}W\subset N_x^{\ge 0}W\subset N_xW$ are the projections of the corresponding subsets of $\tb[x]{W}$.  The coordinate basis above gives
    \[
    F_x W\cong \{0\}^j\times \RR^{n-j}
    \] and isomorphisms
    $$N_x W\cong \RR^j, \qquad N_x^{\ge 0} W \cong \halfspc{j}, \qquad \textrm{and}  \qquad N_x^{>0}  W \cong (0,\infty)^j.$$
    Note that when $x \in W^\circ$ is an interior point, we have $N_xW=N_x^{\ge0}=N_x^{>0} = \{0\}$, so that in this case only, the zero vector is ``positive''.

\subsection{Boundary}\label{sec:boundary-manifolds-with-corners}
  The \defn{topological boundary} of a manifold with corners $W$ is the complement of the interior $\partial_{\mathrm{top}} W := W \setminus W^\circ$.  It does not naturally have the structure of a manifold with corners in general.
  
  Rather, the correct notion of boundary has the following local picture.  In the orthant $\halfspc{n}$ with coordinates $(r_1,\ldots,r_n)$, we define the boundary $\partial\halfspc{n}$ as the disjoint union of the $n$ coordinate suborthants $\{r_j=0\}\cong \halfspc{n-1}$:
    \[
    \partial\halfspc{n} = \bigsqcup_{j=1}^n\,\halfspc{j-1}\times\{0\}\times \halfspc{n-j}.
    \]
    The natural map
    \[
    i\colon \partial\halfspc{n}\to \halfspc{n}
    \]
    is an immersion of manifolds with corners that fails to be injective if $n>0$, since the origin of $\halfspc{n}$ has $n$ preimages by $i$; see \autoref{fig:boundary-quadrant}.
    
    \begin{figure}
        \centering
        \begin{subfigure}[t]{0.3\textwidth}
        \centering
        \begin{tikzpicture}[scale=1]
            \draw[white,fill=white!90!black] (0,0) rectangle (1.9,1.9);
            \draw[thick,->] (0,0) -- (2,0);
            \draw[thick,->] (0,0) -- (0,2);
            \draw[fill] (0,0) circle (0.05);
        \end{tikzpicture}
        \caption{$\halfspc{2}$}
        \end{subfigure}
        \begin{subfigure}[t]{0.3\textwidth}
        \centering
        \begin{tikzpicture}[scale=1]
            \draw[thick,->] (0.15,0) -- (2,0);
            \draw[thick,->] (0,0.15) -- (0,2);
            \draw[fill] (0.15,0) circle (0.05);
            \draw[fill] (0,0.15) circle (0.05);
        \end{tikzpicture}
        \caption{$\partial \halfspc{2}$}
        \end{subfigure}
        \caption{The boundary of the quadrant $\halfspc{2}$ is the disjoint union of two copies of $\halfspc{}$.}
        \label{fig:boundary-quadrant}
    \end{figure}
    
    The \defn{boundary} $\partial W$ of a general manifold with corners is obtained by glueing this local construction. It is a manifold with corners equipped with an immersion 
    \[
    i\colon \partial W\to  W.
    \]
    One can view $\partial W$ as the set of all pairs $(x,b)$ where $x\in W$ and $b$ is a boundary tangent hyperplane of $ W$ at $x$. Note that $i$ need not be injective on connected components of $\partial W$, as shown by the example of the ``teardrop manifold'' in \autoref{fig:teardrop}. 

    \begin{figure}
        \centering
        \begin{subfigure}[t]{0.3\textwidth}
        \centering
        \begin{tikzpicture}[scale=1]
            \draw[thick,fill=white!90!black] (0,0) -- (1,0) arc (-90:180:1) -- (0,0);
            \draw[blue!50!red,fill] (0,0) circle (0.05);
        \end{tikzpicture}
        \caption{$ W$}
        \end{subfigure}
        \begin{subfigure}[t]{0.3\textwidth}
        \centering
        \begin{tikzpicture}[scale=1]
            \draw[thick] (0.15,0) -- (1,0) arc (-90:180:1) -- (0,0.15);
            \draw[blue,fill] (0.15,0) circle (0.05);
            \draw[red,fill] (0,0.15) circle (0.05);
        \end{tikzpicture}
        \caption{$\partial W$}
        \end{subfigure}
        \begin{subfigure}[t]{0.3\textwidth}
        \centering
        \begin{tikzpicture}[scale=1.5]
            \draw[blue,fill] (1,0.5) circle (0.05);
            \draw[red,fill] (0,1.5) circle (0.05);
        \end{tikzpicture}
        \caption{$\partial^2 W$}
        \end{subfigure}
        \caption{The ``teardrop'' manifold $ W$ is a compact surface with a single corner and a single boundary component.  Its double boundary consists of two points that are interchanged by the action of $\symgp{2}$, and correspondingly have the same image in $ W$.}
        \label{fig:teardrop}
    \end{figure}

    Iterating, we obtain for each $k>0$ a manifold with corners $\partial^{k} W:=\partial (\partial^{k-1}W)$, whose points are pairs $(x,(b_1,\ldots,b_k))$ consisting of a point $x\in W$ and an ordered list $(b_1,\ldots,b_k)$ of pairwise distinct boundary tangent hyperplanes of $ W$ at $x$. There is a free action of the symmetric group $\mathfrak{S}_k$ on $\partial^k W$ which permutes the boundary tangent hyperplanes $b_j$, and there are $k$ smooth maps $\partial^k W\to \partial^{k-1} W$ which forget one of the $b_j$. This structure makes the collection $\partial^\bullet W$ into a symmetric semi-simplicial object (in the sense of \cite{ChanGalatiusPayne}, see \S\ref{sec:semi-simplicial-objects}) in the category of manifolds with corners.
    
    The quotient $
    \overline{\partial}^k  W \defas \partial^k W/\mathfrak{S}_k$ is a manifold with corners, whose points are pairs $(x,\{b_1,\ldots,b_k\})$, where $x\in  W$ and  $\{b_1,\ldots,b_k\}$ is a set of pairwise distinct boundary tangent hyperplanes of $ W$ at $x$.  Its connected components are called the \defn{(boundary) faces of $W$}.  These objects are less practical to work with than the $\partial^k W$, because of the impossibility to define maps $\overline{\partial}^{k+1}  W\to \overline{\partial}^{k}  W$ for $k > 0$ in a choice-free manner. There is, however, a natural immersion $\overline{\partial}^k  W\to  W$ for each $k$, which identifies the interior $(\overline{\partial}^kW)^\circ$ with the set of depth-$k$ points in $W$, and the tangent space at $(x,\{b_1,\ldots,b_k\})\in (\overline{\partial}^kW)^\circ$ with the tangent face $F_xW\subset \tb[x]{W}$.  
    
\section{Manifolds with log corners}\label{sec:log-corners}
    
\subsection{Positive log  structures}\label{sec:log-structures}

\subsubsection{Definitions}

Let $\Sig$ be a manifold with corners.  Note that the subsheaves of positive and non-negative functions $\Cinfpos{\Sig} \subset \Cinfge{\Sig} \subset \Cinf{\Sig}$
are sheaves of submonoids with respect to the operation of multiplication, and that $\Cinfpos{\Sig}$ is the subgroup of invertible sections of the monoid $\Cinfge{\Sig}$. 

\begin{definition}
A \defn{positive pre-logarithmic} (or \defn{pre-log}) \defn{structure} on $\Sig$ is a sheaf of monoids $\cM_\Sig$ on $\Sig$ along with a morphism of sheaves of monoids 
\[
\alpha\colon \cM_\Sig\to \Cinfge{\Sig}.
\]
It is called a \defn{positive logarithmic} (or \defn{log}) \defn{structure} if the induced morphism
\[
\alpha^{-1}(\Cinfpos{\Sig})\to \Cinfpos{\Sig}
\]
is an isomorphism.  
\end{definition}
We will often abuse notation and simply write $\Sig$ for a triple $(\Sig,\cM_\Sig,\alpha)$ consisting of a manifold with corners equipped with a (pre-)log structure. When we want to distinguish between such an object and the underlying manifold with corners, we will use the notation $\uSig$ for the latter.

\begin{example}\label{ex:trivial}
    If $\Sig$ is a manifold with corners, then taking $\cM_{\Sig} = \Cinfpos{\Sig}$ with $\alpha\colon \cM_{\Sig} \hookrightarrow \Cinfge{\Sig}$ the inclusion, we obtain a positive log structure on $\Sig$, which we call the \defn{trivial positive log structure}.
\end{example}
    \begin{example}\label{ex:interval}
    On the manifold with corners $[0,\infty)$ with standard coordinate $r$, we define a sheaf of submonoids $\cM_{[0,\infty)}\defas \Cinfpos{[0,\infty)}r^{\NN}\subset \Cinfge{\halfspc{}}$ as the functions which can be written as a positive smooth function times a non-negative integer power of $r$. One readily checks that the inclusion
    \[
    \alpha\colon \Cinfpos{[0,\infty)}r^\NN \hookrightarrow  \Cinfge{[0,\infty)}
    \]
    defines a positive log structure $(\cM_{\halfspc{}},\alpha)$ on $[0,\infty)$.  We refer to $[0,\infty)$ with this positive log structure as the \defn{standard half-open interval}.
    \end{example}

For a positive log structure, we will tacitly identify $\Cinfpos{\Sig}$ with the submonoid $\alpha^{-1}(\Cinfpos{\Sig}) \subset  \cM_\Sig$, viewing $\alpha$ as a factorization of the inclusion $\Cinfpos{\Sig} \hookrightarrow\Cinfge{\Sig}$:
\[
\Cinfpos{\Sig}\hookrightarrow \cM_\Sig \stackrel{\alpha}{\longrightarrow} \Cinfge{\Sig}.
\]
Every positive pre-log structure has an \defn{associated positive log structure} $\cM_{\Sig}^{\log}$ defined as the pushout $\cM_{\Sig}^{\log} := \cM_{\Sig} \sqcup_{\alpha^{-1}(\Cinfpos{\Sig})} \Cinfpos{\Sig}$
with the induced morphism $\alpha^{\log }: \cM_{\Sig}^{\log} \to \Cinfge{\Sig}$; see~\cite[\S{}I.1.1 and \S{}III.1.1.3]{Ogus} for the analogous construction in the algebro-geometric setting.  This will be useful for the constructions below.

\subsubsection{Pullback and restriction}

Let $\Psi=(\underline{\Psi},\cM_\Psi,\alpha_\Psi)$ be a manifold with corners equipped with a positive log structure, let $\uSig$ be a manifold with corners, and let $\phi:\uSig\to \underline{\Psi}$ be a smooth map. The composition
\[
\begin{tikzcd}
\phi^{-1}\!\cM_{\Psi} \ar[r,"\alpha_\Psi"] & \phi^{-1}\Cinfge{\Psi} \ar[r,"\phi^*"] & \Cinfge{\Sig}
\end{tikzcd}
\]
defines a positive pre-log structure on $\uSig$, where the second map is the pullback of smooth functions along $\phi$. Its associated log structure
is denoted by $\phi^*(\cM_\Psi,\alpha_\Psi)$ and called the \defn{pullback} of the positive log structure. When $\phi$ is an immersion, we will refer to this operation as \defn{restriction}. We will implicitly equip any open subset $U\subset \underline{\Psi}$ with the restricted positive log structure.

    \begin{example}\label{ex:endpoint}
    The point $\{0\}\subset [0,\infty)$, equipped with the restriction of the positive log structure from \autoref{ex:interval}, is called the \defn{standard end} and denoted by the symbol $[0) = (\{0\},\cM_{[0)},\alpha_{[0)})$.  It behaves somewhat like a tubular neighbourhood of $0\in\halfspc{}$. Let $t:= r|_{[0)]} \in \cM_{[0)}$ denote the restriction of the standard coordinate on $\halfspc{}$. Then we have $\cM_{[0)}=\RR_{>0} t^\NN$, the product of  $\RR_{>0}$ with the free monoid generated by $t$, and $\alpha_{[0)}$ is the morphism of monoids defined by ``evaluation at $t=0$'':
    \[
    \alpha_{[0)}\colon \RR_{>0} t^\NN \to \RR_{\ge 0} \qquad \lambda t^j\mapsto \lambda 0^j \defas \begin{cases} \lambda & \mbox{ if } j=0\\ 0 & \mbox{ if } j>0. \end{cases}\qedhere
    \]
    \end{example}

\subsubsection{Products}

Let $\Sig=(\uSig,\cM_\Sig,\alpha_\Sig)$ and $\Psi=(\underline{\Psi},\cM_\Psi,\alpha_\Psi)$ be manifolds with corners equipped with positive log structures. Their \defn{product} $\Sig\times\Psi$ is the manifold with corners $\uSig\times\underline{\Psi}$ is equipped with the positive log structure associated to the positive pre-log structure 
\[
p_\Sig^{-1}\cM_\Sig \times p_\Psi^{-1}\cM_{\Psi} \longrightarrow \Cinfge{\Sig\times\Psi} \;\; , \;\; (f,g) \mapsto p_\Sig^*\alpha_\Sig(f)\cdot 
p_\Psi^*\alpha_\Psi(g),
\]
where $p_\Sig$ and $p_\Psi$ denote the projections from $\uSig\times\underline{\Psi}$ to $\uSig$ and $\underline{\Psi}$ respectively.

   \begin{example}\label{ex:std-corner}
    Let $n, k \in \NN$.  The \defn{standard log corner of dimension $(n,k)$} is the product $\model{n}{k}$. 
 Explicitly, the positive log structure $\cM_{\model{n}{k}}$  is defined as follows.     
    Let $r_1,\ldots,r_n$ denote the standard coordinates on $\halfspc{n}$, and let $t_1,\ldots,t_k$ be additional formal variables. We define
    \[
    \cM_{\model{n}{k}} \defas \Cinfpos{\halfspc{n}}r^\NN \cdot t^\NN
    \]
    as the product of the constant monoid $t^\NN\defas t_1^\NN\cdots t_k^\NN$ of monomials in the variables $t_1,\ldots,t_k$ with the subsheaf of monoids of $\Cinfge{\halfspc{n}}$ consisting of functions which can be written as a positive smooth function times a monomial in $r^\NN \defas r_1^\NN\cdots r_n^\NN$.
    The morphism of sheaves of monoids
    \[
    \alpha\colon  \Cinfpos{\halfspc{n}}r^\NN \cdot t^\NN \longrightarrow \Cinfge{\halfspc{n}}
    \]
    is defined as the identity on functions of $r$, and sends each $t_i$ to $0$. We note that $[0,\infty)^n\times [0)^k$ can also be obtained by pulling back the positive log structure of the product $[0,\infty)^{n+k}$  to the vanishing locus of the last $k$ coordinates.
    \end{example}

\subsection{Manifolds with log corners}
We now introduce our main objects of study.

    \subsubsection{Definition} Let $\Sig=(\uSig,\cM_\Sig,\alpha)$ be a manifold with corners equipped with a positive log structure.
    \begin{definition}\label{def:chart}
     A \defn{chart of dimension $(n,k)$} on  $\Sig$ is an isomorphism from an open set  $U \subset \Sig$ to an open set in the standard log corner $\model{n}{k}$.   We say that $\Sig$ is a \defn{manifold with log corners} if every point $x \in \Sig$ is contained in the domain of a chart of dimension $(n,k)$ for some $n,k\in\NN$ (which may depend on $x)$.
    \end{definition}
   If all charts have the same dimension $(n,k)$, which is automatic if $\Sig$ is connected, we say that $\Sig$ itself has dimension $(n,k)$.
   
    Thus a manifold with log corners of dimension $(n,k)$ is covered by charts consisting of \defn{basic coordinates} $r_1,\ldots,r_n \in \Cinfge{\Sig}$ on the underlying manifold with corners, and \defn{phantom coordinates} $t_1,\ldots,t_k \in \cM_{\Sig}$ with $\alpha(t_j) =0$.

        \begin{example}
    Every connected manifold with log corners of dimension $(0,k)$ is isomorphic to $[0)^k$.
        \end{example}

        \begin{example}
    If $\Sig_1,\Sig_2$ are manifolds with log corners of dimensions $(n_1,k_1)$ and $(n_2,k_2)$, then their product $\Sig_1\times\Sig_2$ is a manifold with log corners of dimension $(n_1+n_2,k_1+k_2)$.
        \end{example}

    \begin{remark}
    A manifold with corners equipped with the \emph{trivial} log structure (\autoref{ex:trivial}) is not a manifold with log corners, unless it is a manifold (i.e., without corners). Indeed, in our local model $\halfspc{n}$ the log structure is non trivial along the boundary.
    \end{remark}

    \subsubsection{(Ordinary) morphisms}

    There is an obvious notion of morphism between manifolds with log corners, identical to the corresponding notion in logarithmic algebraic geometry. Since we will later need a weaker notion of morphism, we will sometimes call the usual morphisms ``ordinary'' to distinguish them from the more general ``virtual'' morphisms defined below in \autoref{sec:weak-morphisms}.

    \begin{definition}\label{def:morph}
    Let $\Sig = (\uSig,\cM_\Sig,\alpha_\Sig)$  and $\Psi = (\uPsi,\cM_\Psi,\alpha_\Psi)$ be manifolds with log corners. An \defn{(ordinary) morphism} $\phi\colon \Sig \to \Psi$ is a pair $(\underline{\phi},\phi^*)$, where $\underline{\phi}\colon \uSig \to \uPsi$ is a smooth map and $\phi^* \colon \underline{\phi}^{-1}\cM_{\Psi} \to \cM_\Sig$ is a morphism of sheaves of monoids such the following diagram commutes, where the bottom horizontal arrow is the usual pullback of smooth functions along $\underline{\phi}$.
    \begin{equation}\label{eq:ordinary-commutativity}
    \begin{tikzcd}
    \underline{\phi}^{-1}\cM_{\Psi} \ar[r,"\phi^*"]\ar[d,"\alpha_\Psi"] & \cM_{\Sig} \ar[d,"\alpha_\Sig"] \\
    \underline{\phi}^{-1}\Cinfge{\Psi} \ar[r,"\underline{\phi}^*"] \ar[r] & \Cinfge{\Sig}
    \end{tikzcd}
    \end{equation}
    \end{definition}

    \begin{example}
    An example of morphism of manifolds with log corners is the inclusion $i\colon [0)\hookrightarrow [0,\infty)$ of the standard end (\autoref{ex:endpoint}) inside the standard half-open interval (\autoref{ex:interval}). Concretely, the smooth map $\underline{i}$ is the inclusion of $\{0\}$ inside $[0,\infty)$, and the  morphism of sheaves of monoids is
    \[
    \mapdef{i^*}{\underline{i}^{-1}\Cinfpos{[0,\infty)}r^\NN}{\RR_{>0}t^\NN}
    {g(r)r^j}{g(0)t^j,}
    \]
    which picks out the leading term in the Taylor expansion of a function at $r=0$.
    \end{example}

    \begin{example}
    More generally, consider the standard corner $\halfspc{n}$ with coordinates $r_1,\ldots,r_n$. We have a morphism of manifolds with log corners
    \[
    i\colon\halfspc{n-1}\times [0) \hookrightarrow \halfspc{n}
    \]
    whose underlying smooth map $\underline{i}$ is the inclusion of $[0,\infty)^{n-1}$ as the locus $\{r_n=0\}$ in $[0,\infty)^n$ and whose morphism of sheaves of monoids extracts the leading Taylor monomial in $r_n$:
    \[
    \mapdef{i^*}{\underline{i}^{-1}\Cinfpos{\halfspc{n}} r_1^\NN\cdots r_n^\NN}{\Cinfpos{\halfspc{n-1}}r_1^\NN\cdots r_{n-1}^\NN\cdot t^\NN}
{g(r_1,\ldots,r_n)r_1^{j_1}\cdots r_n^{j_n}}{g(r_1,\ldots,r_{n-1},0)r_1^{j_1}\cdots r_{n-1}^{j_{n-1}}\, t^{j_n}.}
    \]
    More generally, we have inclusion morphisms of manifolds with log corners
    \[
    i \colon \model{n}{k} \hookrightarrow \model{n+j}{k-j}
    \]
    for $0 \le j \le k$.
    \end{example}

    \begin{example}
    If $\Sig_1$, $\Sig_2$ are manifolds with log corners, then the projections $p_{\Sig_1}\colon\Sig_1\times \Sig_2\to \Sig_1$ and $p_{\Sig_2}\colon\Sig_1\times\Sig_2\to \Sig_2$ are morphisms of manifolds with log corners.
    \end{example}

    \subsubsection{Manifolds with corners viewed as manifolds with log corners}

    If $\uSig$ is a manifold with corners, we may endow it with the structure of a manifold with log corners $\Sigbas$ by requiring that every chart $\phi\colon U\to [0,\infty)^n$ on $\uSig$ in the sense of \autoref{def:mfld-with-corners} is a chart of dimension $(n,0)$ in the sense of \autoref{def:chart}. More precisely, let  $\cM_{\Sigbas} \subset \Cinfge{\Sig}$ be the subsheaf of monoids consisting of functions that can be written in local coordinates $(r_1,\ldots,r_n)$ in the form $g(r) r_1^{j_1}\cdots r_n^{j_n}$ where $g \in \Cinfpos{\Sig}$ and $j_1,\ldots,j_n\in\NN$, and let $\alpha_{\Sigbas} : \cM_{\Sigbas} \to \Cinfge{\Sig}$ be the natural inclusion.

    \begin{definition}\label{def:Sigbas}
    The manifold with log corners $\Sigbas = (\uSig,\cM_{\Sigbas},\alpha_{\Sigbas})$ is the \defn{basic 
    manifold with log corners associated to $\uSig$}.
    \end{definition}

    Note that if a smooth map $\uphi:\uSig \to \uPsi$ of manifolds with corners lifts to an ordinary morphism $\Sigbas \to \Psibas$, it does so in at most one way; this occurs if and only if, when expressed in local coordinates, $\uphi$ has the form
    \[
    \uphi \colon (r_1,\ldots,r_n) \longmapsto \rbrac{f_1 \prod_{i=1}^n r_i^{j_{i,1}},\ldots, f_{m} \prod_{i=1}^n r_i^{j_{i,m}}}
    \]
    for some strictly positive  smooth functions $f_1,\ldots,f_{m}$ of $(r_1,\ldots,r_n)$ and exponents $(j_{i,l}) \in \NN^{n\times m}$, i.e.~it is an ``interior b-map'' in the sense of Melrose~\cite{Melrose1992}.  Note that in this case, $\uphi$ preserves interiors, in the sense that $\uphi(\uSig^\circ) \subset \uPsi^\circ$.

\subsubsection{Tangent structure of a manifold with log corners}\label{sec:tangent-log-corners}
If $x\in \Sig$ is a point in a manifold with log corners, there is a natural structure of manifold with log corners on the non-negative tangent space $\tbge[x]{\uSig}$ of the underlying manifold with corners, that we denote by $\tbge[x]{\Sig}$. Concretely, in a chart of dimension $(n,k)$ whose coordinates satisfy \eqref{eq:good-coords}, we have an isomorphism
\[
\tbge[x]{\Sig} \cong [0,\infty)^j\times \RR^{n-j}\times [0)^k
\]
of manifolds with log corners, whose coordinates are the differentials $\dd r_1|_x,\ldots,\dd r_n|_x$ and $\dd t_1|_x,\ldots,\dd t_k|_x$. More intrinsically, the positive log structure of $\tbge[x]{\Sig}$ is the pullback of the positive log structure on $\Sig$ via any open embedding $\tbge{\uSig} \hookrightarrow \uSig$ with linearization equal to the identity.  This construction is evidently functorial: for $\phi\colon \Sig\to \Psi$ a morphism of manifolds with log corners, the derivative of $\phi$ gives at $x$ gives a morphism
$$\dd \phi_x\colon T_x^{\ge 0}\Sig \to T_{\phi(x)}^{\ge 0}\Psi.$$

\subsection{The global structure of manifolds with log corners}

\subsubsection{``Basic'' and  ``phantom'' sections}

 For a manifold with log corners $\Sig$, there is a fundamental dichotomy in the behaviour of sections $f \in \cM_{\Sig}$, generalizing the dichotomy between the basic coordinates $r_i$ and the phantom coordinates $t_j$:

    \begin{definition}
    Let $\Sig=(\uSig,\cM_\Sig,\alpha)$ be a manifold with log corners. A germ $f$ of a section of $\cM_\Sig$ is called \defn{basic} if $\alpha(f)\neq 0$ and a \defn{phantom} if $\alpha(f)=0$. We denote by $\cMbas_\Sig,\cMphan_\Sig \subset \cM_\Sig$ the subsheaves of basic and phantom sections.
    \end{definition}

\begin{example}
For the standard end $[0)$ from \autoref{ex:endpoint}, an element of $\cM_{[0)}$ has the form $\lambda t^j$ with $\lambda\in\RR_{>0}$ and $j\in\NN$. It is basic if $j=0$ and phantom if $j>0$, so that $\cMbas_{[0)}=\RR_{>0}$ and $\cMphan_{[0)}$ is the monoid ideal of $\cM_{[0)}$ generated by $t$.
\end{example}
An important subtlety in the definition is that the condition $\alpha(f) \neq 0$ refers to the germ of the function $\alpha(f)$, not the value at a point; thus it could well happen that the germs of $\alpha(f)$ are everywhere nonzero, so that $f$ is basic, even though the function $\alpha(f)$ has a nonempty vanishing set.

\begin{example}
For the standard interval half-open interval $\halfspc{}$ from \autoref{ex:interval}, a section of $\cM_{\halfspc{}}$ has the form $g(r)r^j$ with $g$ a positive smooth function and $j\in\NN$. Even though such a function vanishes at the origin when $j>0$, its germ is everywhere nonzero. Therefore, $\cMbas_{\halfspc{}}=\cM_{\halfspc{}}$ and $\cMphan_{\halfspc{}}=\varnothing$.
\end{example}

    Note that by definition, the stalk of $\cM_\Sig$ at any point decomposes as the disjoint union of the sets of basic and phantom elements, i.e.~we have a decomposition
    \[
    \cM_\Sig = \cMbas_\Sig \sqcup \cMphan_\Sig
    \]
    of sheaves of sets.  In local coordinates $(r,t)$, a phantom section is one that is a multiple of some phantom coordinate $t_j$, while the basic sections are given by
    $$\cMbas_{\model{n}{k}} =  \Cinfpos{\halfspc{n}}r^\NN.$$
    Therefore $\cMbas_\Sig\subset \cM_\Sig$ is a sheaf of submonoids while $\cMphan_\Sig\subset \cM_\Sig$ is a sheaf of monoid ideals.

For a manifold with corners $\uSig$, the associated basic manifold with log corners $\Sigbas$ (\autoref{def:Sigbas}) does not have phantoms, i.e.~the sheaf $\cMphan_{\Sigbas}$ is empty.  On the other hand, if $\Sig$ is \emph{any} manifold with log corners, then it is clear from the definition of a chart that the subsheaf $\cMbas_{\Sig}\subset \cM_\Sig$ of basic elements depends only on the underlying manifold with corners, and is canonically identified with $\cM_{\Sigbas}$ via $\alpha$.  Hence there is a canonical ordinary morphism
\[
\Sig \to \Sigbas
\]
to the underlying basic manifold with log corners. In local coordinates, it is simply the projection $\halfspc{n}\times[0)^k\to \halfspc{n}$. 

\begin{definition}
    A manifold with log corners is \defn{basic} if $\cMbas_{\Sig} = \cM_{\Sig}$, or equivalently the canonical map $\Sig \to \Sigbas$ is an isomorphism.
\end{definition}

\subsubsection{The boundary of a manifold with log corners}

Let $\Sig = (\uSig,\cM_\Sig,\alpha_\Sig)$ be a manifold with log corners. Then the pullback of the positive log structure on $\uSig$ to the boundary $\ubdSig$ (defined as in \autoref{sec:boundary-manifolds-with-corners}) gives the latter the structure of a manifold with log corners, and makes the boundary immersion into a morphism
\[
i\colon\partial\Sig\to \Sig.
\]
 whose local picture is given by the following example.  Note that if $\Sig$ has dimension $(n,k)$, then $\partial\Sig$ has dimension $(n-1,k+1)$; in particular, the boundary is not basic unless it is empty.

\begin{example}
For $\Sig=\halfspc{}$ the standard half-open interval (\autoref{ex:interval}), the positive log structure on $\partial\halfspc{}$ is the standard end $[0)$. More generally, one checks that for the standard log corner $\model{n}{k}$ we have an identification 
\[
\partial(\model{n}{k}) = \bigsqcup_{j=1}^n \, \halfspc{j-1}\times [0) \times \halfspc{n-j}\times [0)^k,
\]
which is the disjoint union of $n$ copies of $\model{n-1}{k+1}$.
\end{example}

Iterating this construction as in \autoref{sec:boundary-manifolds-with-corners}, we obtain manifolds with log corners $\bdSig[k]$ and  $\overline{\partial}^k\Sig = \bdSig[k]/\symgp{k}$ for $k \ge 0$, so that every face of $\Sig$ is also a manifold with log corners.  In fact, if $\Sig=\Sigbas$ is a basic manifold with corners of dimension $n$, the induced log structure on $\bdSig[k]$ depends only on the structure of its normal bundle, as we now explain.

Let us denote by
\[
i \colon \ubdSig[k] \to \uSig
\]
the natural immersion, and by
\[
N^{\ge0} \defas i^*\tbge{\uSig}/\tb{\ubdSig[k]}
\]
its non-negative normal bundle.  Concretely, near any point $p \in \ubdSig$ we can choose coordinates $(r_1,\ldots,r_n)$ in which the immersion $\ubdSig[k] \to \uSig$ is given by the inclusion of the boundary face $r_1=r_2=\cdots=r_k=0$ in $[0,\infty)^n$.  Then the fibre of $N^{\ge0}$ at $p$ is identified with the set of non-negative linear combinations of the basis normal vectors
\[
[\cvf{r_1}],\ldots,[\cvf{r_k}] \in i^*\tb{\uSig}/\tb{\ubdSig[k]}.
\]
We may then view $N^{\ge 0}$ as a basic manifold with log corners with induced coordinates $(\dd r_1,\ldots, \dd r_k, r_{k+1},\ldots,r_{n})$,
where we abuse notation and view the differentials $\dd r_1,\ldots,\dd r_k$ as functions on the normal bundle, and the functions $r_{k+1},\ldots, r_n$ on $\uSig$ as functions on the base $\ubdSig[k]$ by pullback.

We have the zero section embedding
\[
i_0 : \ubdSig \to N^{\ge 0},
\]
given locally by the inclusion of the face $\dd r_1=\cdots = \dd r_k = 0$, so that we may pull back the basic positive log structure on $N^{\ge 0}$ to make $\ubdSig[k]$ into a manifold with log corners $\widehat{\bdSig[k]}$, with induced coordinates $(\dd t_1,\ldots, \dd t_k,r_{k+1},\ldots, r_n)$, where the coordinates $\dd t_j \defas i_0^* \dd r_j$ are phantoms.  By construction, $\widehat{\bdSig[k]}$ depends only on the non-negative normal bundle $N^{\ge0}$.

On the other hand, the pullback log structure $i^*\cM_{\Sig}$ defining $\bdSig[k]$ as before has coordinates $(t_1,\ldots,t_k,r_{k+1},\ldots,r_n)$ with phantoms $t_j := i^* r_j$.  It is straightforward to check that the identifications
\begin{align}(t_1,\ldots,t_k,r_{k+1},\ldots,r_n)\leftrightarrow 
 (\dd t_1,\ldots, \dd t_k,r_{k+1},\ldots,r_n)   \label{eq:symbol-coords}
\end{align}
are independent of the choice of coordinates $(r_1,\ldots,r_n)$ on $\Sig$, and therefore gives a canonical isomorphism $\widehat{\bdSig[k]} \cong \bdSig[k]$.

More invariantly, we may note that sections of $\cM_{\widehat{\bdSig[k]}}$ are identified with fibrewise monomial functions on $N^{\ge0}$, with coefficients in the basic functions $\cMbas_{\bdSig[k]}$. Furthermore,  if $f \in \cM_{\Sig}$ is a locally monomial function on $\Sig$, the leading term in its Taylor expansion  along $\bdSig[k]$ is exactly such a locally monomial function on the normal space. This gives a map of positive pre-log structures
\begin{align}
\sigma \colon
i^{-1} \cM_{\Sig} \to \cM_{\widehat{\bdSig[k]}} \label{eq:symbol-map}
\end{align}
which we call the \defn{symbol map}.  It has the local expression
\[
f(r_1,\ldots,r_n)r_1^{j_1}\cdots r_n^{j_n} \mapsto f(0,\ldots,0,r_{k+1},\ldots,r_n) (\dd t_1)^{j_1} \cdots (\dd t_k)^{j_k} \cdot r_{k+1}^{j_{k+1}}\cdots r_n^{j_n}
\]
and hence, upon logification, it induces the coordinate mapping \eqref{eq:symbol-coords} above.

\begin{proposition}\label{prop:normal-cone}
    For $\Sig=\Sigbas$ basic, the symbol map induces an isomorphism 
    \[
    \cM_{\bdSig[k]} \cong \cM_{\widehat{\bdSig[k]}}.
    \]
    of positive log structures, for all $k \ge 0$.  Hence the log structure on $\bdSig[k]$ depends only on the non-negative normal bundle of $\ubdSig[k]$ in $\uSig$.
    \end{proposition}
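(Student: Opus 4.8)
The plan is to reduce to the standard local model and invoke the universal property of the associated log structure. By construction, $\cM_{\bdSig[k]}$ is the restriction of $\cM_{\Sig}$ along the immersion $i\colon\ubdSig[k]\to\uSig$, that is, the positive log structure associated to the positive pre-log structure $i^{-1}\cM_{\Sig}\xrightarrow{\,i^*\alpha\,}\Cinfge{\bdSig[k]}$. Since the symbol map $\sigma$ is a morphism of positive pre-log structures and its target $\widehat{\cM}_{\bdSig[k]}=i_0^*\cM_{N^{\ge0}}$ is already a positive log structure, the universal property of the associated log structure yields a unique morphism of positive log structures
\[
\overline{\sigma}\colon \cM_{\bdSig[k]}\longrightarrow \widehat{\cM}_{\bdSig[k]}
\]
over $\Cinfge{\bdSig[k]}$, so it remains only to show that $\overline{\sigma}$ is an isomorphism of sheaves of monoids. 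As this is a stalkwise statement, I would work in a single chart.

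So I would fix basic coordinates identifying a neighbourhood of a point of $\ubdSig[k]$ with $\halfspc{n}$, in which $i$ is the inclusion of the face $\{r_1=\cdots=r_k=0\}$; write $\bar r_i\in\cM_{\bdSig[k]}$ for the image of the germ $r_i$. Iterating the boundary computation recorded above for $\model{n}{k}$, the source $\cM_{\bdSig[k]}$ is locally the log structure of $\model{n-k}{k}$, namely $\Cinfpos{\bdSig[k]}\cdot r_{k+1}^{\NN}\cdots r_n^{\NN}\cdot \bar r_1^{\NN}\cdots \bar r_k^{\NN}$, with basic coordinates $r_{k+1},\dots,r_n$ and phantom coordinates $\bar r_1,\dots,\bar r_k$. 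On the target side, in the same chart the non-negative normal bundle is $N^{\ge0}\cong\halfspc{n-k}\times\halfspc{k}$, with base coordinates $r_{k+1},\dots,r_n$ and fibre coordinates $\dd r_1,\dots,\dd r_k$, and the zero section $i_0$ is the inclusion of the face $\{\dd r_1=\cdots=\dd r_k=0\}$; the very same boundary computation then identifies $\widehat{\cM}_{\bdSig[k]}=i_0^*\cM_{N^{\ge0}}$ with the log structure of $\model{n-k}{k}$, now with basic coordinates $r_{k+1},\dots,r_n$ and phantom coordinates $\dd r_1,\dots,\dd r_k$.

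Comparing with the explicit formula for $\sigma$, the induced map $\overline{\sigma}$ sends each phantom $\bar r_i$ to $\dd r_i$, fixes the basic coordinates $r_{k+1},\dots,r_n$, and is the identity on $\Cinfpos{\bdSig[k]}$; it is therefore the evident isomorphism between the two presentations of $\cM_{\model{n-k}{k}}$. Being a globally defined morphism of sheaves that is an isomorphism on every stalk, $\overline{\sigma}$ is an isomorphism of positive log structures, and the final assertion follows at once because $\widehat{\cM}_{\bdSig[k]}$ is manufactured out of $N^{\ge0}$ alone. The step I expect to require the most care is the identification of the associated log structure $\cM_{\bdSig[k]}$ with the standard model: one must verify that the pushout defining the restriction creates no sections beyond the stated monomials and, in particular, keeps the phantom generators $\bar r_1,\dots,\bar r_k$ mutually distinct and distinct from every basic section. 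This is precisely what the iterated boundary computation supplies, and once both sides are known to be $\cM_{\model{n-k}{k}}$, matching the two families of phantom coordinates is immediate.
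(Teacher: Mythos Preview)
Your argument is correct and follows essentially the same route as the paper, which simply states that the proposition follows ``from this formula'' (the explicit coordinate expression for $\sigma$). You have supplied the details the paper omits: the passage from the pre-log map $\sigma$ to a map $\overline{\sigma}$ out of the associated log structure via the universal property, and the explicit identification of both source and target with $\cM_{\model{n-k}{k}}$ in a chart.
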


\begin{corollary}\label{cor:normal-monoid}
For every $x \in \Sig$, the pullback log structure $\cM_{\Sig}|_x$ is canonically identified with the multiplicative monoid of non-negative monomial functions on the non-negative normal space $N^{\ge0}_x\Sig$.
\end{corollary}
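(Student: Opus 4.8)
The statement is local, so the plan is to compute the restricted monoid $\cM_\Sig|_x$ in an arbitrary chart and then argue that the resulting identification is canonical; the latter point is exactly where \autoref{prop:normal-cone} is needed, which is what makes this a genuine corollary. First I would unwind the definition of the pullback log structure along $\iota\colon\{x\}\hookrightarrow\Sig$: it is the log structure associated to the pre-log structure on the point whose monoid is the stalk $\cM_{\Sig,x}$ and whose structure map is $\alpha$ followed by evaluation at $x$. Choosing a chart of dimension $(n,k)$ in which $x$ has depth $j$, with vanishing coordinates $r_1,\dots,r_j$ and positive coordinates $r_{j+1},\dots,r_n$, the key simplification is that the positive coordinates are invertible germs and hence lie in $\Cinfpos{\Sig,x}$; combined with the decomposition $\cM_\Sig=\cMbas_\Sig\sqcup\cMphan_\Sig$ this gives
\[
\cM_{\Sig,x}=\Cinfpos{\Sig,x}\cdot r_1^\NN\cdots r_j^\NN\cdot t_1^\NN\cdots t_k^\NN\cong \Cinfpos{\Sig,x}\times\NN^j\times\NN^k,
\]
where the isomorphism uses that the exponents are pinned down by the orders of vanishing along the hyperplanes $\{r_i=0\}$. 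The germs with nonzero value at $x$ are precisely the units $\Cinfpos{\Sig,x}$, which evaluate onto $\RR_{>0}$, so passing to the associated log structure replaces this factor by $\RR_{>0}$ and yields $\cM_\Sig|_x\cong\RR_{>0}\times\NN^j\times\NN^k$.

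It then remains to identify this monoid with the non-negative monomial functions on $N^{\ge0}_x\Sig$. Here I would split off the phantoms: the factor $t_1^\NN\cdots t_k^\NN$ is a free factor corresponding to the phantom directions, and matches the $[0)^k$ factor of $N^{\ge0}_x\Sig\cong\model{j}{k}$. The remaining basic factor $\RR_{>0}\times\NN^j$ is the restriction to $x$ of $\cM_{\Sigbas}$ (via the canonical identification $\cMbas_\Sig\cong\cM_{\Sigbas}$), and I would identify it canonically with the monomial functions on the non-negative normal space $N^{\ge0}_x\uSig$ using the symbol map $\sigma$ of \eqref{eq:symbol-map}: applying \autoref{prop:normal-cone} to the face through $x$ shows that $\sigma$ carries a basic germ to the leading term of its Taylor expansion, realised chart-independently as a monomial function on the normal space, with $r_i\mapsto\dd r_i|_x$.

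The main obstacle is precisely this canonicity: the monoid-theoretic computation above is routine, but one must check that the identification does not depend on the chosen coordinates. For the phantom directions this is immediate, since they are intrinsically characterised by the vanishing of $\alpha$ and their differentials $\dd t_l|_x$ span the phantom directions of the normal space as in \autoref{sec:tangent-log-corners}; for the basic directions it is exactly the content of \autoref{prop:normal-cone}, which guarantees that the leading-term (symbol) identification is independent of the chart. Once these two naturality statements are combined, the canonical isomorphism of the corollary follows.
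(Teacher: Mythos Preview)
Your argument is correct and matches the paper's intended reasoning: the corollary is stated without proof, as an immediate consequence of \autoref{prop:normal-cone}, and your expansion via the stalk computation together with the symbol map is exactly how one would flesh this out. The identification of the invertible part of the pre-log structure with $\RR_{>0}$ under evaluation, and the appeal to the symbol map for chart-independence, are both the right moves.

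One point of scope worth noting: in the paper the corollary sits directly under \autoref{prop:normal-cone}, which carries the hypothesis $\Sig=\Sigbas$, and it is only ever invoked in that basic setting (for instance in the proof of \autoref{prop:tangential-basepoints-as-weak-morphisms}). In that case $N^{\ge0}_x\Sig$ is simply the classical $N^{\ge0}_x\uSig\cong\halfspc{j}$ of \autoref{sec:tangent-structure-manifolds-with-corners}, and there is no phantom factor. Your treatment of the general case, reading $N^{\ge0}_x\Sig$ as $\model{j}{k}$, is a sensible extension, but that object is not actually defined in the paper; if you want the statement at that generality you should say explicitly how you are interpreting the normal space of a manifold with log corners (e.g.\ as a quotient of the $\tbge[x]{\Sig}$ of \autoref{sec:tangent-log-corners}). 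For the basic case your proof goes through verbatim with $k=0$.
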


\subsubsection{The phantom tangent bundle}\label{sec:phantom-tangent}

\autoref{prop:normal-cone} shows that the manifolds with log corners that arise as boundary faces of a basic manifold with log corners have a special form: they are determined by a vector bundle (the normal bundle) equipped with a subbundle of orthants.  Our aim now is to show that in fact, every manifold with log corners has such a form, and may thus be viewed as a union of faces of a basic manifold with log corners in a canonical way. For this, we need to replace the normal bundle of the embedding with the following intrinsic notion.
 \begin{definition}
 Let $\Sig$ be a manifold with log corners, and let $x \in \Sig$ be a point.  A \defn{phantom tangent vector at $x$} is a map 
     \[
     v \colon \cMphan_{\Sig,x} \to \RR
     \]
     that is $\cM_{\Sig,x}$-linear, in the sense that
     \[
     v(fg) = \alpha(f)|_x \cdot v(g)
     \]
     for all $f \in \cM_{\Sig,x}$ and $g \in \cMphan_{\Sig,x}$. 
 A phantom tangent vector is \defn{non-negative} (resp.~\defn{positive}) if it takes values in $\RR_{\ge0}$ (resp.~$\RR_{>0}$).
 \end{definition}

  The set of all phantom tangent vectors on $\Sig$ is naturally a vector bundle over $\uSig$, which we call the \defn{phantom tangent bundle $\tbphan{\Sig}$}.  The positive and non-negative vectors give subbundles
\[
\tbposphan{\Sig}\subset \tbgephan{\Sig} \ \subset\  \tbphan{\Sig}
\]
with fibres isomorphic to $(0,\infty)^k \subset \halfspc{k}\subset \RR^k$.  Concretely, if $t_1,\ldots,t_k \in \cMphan_{\Sig,x}$  is a set of phantom coordinates we may define phantom tangent vectors  $\cvf{t_1},\ldots,\cvf{t_k} \in \tbphan[x]{\Sig}$ by $\cM_{\Sig,x}$-linear extension of the formula
 \[
 \cvf{t_i}(t_j) = \begin{cases}
     1 & i=j \\ 
     0 & i \neq j.
 \end{cases}
 \]
 Using that $\cMphan_{\Sig,x}$ is freely generated over $\cM_{\Sigbas,x}$ by $t_1,\ldots,t_k$, it is straightforward to verify that these phantom tangent vectors are well-defined, and form a local basis for $\tbphan{\Sig}$ such that the positive (resp.~non-negative) vectors are the positive (resp.~non-negative) linear combinations of the basis elements.
 
In particular, $\tbgephan{\Sig}$ is a manifold with corners, which we equip with its basic positive log structure.  This has the effect of turning local phantom coordinates $t_1,\ldots,t_k$ around a point $x\in \Sig$ into actual coordinates on the fibres of $\tbgephan{\Sig}$.  Consequently, the zero section lifts canonically to an embedding $\Sig \to \tbgephan{\Sig}$, identifying the positive log structure on $\Sig$ with the pullback of the basic positive log structure on $\tbgephan{\Sig}$.  The global structure of a manifold with log corners is thus summarized by the following statement.
\begin{proposition}\label{prop:embedding-in-phantom-tangent}
    Let $\Sig$ be a manifold with log corners.  Then $\Sig$ is canonically identified with the zero section in the basic manifold with log corners $\tbgephan{\Sig}$, with the induced log structure, giving a commutative diagram
    \[
    \begin{tikzcd}
        \Sig \ar[rr,hook]\ar[rd] && \tbgephan{\Sig}\ar[ld] \\
        & \Sigbas
    \end{tikzcd}
    \]
\end{proposition}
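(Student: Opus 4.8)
The plan is to build the stated identification in three stages: first realize the underlying map as the zero section of a bundle, then upgrade it to a log morphism by computing the restriction of the basic log structure to the zero section, and finally check canonicity together with the commutativity of the triangle. Throughout I work locally in a chart of dimension $(n,k)$ with basic coordinates $r_1,\dots,r_n$ and phantom coordinates $t_1,\dots,t_k$, and then globalize using the intrinsic nature of the objects involved.

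First, at the level of underlying manifolds with corners, $\tbgephan{\uSig}$ is by construction the non-negative part of a vector bundle over $\uSig$, with fibre the orthant $\tbgephan[x]{\Sig}\cong\halfspc{k}$ of non-negative phantom tangent vectors. The local basis $\cvf{t_1},\dots,\cvf{t_k}$ from \autoref{sec:phantom-tangent} trivializes it, identifying the bundle over the chart with an open set in the standard corner $\halfspc{n+k}$. I take the coordinates here to be $r_1,\dots,r_n$ pulled back from the base together with the fibre coordinates $\tau_1,\dots,\tau_k$ defined by $\tau_j(v)=v(t_j)$, so that $\tau_j$ is dual to $\cvf{t_i}$. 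The zero section $\iota\colon \uSig\hookrightarrow\tbgephan{\uSig}$ is then the canonical smooth embedding onto the codimension-$k$ corner locus $\{\tau_1=\cdots=\tau_k=0\}$, and it is manifestly independent of the chosen trivialization since the zero vector is intrinsic.

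Next I enhance $\iota$ to a morphism of manifolds with log corners by identifying the restriction (i.e.\ the pullback along this immersion) of the basic log structure $\cM_{\tbgephan{\Sig}}=\Cinfpos{\halfspc{n+k}}\,r^\NN\tau^\NN$ along $\iota$ with $\cM_\Sig$. The computation is exactly the one governing boundary log structures: a monomial $g(r,\tau)\,r^{j}\tau^{l}$ restricts, after passing to the associated log structure, to $g(r,0)\,r^{j}$ when $l=0$ (a basic section of $\cM_\Sig$) and to a phantom when $l\neq 0$, since $\tau^{l}$ vanishes identically on the zero section. The phantom arising from $\tau_j$ is canonically the phantom coordinate $t_j$, via the defining pairing $\cvf{t_i}(t_j)=\delta_{ij}$, so the restricted log structure is precisely $\cM_{\model{n}{k}}$ with the original $t_j$ as phantoms. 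This is the mechanism of \autoref{prop:normal-cone} applied to the basic manifold $\tbgephan{\Sig}$, whose non-negative normal bundle along the zero section is tautologically $\tbgephan{\Sig}$ itself; hence the identification is coordinate-free and globalizes.

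Finally, for the triangle: the projection $\tbgephan{\uSig}\to\uSig$, $(r,\tau)\mapsto r$, is monomial, hence a morphism of basic manifolds with log corners $\tbgephan{\Sig}\to\Sigbas$, and composing with $\iota$ recovers the identity on $\uSig$ together with the inclusion $\cMbas_\Sig\hookrightarrow\cM_\Sig$, which is exactly the canonical map $\Sig\to\Sigbas$. I expect the main obstacle to lie not in any of these verifications individually but in establishing canonicity cleanly: one must check that the phantom produced by $\tau_j$ is identified with $t_j$ compatibly with the structure map $\alpha$ and independently of the trivialization, so that the local isomorphisms glue to a single global identification. Because the bundle $\tbgephan{\Sig}$, its zero section, and the evaluation pairing $v\mapsto v(t)$ are all defined intrinsically, this compatibility is forced; the associated-log-structure (pushout) step then only serves to replace positive functions of $(r,\tau)$ by positive functions of $r$, i.e.\ to restore the $\Cinfpos{}$-part, adding no new generators.
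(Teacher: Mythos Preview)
Your proposal is correct and follows essentially the same approach as the paper. In fact, the paper does not give a separate formal proof of this proposition; it is stated as a summary of the construction in the paragraph immediately preceding it, which notes that the phantom coordinates $t_1,\ldots,t_k$ become actual fibre coordinates on $\tbgephan{\Sig}$, so that the pullback of the basic log structure along the zero section recovers $\cM_\Sig$. Your three-stage argument spells out exactly this reasoning in more detail, including the invocation of \autoref{prop:normal-cone} and the explicit check of canonicity via the evaluation pairing $\tau_j(v)=v(t_j)$, which the paper leaves implicit.
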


Note that by construction, every transition function for $\tbgephan{\Sig}$ has to map a phantom coordinate $t_j$ to $\lambda t_{j'}$ for some $\lambda>0$ and some index $j'$.  We therefore get a reduction of the structure group of the vector bundle $\tbphan{\Sig}$ from $\GL{k}{\RR}$ to the subgroup $S_k \ltimes \RR_{>0}^{k}$ generated by permutation matrices and diagonal matrices with positive entries. This means that locally, $\tbphan{\Sig}$ has a canonical decomposition as a sum of line bundles corresponding to the phantom coordinates $t_1,\ldots,t_k$, but this decomposition need not be globally well-defined, as the following example shows.

\begin{example}
    Let $\Sig_0 = \RR \times [0)^2$ and consider the $\ZZ$-action on $\Sig_0$ generated by the automorphism $(x,t_1,t_2) \mapsto (x+1,t_2,t_1)$, which acts as a translation on the underlying manifold, an swaps the phantom coordinates.  Then $\Sig\defas \Sig_0/\ZZ$ is a manifold with log corners whose underlying manifold is the circle $\RR/\ZZ$, and for which the pair of lines decomposing the fibres of $\tbphan{\Sig}$ are interchanged as we go around the circle.
\end{example}

\subsection{Fixed points of group actions} In this subsection, we briefly discuss the behaviour of compact Lie group actions on manifolds with log corners.   

\begin{definition}
Let $\Sig$ be a manifold with log corners, and $G$ a Lie group, which we view as a manifold with corners equipped with the trivial positive log structure.  An \defn{action of $G$ on $\Sig$} is a morphism $G \times \Sig \to \Sig$ of manifolds with log corners satisfying the unit and associativity conditions in the category of manifolds with log corners.
\end{definition}

 If $x \in \uSig$ is a $G$-fixed point, then by functoriality, we obtain a linear action
\[
G \times \tbge[x]{\Sig} \to \tbge[x]{\Sig}
\]
on the non-negative tangent space, viewed as a manifold with log corners as in \autoref{sec:tangent-log-corners}.  When $G$ is compact, this gives a local model for the action, thanks to the following mild extension of Bochner's linearization theorem~\cite{Bochner1945}.
\begin{theorem}
    If $G$ is compact, there exists a $G$-equivariant isomorphism of manifolds with log corners from an open neighbourhood of the fixed point $x \in \Sig$ to an open neighbourhood of $0 \in \tbge[x]{\Sig}$, whose derivative at $x$ is the identity map.
\end{theorem}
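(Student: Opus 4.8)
The claim is an equivariant linearization theorem for compact Lie group actions on manifolds with log corners, extending Bochner's classical result. The plan is to reduce the statement to the underlying manifold with corners, where a $G$-equivariant linearization is already available, and then upgrade it to an isomorphism of manifolds with log corners by tracking what the linearization does to the log structure. First I would recall that a manifold with log corners is, by \autoref{prop:embedding-in-phantom-tangent}, the zero section inside the basic manifold with log corners $\tbgephan{\Sig}$, so the log structure is entirely encoded in the phantom tangent bundle together with the underlying basic structure. This suggests a two-step strategy: linearize the underlying action, then handle the phantoms.

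**Step one: linearize on the underlying manifold with corners.** I would first apply the classical Bochner linearization theorem to the underlying manifold with corners $\uSig$, in a form adapted to corners. Concretely, averaging the $G$-action against Haar measure (using that $G$ is compact) against any local chart produces a $G$-equivariant diffeomorphism $\Phi$ from a neighbourhood of $x$ to a neighbourhood of $0$ in $\tbge[x]{\uSig}$, whose derivative at $x$ is the identity. The only subtlety here versus the standard statement is that one must check the averaging respects the corner structure: since $G$ fixes $x$ and its linearized action preserves the non-negative tangent space $\tbge[x]{\uSig}$ (indeed it must permute the boundary tangent hyperplanes at $x$, by functoriality of the tangent structure from \autoref{sec:tangent-structure-manifolds-with-corners}), the averaged map is again an interior $b$-map and a local diffeomorphism of manifolds with corners. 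This gives the result at the level of $\uSig \to \tbge[x]{\uSig}$.

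**Step two: upgrade to the log structure.** Now I would promote $\Phi$ to a morphism of manifolds with log corners. Because the derivative of $\Phi$ at $x$ is the identity, the induced map on non-negative normal spaces of each boundary face is the identity, and hence $\Phi$ automatically matches up the \emph{basic} log structures: a locally monomial function pulls back to a locally monomial function with the same leading exponents, so $\Sigbas \to \tbge[x]{\Sigbas}$ is an isomorphism near $x$. For the phantom directions, I would use that $\Phi$ is $G$-equivariant and $\dd\Phi_x = \id{}$ to identify the phantom tangent bundles: the $G$-action fixes $x$, so it acts linearly on $\tbphan[x]{\Sig}$, and the linearization must intertwine the phantom coordinates on $\Sig$ with the corresponding phantom coordinates on $\tbge[x]{\Sig}$. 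Since the condition $\dd\Phi_x = \id{}$ forces the identity on the fibre of $\tbphan{\Sig}$ at $x$, the map $\Phi$ lifts canonically to a log morphism, and it is an isomorphism because it is one on both the basic structure and the phantom fibres.

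**The main obstacle.** I expect the genuine difficulty to be in the compatibility at \emph{higher-depth} points near $x$, rather than at $x$ itself. At $x$ everything is governed by the linear algebra of the fixed-point representation, but the linearizing map $\Phi$ must simultaneously respect the log structure on a whole neighbourhood, including nearby boundary faces of lower depth where $G$ may no longer fix points. The key technical point to verify is that the averaged $\Phi$ remains an interior $b$-map with the correct monomial behaviour along every boundary stratum it meets, not just at $x$; this is where one must invoke the explicit local model $\model{n}{k}$ and check that the $G$-averaging, performed inside a single chart compatible with the corner structure, does not destroy the monomial form of the transition. The reduction of structure group to $S_k \ltimes \RR_{>0}^k$ noted after \autoref{prop:embedding-in-phantom-tangent} is what guarantees that the phantom directions can only be permuted and rescaled, so the linearized $G$-action on phantoms is by such matrices and the averaging stays within the allowed group; making this precise is the crux of the argument.
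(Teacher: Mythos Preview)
Your overall strategy is sensible but you have inverted the order of operations relative to the paper, and this creates a genuine gap in Step two.

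The paper's proof first uses \autoref{prop:embedding-in-phantom-tangent} to embed $\Sig$ as the zero section of the \emph{basic} manifold with log corners $\tbgephan{\Sig}$, observes that the $G$-action lifts functorially to a fibrewise linear action on $\tbgephan{\Sig}$ preserving this embedding, and thereby reduces to the case $\Sig=\Sigbas$. Only \emph{then} does it run the Bochner averaging. The point is that once $\Sig$ is basic, the log structure is entirely determined by the underlying manifold with corners, so a $G$-equivariant diffeomorphism of $\uSig$ automatically is a $G$-equivariant log isomorphism; there is nothing further to check about phantoms.

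In your approach you average first on $\uSig$ and then try to lift the resulting diffeomorphism $\Phi$ to a log morphism. The lift on the basic part is fine, but your treatment of phantoms is where the gap lies: you assert that ``$\Phi$ lifts canonically to a log morphism'' and that it ``must intertwine the phantom coordinates,'' but $\Phi$ was constructed purely on $\uSig$ with no reference to $\cM_\Sig$, and a diffeomorphism of the base does not come with a canonical action on phantom elements. There are many possible lifts (any bundle automorphism of $\tbphan{\Sig}$ over $\Phi$ with the correct structure group would do), and you give no mechanism for selecting one, let alone a $G$-equivariant one. The condition $\dd\Phi_x=\id{}$ constrains only the fibre at $x$, not the bundle map over a neighbourhood. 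Your ``main obstacle'' paragraph correctly senses that something delicate is happening, but misidentifies it as a higher-depth issue; the real issue is the absence of any phantom data in your averaged map. The paper's reduction-first approach dissolves this problem: in $\tbgephan{\Sig}$ the former phantom coordinates are honest coordinates, so Haar-averaging a chart of $\tbgephan{\Sig}$ handles them automatically, and the resulting map restricts to the zero section $\Sig$.
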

\begin{proof}
    The proof is a straightforward adaptation of Bochner's argument~\cite{Bochner1945} as presented in \cite[Section 2.2]{DuistermaatKolk}; we will simply indicate the necessary adjustments.

    By functoriality, the action of $G$ lifts to a fibre-wise linear action on the phantom tangent bundle preserving the embedding $\Sig\hookrightarrow \tbgephan{\Sig}$ from \autoref{prop:embedding-in-phantom-tangent}.  We may thus assume without loss of generality that $\Sig = \Sigbas$ is basic.
    
    From here, the proof prooceeds as in \emph{op.~cit.}. By compactness of $G$, there exists a $G$-invariant open neighbourhood $U$ of $x$ in $\Sig$. 
    Choose an arbitrary open embedding $\chi\colon U \hookrightarrow \tbge[x]{\Sig}$ whose derivative at $x$ is the identity.  Let $\chi_g \defas g \chi g^{-1}$ be the conjugation of $\phi$ by the given action of $g$ on $U$ and the induced linear action on $\tbge[x]{\Sig}$.  We may view each map $\chi_g$ as a vector in the vector space of smooth maps $U \to \tb[x]{\Sig}$. 
    The latter is a complete locally convex topological vector space (cf.~\autoref{rem:Ck-topology}), so that we may define the average $\overline{\chi} \defas \int_{g \in G} \phi_g$ with respect to the Haar measure on $G$.  As in \emph{op.~cit.}, $\overline{\chi}$ is a smooth, $G$-invariant map $U \to \tb[x]{\Sig}$ whose derivative at $x$ is the identity.  In addition, since $\tbge[x]{\Sig}\subset\tb[x]{\Sig}$ and all of its boundary faces are preserved by non-negative linear combinations, we conclude that the image of $\overline{\chi}$ is an open neighbourhood of the origin in $\tbge[x]{\Sig}$.  Thus $\overline{\chi}$ gives the desired $G$-equivariant isomorphism.  
\end{proof}

\begin{corollary}\label{cor:local-action}
    If $G$ is compact, then near any fixed point, $\Sig$ is $G$-equivariantly isomorphic to a product $\RR^l \times \model{n}{k}$ with a diagonal $G$-action, where $G$ acts linearly on $\RR^l$, and acts by permutations of the coordinates on $\model{n}{k}$.
\end{corollary}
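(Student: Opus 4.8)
The plan is to deduce the corollary from the preceding linearization theorem, which reduces the problem to a single linear action. By that theorem, a neighbourhood of $x$ in $\Sig$ is $G$-equivariantly isomorphic to a neighbourhood of the origin in the non-negative tangent space $V\defas\tbge[x]{\Sig}$, on which $G$ acts by the linearized (fibrewise linear) action. It therefore suffices to bring this linear action into the required normal form.

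First I would make the structure of $V$ explicit. Writing $n$ for the depth of $x$, $l$ for the number of interior tangent directions (i.e.~the dimension of the lineality space of the non-negative tangent cone), and $k$ for the number of phantom coordinates, the tangent structure of \autoref{sec:tangent-log-corners} gives a canonical identification $V\cong \RR^l\times\model{n}{k}$: the factor $\RR^l$, spanned by the differentials $\dd r_i|_x$ of those basic coordinates with $r_i(x)>0$, carries the trivial log structure; the orthant $\halfspc{n}$ records the boundary tangent hyperplanes at $x$; and $[0)^k$ records the non-negative phantom tangent space $\tbgephan[x]{\Sig}\cong\halfspc{k}$.

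Next I would identify the linear automorphisms of $V$ compatible with the log structure, since these are precisely the transformations through which $G$ can act. The basic/phantom dichotomy is intrinsic (it distinguishes the sections of $\cM_\Sig$ on which $\alpha$ vanishes from those on which it does not) and hence preserved; in particular the basic coordinates $r_i$ and the phantom coordinates $t_j$ are never mixed. The lineality space $L\cong\RR^l$ of the ordinary non-negative cone $C$ is likewise intrinsic, being $C\cap(-C)$, so $G$ preserves it and acts there by an arbitrary linear representation. On the pointed quotient $\RR^n=\RR^{l+n}/L$ the induced action preserves the orthant $\halfspc{n}$, and since a linear automorphism of $\RR^n$ preserving the positive orthant is a monomial matrix with positive entries, $G$ acts on the $r_i$ through $\symgp{n}\ltimes\RR_{>0}^n$. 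The same argument applied to the pointed cone $\tbgephan[x]{\Sig}\cong\halfspc{k}$ shows that $G$ acts on the $t_j$ through $\symgp{k}\ltimes\RR_{>0}^k$, recovering the structure-group reduction noted after \autoref{prop:embedding-in-phantom-tangent}.

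The crux, and the only step requiring genuine care, is to eliminate the positive scalings, and this is where compactness of $G$ is used a second time. The point is that $\symgp{m}$ is a maximal compact subgroup of $\symgp{m}\ltimes\RR_{>0}^m$, because $\RR_{>0}^m\cong\RR^m$ contains no nontrivial compact subgroup; hence the image of any continuous homomorphism from a compact group is conjugate, within the group, to a subgroup of $\symgp{m}$. Concretely, writing the action on the basic coordinates as $g\cdot r_i=\lambda_i(g)\,r_{\pi_g(i)}$, the map $g\mapsto\log\lambda(g)$ is a $1$-cocycle valued in the permutation representation $\RR^n$, and since $H^1(G;\RR^n)=0$ for $G$ compact (by averaging against Haar measure) it is a coboundary; the associated positive constants rescale the $r_i$ by an automorphism of $\model{n}{k}$ that conjugates the action into a pure permutation. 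Running the identical argument on the phantom coordinates $t_j$, and finally averaging to choose a $G$-invariant complement to $L$ so that the splitting $V\cong\RR^l\times\model{n}{k}$ is itself $G$-equivariant, produces the stated normal form. The remainder, namely the unwinding of the linearization theorem and of the structure of the non-negative tangent cone, is routine.
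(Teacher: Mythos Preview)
Your argument is correct and follows essentially the same route as the paper: apply the linearization theorem to reduce to a linear action on $\tbge[x]{\Sig}\cong\RR^l\times\model{n}{k}$, observe that the basic/phantom dichotomy and the lineality space are intrinsic so $G$ acts through $\GL{l}{\RR}\times\bigl((\symgp{n}\times\symgp{k})\ltimes\RR_{>0}^{n+k}\bigr)$, and then use compactness to eliminate the positive scalings. The only noteworthy difference is in this last step: you invoke vanishing of $H^1(G;\RR^m)$ (equivalently, that $\symgp{m}$ is maximal compact in $\symgp{m}\ltimes\RR_{>0}^m$), whereas the paper packages the same fact as \autoref{lem:finite-symmetries} and proves it by an explicit change of basis---choosing $g_i$ with $g_i\cdot e_1=\alpha_i e_i$, rescaling, and checking directly that the action on the new basis is by pure permutations. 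Your cohomological phrasing is cleaner conceptually; the paper's hands-on version has the minor advantage of making the conjugating diagonal matrix completely explicit. Both establish the same conjugacy, and the remainder of the proof is identical.
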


\begin{proof}
    Endowing the tangent space with an invariant inner product, we can decompose it as an orthogonal $G$-invariant direct sum of the tangent space of the stratum through $x$ and its normal directions.  The former is a copy of $\RR^m$ with a linear $G$-action, and the latter are grouped into basic and phantom directions for which the action reduces to permutations by \autoref{lem:finite-symmetries} below.
\end{proof}

\begin{lemma}\label{lem:finite-symmetries}
    Let $G \subset \GL{n+k}{\RR}$ be a group of linear transformations of $\RR^{n+k}$ that restrict to automorphisms of the manifold with log corners $\model{n}{k}$.  Then $G$ lies in the subgroup $(\symgp{n}\times\symgp{k})\ltimes \RR_{>0}^{n+k}$ generated by permutations of the first $n$ and last $k$ coordinates, and the diagonal matrices with positive eigenvalues.  If, in addition, $G$ is compact, then $G$ is conjugate to a subgroup of $\symgp{n}\times\symgp{k}$.
\end{lemma}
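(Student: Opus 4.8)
The plan is to exploit the explicit description of the monoid sheaf $\cM_{\model{n}{k}}$ from \autoref{ex:std-corner} together with the compatibility that an automorphism must have with it. Realizing the phantom coordinates $t_1,\ldots,t_k$ as genuine coordinates on the non-negative phantom tangent bundle via \autoref{prop:embedding-in-phantom-tangent}, I regard $r_1,\ldots,r_n,t_1,\ldots,t_k$ as the standard coordinates on $\halfspc{n+k}\subset\RR^{n+k}$, so that every germ of a section of $\cM_{\model{n}{k}}$ has the form $h\cdot r^a t^b$ with $h\in\Cinfpos{\halfspc{n+k}}$ and multi-indices $a,b$, and is basic exactly when $b=0$ and phantom exactly when $b\neq0$. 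Each $g\in G$ acts on coordinate functions by pullback $g^*$, which, being the restriction of an automorphism of the log structure, is an automorphism of $\cM_{\model{n}{k}}$ commuting with $\alpha$; hence it carries basic sections to basic sections and phantom sections to phantom sections. Since $g$ is linear, $g^*r_i$ is a \emph{linear} basic function and $g^*t_j$ is a \emph{linear} phantom function.

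The key step is to observe that the only linear functions lying in $\cM_{\model{n}{k}}$ are the positive multiples $\lambda x_l$ of a single coordinate. Indeed, writing such a linear function $\ell$ as $h\cdot m$ with $h\in\Cinfpos{\halfspc{n+k}}$ and $m=r^a t^b$ a monomial of total degree $d$, the quotient $h=\ell/m$ is smooth and nonvanishing on the orthant, so $\ell$ and $m$ have the same vanishing orders along coordinate hyperplanes; as a nonzero linear function vanishes to order at most one, this forces $d\le1$, while $d=0$ is impossible because $\ell(0)=0$ whereas $h(0)>0$. Thus $d=1$, $m=x_l$ is a single coordinate, and smoothness of $\ell/x_l$ forces $\ell=\lambda x_l$ with $\lambda>0$. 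Applying this to $g^*r_i$ and $g^*t_j$ and using that the basic/phantom dichotomy is preserved yields $g^*r_i=\lambda_i r_{\sigma(i)}$ and $g^*t_j=\mu_j t_{\tau(j)}$ with $\lambda_i,\mu_j>0$ and with $\sigma\in\symgp{n}$, $\tau\in\symgp{k}$ permutations (bijective because $g^*$ is invertible). Dualizing, $g$ is a monomial matrix with positive entries respecting the splitting $\RR^{n+k}=\RR^n\oplus\RR^k$ into basic and phantom directions, i.e.\ $g\in(\symgp{n}\times\symgp{k})\ltimes\RR_{>0}^{n+k}$, which is the first assertion. This rigidity---that linearity together with compatibility with $\cM_{\model{n}{k}}$ rules out any genuine mixing of coordinates---is the main point, and the only place where there is anything to prove.

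For the second assertion, set $H\defas(\symgp{n}\times\symgp{k})\ltimes\RR_{>0}^{n+k}$, with projection $\pi\colon H\to\symgp{n}\times\symgp{k}$ onto the finite component group, whose kernel is the identity component $\RR_{>0}^{n+k}$. Since $\RR_{>0}^{n+k}\cong\RR^{n+k}$ contains no nontrivial compact subgroup, a compact $G\subset H$ meets this kernel trivially, so $\pi$ restricts to an isomorphism onto a finite subgroup $\overline{G}\defas\pi(G)\subseteq\symgp{n}\times\symgp{k}$; in particular $G$ is finite. It remains to conjugate $G$ into $\symgp{n}\times\symgp{k}$. Passing to logarithmic coordinates identifies $\RR_{>0}^{n+k}$ with the real vector space $V\defas\RR^{n+k}$, on which $\symgp{n}\times\symgp{k}$ acts by permutations, and writing each element of $G$ as $(\bar g, v_{\bar g})$ with $\bar g\in\overline{G}$ and $v_{\bar g}\in V$ exhibits $\bar g\mapsto v_{\bar g}$ as a $1$-cocycle. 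Because $\overline{G}$ is finite and $V$ is a $\QQ$-vector space, $H^1(\overline{G},V)=0$, so the cocycle is a coboundary; equivalently, the associated affine action of $\overline{G}$ on $V$ has a fixed point $w$ (the barycenter of an orbit). The element $\exp(w)\in\RR_{>0}^{n+k}$ then conjugates $G$ onto $\overline{G}\subseteq\symgp{n}\times\symgp{k}$, completing the proof.
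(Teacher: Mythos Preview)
Your argument is correct. For the first assertion, your approach via the monoid---showing that the only linear sections of $\cM_{\model{n}{k}}$ are positive multiples of a single coordinate, and that the basic/phantom dichotomy is preserved---is dual to the paper's, which works on the geometric side by observing that $g$ must permute the one-dimensional boundary faces of $\halfspc{n+k}$, i.e.\ the coordinate rays $\RR_{\ge0}\cdot e_i$. The two arguments are essentially equivalent reformulations of the same fact.

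For the second assertion your route is genuinely different. The paper argues by hand: after reducing to $k=0$ and to a single orbit of the map $G\to\symgp{n}$, it chooses elements $g_i\in G$ with $g_i\cdot e_1=\alpha_i e_i$, rescales to the basis $e_i'=\alpha_i e_i$, and then shows that every $g\in G$ acts on the $e_i'$ by a pure permutation, because if $g\cdot e_i'=\lambda e_j'$ then $g_ig_j^{-1}g$ has $\lambda$ as an eigenvalue, and compactness of the cyclic group it generates forces $\lambda=1$. You instead recast the problem cohomologically: the kernel $\RR_{>0}^{n+k}$ of $\pi\colon H\to\symgp{n}\times\symgp{k}$ has no nontrivial compact subgroups, so $\pi|_G$ is injective and $G$ is the graph of a $1$-cocycle $\overline{G}\to\RR^{n+k}$; since $H^1$ of a finite group with coefficients in a $\QQ$-vector space vanishes (equivalently, the associated affine action has a fixed point by averaging), the cocycle is a coboundary and $G$ is conjugate into $\symgp{n}\times\symgp{k}$ by a diagonal element. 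Your argument is shorter and more conceptual, and makes it transparent that the conjugating element may be taken in $\RR_{>0}^{n+k}$; the paper's explicit construction has the complementary virtue of exhibiting the conjugating diagonal matrix concretely in terms of the $\alpha_i$.
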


\begin{proof}
Let $e_1,\ldots,e_{n+k} \in \RR^{n+k}$ be the standard basis.  
The action of $G$ on $\RR^{n+k}$ must permute the one-dimensional boundary faces of $\halfspc{n+k}$, but the latter are exactly the rays spanned by the basis vectors.  Hence any $g \in G$ acts by $g \cdot e_i = \lambda_i e_{\sigma^{-1}(i)}$ for some $\lambda_i > 0$ and $\sigma \in \symgp{n+k}$. 
 Furthermore, the action must preserve the grouping of the coordinates into the $n$ basic and $k$ phantom coordinates, so that $G < (\symgp{n}\times\symgp{k})\ltimes \RR_{>0}^{n+k}$ as claimed.
 
Now suppose that $G$ is compact; we wish to show that $G$ is conjugate to a subgroup of $\symgp{n}\times\symgp{k}$.  Since $\symgp{n}\times\symgp{k}$ is a subgroup of $\symgp{n+k}$ it is enough to treat the case $k=0$. We first treat the case where the action of $G$ on $\{1,\ldots,n\}$, induced by the map $G\to \symgp{n}$, is transitive. In this case, for all $i\in\{1,\ldots,n\}$, there exists an element $g_i\in G$ such that $g_i \cdot e_1=\alpha_ie_i$ for some $\alpha_i\in \RR_{>0}$, where we assume that $g_1=\id{}$ and $\alpha_1=1$. Performing the change of basis given by $e'_i=\alpha_i e_i$ now gives $g_i\cdot e'_1=e'_i$ for all $i\in\{1,\ldots,n\}$. Let $g\in G$ and let us write $g\cdot e'_i=\lambda e'_j$. Now the element $g_ig_j^{-1}g\in G$ sends $e'_i$ to $\lambda e'_i$. Since $G$ is a compact subgroup of $\symgp{n}\ltimes \RR^n_{>0}$, this element generates a compact subgroup and therefore $\lambda=1$. We conclude that $G$ acts by permuting the basis elements $e'_1,\ldots,e'_n$. In other words, the conjugate subgroup $\underline{\alpha} G\underline{\alpha}^{-1}$ is contained in $\symgp{n}$. For the general case (with $k=0$), the same argument on each orbit of the action of $G$ on $\{1,\ldots,n\}$ gives the claim.
\end{proof}

\begin{corollary}
    The fixed point set $\uSig^G \subset \uSig$ is an embedded submanifold with corners.
\end{corollary}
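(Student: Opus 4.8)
The plan is to reduce to the local normal form supplied by \autoref{cor:local-action} and then to analyze the fixed set of the resulting linear model. Since the fixed-point set $\uSig^G$ is defined intrinsically, it suffices to exhibit, around each point $x \in \uSig^G$, a chart of $\uSig$ in which $\uSig^G$ becomes an embedded submanifold with corners; the compatibility of these charts on overlaps is then essentially automatic, because the transition function between two such charts is a diffeomorphism of $\uSig$ carrying the intrinsic subset $\uSig^G$ to itself, and hence (using the retraction constructed below to see that the restriction is smooth in both directions) restricts to a diffeomorphism of the two local models of the fixed set.

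First I would invoke \autoref{cor:local-action} to obtain, near $x$, a $G$-equivariant isomorphism identifying a neighbourhood of $x$ with an open subset of $\RR^l \times \model{n}{k}$, where $G$ acts linearly on $\RR^l$ and by permutations of the $n$ basic and $k$ phantom coordinates. Passing to underlying manifolds with corners, the factor $[0)^k$ contributes only the point $0$, so the local model becomes $\RR^l \times \halfspc{n}$ with $G$ acting diagonally: linearly on $\RR^l$ and by coordinate permutations on $\halfspc{n}$. The fixed set is therefore the product $(\RR^l)^G \times (\halfspc{n})^G$. The first factor is the subspace of $G$-invariants, a copy of $\RR^{l'}$. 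For the second, writing $\{1,\dots,n\} = O_1 \sqcup \cdots \sqcup O_p$ for the decomposition into $G$-orbits, a point of $\halfspc{n}$ is fixed precisely when its coordinates are constant along each orbit, so $(\halfspc{n})^G$ is the diagonal orthant $\{\,r_i = r_j \text{ whenever } i,j \text{ lie in the same orbit}\,\} \cong \halfspc{p}$.

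To see that this fixed set is an embedded submanifold with corners, I would use the $G$-averaging projection $P \defas \int_{g \in G} g\,\dd g$, a $G$-equivariant linear endomorphism of $\RR^l \times \RR^n$ that projects onto the fixed subspace $(\RR^l)^G \times (\RR^n)^G \cong \RR^{l'}\times\RR^p$ and restricts to the identity there. Because $G$ permutes the coordinates of $\halfspc{n}$, averaging a vector with non-negative entries again yields non-negative entries, so $P$ maps $\halfspc{n}$ into $(\halfspc{n})^G$ and hence restricts to a smooth retraction $\rho\colon \RR^l \times \halfspc{n} \to \RR^{l'}\times\halfspc{p}$ onto the fixed set. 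The inclusion of the fixed set is then a linear immersion that is a homeomorphism onto a relatively closed subset, and the retraction $\rho$ shows that the subspace smooth structure agrees with the evident manifold-with-corners structure $\RR^{l'}\times\halfspc{p}$ of dimension $l' + p$ (and also supplies the smoothness of the chart transitions alluded to above, via $\rho \circ \Phi \circ \iota$). Each point of $\uSig^G$ thus has a neighbourhood realizing it as an embedded submanifold with corners, and gluing the resulting charts completes the proof.

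The step I expect to be the main obstacle is precisely this embedded-submanifold verification in the second factor: unlike a boundary face, the diagonal orthant $\halfspc{p} \hookrightarrow \halfspc{n}$ is \emph{not} obtained by setting coordinates to zero in any corner-respecting chart (it is not a ``$p$-submanifold'' in the sense of Melrose), since its depth-$p$ corner point is mapped to the depth-$n$ corner of $\halfspc{n}$. The content of the statement is therefore to check directly that the diagonal inclusion is a genuine smooth embedding inducing the subspace smooth structure, which is exactly what the averaging retraction $\rho$ provides.
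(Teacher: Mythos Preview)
Your proof is correct and follows the same approach as the paper: invoke \autoref{cor:local-action} to reduce to the linear model $\RR^l \times \halfspc{n}$, and identify the fixed locus as the product of the $G$-invariant linear subspace $\RR^{l'}$ with the diagonal orthant $\halfspc{p}$ determined by the orbit decomposition. The paper's proof is considerably terser---it simply asserts that the diagonals in $\halfspc{n}$ are ``products of corners, with coordinates given by restricting subsets of the coordinates''---whereas you supply the additional detail of the $G$-averaging retraction to justify that the diagonal embedding really induces the correct smooth structure; this extra care is welcome, since (as you observe) the diagonal is not a $p$-submanifold in Melrose's sense.
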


\begin{proof}
    By the previous corollary, the fixed point set decomposes locally as the product of a linear subspace of $\RR^l$ and diagonals in $\halfspc{n}$. The former is an ordinary smooth manifold, and the latter are products of corners, with coordinates given by restricting subsets of the coordinates on $[0,\infty)^n$. 
\end{proof}

We equip the manifold with corners $\uSig^G$ with a positive log structure $\cM_{\Sig^G}$  as follows.  Let $i\colon \Sig^G \hookrightarrow\Sig$ be the inclusion, and note that the pullback log structure $i^*\cM_{\Sig}$ carries a residual action of $G$ by automorphisms, for which the morphism $\alpha\colon i^*\cM_{\Sig} \to \Cinfge{\Sig^G}$ is invariant.  We may therefore take the quotient of the monoid by conguence generated by the $G$-action (see \cite[\S{}I.1.1]{Ogus}) to obtain the following.
\begin{definition}
     The \defn{fixed locus $\Sig^G$} is the fixed point set equipped with the positive log structure $\cM_{\Sig^G}\defas (i^*\cM_{\Sig})/G$.
\end{definition}

\begin{example}\label{ex:quotient-basic}
Consider the symmetric group $\symgp{n}$ acting on the manifold with log corners $\halfspc{n}$ by permuting the coordinates $r_1,\ldots,r_n$.  The fixed-point set is the image of the diagonal $i\colon[0,\infty)\hookrightarrow\halfspc{n}$. The pullback log structure is given by
\[
\mapdef{\alpha}{i^*\cM_{\halfspc{n}}}{\Cinfge{\Sig}}
{g(r)r_1^{j_1}\cdots r_n^{j_n}}{g(r)r^{j_1+\cdots+j_n}}
\]
where $r$ is the coordinate on  $\halfspc{}$ and $g \in \Cinfpos{\halfspc{}}$.  This identifies the quotient $i^*\cM_{\halfspc{n}}/\symgp{n}$ with the standard positive log structure on $[0,\infty)$.
\end{example}

\begin{example}\label{ex:quotient-phantom}
Consider the symmetric group $\symgp{k}$ acting on the manifold with log corners $[0)^k$ by permuting the (phantom) coordinates. The inclusion $i$ of the fixed-point set is the identity of $\{0\}$, and $i^*\cM_{[0)^k}$ is the monoid consisting of monomials $\lambda t_1^{j_1}\cdots t_k^{j_k}$ with $\lambda\in\RR_{>0}$ and $j_1,\ldots,j_k\in\NN$, with the action of $\symgp{k}$ permuting the (phantom) coordinates. The quotient $i^*\cM_{[0)^k}/\symgp{k}$ is therefore identified with the standard positive log structure on $[0)$.
\end{example}

The following result shows that $\Sig^G$ is the categorically correct version of the fixed point set. 

\begin{theorem}\label{thm:fixed-locus}
    Let $G$ be a compact Lie group acting on a manifold with log corners $\Sig$.  Then $\cM_{\Sig^G}$ gives $\Sig^G$ the structure of a manifold with log corners and $i\colon \Sig^G\hookrightarrow \Sig$ is a morphism of manifolds with log corners. Every $G$-invariant morphism $\Psi \to \Sig$ of manifolds with log corners factors uniquely through $i$.
\end{theorem}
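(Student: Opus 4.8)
The plan is to prove the three assertions in turn---that $\cM_{\Sig^G}$ is the log structure of a manifold with log corners, that $i$ is an ordinary morphism, and that $i$ has the stated universal property---reducing the first two to a computation in the local model and handling the last by the universal property of the quotient monoid. The underlying set $\uSig^G$ is already known to be an embedded submanifold with corners by the preceding corollary, so the content of the first assertion is entirely about the log structure.

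For the local structure I would invoke \autoref{cor:local-action} to assume, near a fixed point, that $\Sig = \RR^l \times \model{n}{k}$ with $G$ acting linearly on $\RR^l$ and by permutations of the $n$ basic and $k$ phantom coordinates of $\model{n}{k}$. The fixed locus of the linear action on $\RR^l$ is a linear subspace $\RR^{l'}$ carrying the trivial log structure, while the fixed locus of the permutation action on $\model{n}{k}$ is the diagonal locus on which the coordinates within each $G$-orbit agree. The essential step is the computation of $(i^*\cM_{\Sig})/G$ there. Although the action of $G$ on a given orbit of coordinates need not be the full symmetric group, the congruence generated by the $G$-action already collapses an entire orbit to a single coordinate: a relation $r_i \sim r_{i'}$ is a \emph{monoid} congruence, so multiplying by $r_i$ gives $r_i^2 \sim r_i r_{i'}$, and iterating identifies all monomials of a fixed total degree supported on the orbit. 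Hence the quotient depends only on the partition of the coordinates into orbits, and the computation reduces to the full-symmetric-group cases of \autoref{ex:quotient-basic} and \autoref{ex:quotient-phantom}: each orbit of basic coordinates contributes one standard basic coordinate and each orbit of phantom coordinates one standard phantom coordinate. Thus $\Sig^G$ is locally isomorphic to $\RR^{l'}\times\model{p}{q}$, with $p$ and $q$ counting the basic and phantom orbits, so it is a manifold with log corners; and the quotient map $i^*\cM_{\Sig} \to \cM_{\Sig^G}$ together with the inclusion of underlying spaces defines the morphism $i$, the compatibility with the structure maps $\alpha$ being precisely the $G$-invariance of $\alpha$ noted just before the statement.

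For the universal property, let $f\colon \Psi \to \Sig$ be a $G$-invariant morphism, meaning $a_g \circ f = f$ for every $g \in G$, where $a_g$ is the action of $g$. At the level of underlying maps this forces $\underline{f}$ to land in $\uSig^G$, so it factors uniquely as $\underline{f} = \underline{i}\circ\underline{h}$ for a smooth map $\underline{h}\colon \uPsi \to \uSig^G$. At the level of monoids, writing $b_g$ for the residual $G$-action on $i^*\cM_{\Sig}$, the equality $a_g \circ f = f$ unwinds (using \autoref{def:morph}) to $f^* \circ \underline{h}^{-1}(b_g) = f^*$ for all $g$; that is, the map $f^*$ equalizes the whole $G$-action on its source. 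Since $\underline{h}^{-1}$ is exact and therefore commutes with the congruence quotient, $\underline{h}^{-1}\cM_{\Sig^G} = \rbrac{\underline{h}^{-1}i^*\cM_{\Sig}}/G$, and the universal property of the quotient by the congruence (together with the standard compatibility of pullback and associated log structures) yields a unique monoid morphism $h^*\colon \underline{h}^{-1}\cM_{\Sig^G} \to \cM_{\Psi}$ through which $f^*$ factors. A direct verification that $h^*$ is compatible with the maps $\alpha$---using that $\alpha_{\Sig^G}$ is by construction the descent of $\alpha_{\Sig}$---shows that $h = (\underline{h},h^*)$ is a morphism, and it is the unique one with $f = i\circ h$.

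I expect the main obstacle to be the quotient computation in the local model: checking that the congruence generated by an arbitrary (not necessarily full-symmetric) permutation action collapses each orbit to a single coordinate, so that the result is genuinely a standard log corner rather than a coarser monoid. This is the one genuinely new input beyond \autoref{ex:quotient-basic} and \autoref{ex:quotient-phantom}; once it is in place, the identification with $\RR^{l'}\times\model{p}{q}$ and the descent of $\alpha$ are immediate, and the universal property is formal, resting only on the exactness of $\underline{h}^{-1}$ and the universal property of the congruence quotient.
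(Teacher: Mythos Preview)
Your proposal is correct and follows essentially the same approach as the paper: local reduction via \autoref{cor:local-action}, identification of the quotient monoid orbit-by-orbit using \autoref{ex:quotient-basic} and \autoref{ex:quotient-phantom}, and the universal property by factoring first through the pullback log structure and then through the congruence quotient. You actually spell out a point the paper leaves implicit---that a merely transitive (not full symmetric) permutation action on an orbit already generates the same congruence, so the examples with $\symgp{n}$ and $\symgp{k}$ suffice---which is exactly the right justification.
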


\begin{proof}
    For any fixed point, we may choose a chart on $\Sig$ as in \autoref{cor:local-action}.  The action of $G$ therefore partitions the coordinates on $\model{n}{k}$ into orbits, and the first claim follows from \autoref{ex:quotient-basic} and \autoref{ex:quotient-phantom}.
    
    For the universal property, note that any $G$-invariant map of manifolds with log corners must map entirely to fixed points, and therefore factor through the pullback positive log structure $i^*\cM_\Sig$.  But the map on monoids is also $G$-invariant, and hence it must further factor through $(i^*\cM_\Sig)/G$.
\end{proof}

\section{Virtual morphisms, tangential basepoints, and scales}\label{sec:weak-morphisms}

\subsection{Definition and examples}

    The notion of (ordinary) morphism between manifolds with log corners is too restrictive for some purposes, and we will need a weaker notion, as follows. For a sheaf of (commutative) monoids $\cM$, recall that its \defn{group completion} $\cMgp$ is the universal sheaf of (abelian) groups equipped with a homomorphism from $\cM$. For the standard log corner of dimension $(n,k)$ from \autoref{ex:std-corner},
    $$\cMgp_{\model{n}{k}} = \Cinfpos{\halfspc{n}}r^{\ZZ}\cdot t^{\ZZ}$$
    is the product of the constant abelian group $t^{\ZZ}:=t_1^{\ZZ}\cdots t_k^{\ZZ}$ of Laurent monomials in the variables $t_1,\ldots,t_k$ with the subsheaf of groups of $\Cinfpos{(0,\infty)^n}$ consisting of functions which can be written as a positive smooth function on $\halfspc{n}$ times a Laurent monomial in $r^{\ZZ}:=r_1^{\ZZ}\cdots r_n^{\ZZ}$. This description implies that for every manifold with log corners $\Sig$, the natural morphism $\cM_\Sig\to \cMgp_\Sig$ is injective.

    The following definition is inspired by Howell's analogous notion in logarithmic algebraic geometry \cite{Howell2017}; see \cite{DPP:WeakMorphisms} for more details.
        
    \begin{definition}\label{def:weak-morph}
    Let $\Sig = (\uSig,\cM_\Sig,\alpha_\Sig)$  and $\Psi = (\uPsi,\cM_\Psi,\alpha_\Psi)$ be manifolds with log corners. A \defn{virtual morphism} $\phi\colon\Sig\to \Psi$ is a pair $(\underline{\phi},\phi^*)$ where $\underline{\phi}\colon \uSig \to \uPsi$ is a smooth map and $\phi^*\colon \underline{\phi}^{-1}\cMgp_\Psi \to \cMgp_\Sig$ is a morphism of sheaves of groups such that the following diagram commutes.
    \[
    \begin{tikzcd}
        \uphi^{-1}\Cinfpos{\Psi} \ar[r,"\uphi^*"] \ar[d,hook] & \Cinfpos{\Sig} \ar[d,hook] \\
        \uphi^{-1}\cMgp_{\Psi} \ar[r,"\phi^*"] & \cMgp_{\Sig}
    \end{tikzcd}
    \]
\end{definition}

    \begin{remark}
    By the universal property of group completion, the datum of $\phi^*$ is equivalent to the datum of a morphism $\uphi^{-1}\cM_\Psi \to \cMgp_\Sig$.
    \end{remark}

    There are two notable differences between the notions of ordinary and virtual morphisms, illustrated by examples below.
    \begin{enumerate}
    \item Firstly, the notion of virtual morphism does not require compatibility of pullback with the maps $\alpha$, only with positive smooth functions. For an ordinary morphism $\phi \colon \Sig \to \Psi$, the commutativity of \eqref{eq:ordinary-commutativity} implies that every phantom $f \in \uphi^{-1}\cM_\Psi$ pulls back to a phantom $\phi^*f \in \cM_\Sig$, and more generally if $f \in \uphi^{-1}\cM_\Psi$ is such that $\uphi^*\alpha_\Psi(f)= 0$ (i.e.\ $f$ is a phantom ``relative to $\uphi$''), then $\phi^*f\in\cM_\Sig$ is a phantom.  For a virtual morphism, this is no longer true and therefore virtual morphisms are allowed to ``breathe life into phantoms'' by pulling them back to non-phantoms---but the axiom asserts that they cannot change the values assigned to positive smooth functions.
    \item Secondly, the pullback happens at the level of the group completions of the sheaves of monoids and produces formal quotients of monoid elements which are no longer associated to functions on the underlying manifold with corners---this explains the terminology ``virtual'' (as in ``virtual representation of a group''). For instance, for the standard end $[0)$ from \autoref{ex:endpoint}, the monoid element $t$ corresponds to the function $\alpha(t)=0$ on the point, but its inverse $t^{-1}$ does not correspond to any function.
    \end{enumerate}

    By definition, virtual morphisms satisfy the equality
    \begin{align}
    \phi^*f = \uphi^*f. \label{eq:weak-mor-equality}
    \end{align}
    for all positive smooth functions $f \in \Cinfpos{\Psi}$.  In fact, the equality holds more generally thanks to the following proposition, that we call the ``continuity principle'', and whose proof we delay to the end of this subsection:

    \begin{proposition}\label{prop:weak-mor-basic-sections}
        If $\phi$ is a virtual morphism then \eqref{eq:weak-mor-equality} holds for all $f \in \cMbas_{\Psi}$ such that $\uphi^*f  \neq 0$.
        In other words, for all $f\in\cM_\Psi$ such that $\uphi^*\alpha_\Psi(f)$ is not the zero function, we have that $\phi^*f\in \cM_\Sig$ and
        $$\uphi^*\alpha_\Psi(f) = \alpha_\Sig(\phi^*f).$$
    \end{proposition}

With the obvious notion of composition, virtual morphisms form a category.  Every ordinary morphism is a virtual morphism, but not conversely.  The following examples illustrate the key similarities and differences between these notions.

    \begin{example}
    Let $*$ denote the point equipped with the trivial positive log structure.  Then for any $\Sig$, the projection $\Sig \to *$ is a (ordinary or virtual) morphism in a unique way.  Hence $*$ is a terminal object in the category of either ordinary or virtual morphisms.
    \end{example}

\begin{example}\label{ex:weak-mor-pt-to-end}
    Let $*$ be the point as in the previous example, and let $[0)$ be the standard end, given by the point with the positive log structure $\cM_{[0)} = \RR_{>0}t^\NN$ from \autoref{ex:endpoint}.  There is a unique map of the underlying manifolds $\underline{*}\cong\underline{[0)}$ since both consist of a single point. A virtual morphism 
    \[
    s\colon * \to [0)
    \]
    is determined by a morphism of monoids $s^*\colon\RR_{>0}t^\NN \to \RR_{>0}$ which acts as the identity on $\RR_{>0}$, but may send the generator $t$ to an arbitrary positive real number.  Thus virtual morphisms $s\colon * \to [0)$ are in bijection with $\RR_{>0}$, and are all (virtual) sections of the unique morphism $p\colon [0)\to *$. Note, however, that there are no ordinary morphisms from $*$ to $[0)$.  Indeed, since $\alpha(t)=0$, an ordinary morphism would have to send $t$ to a phantom in $\RR_{>0}$, of which there are none.
\end{example}

\begin{example}\label{ex:weak-mor-retract-interval}
A virtual morphism 
\[
q\colon \halfspc{}\to [0)
\]
is the datum of a morphism of monoids $q^*\colon \RR_{>0} t^{\NN}\to \sect{\halfspc{},\cMgp_{\halfspc{}}}$ which acts as the identity on the basic elements $\RR_{>0}$, but may send the generator $t$ to an arbitrary element $g(r)r^j$, with $g$ a positive smooth function on $\halfspc{}$ and $j\in\ZZ$. If $g(0)=1$ and $j=1$ then $f$ is a (virtual) retract of the ordinary morphism $i\colon [0)\hookrightarrow \halfspc{}$. Note, however, that there are no ordinary morphisms from $\halfspc{}$ to $[0)$ because $\cM_{\halfspc{}}$ has no phantoms.
\end{example}

\begin{example}
More generally, let $\Sig=(\uSig,\cM_\Sig,\alpha)$ be a manifold with log corners. A virtual morphism $f:\Sig\to [0)$ is the datum of a morphism of monoids $f^*:\RR_{>0}t^\NN\to \cMgp(\Sig)$  which acts as the identity on $\RR_{>0}$, where $\cMgp(\Sig)\defas \sect{\Sig,\cMgp_\Sig}$ denotes the monoid of global sections of $\cMgp_\Sig$. Thus virtual morphisms $\Sig\to [0)$ are in bijection with $\cMgp(\Sig)$. In contrast, ordinary morphisms $\Sig\to [0)$ are in bijection with $\cMphan(\Sig)\defas\sect{\Sig,\cMphan_\Sig} \subset \cM(\Sig)\subset \cMgp(\Sig)$.
\end{example}

\begin{proof}[Proof of \autoref{prop:weak-mor-basic-sections}]
    The statement being local, we can assume that $\uSig$ is connected, and up to shrinking $\uPsi$, that $f$ is a global section of $\cMbas_\Psi$.

    Consider the open sets $V=\{f>0\} \subset \uPsi$ and $U=\uphi^{-1}(V)=\{\uphi^*f>0\} \subset \uSig$, which are non-empty by assumption. By definition of a virtual morphism we have $\phi^*(f|_{V})=\uphi^*(f|_{V})$, or in other words
    \begin{equation}\label{eq:weak-morphism-first-prop}
    (\phi^*f)|_{U}=(\uphi^*f)|_{U}.
    \end{equation}
    Note that since $U$ is nonempty, $\phi^*f$ is necessarily a section of $(\cMbas_\Sig)^{\mathrm{gp}}$, hence is strictly positive on $\uSigo$. (By the description of $\cMgp_{\Sig}$, being a section of $(\cMbas_\Sig)^{\mathrm{gp}}$ is a property that can be checked at a point in a connected manifold with log corners.)
    
    We claim that $\uSigo\subset U$, which amounts to showing that $U\cap \uSigo$ is closed in $\uSigo$. For this, take a sequence $x_n\to x$ in $\uSigo$ where all $x_n\in U$. By \eqref{eq:weak-morphism-first-prop} we have, for all $n$ that
    $$(\phi^*f)(x_n)=(\uphi^*f)(x_n),$$
    and passing to the limit we get $(\uphi^*f)(x)=(\phi^*f)(x) > 0$.  Hence $x\in U$, and so $U\cap\uSigo$ is closed in $\uSigo$ and $\uSigo\subset U$ as desired.

    Now $\phi^*f$ agrees with $\uphi^*f$ on $\uSigo$ and $\uphi^*f$ extends to a smooth function on $\uSig$, hence $\phi^*f$ cannot blow up along $\partial\uSig$ and is a section of $\cMbas_\Sig$. The equality $\phi^*f=\uphi^*f$ follows on the whole of $\uSig$ by density of $\uSigo$ in $\uSig$.
\end{proof}

\begin{remark}\label{rem:virtual-vs-weak}
There is an obvious variant of the notion of virtual morphism where the pullback happens at the level of the sheaves of monoids, without group completing: $\phi^*:\uphi^{-1}\cM_{\Psi}\to \cM_\Sigma$. Such morphisms were called ``weak'' in the first arXiv version of this paper.
\end{remark}

\subsection{Tangential basepoints}\label{sec:tangential-basepoints}

As we saw in \autoref{ex:weak-mor-pt-to-end}, a point, equipped with the trivial log structure, does not admit any ordinary morphism to $[0)$.  More generally,  a point does not admit any ordinary morphism to a manifold with log corners that has phantoms; this includes the boundary of a basic manifold with corners.  However, it admits many virtual morphisms to the boundary.  As we now explain, these correspond to a $C^\infty$ analogue of Deligne's notion of a tangential basepoint in algebraic geometry \cite[\S 15]{Deligne1989}; the precise relationship with the latter is discussed in \autoref{sec:KN-basepoints} below.

\begin{definition}
Let $\uSig$ be a manifold with corners. A \defn{tangential basepoint} of $\uSig$ is a pair $(x,v)$ where $x\in\uSig$ and $v\in N_x^{>0}\uSig$ is a positive normal vector at $x$ in the sense of \autoref{sec:tangent-structure-manifolds-with-corners}.
\end{definition}
 Recall that if $x$ lies in the interior of $\uSig$, the set $N_x^{>0}\uSig$ has a unique element; hence, in the interior, a tangential basepoint is just an ordinary point.  However, the notion becomes nontrivial over the boundary: at a point $x \in \uSig$ of depth $j$, there is a $j$-dimensional space of tangential basepoints; see \autoref{fig:basepoints}. 
 These can be viewed as ``virtual points'' of $\Sig$ thanks to the following result.

\begin{figure}
\begin{tikzpicture}[scale=1.5] \draw[thick,fill=white!90!black] (0,0) -- (1,0) arc (-90:180:1) -- (0,0);
\draw[blue,->] (0,0) -- (30:0.4);
\draw[fill,blue] (0,0) circle (0.03);
\draw[blue,->] (0,0) -- (60:0.6);
\draw[blue,->] (0,0) -- (47:0.7);
\draw[blue,->] (1,0) -- (1,0.3);
\draw[fill,blue] (1,0) circle (0.03);
\begin{scope}[shift={(1,1)}]
\draw[blue,->] (-40:1) -- (-40:0.5);
\draw[fill,blue] (-40:1) circle (0.03);
\draw[blue,->] (25:1) -- (25:0.7);
\draw[fill,blue] (25:1) circle (0.03);
\draw[blue,->] (75:1) -- (75:0.6);
\draw[fill,blue] (75:1) circle (0.03);
\draw[blue,->] (180:1) -- (180:0.5);
\draw[fill,blue] (180:1) circle (0.03);
\end{scope}
\draw[fill,blue] (1.2,1.2) circle (0.03);
\draw[fill,blue] (1.1,0.5) circle (0.03);
\draw[fill,blue] (0.3,1.3) circle (0.03);
\end{tikzpicture}
\caption{Some tangential basepoints on the teardrop manifold.}\label{fig:basepoints}
\end{figure}

 \begin{proposition}\label{prop:tangential-basepoints-as-weak-morphisms}
 Let $\uSig$ be a manifold with corners and let $\Sig = \Sigbas$ be the associated basic manifold with log corners.  Then there is a natural bijection between the set of virtual morphisms $* \to \Sig$ and the set of tangential basepoints of $\uSig$.
 \end{proposition}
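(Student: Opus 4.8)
The plan is to unwind both sides of the claimed bijection into monoid-theoretic data and to match them using the local description of the log structure from \autoref{cor:normal-monoid}. Since $*$ is a single point carrying the trivial log structure $\cM_* = \Cinfpos{*} = \RR_{>0}$, a weak morphism $\phi\colon *\to\Sig$ amounts to a point $x\in\uSig$ (the image of the underlying map $\uphi$) together with a morphism of monoids $\phi^*\colon\cM_{\Sig,x}\to\RR_{>0}$ out of the stalk at $x$. The commutativity square of \autoref{def:weak-morph} says exactly that $\phi^*$ agrees with evaluation at $x$ on the submonoid of units $\Cinfpos{\Sig,x}\subset\cM_{\Sig,x}$, i.e.\ $\phi^*(g)=g(x)$ for every positive germ $g$. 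So, beyond the choice of $x$, the data is a unit-extending morphism of monoids $\cM_{\Sig,x}\to\RR_{>0}$, and the whole problem is to identify these with the positive normal vectors at $x$.

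First I would observe that any such $\phi^*$ factors uniquely through the pullback log structure $\cM_\Sig|_x$ to the point $x$. Indeed, because $\Sig=\Sigbas$ is basic, the germs of $\cM_{\Sig,x}$ with nonzero value at $x$ are precisely the units, so $\cM_\Sig|_x$ is the pushout $\cM_{\Sig,x}\sqcup_{\Cinfpos{\Sig,x}}\RR_{>0}$ obtained by replacing each positive germ with its value at $x$; since $\phi^*$ already performs this replacement on units, the universal property of the pushout produces a unique factoring morphism $\cM_\Sig|_x\to\RR_{>0}$ restricting to the identity on the positive constants. By \autoref{cor:normal-monoid}, $\cM_\Sig|_x$ is canonically the monoid of non-negative monomial functions on the non-negative normal space $N_x^{\ge0}\uSig$, so the remaining data is a morphism from this monoid to the group $\RR_{>0}$ that is the identity on positive constants.

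The heart of the argument is to identify such morphisms with points of the positive normal space. Evaluation at any $v\in N_x^{>0}\uSig$ sends a non-negative monomial function to a strictly positive real, since $v$ lies in the open orthant, and so defines a morphism of the required type. Conversely, choosing good coordinates \eqref{eq:good-coords} at a point of depth $j$ exhibits the monoid of monomial functions on $N_x^{\ge0}\uSig$ as $\RR_{>0}\times\NN^j$, with free generators given by the coordinate linear functions $\dd r_1|_x,\ldots,\dd r_j|_x$; a morphism to $\RR_{>0}$ fixing the positive constants is therefore freely and uniquely determined by the images $a_1,\ldots,a_j\in\RR_{>0}$ of these generators, and the assignment $\langle v,\dd r_i|_x\rangle\defas a_i$ recovers a unique $v\in N_x^{>0}\uSig\cong(0,\infty)^j$. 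This is the higher-dimensional version of the computation already carried out for $[0)$ in \autoref{ex:weak-mor-pt-to-end}. Assembling the identifications yields the bijection $\phi\leftrightarrow(x,v)$.

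Because the correspondence has been routed through the canonical objects $\cM_\Sig|_x$ and $N_x^{>0}\uSig$ rather than through a chart, it is manifestly coordinate-independent, which gives the naturality. I expect the main obstacle to be the middle step: one must check that a unit-extending morphism on the \emph{full} stalk $\cM_{\Sig,x}$ is genuinely free on the classes of the vanishing coordinates---equivalently, that the only relations among germs of monomials $g(r)\,r^J$ are those forced by the units---so that evaluation at a normal vector is both well defined and exhausts all such morphisms. This is exactly the point at which the basic local model $\model{n}{0}$ and the identification of \autoref{cor:normal-monoid} do the essential work.
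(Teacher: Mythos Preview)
Your proof is correct and follows essentially the same approach as the paper: factor through the pullback log structure at the underlying point, invoke \autoref{cor:normal-monoid} to identify it with non-negative monomial functions on $N_x^{\ge0}\uSig$, and then match unit-extending monoid maps to $\RR_{>0}$ with points of $N_x^{>0}\uSig$ via evaluation. You have simply spelled out in more detail the steps that the paper's proof leaves as standard.
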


 \begin{proof}
 Every virtual morphism $* \to \Sig$ factors uniquely through the pullback log structure on its underlying image point $x \in \uSig$.  (This is the standard universal property for pullback log structures, which works equally well in the context of virtual morphisms.) But by \autoref{cor:normal-monoid} the pullback log structure $\cM_{\Sig}|_x$ is naturally isomorphic to the monoid of non-negative monomial functions on the non-negative normal space $N^{\ge0}_x\uSig$.  Evaluating such functions at points in $N^{>0}_x \uSig$ gives a bijection between tangential basepoints and monoid homomorphisms $\cM_{\Sig}|_x \to \RR_{>0}$ that act as the identity on the constants $\RR_{>0} \subset \cM_{\Sig}|_x$, or equivalently virtual morphisms $* \to (x,\cM_{\Sig}|_x,\alpha_\Sig|_x)$, as desired.
  \end{proof}

Concretely, in local coordinates $(r_1,\ldots, r_n)$ such that $r_1(x)=\cdots = r_j(x) = 0$ and $r_i(x) > 0$ for $i > j$ we may write a tangential basepoint as
\[
v = v_1 \cvf{r_1}|_x + \cdots + v_j\cvf{r_j}|_x
\]
where $v_1,\ldots,v_j \in \RR_{>0}$.  The corresponding virtual morphism $s\colon * \to \Sigbas$ is thus determined by the positive constants
\[
s^*r_i = \begin{cases}
    v_i & 1 \le i \le j \\
    r_i(x) & j < i \le n.
\end{cases}
\]
In light of this result we shall often denote the corresponding virtual morphism $s \colon * \to \Sigbas$ simply by $\sum v_i \cvf{r_i}|_x$.

\begin{remark} 
    In the literature, it is common to consider paths between tangential basepoints $v, w$ on $\Sig$: these are usually defined as smooth maps $\gamma : [0,1] \to \Sig$ that send the interior $(0,1)$ to $\intSig$, and whose initial and final velocities in the normal directions are given by 
    \begin{equation}\label{eq:initial-final-velocity}
    \gamma'(0)=v\qquad \mbox{ and } \qquad \gamma'(1)=-w.
    \end{equation}
    This can be phrased in the language of virtual morphisms as follows. Consider the interval $[0,1]$ with coordinate $t$, viewed as a basic manifold with corners, equipped with its two ``canonical'' tangential basepoints $s_0:=\partial_t|_0:*\to [0,1]$ and $s_1:=-\partial_t|_1:*\to [0,1]$. Then \eqref{eq:initial-final-velocity} implies the equalities
    \begin{equation}\label{eq:initial-final-compatibility-path}
    \gamma\circ s_0=v \qquad \mbox{ and } \qquad \gamma\circ s_1=w
    \end{equation}
    of virtual morphisms $*\to\Sig$. In other words, we have the following commutative diagram of virtual morphisms.
    \begin{equation}\label{eq:diagram-path-between-tangential-basepoints}
    \begin{tikzcd}
        *\sqcup * \ar[r,"s_0{,}s_1"] \ar[d,equal] &[.5cm] [0,1] \ar[d,"\gamma"] \\
        *\sqcup * \ar[r,"v{,}w"'] & \Sig
    \end{tikzcd}
    \end{equation}
    Note that \eqref{eq:initial-final-compatibility-path} holds more generally if we only require the leading Taylor coefficients at $0$ and $1$ to be $v$ and $-w$ respectively. For instance, for $(\Sig,v,w)=([0,1],s_0,s_1)$, this is the case for any smooth map $\gamma:[0,1]\to [0,1]$ sending $(0,1)$ to $(0,1)$ and satisfying $\gamma(t)\sim_0 t^a$ and $\gamma(t)\sim_1(1-t)^b$ for some $a,b\in\NN^*$, even though only those with $(a,b)=(1,1)$ are classically referred to as paths between tangential basepoints.
\end{remark}

\subsection{Scales}\label{sec:scales}
There are natural higher-dimensional counterparts of tangential basepoints, given by sections of normal bundles of strata, or more intrinsically, the phantom tangent bundle.  As we shall see, these correspond to the following notion.
\begin{definition}\label{def:scale}
     A \defn{scale} for a manifold with log corners $\Sig$ is a virtual morphism $s \colon \Sigbas \to \Sig$ that is a section of the natural projection $\Sig\to \Sigbas$.  We say that $s$ is \defn{regular} if the pullback of every phantom function is a section of $\cMbas_\Sigma\subset (\cMbas_\Sigma)^{\mathrm{gp}}$, i.e.~$s^*\cMphan_\Sig\subset \cMbas_\Sig$. We say that $s$ is \defn{nondegenerate} if the pullback of every phantom function is a strictly positive smooth function, i.e.~$s^*\cMphan_{\Sig}\subset \Cinfpos{\Sig}$.
\end{definition}

Concretely, in local coordinates $(r,t)$, the sheaf $\cM_\Sig$ is identified with the monoid $\cMbas_\Sig t^\NN$ freely generated by the phantom coordinates $t_1,\ldots,t_k$ over $\cMbas_\Sig$, and thus a scale is equivalent to the data of the $k$-tuple of nonnegative locally monomial functions
\[
s^*t_1,\ldots,s^*t_k \in (\cMbas_{\Sig})^{\mathrm{gp}}\subset j_*\Cinfpos{\intSig}.
\]
In general, the functions $s^*t_1,\ldots, s^*t_k$ may have poles on the boundary of $\Sig$. The scale $s$ is regular in this chart if and only if these functions extend to (non-negative) smooth functions on $\Sig$. It is nondegenerate if and only if they extend to positive smooth functions on $\Sig$. 

Invariantly, a regular scale is uniquely determined by its restriction to the phantoms, giving a morphism $s|_{\cMphan_\Sig} \colon \cMphan_{\Sig} \to \cMbas_{\Sig}\subset \Cinfge{\Sig}$, which defines a non-negative section of the phantom tangent bundle. We thus have the following. 
\begin{lemma}
    There is a natural bijection between regular scales (respectively, nondegenerate scales) on a manifold with log corners $\Sig$,  and morphisms of basic manifolds with log corners $\Sigbas \to \tbgephan{\Sig}$ (resp.~$\Sigbas \to \tbposphan{\Sig})$ that are sections of the natural projection. 
\end{lemma}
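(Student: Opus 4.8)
The plan is to reduce both sides of the claimed bijection to the same local data---a $k$-tuple of sections of $\cMbas_\Sig$---and then to check that the resulting correspondence is canonical, so that it glues across charts.

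First I would pin down the data in a scale. The underlying map of the projection $\Sig\to\Sigbas$ is the identity of $\uSig$, so any section $s$ has $\underline s=\id{\uSig}$ and is therefore nothing but a morphism of sheaves of monoids $s^*\colon\cM_\Sig\to\cM_{\Sigbas}=\cMbas_\Sig$. The section condition forces $s^*$ to restrict to the identity on $\cMbas_\Sig$, and the weak-morphism condition \eqref{eq:weak-mor-equality} is then automatic because $\Cinfpos{\Sig}\subset\cMbas_\Sig$. Since, in a chart, $\cM_\Sig=\cMbas_\Sig\, t^\NN$ is freely generated over $\cMbas_\Sig$ by the phantom coordinates $t_1,\ldots,t_k$, such an $s^*$ is determined by the tuple $(s^*t_1,\ldots,s^*t_k)\in(\cMbas_\Sig)^k$, and conversely any such tuple extends multiplicatively to a scale. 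The scale is nondegenerate exactly when each $s^*t_j$ is strictly positive, i.e.\ lies in $\Cinfpos{\Sig}$.

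Next I would describe the target morphisms in the same terms. As $\Sigbas$ and $\tbgephan{\Sig}$ are both basic, a morphism between them is the same thing as an interior b-map of the underlying manifolds with corners (the lift to monoids being unique when it exists); hence a section $\sigma$ of the projection $\tbgephan{\Sig}\to\Sigbas$ is a locally monomial section of the bundle $\tbgephan{\Sig}$. In a chart the fibre coordinates of $\tbgephan{\Sig}$ are precisely the phantom coordinates $t_1,\ldots,t_k$, now promoted to honest basic coordinates on the total space (\autoref{sec:phantom-tangent}), and $\sigma$ is determined by its components $(\sigma^*t_1,\ldots,\sigma^*t_k)$; the b-map condition is exactly the requirement that these lie in $\cMbas_\Sig$, and $\sigma$ lands in $\tbposphan{\Sig}$ precisely when they are strictly positive. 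With both sides reduced to a tuple in $(\cMbas_\Sig)^k$, the bijection is given by $s^*t_j=\sigma^*t_j$. The remaining point is that this is canonical, independent of the chart; for this I would invoke the reduction of the structure group of $\tbphan{\Sig}$ to $\symgp{k}\ltimes\RR_{>0}^k$ recorded in \autoref{sec:phantom-tangent}: a transition function sends each $t_j$ to $\lambda t_{j'}$, and the scale data and the section components transform under this same rule, so the local bijections agree on overlaps and glue to a natural global bijection matching nondegenerate scales with sections into $\tbposphan{\Sig}$.

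I expect the only real subtlety to be the insistence on morphisms of basic manifolds with log corners, rather than arbitrary smooth sections of the underlying bundle. The point is that such a morphism remembers its components $\sigma^*t_j$ as elements of $\cMbas_\Sig$---locally monomial functions, together with their monomial orders along the boundary---which is exactly the data carried by a scale, whereas a naive pointwise section of the phantom tangent bundle would forget this logarithmic structure. Checking that the monoid-homomorphism property of $s^*$ on $\cM_\Sig$ matches that of $\sigma^*$ on $\cM_{\tbgephan{\Sig}}$ under the identification $s^*t_j=\sigma^*t_j$ is the one place where care is needed; everything else is a direct unwinding of the definitions.
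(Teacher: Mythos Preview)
Your proposal is correct and follows essentially the same approach as the paper: both reduce a scale to the data of the tuple $(s^*t_1,\ldots,s^*t_k)\in(\cMbas_\Sig)^k$ and identify this with a section of $\tbgephan{\Sig}$. The only difference is one of packaging: the paper phrases the correspondence invariantly by observing that the restriction $s|_{\cMphan_\Sig}\colon\cMphan_\Sig\to\cMbas_\Sig\subset\Cinfge{\Sig}$ is, pointwise, exactly an $\cM_\Sig$-linear map $\cMphan_{\Sig,x}\to\RR$, i.e.\ a phantom tangent vector by definition, thereby avoiding any explicit gluing check; you instead work in charts and verify compatibility via the structure-group reduction to $\symgp{k}\ltimes\RR_{>0}^k$, which is equally valid.
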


This explains the terminology: a scale assigns a notion of ``unit length vectors'' in the phantom directions.

We will tacitly identify a scale with the corresponding phantom vector field, writing a scale with components $s_j(r) \defas s^*t_j \in \cMbas_{\Sig}$ in local coordinates as
\[
s = \sum s_j(r)\cvf{t_j}.
\]

\begin{example}
By \autoref{ex:weak-mor-pt-to-end}, a scale for $[0)$ with phantom coordinate $t$ is the same thing as a positive number $\lambda \defas s^*t \in \RR_{>0}$; we write $s = \lambda \cvf{t}$.  Note that $s$ is automatically nondegenerate.
\end{example}
\begin{example}
A scale for $[0,\infty)\times [0)$ with coordinates $(r,t)$ is the same thing as a function $s^*(t)=g(r)r^j$, where  $g$ is a positive smooth function on $\halfspc{}$ and $j\in\ZZ$; we write $s = g(r)r^j\cvf{t}$.  Then $s$ is regular iff $j\ge 0$ and non-degenerate iff $j=0$.
\end{example}

\begin{proposition}\label{prop:scales-exist}
    Every manifold with log corners admits a nondegenerate scale.
\end{proposition}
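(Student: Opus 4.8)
The plan is to use the reformulation of nondegenerate scales provided by the lemma immediately preceding the statement: a nondegenerate scale on $\Sig$ is the same thing as a smooth section of the natural projection $\tbposphan{\Sig} \to \uSig$, where $\tbposphan{\Sig} \subset \tbphan{\Sig}$ is the positive cone inside the phantom tangent bundle, with fibres $(0,\infty)^k \subset \RR^k$. So it suffices to produce a global section of this bundle of positive orthants. Such sections exist locally: in any chart with phantom coordinates $t_1,\ldots,t_k$, the constant section $s = \sum_j \cvf{t_j}$ (equivalently $s^* t_j = 1$ for all $j$) is a nondegenerate scale on the chart.

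First I would globalize these local sections by a partition-of-unity argument, using that $\tbphan{\Sig}$ is an honest finite-rank real vector bundle over $\uSig$, so that its sections can be added and scaled by smooth functions. Choose a locally finite cover $\{U_\alpha\}$ of $\uSig$ by charts, a nondegenerate scale $s_\alpha$ on each $U_\alpha$ as above, and a subordinate smooth partition of unity $\{\rho_\alpha\}$ with $\rho_\alpha \ge 0$ and $\sum_\alpha \rho_\alpha = 1$. Since $\rho_\alpha$ is supported in $U_\alpha$, each product $\rho_\alpha s_\alpha$ extends by zero to a global smooth section of $\tbphan{\Sig}$, and I would set
\[
s \defas \sum_\alpha \rho_\alpha s_\alpha,
\]
which is a smooth global section of $\tbphan{\Sig}$ by local finiteness of the cover.

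It then remains to check that $s$ lands in the positive subbundle $\tbposphan{\Sig}$, i.e.\ that it is a genuine nondegenerate scale rather than merely a phantom vector field; this is where I expect the only real subtlety to lie. At a point $x \in \uSig$, the value $s(x) = \sum_{\alpha} \rho_\alpha(x)\, s_\alpha(x)$ is a convex combination of the vectors $s_\alpha(x) \in \tbposphan[x]{\Sig}$, with at least one strictly positive weight since $\sum_\alpha \rho_\alpha(x) = 1$. As each fibre $\tbposphan[x]{\Sig}$ is a convex cone (it is $(0,\infty)^k$ in any local basis, closed under positive linear combinations), such a combination again lies in $\tbposphan[x]{\Sig}$. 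The point requiring care is that this is independent of the chart used to identify the fibre, since a priori the decomposition of $\tbphan{\Sig}$ into phantom line bundles is only local: here I would invoke the reduction of the structure group of $\tbphan{\Sig}$ to $\symgp{k}\ltimes\RR_{>0}^k$ established earlier. The transition functions, being compositions of coordinate permutations and positive diagonal rescalings, are linear maps preserving the positive orthant; since linear maps commute with convex combinations, both the positive cone and the operation of forming $s$ are well defined independently of the chosen trivialization. Hence $s$ is a global nondegenerate scale, completing the proof.
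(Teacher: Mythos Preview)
Your argument is correct and complete. The key observation that the fibrewise positive cone $\tbposphan[x]{\Sig}$ is convex and preserved by the transition functions in $\symgp{k}\ltimes\RR_{>0}^k$ is exactly what makes a partition-of-unity argument go through, and you have stated and used it carefully.

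The paper takes a different route. Rather than averaging local sections directly, it first passes to a finite cover $\pi\colon\uSig'\to\uSig$ on which the associated $\symgp{k}$-bundle becomes trivial, so that the structure group of $\pi^*\tbposphan{\Sig}$ reduces to $(\RR_{>0}^k,\cdot)\cong(\RR^k,+)$; the vanishing of $H^1(\uSig',\Cinf{\uSig'})$ then implies this bundle is trivial, hence admits a section, which is averaged over the deck group to make it descend. Your approach is more elementary: it avoids the cover, the cohomological argument, and the averaging step, replacing all of them with the single observation that the positive cone is convex, so a convex combination of local positive sections remains positive. The paper's argument has the mild advantage of exhibiting explicitly that the obstruction theory is controlled by the finite permutation monodromy, but for the bare existence statement your proof is both shorter and more transparent.
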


\begin{proof}
    Recall that the bundle $\tbposphan{\Sig} \to \Sig$ has structure group $\symgp{k}\ltimes \RR_{>0}^k$. 
 Let $P \to \uSig$ be the associated $\symgp{k}$-bundle;  its fibres are in bijection with the boundary tangent hyperplanes along the zero section in $\tbgephan{\Sig}$.  Note that $P$ is determined up to isomorphism by the monodromy representation $\pi_1(\uSig) \to \symgp{k}$, and hence there exists a finite cover $\pi \colon \uSig' \to \uSig$ such that $\pi^*P$ is trivial.  The bundle $\pi^*\tbposphan{\Sig}$ then admits a further reduction of structure to the group $(\RR_{>0}^k,\cdot)\cong (\RR^k,+)$, and is thus classified by an element in the sheaf cohomology group $H^1(\uSig',\Cinf{\uSig'})^{\oplus k} = 0$.  Therefore  $\pi^*\tbposphan{\Sig}$ is a trivial bundle with fibre $(0,\infty)^k$, and hence it admits a section, say $s'$.  Averaging $s'$ over the action of $\symgp{k}$, we may assume without loss of generality that $s'$ is $\symgp{k}$-invariant, and  hence descends to a section of $\tbposphan{\Sig}$, giving the desired nondegenerate scale $s$.
\end{proof}

\begin{definition}\label{def:scale-preserving}
    Let $(\Sig,s)$ and $(\Psi,\tilde s)$ be manifolds with log corners equipped with scales.  A virtual morphism $\phi \colon \Sig \to \Psi$ is \defn{scale-preserving} if there exists a virtual morphism $\phi' \colon \Sigbas \to \Psibas$ making the following diagram commute:
    \begin{equation}\label{eq:scale-preserving}
    \begin{tikzcd}
        \Sig \ar[r,"\phi"] & \Psi \\
        \Sigbas \ar[r,"\phi'"]\ar[u," s"] \ar[r] & \Psibas\ar[u,"\tilde s"].
    \end{tikzcd}
    \end{equation}
\end{definition}
Clearly compositions of scale-preserving virtual morphisms are scale-preserving.  Note that the morphism $\phi'$, if its exists, is unique, being given by the formula
\begin{equation}\label{eq:scale-preserving-concrete}
\phi' = p \circ \phi \circ s
\end{equation}
where $p \colon \Psi \to \Psibas$ is the canonical projection. Therefore $\phi$ is scale-preserving if and only if $\tilde{s}\circ p\circ\phi\circ s=\phi\circ s$.

\begin{remark}
Note that for a virtual morphism $\phi:\Sig\to\Psi$ there rarely exists a virtual morphism $\phi':\Sigbas\to \Psibas$ that fits into a commutative diagram with $\phi$ and the natural projections $\Sig\to\Sigbas$ and $\Psi\to\Psibas$. For instance, there is no virtual morphism $i'$ that makes the following diagram commute, where $i$ is the inclusion and $p$ the projection.
\begin{equation*}
\begin{tikzcd}
    {[0)} \ar[r,"i",hook] \ar[d,"p"] & {[0,\infty)} \ar[d,equal] \\
    \{0\} \ar[r,"i'"]& {[0,\infty)} 
\end{tikzcd} 
\end{equation*}
Indeed, $i^*$ sends the basic coordinate $r$ to the phantom coordinate $t$ whereas $p^*$ is the inclusion of $\RR_{>0}$ inside $\RR_{>0}t^\ZZ$ and does not contain $t$ in its image.
\end{remark}

\section{Functions with logarithmic singularities}
\label{sec:log-functions}
Let $\Sig$ be a manifold with log corners. In this section, we construct a natural sheaf $\Cinflog{\Sig}$ of functions with logarithmic singularities on $\Sig$, by adding formal logarithms $\log(f)$ for every $f \in \cM_{\Sig}$, in such a way that $\log(f)$ agrees with the usual logarithm of the function $\alpha(f)$ whenever the latter is not identically zero.  To make the functoriality of the construction clear, we will start by defining $\Cinflog{\Sig}$ abstractly using generators and relations; we then spell out what this means concretely in local coordinates.  

Throughout the present \autoref{sec:log-functions} we will denote formal logarithms by $\ulog(f)$, reserving $\log(f)$ for the logarithm of an actual function, in order to avoid confusion.  This notation is temporary and from \autoref{sec:vector-fields} onwards we shall simply write $\log(f)$ for both the formal and actual logarithm;  \autoref{thm:injectivity-of-j*} below will guarantee that this does not introduce any ambiguity.

\subsection{Construction and functoriality} Our sheaf of functions is defined as follows.
\begin{definition}\label{def:log-functions}
    The \defn{sheaf of logarithmic functions on $\Sig$} is the sheaf $\Cinflog{\Sig}$ of $\Cinf{\Sig}$-algebras generated by formal symbols $\ulog(f)$ where $f \in \cMgp_\Sig$, subject to the relations:
\begin{enumerate}
    \item For every $f_1,f_2 \in \cMgp_\Sig$, we have
    \[
        \ulog(f_1f_2) = \ulog(f_1)+\ulog(f_2).
    \]
    \item If $f \in \cMbas_\Sig$ and $g \in \Cinf{\Sig}$ are such that $g\log(\alpha(f))$ is smooth on $\Sig$, then 
    \[
        g\ulog(f) = g\log(\alpha(f)),
    \]
    where $\log(\alpha(f))$ denotes the real-valued logarithm of the nonnegative function $\alpha(f)$.
\end{enumerate}
\end{definition}

\begin{remark}
The first relation implies that $\ulog(1)=0$ and $\ulog(f^{-1})=-\ulog(f)$ for all $f\in\cM_\Sig$, and therefore $\Cinflog{\Sig}$ is generated by the formal symbols $\ulog(f)$ for $f\in \cM_\Sig\subset \cMgp_\Sig$.
\end{remark}

\begin{remark}
    By ``$g\log(\alpha(f))$ is smooth on $\Sig$'', we mean the following: since $f$ is basic, it is expressed in local coordinates as $f=h(r)r_1^{j_1}\cdots r_n^{j_n}$ for some $j_i\in\NN$ and a positive smooth function $h$. Therefore, $g\log \alpha(f)$ is a well-defined smooth function on the interior of $\Sig$, and the condition is that this function extends as a smooth function on $\Sig$. The right-hand side of the second relation above refers to this extension, as a section of $\Cinf{\Sig}$ (or rather its image in $\Cinflog{\Sig}$).
\end{remark}

The meaning of the first relation is self-evident.  The second relation ensures that we do not overcount the smooth functions.  For instance,  taking $g=1$ and identifying a positive function $f$ with the corresponding section of $\cM_\Sig$, we have the following.

\begin{lemma}\label{lem:formal-vs-actual-log}
    If $f \in \Cinfpos{\Sig}$, then $\ulog(f) = \log(f) \in \Cinflog{\Sig}$.
\end{lemma}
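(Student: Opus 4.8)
The plan is to apply the second defining relation of \autoref{def:log-functions} directly, with $g=1$. First I would identify the positive function $f \in \Cinfpos{\Sig}$ with the corresponding section of $\cM_\Sig$ under the inclusion $\Cinfpos{\Sig} \hookrightarrow \cM_\Sig$; under this identification $\alpha(f)=f$, since $\alpha$ is a factorization of the inclusion $\Cinfpos{\Sig}\hookrightarrow\Cinfge{\Sig}$. Because $\alpha(f)=f$ is strictly positive, its germ is nonzero at every point, so $f$ is a basic section, i.e.\ $f\in\cMbas_\Sig$, and relation~(2) is applicable to it.

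It then remains to check the smoothness hypothesis of relation~(2). Since $f$ takes values in $(0,\infty)$ and the real logarithm is smooth on $(0,\infty)$, the composite $\log(\alpha(f))=\log(f)$ is a genuine smooth function on all of $\Sig$, with no divergence at the boundary precisely because $f$ is bounded away from zero on compact sets. Taking $g=1$, relation~(2) then gives $\ulog(f)=\log(\alpha(f))=\log(f)$ in $\Cinflog{\Sig}$, as claimed. I expect no real obstacle here: the entire content is the observation that a strictly positive smooth function is a basic section whose formal logarithm is \emph{forced} to coincide with its actual logarithm by the second relation, and the only point deserving a moment's attention is the smoothness of $\log(f)$ up to and including the boundary.
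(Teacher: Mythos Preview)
Your proposal is correct and takes essentially the same approach as the paper: the paper presents this lemma as an immediate consequence of the second defining relation with $g=1$, after identifying $f$ with its image in $\cM_\Sig$. Your additional remarks verifying that $f$ is basic and that $\log(f)$ is smooth up to the boundary are accurate and flesh out exactly the observation the paper leaves implicit.
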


The second relation in \autoref{def:log-functions} is more subtle when $\alpha(f)$ has zeros, since then $\log( \alpha(f))$ is not smooth.  For instance, we have the following useful property.

\begin{lemma}\label{lem:flat-times-log}
    Suppose that $f \in \cMbas_\Sig$ and $g \in \Cinf{\Sig}$ are such that the function $g\log(\alpha(f))$ is continuous.  If $x \in \Sig$ is any point such that the function $\alpha(f)$ vanishes at $x$, then $g$ and $g\log(\alpha(f))$ also vanish at $x$.
\end{lemma}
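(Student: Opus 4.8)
The plan is to reduce everything to a local computation in basic coordinates, where the only analytic input is the elementary limit $r\log r \to 0$ as $r \to 0^{+}$, together with Hadamard's lemma. First I would fix a chart around $x$ and work in local basic coordinates $r_1,\ldots,r_n$ (the phantom coordinates are irrelevant here, since $g$ and $\alpha(f)$ are functions on the underlying manifold with corners). Reordering, I would arrange that $r_i(x) = 0$ exactly for $i \le m$. Writing the basic section as $f = g_0(r)\,r_1^{j_1}\cdots r_n^{j_n}$ with $g_0 \in \Cinfpos{\Sig}$ and $j_i \in \NN$, the hypothesis $\alpha(f)(x) = 0$ says precisely that the set $T \defas \{i \le m : j_i > 0\}$ is nonempty. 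The useful structural fact to record is the decomposition
\[
\log\alpha(f) = h + \sum_{i \in T} j_i \log r_i, \qquad h \defas \log g_0 + \sum_{i \notin T,\, j_i > 0} j_i \log r_i,
\]
where $h$ is smooth near $x$ because every index contributing to it has $r_i(x) > 0$.

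To prove the first assertion $g(x) = 0$ — and, more usefully, that $g$ vanishes at \emph{every} point of the zero locus $V \defas \{\alpha(f) = 0\}$ — I would argue by contradiction. Suppose $g(x') \ne 0$ for some $x' \in V$; then by continuity $|g| \ge c > 0$ on a neighbourhood of $x'$. Since $\alpha(f)$ is continuous, nonnegative, and strictly positive on the dense interior $\intSig$, I can choose interior points $y \to x'$; along them $\alpha(f)(y) \to \alpha(f)(x') = 0$ from above, so $\log\alpha(f)(y) \to -\infty$ and hence $\lvert g(y)\log\alpha(f)(y)\rvert \ge c\,\lvert\log\alpha(f)(y)\rvert \to +\infty$. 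This contradicts the assumed continuity (hence local boundedness) of $g\log\alpha(f)$ at $x'$. Applying this at $x$ gives $g(x) = 0$; applying it at all points of $V$ shows $g$ vanishes on $V$, and in particular on each coordinate hyperplane $\{r_i = 0\}$ with $i \in T$, these being the local branches of $V$ through $x$.

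For the second assertion I would upgrade this pointwise vanishing to a divisibility statement and let the factors $r_{i}\log r_{i}$ absorb the singularity. Since $g$ is smooth and vanishes on each $\{r_i = 0\}$ with $i \in T$, an iterated application of Hadamard's lemma (valid up to the corners by Seeley's extension theorem, see \autoref{rem:Ck-topology}) produces a smooth function $\tilde g$ near $x$ with $g = \big(\prod_{i\in T} r_i\big)\tilde g$. Substituting this and the decomposition above yields
\[
g\log\alpha(f) = \Big(\prod_{i\in T} r_i\Big)\tilde g\,h \;+\; \sum_{i_0 \in T} j_{i_0}\,\tilde g\Big(\prod_{i\in T,\, i \ne i_0} r_i\Big)\big(r_{i_0}\log r_{i_0}\big),
\]
and each summand tends to $0$ as $y \to x$ (the first because $\prod_{i\in T} r_i \to 0$ while $\tilde g\,h$ stays bounded, and each term of the second because $r_{i_0}\log r_{i_0} \to 0$ while the remaining factors stay bounded). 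As $g\log\alpha(f)$ is continuous and $\intSig$ is dense, its value at $x$ equals this limit, namely $0$. The crux of the argument — and the step I expect to be the main obstacle — is precisely the passage from vanishing of $g$ at the single point $x$ to its vanishing along the whole face: the value $(g\log\alpha(f))(x)$ depends genuinely on the transverse behaviour of $g$, and a mere Lipschitz bound $g = O(\mathrm{dist})$ at $x$ would fail to control the logarithmic blow-up along tangential approach directions. The divisibility supplied by Hadamard's lemma is exactly what overcomes this.
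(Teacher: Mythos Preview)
Your proof is correct and follows essentially the same approach as the paper: work in local coordinates, write $f$ as a positive function times a monomial, use boundedness of $g\log\alpha(f)$ to force $g$ to vanish on the zero set of $\alpha(f)$, apply Hadamard's lemma for divisibility, and conclude via $r\log r \to 0$. Your version is more detailed in one respect: where the paper says only that $g$ is divisible by each $r_i$ and leaves the multi-face conclusion implicit, you iterate Hadamard to obtain $g = \big(\prod_{i\in T} r_i\big)\tilde g$ and write out the resulting decomposition explicitly, which makes the vanishing at a deep corner point transparent.
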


\begin{proof}
    The problem is local, so we may work in a chart with coordinates $(r,t)$.  Then $f$ has the form $f = f_0(r) r^J$ where $f_0(r)$ is a positive smooth function and $J =(j_1,\ldots,j_n)$ is a multi-index.  In particular the vanishing set of $f$ is the union of the boundary facets $r_i=0$ for indices $i$ such that $j_i> 0$, so it suffices to show that $g$ also vanishes there. But the function
    \[
        g \log(\alpha(f)) = g \cdot \rbrac{\log(f_0) + \sum_{i=1}^n j_i \log(r_i)}.
    \]
    is continuous, hence bounded, which implies that $g \to 0$ as $r_i \to 0$, to compensate for the divergence of $\log(r_i)$. 
 Hence, by Hadamard's lemma, $g$ is divisible by $r_i$ in the algebra of smooth functions.  Since $r_i\log r_i \to 0$  as $r_i\rightarrow 0$, it follows that $g\log\alpha(f)$ also vanishes in this limit.
\end{proof}
\begin{remark}
If  $g \log \alpha(f)$ is not just continuous, but actually smooth, then $g$ must vanish to infinite order on the vanishing set of $\alpha(f)$; see \autoref{prop:j^*-injective-induction} below.
\end{remark}

The sheaves $\Cinflog{}$ are functorial with respect to virtual morphisms, in the following sense.

\begin{lemma}\label{lem:functoriality-of-log-functions}
    If $\phi\colon \Sig \to \Psi$ is a virtual morphism, then the formula
    \[
    \phi^*(\ulog(f))\defas\ulog(\phi^*(f)) \quad (f\in\uphi^{-1}\cMgp_\Psi)
    \]
    uniquely defines a morphism $\phi^*\colon \uphi^{-1}\Cinflog{\Psi} \to \Cinflog{\Sig}$ of sheaves of $\uphi^{-1}\Cinf{\Psi}$-algebras.
\end{lemma}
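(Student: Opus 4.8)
The plan is to use the fact that $\Cinflog{\Psi}$ is presented by generators and relations (\autoref{def:log-functions}): a morphism of sheaves of $\uphi^{-1}\Cinf{\Psi}$-algebras out of $\uphi^{-1}\Cinflog{\Psi}$ is the same datum as the structure map $\uphi^{-1}\Cinf{\Psi}\to\Cinf{\Sig}\to\Cinflog{\Sig}$ (the ordinary pullback $\uphi^*$ followed by the canonical inclusion), together with images of the generators $\ulog(f)$ that satisfy the two defining relations. I would send $\ulog(f)$ to $\ulog(\phi^*f)$, which makes sense since $\phi^*f$ is a section of $\cM_{\Sig}$. Uniqueness is then immediate, because the symbols $\ulog(f)$ together with the image of $\uphi^{-1}\Cinf{\Psi}$ generate $\uphi^{-1}\Cinflog{\Psi}$, so any $\uphi^{-1}\Cinf{\Psi}$-algebra morphism is determined by the prescribed values. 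It therefore remains to verify that the two relations of \autoref{def:log-functions} are carried to identities in $\Cinflog{\Sig}$; since these relations are local, I would check them on connected open subsets of $\Sig$.

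The first relation is straightforward: because $\phi^*$ is a morphism of sheaves of monoids, $\phi^*(f_1f_2)=\phi^*f_1\cdot\phi^*f_2$, and applying the first relation inside $\Cinflog{\Sig}$ gives $\ulog(\phi^*(f_1f_2))=\ulog(\phi^*f_1)+\ulog(\phi^*f_2)$, exactly the image of the corresponding relation on $\Psi$.

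The second relation is where the real work lies, and the main obstacle is that $\phi^*f$ need not remain basic: the map $\uphi$ may send points into the vanishing locus of $\alpha_\Psi(f)$, which is precisely the tangential-basepoint phenomenon. Write $F\defas\phi^*f$ and $G\defas\uphi^*g$; the goal is the identity $G\,\ulog(F)=\uphi^*\bigl(g\log\alpha_\Psi(f)\bigr)$ in $\Cinflog{\Sig}$, where the right-hand side is the pullback of the smooth function $g\log\alpha_\Psi(f)$. On a connected open set I would distinguish two cases according to whether $\uphi^*\alpha_\Psi(f)$ is the zero function. If it is not, then \autoref{prop:weak-mor-basic-sections} applies and yields $\alpha_\Sig(F)=\uphi^*\alpha_\Psi(f)$; in particular $F$ is basic, so the locus $\{\alpha_\Sig(F)>0\}$ is dense, and there $G\log\alpha_\Sig(F)=\uphi^*g\cdot\uphi^*\bigl(\log\alpha_\Psi(f)\bigr)=\uphi^*\bigl(g\log\alpha_\Psi(f)\bigr)$. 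Hence the a priori only partially-defined function $G\log\alpha_\Sig(F)$ extends smoothly, its smooth extension being the section $\uphi^*\bigl(g\log\alpha_\Psi(f)\bigr)$ of $\Cinf{\Sig}$. The second relation of \autoref{def:log-functions}, applied now to $F\in\cMbas_{\Sig}$ and $G$, then gives $G\,\ulog(F)=G\log\alpha_\Sig(F)=\uphi^*\bigl(g\log\alpha_\Psi(f)\bigr)$, as wanted.

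If instead $\uphi^*\alpha_\Psi(f)$ vanishes identically, then $\uphi$ maps the component into the zero locus $Z$ of $\alpha_\Psi(f)$. Here I would invoke \autoref{lem:flat-times-log}: the smoothness hypothesis on $g\log\alpha_\Psi(f)$ forces both $g$ and $g\log\alpha_\Psi(f)$ to vanish on $Z$. Pulling back, $G=\uphi^*g=0$ and $\uphi^*\bigl(g\log\alpha_\Psi(f)\bigr)=0$, so both sides of the desired identity vanish and the relation holds trivially, whether $F$ is basic or phantom. Having verified both relations in both cases, the universal property of the presentation produces the desired morphism $\phi^*$, and the construction is manifestly a morphism of $\uphi^{-1}\Cinf{\Psi}$-algebras, completing the proof.
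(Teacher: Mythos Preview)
Your proposal is correct and follows essentially the same approach as the paper's proof: both argue via the generators-and-relations presentation, dispatch the first relation using that $\phi^*$ is a monoid homomorphism, and handle the second relation by the same case split on whether $\uphi^*\alpha_\Psi(f)$ vanishes, invoking \autoref{prop:weak-mor-basic-sections} in the nonvanishing case and \autoref{lem:flat-times-log} in the vanishing case. Your Case~1 is slightly more explicit than the paper's (you spell out why $G\log\alpha_\Sig(F)$ extends smoothly via the dense open set), but the content is identical.
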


\begin{proof}
Uniqueness is clear because the monoid elements generate $\Cinflog{}$ over $\Cinf{}$. It remains to show that the pullback is well-defined, that is, we must show that the map $\phi^*\colon \uphi^{-1}\cM_{\Psi} \to \cM_\Sig$ preserves the ideals generated by the two types of relations in \autoref{def:log-functions}. Compatibility with the first relation is immediate since $\phi^*$ is a monoid homomorphism. For the second relation, let $x \in \Sig$ be a point, let $f \in \uphi^{-1}(\cMbas_{\Psi})_x = \cMbas_{\Psi,\uphi(x)}$ be a germ of a section and let $g \in \Cinf{\Psi,\uphi(x)}$ be a germ of a function such that $g \log(\alpha_\Psi(f))$ is smooth on $\Psi$.  We must show that
\begin{align}\label{eq:log-pullback-relation-check}
    \uphi^*g\cdot \ulog(\phi^*f) = \uphi^*\rbrac{g\log\rbrac{\alpha_\Psi(f)}} \in \Cinflog{\Psi,x}.
\end{align}
There are two possibilities: either the germ $\uphi^*(\alpha_{\Psi}(f))$ is nonzero, or it is zero.  If $\uphi^*(\alpha_{\Psi}(f))$ is nonzero, then by \autoref{prop:weak-mor-basic-sections}, $\phi^*f\in \cM_\Sig$ and $\alpha_{\Sig}(\phi^*f)=\uphi^*(\alpha_\Psi(f))$, hence \eqref{eq:log-pullback-relation-check} is one of the defining relations for $\Cinflog{\Sig,x}$.  Otherwise, $\uphi^*(\alpha_\Psi(f)) = 0$ says that $\uphi$ maps a neighbourhood of $x$ to the vanishing set of $\alpha_\Psi(f)$. Hence by \autoref{lem:flat-times-log}, we have $\uphi^*g = \uphi^*(g \log \alpha_\Psi(f)) =0$, so that both sides of \eqref{eq:log-pullback-relation-check} are identically zero.
\end{proof}

\begin{example}
As an illustration of the subtle point in the proof of the lemma, consider the virtual morphism $i\colon\{0\}\to \halfspc{}$ corresponding to a tangential basepoint $c\,\cvf{r}$ at $0$ with $c>0$, i.e.~defined by $i^*(r)=c$. Let $g(r)$ be a smooth function on $\halfspc{}$ such that $g(r)\log(r)$ is smooth. Then we have the relation
\[
g(r)\ulog(r)=g(r)\log(r)
\]
in $\Cinflog{\halfspc{}}$ and we need to prove that the relation
\[
g(0)\ulog(c)\stackrel{?}{=}\big[g(r)\log(r)\big]_{r=0}
\]
is satisfied in $\Cinflog{\{0\}}=\RR$. The fact that $g(r)\log(r)$ is smooth implies, thanks to \autoref{lem:flat-times-log}, that both $g(r)$ and $g(r)\log(r)$ vanish at $r=0$, and therefore the latter relation reads $0=0$.
\end{example}

Recall that the boundary immersion of $\ubdSig \to \uSig$ a manifold with corners is locally modelled by the inclusion $\{0\} \times \halfspc{{n-1}} \to \halfspc{n}$ of the boundary facet $\{r_1=0\}$ in $\halfspc{{n}}$, so that $\Cinf{\ubdSig}$ is locally identified with the quotient
\[
\Cinf{\{0\}\times\halfspc{n-1}}
\cong \Cinf{\halfspc{n}}/(r_1),
\]
where $(r_1)$ denotes the principal ideal generated by $r_1$.  Using the functoriality above, we can likewise show that for a manifold with log corners $\Sig$, the algebra $\Cinflog{\bdSig}$ is locally identified with $\Cinflog{\halfspc{n}\times[0)^k}/(r_1)$.  In coordinate-free terms, we have the following following.
\begin{lemma}\label{lem:boundary-log-principal-ideal}
Let $\Sig$ be a manifold with log corners, let $i : \bdSig \to \Sig$ be the natural immersion of the boundary, and let $\mathscr{I} = \ker(\underline{i}^* : i^{-1}\Cinf{\uSig} \to \Cinf{\ubdSig})$ be the ideal defining the underlying immersion $\underline{i}: \ubdSig \to \uSig$ of manifolds with corners.  Then the pullback $i^* : i^{-1}\Cinflog{\Sig} \to \Cinflog{\bdSig}$ induces an isomorphism
\[
\Cinflog{\bdSig} \cong \frac{i^{-1}\Cinflog{\Sig}}{\mathscr{I}\cdot i^{-1}\Cinflog{\Sig}}
\]
\end{lemma}
\begin{proof}
    The problem is local in $\bdSig$, so it suffices to prove the corresponding statement when $\Sig = \halfspc{} \times \Psi$ for some manifold with log corners $\Psi$, and the boundary immersion $\bdSig \to \Sig$ is replaced with the natural immersion
    \[
    i : [0) \times \Psi \hookrightarrow \halfspc{} \times \Psi
    \]
    Let $r$ be the standard coordinate on $\halfspc{}$ and $t = i^*r$ its pullback to $[0)$.  Then $\mathscr{I} \defas \ker{\underline{i}^*} = (r) < \Cinf{\halfspc{}\times \uPsi} $ is the principal ideal generated by $r$.  We must show that $\ker i^* = r \cdot \Cinflog{[0)\times\Psi}$.  
    The inclusion $r \cdot \Cinflog{[0)\times\Psi} \subset \ker i^*$ is immediate, since $i^*$ is a morphism of $\Cinf{\Sig}$-algebras and $i^*r = 0$ as smooth functions. 
    
    For the reverse inclusion, set
    \[
    p = q \times \id{\Psi} : \halfspc{} \times {\Psi} \to [0) \times \Psi
    \]
    where $q : \halfspc{} \to [0)$ is the virtual retraction from  \autoref{ex:weak-mor-retract-interval}, defined by $q^*t = r$.    Since $\cM_\Sig$ is generated over $\Cinfpos{\Sig}$ by $r$ and  $\cM_{\Psi}$, any element $f \in \Cinflog{\Sig}$ can be locally represented by a finite sum
    \[
    f = \sum_j g_j (\ulog r)^{l_j} \ulog h_{i,1} \cdots \ulog h_{i,m_i}
    \]
    where $g_j \in \Cinf{\halfspc{}\times\uPsi}$ are ordinary smooth functions, $l_j,m_j \in \ZZ_{\ge 0}$, and $h_{i,m} \in \cM_{\Psi}$ for all $i,m$.  Note that if $g$ is a smooth function, then $g - p^*i^*g$ is the difference between $g$ and its restriction to $r=0$, and is therefore divisible by $r$ thanks to Hadamard's lemma.  On the other hand, $p^*i^* \ulog r = \ulog p^*i^* r = \ulog p^* t = \ulog r$ and similarly $p^*i^*\ulog  h = \ulog h$ for all $h \in \cM_{\Psi}$. Therefore, if $f \in \ker i^*$, we have that
    \[
    f = f - p^*i^*f = \sum_j (g_j-p^*i^*g_j) (\ulog r)^{l_j}  \ulog h_{j,1} \cdots \ulog h_{j,m_j}
    \]
    is divisible by $r$, as desired.
\end{proof}

\subsection{Local structure of logarithmic functions}
Our goal now is to prove \autoref{thm:log-functions} from the introduction, which describes the local structure of $\Cinflog{\Sig}$.  This follows from \autoref{lem:phantom-log-functions} and \autoref{thm:injectivity-of-j*} below which treat the contributions from phantom and basic directions, respectively.

\subsubsection{Phantom logarithms}
After the logarithms of positive functions---which are just smooth functions---the next simplest elements of $\Cinflog{}$ to understand are those of the form $\ulog f$, where $f$ is a phantom. 
\begin{example}
     Let $\Sig = [0)^k$ with phantom coordinates $t_1,\ldots,t_k$.  Since $\cM_\Sig =\RR_{>0}t^\NN$ is freely generated over the positive constants by $t_1,\ldots,t_k$, the logarithmic functions form a free commutative algebra generated by $\ulog t_1,\ldots,\ulog t_k$, i.e.
     \[
     \Cinflog{\Sig} \cong \RR[ \ulog t_1,\ldots,\ulog t_k],
     \]
     is a polynomial ring in the formal logarithms of the phantom coordinates.
\end{example}

More generally, note that the  projection $p \colon \Sig \to \Sigbas$ gives a canonical map
\[
p^*\colon \Cinflog{\Sigbas}\to\Cinflog{\Sig}
\]
of $\Cinf{\Sig}$-algebras.  We have the following:
\begin{lemma}\label{lem:phantom-log-functions}
    If $t_1,\ldots,t_k$ is a system of phantom coordinates in a neighbourhood of any point $x \in \Sig$, then $p^*$ gives a canonical isomorphism
    \[
    \Cinflog{\Sigbas,x}[\ulog t_1,\ldots,\ulog t_k] \stackrel{\sim}{\longrightarrow} \Cinflog{\Sig,x}
    \]
    of $\Cinf{\Sig,x}$-algebras.
\end{lemma}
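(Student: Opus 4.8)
The plan is to work at the stalk over $x$ and to produce an explicit two-sided inverse to the map induced by $p^*$, which settles injectivity and surjectivity at once. Write $R \defas \Cinflog{\Sigbas,x}[\ulog t_1,\ldots,\ulog t_k]$ for the polynomial algebra appearing in the statement, and let $\Phi \colon R \to \Cinflog{\Sig,x}$ be the $\Cinf{\Sig,x}$-algebra homomorphism that acts as $p^*$ on the coefficient ring $\Cinflog{\Sigbas,x}$ and sends the $i$-th indeterminate to the germ $\ulog t_i$. Three structural facts underlie the argument. First, $\Sig$ and $\Sigbas$ share the same underlying manifold with corners, so $\Cinf{\Sig,x} = \Cinf{\Sigbas,x}$, and $\alpha_\Sig$ restricted to $\cMbas_\Sig$ agrees with $\alpha_{\Sigbas}$ under the canonical identification $\cMbas_\Sig = \cM_{\Sigbas}$. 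Second, as recalled in \autoref{sec:phantom-tangent}, the monoid $\cM_{\Sig,x}$ is freely generated over $\cMbas_{\Sig,x}$ by the phantom coordinates, so each germ $f \in \cM_{\Sig,x}$ has a unique factorization $f = f_0\,t_1^{a_1}\cdots t_k^{a_k}$ with $f_0 \in \cMbas_{\Sig,x}$ and $a \in \NN^k$. Third, by functoriality (\autoref{lem:functoriality-of-log-functions}) $p^*$ is the identity on basic logarithms, i.e.\ $\Phi(\ulog f_0) = \ulog f_0$.

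Surjectivity of $\Phi$ is immediate from the first relation of \autoref{def:log-functions}, which gives $\ulog f = \ulog f_0 + \sum_i a_i\,\ulog t_i$ for the factorization above. For the inverse I would invoke the generators-and-relations presentation of $\Cinflog{\Sig,x}$ to define a $\Cinf{\Sig,x}$-algebra map $\Psi \colon \Cinflog{\Sig,x} \to R$ on generators by
\[
\Psi(\ulog f) \defas \ulog f_0 + \sum_{i=1}^k a_i\,(\ulog t_i), \qquad f = f_0\,t_1^{a_1}\cdots t_k^{a_k},
\]
where $\ulog f_0 \in \Cinflog{\Sigbas,x}$ and the $\ulog t_i$ now denote the indeterminates of $R$. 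Once $\Psi$ is known to be well defined, a check on generators shows that $\Phi$ and $\Psi$ are mutually inverse: $\Psi \circ \Phi$ fixes the coefficients and sends each indeterminate to itself, while $\Phi \circ \Psi$ recovers $\ulog f$ from $\ulog f_0 + \sum_i a_i\,\ulog t_i$ by the first relation.

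The only genuine content is the verification that $\Psi$ respects the defining relations, and this is where the distinguished role of the phantoms enters. Compatibility with the first relation reduces, after the phantom exponents cancel linearly, to $\ulog(f_0 f_0') = \ulog f_0 + \ulog f_0'$ in $\Cinflog{\Sigbas,x}$, which is the first relation for $\Sigbas$. The second relation is imposed only for $f \in \cMbas_\Sig$, so its phantom part is trivial ($a = 0$) and no indeterminate of $R$ ever occurs; using $\Cinf{\Sig,x} = \Cinf{\Sigbas,x}$ and $\alpha_\Sig|_{\cMbas_\Sig} = \alpha_{\Sigbas}$, the needed identity $g\,\ulog f_0 = g\log\alpha(f_0)$ in $R$ is exactly the second relation for $\Sigbas$. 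I expect this to be the conceptual crux: because the second relation never involves a phantom generator, the presentation of $\Cinflog{\Sig,x}$ imposes no relation among $\ulog t_1,\ldots,\ulog t_k$, which is precisely the assertion that they are free polynomial variables over $\Cinflog{\Sigbas,x}$; exhibiting $\Psi$ packages this observation into a proof.
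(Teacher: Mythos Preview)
Your proof is correct and follows essentially the same approach as the paper: both arguments hinge on the observation that the second defining relation of $\Cinflog{\Sig}$ involves only basic sections, so the phantom logarithms $\ulog t_1,\ldots,\ulog t_k$ are subject only to the additive relations coming from the monoid structure, and since $\cM_{\Sig,x}$ is freely generated over $\cMbas_{\Sig,x}$ by $t_1,\ldots,t_k$, these yield a polynomial ring. The paper phrases this tersely in terms of monoid algebras, whereas you package it by constructing the explicit inverse $\Psi$ and checking well-definedness on relations; your version is more detailed but the content is the same.
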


\begin{proof}
    By definition, $\cM_\Sig$ is freely generated over $\cMbas_{\Sig}$ by the phantom coordinates, hence the map is surjective.  Since the second relation in \autoref{def:log-functions} only involves basic elements, the only relations involving $t_1,\ldots,t_k$, are those of the first type, which define the monoid algebra of $\cM_\Sig$.  The result follows since the monoid algebra of a free monoid is a polynomial ring.
\end{proof}

\subsubsection{Structure of the basic logarithms}
In light of \autoref{lem:phantom-log-functions}, it remains to understand the structure of  the subalgebra $\Cinflog{\Sigbas}\subseteq \Cinflog{\Sig}$ generated by basic elements.  Therefore, we now assume without loss of generality that $\Sig = \Sigbas$ is basic.

Let $r_1,\ldots,r_n$ be coordinates in a neighbourhood of $x \in \Sig$, so that $\cM_{\Sig,x}=\Cinfpos{\Sig,x}r^\NN$. Using the first relation in \autoref{def:log-functions}, we may write every $\uf \in \Cinflog{\Sig}$ locally as a finite sum
\begin{align}
\uf = \sum_{I} f_I(r) \ulog{}^I(r)\label{eq:coord-log-function-formal}
\end{align}
over multi-indices $I=(I_1,\ldots,I_n) \in \NN^n$ where $f_I(r)$ are smooth functions on the underyling manifold with corners, and 
\[
\ulog{}^I(r) \defas \ulog{}^{I_1}(r_1) \cdots \ulog{}^{I_n}(r_n) 
\]
denotes the corresponding monomial in the formal logarithms of the coordinates.  Our aim now is to show that the relations defining $\Cinflog{\Sigbas}$ amount to the statement that the formal symbols $\ulog r_i$ can be identified with the actual logarithm functions $\log r_i$, so that the expressions $\uf$ as above can be manipulated like ordinary functions of $r$; this is the content of \autoref{thm:injectivity-of-j*} below but to formulate the statement we need to set some notation.

Let $j \colon \intSig \hookrightarrow \Sig$ be the inclusion of the interior,  where all coordinates $r_i$ are positive. By functoriality, we have a canonical map of sheaves of algebras
\begin{equation*}
    j^*\colon \Cinflog{\Sig} \to j_*\Cinflog{\intSig},
\end{equation*}
given by restriction of functions to the interior.  Since $\Sig$ is assumed basic, we have $\cM_{\intSig} = \Cinfpos{\intSig}$, so we deduce from \autoref{lem:formal-vs-actual-log} that $\Cinflog{\intSig} = \Cinf{\intSig}$, and $j^*$  sends the expression $\uf$ from \eqref{eq:coord-log-function-formal} to the smooth function
\begin{equation}\label{eq:coord-log-function-interior}
    j^*\uf \defas \sum_{I} f_{I}(r) \log^I(r)
\end{equation}
obtained by replacing each formal logarithm $\ulog r_i$ with the corresponding smooth function $\log r_i$ defined in the interior. 

\begin{theorem}\label{thm:injectivity-of-j*}
    For a basic manifold with log corners $\Sig = \Sigbas$, the map $j^*$ is injective, and identifies $\Cinflog{\Sigbas}$ with  the sheaf of smooth functions in the interior with at worst polynomially logarithmic divergences along the boundary, i.e.~those which have the form \eqref{eq:coord-log-function-interior} in some (and hence any) coordinate chart.
\end{theorem}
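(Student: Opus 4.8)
The plan is to prove both assertions at the level of stalks, treating the injectivity as the sole real content. Since the claimed target is by construction the image of the intrinsic map $j^*$, it is automatically a coordinate-independent subsheaf of $j_*\Cinf{\intSig}$; and in any chart this image consists exactly of the germs of the form $\sum_I f_I(r)\log^I(r)$, because every germ of $\Cinflog{\Sigbas}$ acquires such a form once coordinates are chosen (using the first relation of \autoref{def:log-functions} to split off the positive factors of each monomial). This already yields the surjectivity and the ``some and hence any'' independence, so injectivity is what remains, and this I would reduce to the germ at a single point $x$. At a point of depth $d$, the second relation of \autoref{def:log-functions} applied with $g=1$ and $f=r_i$ identifies $\ulog(r_i)$ with the genuinely smooth function $\log(r_i)$ for every coordinate $r_i$ nonvanishing at $x$; hence the stalk is generated over $\Cinf{\Sig,x}$ by the logarithms of the $d$ vanishing coordinates, and I may assume $\Sig=\halfspc{n}$ with $x$ the corner and $n=d$ (the transverse $\RR$-directions play no role).

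The base of the induction is the one-variable case, which is the analytic heart. If $F(r)=\sum_{i}f_i(r)\log^i(r)$ vanishes on $(0,\infty)$, then comparing asymptotic expansions in the scale $\{r^m\log^i(r)\}$ (which are linearly independent over $\RR$) forces every Taylor coefficient of every $f_i$ to vanish, so each $f_i$ is flat at $r=0$; the second relation, applied one logarithm at a time, then rewrites $f_i\,\ulog^i(r)=f_i\log^i(r)$ as a smooth function, and summing gives $\sum_i f_i\,\ulog^i(r)=F=0$. Running the same argument transversally to a facet, with the remaining coordinates as smooth parameters, is what I would package as \autoref{prop:j^*-injective-induction}: if $g\log^I(r)$ is smooth with $I_i>0$, then $g$ vanishes to \emph{infinite} order along $\{r_i=0\}$. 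This strengthens the order-one conclusion of \autoref{lem:flat-times-log} by iterating Hadamard's lemma, since once $g=r_i h$ the function $r_i\log^I(r)$ is again smooth, so $h\log^I(r)$ is smooth and the argument repeats.

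For general $n$ I would induct on $n$ (equivalently on the depth of the corner), treating a germ $\tilde f=\sum_I f_I\,\ulog^I(r)$ with $j^*\tilde f=0$. Every point other than the corner has depth $<n$, so the inductive hypothesis gives injectivity there and forces $\tilde f$ to vanish on the punctured neighbourhood $\halfspc{n}\setminus\{0\}$. The key point is that this off-corner vanishing, read on each chart $\{r_j>0\}$ where $\ulog(r_j)=\log(r_j)$ becomes smooth, produces genuine relations among the coefficient functions via one-variable independence in the remaining singular directions; combined with the smoothness of the $f_I$ across the boundary and \autoref{prop:j^*-injective-induction}, these relations upgrade to the statement that each $f_I$ with $I\neq0$ is flat along every facet $\{r_i=0\}$ with $I_i>0$. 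Once this flatness is in hand, the second relation (iterated) replaces each $f_I\,\ulog^I(r)$ by the smooth function $f_I\log^I(r)$, whence $\tilde f=\sum_I f_I\log^I(r)=j^*\tilde f=0$; together with \autoref{lem:functoriality-of-log-functions} this also certifies the coordinate-independence asserted above.

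The main obstacle is precisely the middle step: extracting, from the bare vanishing of $\tilde f$ off the corner, enough flatness of the \emph{individual} coefficients to apply the second relation. The subtlety is the non-uniqueness of the expansion $\sum_I f_I\log^I(r)$ — coefficients flat along some facets but not others can conspire so that the functional identity $F=0$ alone only exhibits certain \emph{combinations} as flat (for example $\log^2(r_1)\cdot 1-1\cdot(\log r_1)^2=0$ near an interior boundary point). Resolving this forces one to organize the coefficients by the support of $I$, restrict to the boundary stratum on which the transverse directions are smooth parameters, and invoke \autoref{prop:j^*-injective-induction} to convert ``the combination is smooth'' into ``each coefficient is flat along the relevant facet.'' This bookkeeping, rather than any single estimate, is the technical crux of the proof.
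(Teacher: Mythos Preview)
Your overall architecture tracks the paper's: reduce to the stalk at the corner of $\halfspc{n}$, establish the one-variable asymptotic independence, and then bootstrap via flatness and relation~(2). The genuine gap is the middle step. The assertion that ``each $f_I$ with $I\neq0$ is flat along every facet $\{r_i=0\}$ with $I_i>0$'' is \emph{false} for a general presentation: if $g$ is $r_1$-flat but not $r_2$-flat, then
\[
g\,\ulog(r_1)\ulog(r_2) - (g\log r_1)\,\ulog(r_2) = 0
\]
is a presentation of zero with $f_{(1,1)}=g$ not flat along $\{r_2=0\}$. Your stated version of \autoref{prop:j^*-injective-induction} (``$g\log^I(r)$ smooth $\Rightarrow$ $g$ flat along $\{r_i=0\}$'') handles only \emph{single} terms; in the inductive step you face \emph{sums}, and the off-corner vanishing you extract yields only that each $f_I$ has vanishing Taylor series at the \emph{corner} $\{r_1=\cdots=r_n=0\}$, not along individual facets. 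That is too weak to feed relation~(2) directly.

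The paper closes this gap with two ingredients absent from your outline. First, the actual content of \autoref{prop:j^*-injective-induction} is the weaker, representation-dependent statement $(\beta)$: if $f_I=0$ for all $I>J$ in the product order, then $f_J$ is \emph{$J$-flat}, meaning infinitely flat along the intersection $\bigcap_{J_i>0}\{r_i=0\}$ rather than along each facet separately. This survives the counterexample above and is proved by induction on the \emph{support} of $J$ (not on $n$), via Hadamard expansion in one coordinate and restriction to a facet. Second, to turn $J$-flatness into something relation~(2) can consume, the paper proves a Borel-type decomposition: any $J$-flat germ splits as $h_1+\cdots+h_k$ with each $h_l$ flat along a \emph{single} facet $\{r_{j_l}=0\}$ in the support of $J$. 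Only then can relation~(2) be applied one logarithm at a time to lower the degree. Without this decomposition and the partial-order bookkeeping of $(\beta)$, your reduction stalls.
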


The rest of this subsection is devoted to the proof of \autoref{thm:injectivity-of-j*}. In fact we will formulate and prove a stronger statement (\autoref{prop:j^*-injective-induction} below) that gives more precise control over the behaviour of the coefficients, allowing us to prove the theorem by induction on the number of boundary components.

The statement is local, so it suffices to prove it for the stalk at the origin in the manifolds with log corners
\begin{equation*}
    \Sig_{m,n} \defas \RR^m \times [0,\infty)^n
\end{equation*}
for $m,n \ge 0$, with coordinates $r_1,\ldots,r_n$ on the factor $\halfspc{n}$.

To formulate the stronger statement, we adopt the following terminology.  If  $I = (I_1,\ldots,I_n)$ is a multi-index, its \defn{support} is the collection of variables $r_l$ such that $I_l\neq 0$, and its \defn{vanishing set} $V(I)=\{I_1 r_1=\cdots=I_n r_n=0\}$ is the locus where all coordinates in the support vanish. If $h \in \Cinf{\Sig_{m,n},0}$ is a germ of a smooth function, we say that $h$ is \defn{$I$-flat} if we have
\begin{equation*}
\big(\cvf{r_1}^{a_1}\cdots \cvf{r_n}^{a_n} h \big)\Big|_{V(I)} = 0
\end{equation*}
for every multi-index $A = (a_1,\ldots,a_n)\in\NN^n$ whose support is contained in that of $I$. This always includes $A =0$, where the statement amounts to $h|_{V(I)} = 0$.

The condition to be $I$-flat becomes stronger for smaller support of $I$. The strongest case is $I=0$ (empty support), where $h$ is $I$-flat if and only if $h=0$, because $V(I)=\Sig_{m,n}$. The weakest condition arises when $I$ has full support $\{r_1,\ldots,r_n\}$; then an $I$-flat germ $h$ is only constrained near the codimension $n$ corner $\{r_1=\cdots=r_n=0\}$ of $\Sig_{m,n}$. Note, however, that for \emph{any} $I$, being $I$-flat implies that the Taylor expansion of $h$ at the origin is identically zero.
\begin{lemma}
    Suppose that $I$ is a multi-index with support $\{r_1,\ldots,r_j\}$ for some $j \ge 0$, and $h$ is an $I$-flat smooth function.  Then there exist smooth functions $h_1,\ldots,h_j$ such that $h=h_1+\cdots + h_j$ and $h_i$ is $r_i$-flat for all $i$.  In particular, $h_i \log(r_i)$ is smooth and $r_i$-flat.
\end{lemma}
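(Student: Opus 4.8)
The plan is to induct on the size $j$ of the support of $I$, using a \emph{relative} version of Borel's theorem as the engine: at each step I peel off the Taylor jet of $h$ in one boundary direction by a smooth function that is itself flat in all the remaining directions. The base case $j=1$ is immediate, since then $I$-flatness of $h$ is exactly $r_1$-flatness and I may take $h_1=h$; the degenerate case $j=0$ forces $h=0$, so the empty sum suffices. As the problem is local I work throughout with germs at the origin of $\Sig_{m,n}$.

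For the inductive step I assume the claim whenever the support has size $j-1$, and let $h$ be $I$-flat with support $\{r_1,\ldots,r_j\}$. First I record the $r_1$-jet of $h$ along $\{r_1=0\}$, namely the smooth functions $c_k\defas(\cvf{r_1}^k h)|_{r_1=0}$ for $k\ge 0$, viewed as functions of $r_2,\ldots,r_n$ (with the $\RR^m$-coordinates as parameters). Using $I$-flatness of $h$, each $c_k$ is flat in the remaining directions: for any $a_2,\ldots,a_j\ge 0$ one has $(\cvf{r_2}^{a_2}\cdots\cvf{r_j}^{a_j}c_k)|_{r_2=\cdots=r_j=0}=(\cvf{r_1}^k\cvf{r_2}^{a_2}\cdots\cvf{r_j}^{a_j}h)|_{r_1=\cdots=r_j=0}=0$, so $c_k$ is $I'$-flat for a multi-index $I'$ with support $\{r_2,\ldots,r_j\}$.

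The heart of the argument---and the step I expect to be the main obstacle---is to realize this jet by a single smooth function without destroying the transverse flatness. Concretely, I want a smooth $g$ with $\cvf{r_1}^k g|_{r_1=0}=c_k$ for all $k$ and with $g$ itself $I'$-flat. This is achieved by the usual Borel construction $g=\sum_{k\ge 0}\chi(r_1/\epsilon_k)\,\tfrac{r_1^k}{k!}\,c_k$, where $\chi$ is a cutoff equal to $1$ near $0$ and supported in $[-1,1]$, and $\epsilon_k\to 0$ is chosen small enough that the series converges in $C^\infty$ near the origin. The key point is that the factors $\chi(r_1/\epsilon_k)$ and $r_1^k$ depend only on $r_1$: applying $\cvf{r_2}^{a_2}\cdots\cvf{r_j}^{a_j}$ to a term therefore differentiates only $c_k$, and restricting to $\{r_2=\cdots=r_j=0\}$ gives $0$ by the $I'$-flatness of $c_k$; since $I'$-flatness is a closed condition preserved under $C^\infty$-limits of the partial sums, $g$ is $I'$-flat. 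Setting $h_1\defas h-g$, the property $\cvf{r_1}^k g|_{r_1=0}=c_k$ makes $h_1$ flat along $\{r_1=0\}$, i.e.\ $r_1$-flat; and $g$ is $I'$-flat with support of size $j-1$, so the inductive hypothesis gives $g=h_2+\cdots+h_j$ with each $h_i$ being $r_i$-flat. Hence $h=h_1+\cdots+h_j$, as required.

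Finally, for the ``in particular'' clause I fix $i$ and show that $r_i$-flatness of $h_i$ forces $h_i\log(r_i)$ to be smooth and again $r_i$-flat. Working in the interior and expanding by Leibniz, every partial derivative of $h_i\log(r_i)$ is a finite sum of terms of the form $(\partial^{\beta}h_i)\cdot(\cvf{r_i}^{c}\log(r_i))$, where $\cvf{r_i}^{c}\log(r_i)$ equals $\log(r_i)$ for $c=0$ and a constant multiple of $r_i^{-c}$ for $c\ge 1$. Because $h_i$ and all its derivatives vanish to infinite order along $\{r_i=0\}$, each such term tends to $0$ as $r_i\to 0$ (infinite-order vanishing dominates both $\log r_i$ and any negative power $r_i^{-c}$). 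Thus every partial derivative of $h_i\log(r_i)$ extends continuously by $0$ across $\{r_i=0\}$, so by Seeley's characterization of smoothness on manifolds with corners (\autoref{rem:Ck-topology}) the function $h_i\log(r_i)$ is smooth; as its $r_i$-derivatives vanish on $\{r_i=0\}$, it is $r_i$-flat.
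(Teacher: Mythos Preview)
Your proof is correct and follows essentially the same strategy as the paper's: induct on the support size, take the Taylor jet in one boundary direction, realize it via a Borel construction that preserves the transverse flatness, and observe that the remainder is flat in the peeled-off direction. The only notable organizational difference is the order in which you interleave Borel and induction: the paper first applies the inductive hypothesis to \emph{each} Taylor coefficient $a_n$ to split it as $a_n=a_{n,1}+\cdots+a_{n,j-1}$ with $a_{n,i}$ being $r_i$-flat, and then forms $j-1$ separate Borel sums $h_i=\sum_n r_j^n a_{n,i}\psi(r_j C_n)$; you instead perform a single Borel sum $g$ of the full coefficients $c_k$ (preserving $I'$-flatness), and only afterwards apply the inductive hypothesis to $g$. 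Your ordering is arguably cleaner, needing only one Borel construction rather than $j-1$ of them sharing a common scale sequence. The paper also first reduces to compactly supported $h$ via a cutoff, which you avoid by working directly with germs; both are valid ways to guarantee convergence of the Borel series. Your treatment of the ``in particular'' clause matches the paper's.
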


\begin{proof}
    We proceed by induction on $j$ and show that all functions $h_i$ can in fact be chosen to vanish wherever $h$ does. Let $\psi\in\Cinf{}(\RR)$ denote a smooth function of one variable, with compact support, that is equal to $1$ in a neighbourhood of $0$. Note that a partition of unity $h=\psi(z) h+(1-\psi(z)) h$, with $z=r_i$ or $z=x_i$, preserves $I$-flatness. We can thus assume that $h$ has compact support.
    
    In the cases $j=0$ ($h=0$) and $j=1$ ($h=h_1$) there is nothing to prove.  For $j>1$, let
    \begin{equation*}
    a_n(r_1,\ldots,r_{j-1}) = \frac{1}{n!}(\cvf{r_j}^n h) \big|_{r_j=0}
    \end{equation*}
    be the $n$th Taylor coefficient of $h$ with respect to $r_j$; it is a smooth function of all remaining variables and it is flat with respect to $\{r_1,\ldots,r_{j-1}\}$ by assumption.  Hence by induction we may write
    \[
    a_n = a_{n,1} + \cdots + a_{n,j-1}
    \]
    where $a_{n,i}$ is $r_i$-flat for $1 \le i < j$. Following a proof of Borel's lemma, we may find a sequence of positive numbers $C_n$ such that for all $i<j$, the sum
    \begin{equation*}
        h_i \defas \sum_{n} r_j^n  a_{n,i}  \psi(r_jC_n) 
    \end{equation*}
    and all its partial derivatives are absolutely convergent. 
 It follows that the derivative of $h_i$ with respect to $r_i$ can be computed termwise, and hence the $r_i$-flatness of $a_{n,i}$ implies that $h_i$ is $r_i$-flat. 

 Now let $h_j = h - h_1 - \cdots - h_{j-1}$.  Then by construction, the Taylor expansion of $h_j$ along $r_j=0$ is identically zero.  Hence $h_1,\ldots,h_j$ give the desired functions. 

 Finally, observe that since $h_i$ is $r_i$-flat, all partial derivatives of $h_i \log(r_i)$ exist and are smooth, even $r_i$-flat.
\end{proof}

The following is a strengthened version of \autoref{thm:injectivity-of-j*}.
.
\begin{proposition}\label{prop:j^*-injective-induction}
Suppose that $f_{I} \in \Cinf{\Sig_{m,n},0}$ are germs of smooth functions at the origin, set
\[
    \uf \defas \sum_{I} f_{I}(x,r)\ulog^I(r)
    \in \Cinflog{\Sig_{m,n},0}
\]
and denote the germ of the corresponding function on the interior as
\[
    f \defas j^*\uf = \sum_{I} f_{I}(x,r)\log^I(r)
    \in \big(j_*\Cinf{\intSig_{m,n}}\big)_0
    .
\]
We write $I>J$ to denote that $I_k\ge J_k$ for all indices $1\le k \le n$, with at least one of these inequalities being strict ($I_k > J_k$).

If $f=0$ then the following statements hold:
\begin{itemize}
     \item[$(\alpha)$] We have $\uf = 0 \in \Cinflog{\Sig_{m,n},0}$.
     \item[$(\beta)$] If $J$ is such that $f_I=0$ for all $I$ with $I>J$, then the function $f_{J}$ is $J$-flat.
 \end{itemize}
\end{proposition}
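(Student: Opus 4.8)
The plan is to deduce $(\alpha)$ from $(\beta)$, and to prove $(\beta)$ by induction on the number $n$ of boundary coordinates, isolating all of the analytic content into a single one-variable statement. The key one-variable input I would record first is: if $g_0,\ldots,g_P$ are germs at $0$ of smooth functions of one variable $r\ge 0$ and $\sum_{p=0}^P g_p(r)\log^p(r)=0$ for all small $r>0$, then every $g_p$ is flat at $r=0$. I would prove this by Taylor-expanding each $g_p$ to order $N$, collecting the coefficient of $r^k$ into a polynomial $Q_k$ in $\log r$, so that $\sum_{k\le N}Q_k(r)\,r^k + O(r^{N+1}\log^P r)=0$. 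Letting $r\to 0$ forces $Q_0(r)\to 0$, and a nonzero polynomial in $\log r$ is unbounded as $r\to 0$ unless it is a nonzero constant; hence $Q_0\equiv 0$. Dividing by $r$ and repeating extracts $Q_1\equiv 0,\ldots,Q_N\equiv 0$, so all Taylor coefficients of all $g_p$ vanish, and since $N$ is arbitrary each $g_p$ is flat. This is the only place asymptotics enter.

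Next I would prove $(\beta)$ by induction on $n$. The case $n=0$ is trivial, and the degenerate case $J=0$ follows because the hypothesis forces $f=f_0$ on the dense interior, whence $f_0=0$. For $J\ne 0$, after permuting coordinates I may assume $n\in\mathrm{supp}(J)$; write $P=J_n\ge 1$ and group the expansion of $f$ by powers of $\log(r_n)$ as $f=\sum_p g_p(x,r',r_n)\log^p(r_n)$ with $g_p=\sum_{I'}f_{(I',p)}\log^{I'}(r')$ smooth in $r_n$. Fixing any interior point $(x,r')$ and applying the one-variable lemma in $r_n$ gives $\partial_{r_n}^a g_p|_{r_n=0}=0$ for all $a,p$. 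Taking $p=P$, this is a vanishing log-function in the $n-1$ variables $(x,r')$ whose coefficient at $I'$ is $(\partial_{r_n}^a f_{(I',P)})|_{r_n=0}$. Since $(I',P)>J$ exactly when $I'>J'$, the hypothesis $f_I=0$ for $I>J$ makes these coefficients vanish for $I'>J'$, so the inductive hypothesis $(\beta)$ at level $n-1$ applies and shows $(\partial_{r_n}^a f_J)|_{r_n=0}$ is $J'$-flat. Letting $a$ range over $\NN$ and using $V(J)=V(J')\cap\{r_n=0\}$, this assembles into $\partial^A f_J|_{V(J)}=0$ for every $A$ supported on $\mathrm{supp}(J)$, i.e.\ $f_J$ is $J$-flat.

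Finally I would deduce $(\alpha)$ from $(\beta)$ at the same level $n$ by a rewriting procedure. Assuming $f=0$, let $D$ be the top total degree $|I|$ among the finitely many $I$ with $f_I\ne 0$, and pick $J$ with $|J|=D$; then $f_I=0$ for all $I>J$, so $(\beta)$ gives that $f_J$ is $J$-flat. By the preceding lemma I may write $f_J=\sum_{i\in\mathrm{supp}(J)}h_i$ with each $h_i$ being $r_i$-flat, so $h_i\log(r_i)$ is smooth; since $J_i\ge 1$, the second relation of \autoref{def:log-functions} lets me replace $h_i\ulog(r_i)$ by the smooth function $h_i\log(r_i)$ and rewrite $f_J\ulog^J(r)=\sum_{i}(h_i\log r_i)\,\ulog^{J-e_i}(r)$, an identity in $\Cinflog{}$ that lowers the degree of this term to $D-1$ while leaving $j^*\uf=f=0$ unchanged. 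Each such step strictly decreases the pair (top degree, number of top-degree terms) in the lexicographic order, so after finitely many steps $\uf$ reduces to a single smooth function $f_0$; then $f_0=j^*\uf=0$ on the dense interior forces $f_0=0$, hence $\uf=0$.

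The hard part is the passage from the one-variable lemma to the multivariable statement and the accompanying bookkeeping. The essential subtlety, as the paper emphasizes, is that the coefficients $f_I$ are determined only up to infinitely flat functions, so one cannot read $f_J$ off from $f$ directly; the purpose of $(\beta)$ is precisely to certify the flatness needed to invoke the second defining relation, and the purpose of the induction is to keep the asymptotic analysis strictly one-dimensional. The point requiring the most care is ensuring that the flatness produced by the level-$(n-1)$ hypothesis in the $r'$-directions combines correctly with the $r_n$-derivatives to yield genuine $J$-flatness \emph{along the stratum} $V(J)$, rather than the weaker flatness one would obtain by letting all coordinates tend to zero simultaneously.
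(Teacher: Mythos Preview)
Your proposal is correct and follows essentially the same approach as the paper: deduce $(\alpha)$ from $(\beta)$ via the flat-decomposition lemma, and prove $(\beta)$ by induction peeling off one boundary coordinate in the support of $J$. The only organizational difference is that you package the asymptotic analysis as a standalone one-variable flatness lemma applied pointwise at interior $(x,r')$, whereas the paper embeds the same Taylor-expand-and-take-$r_n\to 0$ argument directly into an induction on the derivative order $k$ via Hadamard's lemma; both extract exactly the vanishing of $j_n^*\big(\ug_l^{(k)}\big)$ needed to invoke the inductive hypothesis at level $n-1$.
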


\begin{proof}
We first show that $(\alpha)$ follows from $(\beta)$. Namely, note that for a multi-index $J$ as in $(\beta)$, since $f_{J}$ is $J$-flat, we may write $f_{J} = f_{J,j_1} + \cdots + f_{J,j_k}$ where the indices $j_l$ range over the support of $J$ and $f_{J,j_l}$ is $r_{j_l}$-flat.
Therefore, using the second defining relation for $\Cinflog{}$ from \autoref{def:log-functions}, we may replace $f_{J}\ulog^J(r)$ in the expression for $\uf$ by a sum of monomials of strictly smaller degree in the symbols $\ulog r_i$. By induction over the degree in $\ulog(r)$, we conclude that $\uf$ has a representative without logarithms. But then, $\uf=f_0$ is $0$-flat by $(\beta)$, and hence identically zero.

We now prove $(\beta)$ by induction over the support of $J$. The base case is $J=0\in\NN^n$ with empty support, so we must show that $f_0=0$. Since we have $I>J$ for all $I\neq 0$, and thus $f_I=0$, we have $f_0|_{\intSig_{m,n}}=f=0$. By continuity, $f_0=0$ and so $f_0$ is indeed $J$-flat.

From now on we will assume that we are given $J$ as in $(\beta)$, with non-empty support, and that $(\beta)$ holds in all cases with smaller support. Label one of the coordinates in the support of $J$ as $r_n$, and consider the embedding
\[
i\colon \Sig_{m,n-1} \times [0) \hookrightarrow \Sig_{m,n}
\]
of the vanishing set of $r_n$. Furthermore, we denote the inclusion of the relative interior of this facet as
\begin{equation*}
    j_n \colon \intSig_{m,n-1}  
    \hookrightarrow \Sig_{m,n-1} 
    .
\end{equation*}
We denote the $r_n$-derivaties of the coefficients of $\ulog^l(r_n)$ in $\uf$ as
\begin{equation}\label{eq:r_n-expansion}
    \ug_l^{(k)} \defas \sum_{I\colon I_n=l} \left(\cvf{r_n}^k f_{I}\right)_{r_n=0} \ulog^{I'}(r)
    \in \Cinflog{\Sig_{m,n-1},0}
    ,
\end{equation}
where we write $I'=(I_1,\ldots,I_{n-1})$. We will now show, by induction on $k$, that
\begin{equation*}
    j_n^* \left(\ug_l^{(k)} \right)
    =\lim_{r_n=0} \cvf{r_n}^k \left(\sum_{I\colon I_n=l} j^*f_I \log^{I'}(r) \right)
    = 0
\end{equation*}
for all $k$ and $l$. Namely, by Hadamard's lemma there are smooth functions $h_I^{(k)}$ for each $I$ and $k$ such that
\begin{equation*}
    f_I=\sum_{a=0}^{k} \frac{r_n^a}{a!} \left(\cvf{r_n}^a f_I \right)_{r_n=0}
    +r_n^{k+1} h_I^{(k)}
\end{equation*}
Therefore, once $j_n^*(\ug_l^{(0)})=\cdots=j_n^*(\ug_l^{(k-1)})=0$ is established, it follows from
\begin{equation*}
    0 = \frac{f}{r_n^k} = \frac{1}{k!}\sum_l j_n^*\left(\ug_l^{(k)}\right) \log^l(r_n) + \sum_l r_n \log^l(r_n) \sum_{I\colon I_n=l} h_I^{(k)} \log^{I'}(r) 
\end{equation*}
by considering the limit $r_n\rightarrow 0$ that we must have, for all $l$, that $j_n^*\big(\ug_l^{(k)}\big)=0$.

Hence, by the induction hypothesis, we conclude in particular that for $l=J_n$ and for every $k$, the coefficient $\cvf{r_n}^k f_J|_{r_n=0}$ of $\ulog^{J'}(r)$ in $\ug_l^{(k)}$ is $J'$-flat, where $J'=(J_1,\ldots,J_{n-1})$. Note that $J'$ fulfils the condition $(\beta)$ for $\ug_l^{(k)}$, since the coefficient of any multi-index $I'>J'$ in $\ug_l^{(k)}$ is $\cvf{r_n}^k f_I|_{r_n=0}$ where $I=(I',l)>J=(J',l)$ and thus $f_I=0$. We have thus proved that $f_J$ is $J$-flat (recall that $l=J_n>0$ since $r_n$ is in the support of $J$).
\end{proof}

\subsection{Regularized limits} Using scales, we may assign finite values to divergent limits of logarithmic functions, by the following general recipe.  

First, note that if $\Psi$ is a manifold with log corners, and $s : \Psibas \to \Psi$ is a scale on $\Psi$, given in local coordinates by $s = \sum_i s^i(r)\cvf{t_i}$, then the pullback $s^* : \Cinflog{\Psi} \to \Cinflog{\Psibas}$ acts as the identity on the subsheaf of basic logarithms, and sends the phantom logarithms to the functions
\[
s^*\log(t_i) = \log(s^i(r)) \in \Cinflog{\Sigbas}.
\]
Then if $\phi : \Psi \to \Sig$ is any virtual morphism, and $g \in \Cinflog{\Sig}$ is a logarithmic function, we may define its \defn{regularized pullback} to be the function
\[
\reg[s] \phi^*(f) := s^*\phi^*f \in \Cinflog{\Psibas},
\]
which is smooth function in the interior of $\uPsi$ but may have logarithmic divergences on the boundary.

In particular, in the case in which $\phi$ is the natural immersion $\bdSig \to \Sig$ of the boundary of a basic manifold with log corners, we obtain a \defn{regularized restriction}
\[
f|_{\bdSigbas}^{\reg[s]} \in \Cinflog{\bdSigbas}
\]
This construction recovers the usual restriction when the latter makes sense:

\begin{lemma}\label{lem:continuous-restriction}
    If $f \in \Cinflog{\Sigbas}$ extends to a continuous function on $\Sigbas$, then the regularized restriction $f|_{\bdSigbas}^{\reg[s]}$ agrees with the ordinary restriction  as a function.
\end{lemma}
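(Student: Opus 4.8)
The plan is to compute the regularized restriction in local coordinates and to show that continuity up to the boundary annihilates precisely those terms that would otherwise distinguish it from the ordinary restriction. Since the statement is local, and near any point the boundary immersion $i\colon\bdSig\to\Sig$ is a disjoint union of facet inclusions, I would fix a single facet and work in coordinates with $\Sig = \Sig_{m,n} = \RR^m\times\halfspc{n}$ and the facet equal to $\{r_n=0\}$, so that $i\colon \Sig_{m,n-1}\times[0)\hookrightarrow\Sig_{m,n}$ carries $r_n$ to the phantom coordinate $t$. Writing $f$ as in \eqref{eq:coord-log-function-formal} and grouping the powers of $\ulog(r_n)$, I obtain an expansion $f = \sum_{l\ge0}\phi_l\,\ulog^l(r_n)$ in which each $\phi_l$ is a logarithmic function in $r'=(r_1,\ldots,r_{n-1})$ whose coefficients are smooth in all of $r$. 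Applying $i^*$ turns $\ulog(r_n)$ into the phantom logarithm $\ulog(t)$ and restricts the coefficients to $r_n=0$, giving $i^*f = \sum_l(i^*\phi_l)\,\ulog^l(t)$; by \autoref{lem:phantom-log-functions} this is the unique expansion of $i^*f$ in the phantom logarithm, so the coefficients $i^*\phi_l\in\Cinflog{\bdSigbas}$ are well-defined. Since the scale sends $\ulog(t)$ to $\log(s^*t)$, the regularized restriction is $f|_{\bdSigbas}^{\reg[s]} = s^*i^*f = \sum_l(i^*\phi_l)\,\log^l(s^*t)$.

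The crux is to prove that continuity of $f$ up to $\{r_n=0\}$ forces $i^*\phi_l = 0$ for all $l>0$. Fixing $r'$ in the interior, the function $r_n\mapsto f(r',r_n)$ is bounded as $r_n\to0^+$ and has the asymptotic expansion $\sum_l\phi_l(r',r_n)\log^l(r_n)$, with each $\phi_l(r',\cdot)$ smooth at $0$. Peeling off the top power first, boundedness forces $\phi_N(r',0)=0$ for the highest index $N$; then $\phi_N(r',r_n) = O(r_n)$ makes that term negligible, and applying the same argument to $f-\phi_N\log^N(r_n)$ yields $\phi_l(r',0)=0$ for each $l>0$ by downward induction. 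This shows that $i^*\phi_l$ vanishes pointwise on the interior of the facet, i.e.\ $j^*(i^*\phi_l)=0$, and the injectivity of $j^*$ from \autoref{thm:injectivity-of-j*} upgrades this to the identity $i^*\phi_l=0$ in $\Cinflog{\bdSigbas}$.

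With the positive-power terms gone, the regularized restriction collapses to $f|_{\bdSigbas}^{\reg[s]} = i^*\phi_0$, whose values on the interior of the facet are $\phi_0(r',0)$; in particular this is visibly independent of the scale. The same asymptotic analysis gives $\lim_{r_n\to0}f = \phi_0(r',0)$, so the ordinary continuous restriction of $f$ takes exactly these values there as well. Hence the two restrictions agree on the interior of $\bdSigbas$, and since a logarithmic function is determined by its restriction to the interior (\autoref{thm:injectivity-of-j*} again) they agree as functions. The main obstacle is the asymptotic argument of the second paragraph: the coefficients $\phi_l$ are only defined modulo functions that are flat along $\{r_n=0\}$, so one must check that the extracted limits are independent of the chosen expansion. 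This is guaranteed by the uniqueness of asymptotic expansions for fixed $r'$ and, globally, by the injectivity in \autoref{thm:injectivity-of-j*}.
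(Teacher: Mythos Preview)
Your argument is correct and follows essentially the same route as the paper's proof. The paper is terser: it writes $f=\sum_I f_I\log^I(r)$, invokes \autoref{lem:flat-times-log} to conclude that $f_I$ vanishes on (hence is divisible by) $r_i$ whenever $r_i$ lies in the support of $I$, and then notes $r_i\log(r_i)\big|^{\reg[s]}_{r_i=0}=s^*(0\cdot\log t_i)=0$. Your downward-induction asymptotic peeling is exactly the content of ``arguing as in \autoref{lem:flat-times-log}'' spelled out in full, and your use of \autoref{lem:phantom-log-functions} to secure the uniqueness of the $i^*\phi_l$ is a careful point that the paper leaves implicit.
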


\begin{proof}
    For smooth functions the statement is vacuous.  In general, if we have an expansion $f = \sum f_I\log^I(r)$, and $f$ is continuous, then arguing as in \autoref{lem:flat-times-log}, we see that the coefficient $f_I$ must be divisible by $r_i$ whenever $r_i$ is in the support of $I$.  But 
    \[
    r_i\log(r_i)|^{\reg[s]}_{r_i=0} := s^*(0 \cdot \log(t_i)) = 0,
    \]
    so the non-smooth terms in $f$ do not contribute to the regularized restriction.
\end{proof}

The regularized restriction defined in this way recovers the classical notion of regularized limit, as follows.
\begin{example}
    Let $\Sig = \halfspc{}$ and $\Psi = [0)$ be equipped with the standard coordinates $r$ and $t = r|_{[0)}$, respectively.  Define a scale on $[0)$ by $s = c \,\cvf{t}$ for some $c>0$.  If $f = \sum f_j(r)\log^j(r) \in \Cinflog{\halfspc{}}$, its regularized restriction to zero is given by
    \[
    f|^{\reg[s]}_0 = s^*(f|_{[0)}) = s^*\rbrac{\sum_j f_j(0)\log^j(t)} = \sum_j f_j(0)\log^j(c)
    \]
    When $c= 1$ is the unit scale, we get
    \[
     f|^{\reg[s]}_0 = f_0(0) =: \reglim_{\epsilon \to 0} \sum_j f_j(\epsilon)\log^j(\epsilon),
    \]
    which is the standard definition of the regularized limit.
\end{example}

\begin{example}
    Let $\Sig = \halfspc{2}$ with basic coordinates $(r_1,r_2)=(x,y)$.  Let $u$ and $v$ be the corresponding phantom coordinates on the boundary components $x=0$ and $y=0$, respectively.  A scale on the boundary then has the form
    \[
    s = 
    \begin{cases}
      e^{g(x)} x^a \cvf{v} & \textrm{on } y=0 \\
      e^{h(y)} y^b \cvf{u} &\textrm{on } x=0
    \end{cases}
    \]
    where $g,h$ are smooth functions on $\halfspc{}$ and $a,b \in \ZZ$.  If $f = \log(x)\log^2(y)$, then the regularized restriction of $f$ to the boundary is given by
    \[
     f|^{\reg[s]}_{\bdSig} = \begin{cases}
        \log(x)(g+a\log(x))^2 &\textrm{on }y=0 \\
        (h+b\log(y))\log^2(y) & \textrm{on } x=0
    \end{cases}
    \]
    exhibiting the explicit dependence on all components of the scale.
\end{example}

\section{de Rham theory for manifolds with log corners}
    
\subsection{Vector fields}\label{sec:vector-fields}

The natural notion of vector field in the context of functions with logarithmic singularities is as follows.
\begin{definition} A \defn{logarithmic vector field on $\Sig$}   is a derivation of the $\RR$-algebra $\Cinflog{}(\Sig)$ of global logarithmic functions on $\Sig$.
\end{definition}

The same argument as in classical differential geometry shows that logarithmic vector fields on $\Sig$ are local operators, so that they form a sheaf.
 
\begin{definition}
    We denote by $\VFlog{\Sig}$ the sheaf of logarithmic vector fields on $\Sig$.
\end{definition}

Concretely, in local coordinates $(r,t)$, define derivations 
\begin{align}
\logcvf{r_1},\ldots,\logcvf{r_n}, \logcvf{t_1},\ldots,\logcvf{t_k}\in \VFlog{\Sig} \label{eq:coord-vfs}
\end{align}
by the ``obvious'' formulae, as follows. Suppose that $f \in \Cinflog{\Sig}$.  Then by \autoref{lem:phantom-log-functions} and \autoref{thm:injectivity-of-j*} we may write $f$ uniquely in the form
\[
f = \sum_J f_J (\log t)^J
\]
where $f_J \in \Cinflog{\Sigbas}$ is a smooth function in the interior that 
can be written in the form $f_J = \sum_I f_{I,J} (\log r)^I$ for some smooth functions $f_{I,J}$.  We then set
\[
\logcvf{r_i} f \defas \sum_J (\logcvf{r_i} f_J) (\log t)^J
\]
and define $\logcvf{t_i}$ be the unique $\Cinflog{\Sigbas}$-linear derivation such that
\[
\logcvf{t_i} (\log t_j) = \begin{cases}
1 & i=j \\
0 & i \neq j
\end{cases}
\]
It is straightforward to check that these operations are well-defined derivations.

\begin{proposition}\label{prop:T-basis}
The derivations \eqref{eq:coord-vfs} form a local basis for $\VFlog{\Sig}$ as a $\Cinflog{\Sig}$-module.  Hence if $\Sig$ is a manifold with log corners of dimension $(n,k)$, the sheaf $\VFlog{\Sig}$ is a locally free $\Cinflog{\Sig}$-module of rank $n+k$.
\end{proposition}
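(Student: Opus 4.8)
The plan is to work locally in a chart of dimension $(n,k)$ with basic coordinates $r_1,\ldots,r_n$ and phantom coordinates $t_1,\ldots,t_k$, and to treat the two halves of the basis statement separately: $\Cinflog{\Sig}$-linear independence of the derivations \eqref{eq:coord-vfs}, and the fact that they span $\VFlog{\Sig}$. Linear independence is immediate: if $D=\sum_i a_i\logcvf{r_i}+\sum_j b_j\logcvf{t_j}$ vanishes, then evaluating $D$ on the logarithmic functions $\log r_l$ and $\log t_m$ and using the defining formulas $\logcvf{r_i}(\log r_l)=\delta_{il}$, $\logcvf{t_j}(\log t_m)=\delta_{jm}$, together with $\logcvf{r_i}(\log t_m)=\logcvf{t_j}(\log r_l)=0$, recovers $a_l=0$ and $b_m=0$. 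The rank assertion then follows formally from the existence of a local basis of $n+k$ elements.

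For spanning, I would take an arbitrary $D\in\VFlog{\Sig}$ and set $a_i:=D(\log r_i)$ and $b_j:=D(\log t_j)$, which lie in $\Cinflog{\Sig}$. Since $\Cinflog{\Sig}$ is generated as an $\RR$-algebra by $\Cinf{\Sig}$ together with the symbols $\log r_i$ and $\log t_j$ (by \autoref{lem:phantom-log-functions} and \autoref{thm:injectivity-of-j*}), two derivations coincide as soon as they agree on these generators. Now $D$ and $\sum_i a_i\logcvf{r_i}+\sum_j b_j\logcvf{t_j}$ agree on every $\log r_i$ and $\log t_j$ by construction, so the whole problem reduces to showing that they agree on smooth functions, i.e.\ that $D(f)=\sum_i a_i\,\logcvf{r_i}f$ for every $f\in\Cinf{\Sig}$ (the phantom terms $\logcvf{t_j}f$ vanishing on smooth $f$).

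To establish this I would pass to the interior. Write $U=\uSig^\circ$ for the locus where all $r_i>0$, an ordinary smooth $n$-manifold. By \autoref{thm:injectivity-of-j*} and \autoref{lem:phantom-log-functions} the restriction map $\rho\colon \Cinflog{\Sig}\to \Cinflog{\Sig}|_U\cong \Cinf{U}[\log t_1,\ldots,\log t_k]$ is an injective homomorphism of algebras, and since logarithmic vector fields are local, $D$ restricts to a derivation $D_U$ of this polynomial algebra with $\rho\circ D=D_U\circ\rho$. A derivation of the polynomial ring $\Cinf{U}[\log t_1,\ldots,\log t_k]$ over smooth functions is determined by its values on the local coordinates $r_i$ and on the $\log t_j$, and splits canonically as $D_U=\sum_i v_i\partial_{r_i}+\sum_j w_j\partial_{\log t_j}$ with $v_i=D_U(r_i)$ and $w_j=D_U(\log t_j)=\rho(b_j)$, where $\partial_{r_i}$ and $\partial_{\log t_j}$ are the ordinary partials on $U$. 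The classical chain rule on $U$ then gives $\rho(a_i)=D_U(\log r_i)=v_i/r_i$, that is $v_i=\rho(r_i a_i)$.

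This last identity is the crux, and the step I expect to be the main obstacle: it says $D(r_i)=r_i\,D(\log r_i)$ after restriction, which is precisely the reason the ordinary field $\partial_{r_i}$ does \emph{not} lie in $\VFlog{\Sig}$ (it fails to preserve $\Cinflog{}$, since $\partial_{r_i}\log r_i=1/r_i$) whereas $\logcvf{r_i}=r_i\partial_{r_i}$ does. A priori a derivation of $\Cinflog{\Sig}$ could act on the smooth function $r_i$ and on the formal symbol $\log r_i$ independently; it is only by reducing to the interior—where the two become genuinely related by calculus—and then invoking the injectivity of restriction that one rigidifies this behaviour. Granting the identity, for any smooth $f$ I would compute on $U$ that $D_U(f)=\sum_i v_i\partial_{r_i}f=\sum_i\rho(a_i)\,(r_i\partial_{r_i}f)=\rho\big(\sum_i a_i\,\logcvf{r_i}f\big)$, whence injectivity of $\rho$ upgrades this to the desired equality $D(f)=\sum_i a_i\,\logcvf{r_i}f$ in $\Cinflog{\Sig}$. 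This completes the verification on the generating set, so $D=\sum_i a_i\logcvf{r_i}+\sum_j b_j\logcvf{t_j}$, and the local freeness of rank $n+k$ follows.
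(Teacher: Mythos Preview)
Your argument is correct, and takes a genuinely different route from the paper's. The paper first strips off the phantom directions using \autoref{lem:phantom-log-functions}, then shows that a derivation $Z$ of $\Cinflog{\Sigbas}$ is determined by its restriction to $\Cinf{\Sig}$ via an algebraic trick: picking any smooth $h$ with $h\log r_i$ smooth, the Leibniz rule gives $Z(\log r_i)=h^{-1}\big(Z(h\log r_i)-Z(h)\log r_i\big)$, so $Z(\log r_i)$ is already dictated by $Z$ on smooth functions; it then writes $Z|_{\Cinf{\Sig}}=\sum Z^i\cvf{r_i}$ and checks that the extension condition forces $Z^i$ to be divisible by $r_i$. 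Your approach instead names the candidate coefficients $a_i=D(\log r_i)$, $b_j=D(\log t_j)$ up front and reduces everything to the identity $D_U(\log r_i)=D_U(r_i)/r_i$ on the interior, lifting back via the injectivity of restriction from \autoref{thm:injectivity-of-j*}. Both work; yours is arguably more transparent once the structure theorem for $\Cinflog{}$ is in hand, while the paper's argument is more ``intrinsic'' in that it never leaves the sheaf $\Cinflog{\Sig}$ and instead exploits the defining relation (2) directly. One small wording issue: in your splitting of $D_U$ the phrase ``over smooth functions'' should read ``over $\RR$'', and the assertion that $D_U|_{\Cinf{U}}=\sum_i D_U(r_i)\,\cvf{r_i}$ is not just a matter of naming values on coordinates---it needs the Hadamard-lemma argument (applied pointwise, using that evaluation maps on $\Cinf{U}[\log t]$ are jointly injective) to show that a derivation of $\Cinf{U}$ into a module is determined by its values on the $r_i$. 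This is standard and does not affect the validity of the proof.
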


\begin{proof}[Proof of \autoref{prop:T-basis}]
    By \autoref{lem:phantom-log-functions}, the algebra $\Cinflog{\Sig}$ is freely generated over $\Cinflog{\Sigbas}$  by $\log t_1,\ldots,\log t_j$.    Thus the operators $\logcvf{t_1},\ldots,\logcvf{t_m}$ form a basis for the $\Cinflog{\Sigbas}$-linear derivations, and hence it suffices to show that $\logcvf{r_1},\ldots,\logcvf{r_l}$ form a basis for the derivations of $\Cinflog{\Sigbas}$.  
    
    We first claim that any derivation $Z$ of $\Cinflog{\Sigbas}$ is uniquely determined by its action on smooth functions. Indeed, let $h \in \Cinf{\Sig}$ be any smooth function that is nonvanishing in the interior and such that $h \log r_i$ is smooth.  Then 
    \[
    Z(h \log r_i) = Z(h) \log r_i + h Z(\log r_i)
    \]
    so that
    \[
    Z(\log r_i) = h^{-1}\cdot (Z(h \log r_i) - Z(h)\log r_i).
    \]
    gives a formula for $Z(\log r_i)$ in terms of the action of $Z$ on smooth functions, for every $i$.  By linearity and the Leibniz rule, this determines the action of $Z$ on any element of $\Cinflog{\Sig}$ from the action of $Z$ on smooth functions.  

    But the action of $Z$ on smooth functions is a derivation from $\Cinf{\Sig}$ to $\Cinflog{\Sigbas}$, and must therefore have the form
    \[
    Z = \sum_{i=1}^n Z^i \cvf{r_i}
    \]
    for some coefficient functions $Z^i\in \Cinflog{\Sigbas}$. It remains to show that such a derivation extends to $\Cinflog{\Sigbas}$ if and only if $Z^i$ is divisible by $r_i$ for all $i$.  Equivalently, we must show that the expression for $Z(\log r_i)$ above is a logarithmic function if and only if $Z^i$ is divisible by $r^i$.  Let $h = h(r_i)$ be any function of $r_i$ such that $h\log r_i$ is smooth. Then $Z$ acts as $Z^i\cvf{r_i}$ on both $h \log r_i$ and $h$. Hence we have
    \begin{align*}
     Z(\log r_i) = h^{-1}\cdot (Z(h \log r_i) - Z(h)\log r_i) &= Z^i\cvf{r_i}\log r_i = \frac{Z^i}{r_i} 
    \end{align*}
    which lies in $\Cinflog{\Sigbas}$ if and only if $Z^i$ is divisible by $r_i$.
\end{proof}

\subsection{The de Rham complex}
Let $\Sig$ be a manifold with log corners.  The \defn{logarithmic cotangent sheaf $\coVFlog{\Sig}$} is the dual $\Cinflog{\Sig}$-module of $\VFlog{\Sig}$.  There is thus a natural derivation
\[
\dd\colon \Cinflog{\Sig} \to \coVFlog{\Sig}
\]
sending a logarithmic function $f$ to the functional $\dd f \colon Z \mapsto Z(f)$ on logarithmic vector fields.  If $(r_1,\ldots,r_n,t_1,\ldots,t_k)$ is a local system of coordinates on $\Sig$, then by \autoref{prop:T-basis} the elements
\[
\dd \log(r_i) = \dlog{r_i} \qquad  \mbox{ and } \qquad \dd\log(t_j) =: \dlog{t_j},
\]
for $1 \le i \le n$ and $1 \le j \le k$, form a local basis of $\coVFlog{\Sig}$.

\begin{definition}
    Let $\Sig$ be a manifold with log corners.  The \defn{sheaf of logarithmic $j$-forms on $\Sig$} is 
    \[
    \cAlog[j]{\Sig} \defas {\bigwedge}^j_{\Cinflog{\Sig}}\left(\coVFlog{\Sig}\right).
    \]
\end{definition}
The differential $\dd$ extends uniquely to a graded derivation 
\[
\dd\colon \cAlog{\Sig} \to \cAlog[\bullet+1]{\Sig}
\]
such that $\dd^2=0$, given by the usual formula for the de Rham differential. 

\begin{lemma}
The sheaf $(\cAlog{\Sig},\dd)$ of commutative differential graded algebras is called the \defn{de Rham complex of $\Sig$}.
\end{lemma}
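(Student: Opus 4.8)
The graded-commutative algebra structure is automatic and requires no argument: each $\cAlog[j]{\Sig}$ is by definition the $j$-th exterior power over the commutative ring $\Cinflog{\Sig}$ of the module $\coVFlog{\Sig}$, so the wedge product makes $\cAlog{\Sig}=\bigoplus_j \cAlog[j]{\Sig}$ into an associative, graded-commutative $\Cinflog{\Sig}$-algebra. The only genuine content is the assertion of the preceding paragraph: that the universal derivation $\dd\colon\Cinflog{\Sig}\to\coVFlog{\Sig}=\cAlog[1]{\Sig}$ extends uniquely to a degree $+1$ graded derivation of $\cAlog{\Sig}$ with $\dd^2=0$. I would prove this by the same route as in classical differential geometry, using \autoref{prop:T-basis} to supply, in every chart, a genuine $\Cinflog{\Sig}$-basis of $1$-forms in which forms admit unique expansions.

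For uniqueness---which will simultaneously give coordinate-independence---the key point is that the basis $1$-forms $\dlog{r_i}$ and $\dlog{t_j}$ are themselves exact, being the differentials $\dd\log(r_i)$ and $\dd\log(t_j)$ of logarithmic functions. Hence $\dd^2=0$ forces $\dd(\dlog{r_i})=\dd(\dlog{t_j})=0$, and the graded Leibniz rule then forces
\[
\dd\rbrac{\textstyle\sum_I f_I\,\omega_I}=\sum_I \dd f_I\wedge\omega_I,
\]
where $\omega_I$ runs over wedges of basis $1$-forms and the coefficients $f_I\in\Cinflog{\Sig}$ are unique by \autoref{prop:T-basis}. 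Since the right-hand side is built solely from the intrinsic differential on functions, any two charts produce the same operator on their overlap, so the local formulas glue to a single global $\dd$, with no transition-function computation required.

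For existence I would take the displayed formula as a local definition and check the two axioms. The graded derivation property is immediate from the Leibniz rule for the universal derivation on $\Cinflog{\Sig}$ together with the antisymmetry of $\wedge$. For $\dd^2=0$, write $\dd f_I=\sum_a (\partial_a f_I)\,\omega_a$, where $a$ ranges over coordinates and $\partial_a$ is the corresponding coordinate logarithmic vector field $\logcvf{r_i}$ or $\logcvf{t_j}$; applying the formula twice gives
\[
\dd^2\rbrac{\textstyle\sum_I f_I\,\omega_I}=\sum_{I}\sum_{a,b}\rbrac{\partial_b\partial_a f_I}\,\omega_b\wedge\omega_a\wedge\omega_I,
\]
which vanishes because the coordinate logarithmic vector fields pairwise commute---making the coefficients symmetric in $(a,b)$---while $\omega_b\wedge\omega_a$ is antisymmetric. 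This commutativity is the one ingredient genuinely special to the present setting: it follows from \autoref{prop:T-basis}, \autoref{lem:phantom-log-functions} and \autoref{thm:injectivity-of-j*}, which present $\logcvf{r_i}$ and $\logcvf{t_j}$ as mutually commuting operators (the Euler derivations $r_i\partial_{r_i}$ on the smooth coefficients, with $\logcvf{r_i}\log(r_{i'})=\delta_{ii'}$, and the $\Cinflog{\Sigbas}$-linear derivations dual to $\log(t_j)$).

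I expect the point requiring the most care to be the global well-definedness of $\dd$, and the plan is to obtain it not by comparing charts directly but as a byproduct of the universal characterization above: because $\dd$ on functions is intrinsic and the basis $1$-forms are exact, the operator is pinned down chart-independently. Beyond this, the only inputs not already available in the smooth theory are the commutativity of the coordinate logarithmic vector fields and the fact that the singular forms $\dlog{r_i}$ are nevertheless closed logarithmic forms---both immediate from the structure theory of $\Cinflog{\Sig}$ established earlier in this section.
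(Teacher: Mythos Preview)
The ``lemma'' as stated is purely a definition: it names the object $(\cAlog{\Sig},\dd)$ the de Rham complex of $\Sig$. The paper provides no proof of it, because there is nothing to prove. What you have actually argued is the content of the sentence \emph{preceding} the lemma, namely that $\dd$ extends uniquely to a graded derivation on $\cAlog{\Sig}$ with $\dd^2=0$; the paper simply asserts this, invoking ``the usual formula for the de Rham differential'' without further justification.

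Your argument for that preceding assertion is correct and is exactly the standard one: use the local basis of $1$-forms from \autoref{prop:T-basis}, observe the basis elements are exact so the Leibniz rule and $\dd^2=0$ force the local formula, then check existence by verifying commutativity of the coordinate logarithmic vector fields. This is precisely what the paper means by ``the usual formula'', so you have supplied the details the paper omits. But strictly speaking, for the lemma as written, the appropriate response is that it is a definition and requires no proof.
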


Note that $\cAlog[j]{\Sig}$ is a sheaf of $\Cinf{\Sig}$-modules for all $j\ge0 $, hence soft.  The hypercohomology of $(\cAlog{\Sig},\dd)$ thus reduces to the cohomology of the complex $\cAlog{}(\Sig) \defas \sect{\Sig,\cAlog{\Sig}}$ of global sections, and similarly for the hypercohomology with compact supports. 

\begin{definition}
    The \defn{de Rham cohomology} of a manifold with log corners $\Sig$ is the cohomology of the complex of forms with logarithmic singularities, denoted
    \[
    \HdR{\Sig} \defas \coH{\cAlog{}(\Sig),\dd}.
    \]
    The \defn{compactly supported de Rham cohomology of $\Sig$} is the cohomology
    \[
    \HdRc{\Sig}\defas \coH{\Alogc{\Sig},\dd}
    \]
    of the complex of compactly supported sections of $\cAlog{\Sig}$.
\end{definition}

Given a virtual morphism $\phi \colon \Sig \to \Psi$, the pullback on logarithmic functions given by \autoref{lem:functoriality-of-log-functions} induces a pushforward map $\VFlog{\Sig} \to \phi^*\VFlog{\Psi}$ on logarithmic vector fields, and therefore extends uniquely to a map $\phi^* : \uphi^{-1}\cAlog{\Psi} \to \cAlog{\Sig}$ of sheaves of commutative differential graded algebras, so that the de Rham complex and its cohomology are functorial for virtual morphisms, and the compactly supported versions are functorial for virtual morphisms whose underlying map of manifolds is proper.

\subsection{Homotopy invariance and the log de Rham theorem}\label{sec:poincare}

We now show that our de Rham cohomology agrees with the ordinary de Rham cohomology defined using the smooth forms $\cA{\uSig}$, and hence inherits its usual topological properties.  The key is to directly prove the ``homotopy invariance'' $\HdR{\Sig} \cong \HdR{\Sig\times[0)} \cong \HdR{\Sig\times\halfspc{}}$ in the spirit of the classical argument, e.g.~as presented in \cite[\S{}I.4]{Bott1982}, for which we make essential use of virtual morphisms.

    Let  $\Sig$ be a manifold with log corners.  Let $r$ and $t$ be the standard coordinates on $\halfspc{}$ and $[0)$, respectively.    Consider the natural maps
  \begin{equation}\label{eq:homotopy-setup}
\begin{tikzcd}
    \Sig \times [0) \ar[rr,"i",hook]\ar[rd,"p"',twoheadrightarrow] & &\Sig\times \halfspc{} \ar[ld,"\tp",twoheadrightarrow] \\
    & \Sig
\end{tikzcd} 
\end{equation}
induced by the projection to a point and the canonical embedding $[0) \to \halfspc{}$.  Each of these (ordinary) morphisms has a canonical left or right inverse given by a virtual morphism: we have a commutative diagram
\begin{equation}
\begin{tikzcd}
    \Sig \times [0)  & &\Sig\times \halfspc{} \ar[ll,"q"',twoheadrightarrow]  \\
    & \Sig \ar[ul,"s",hook'] \ar[ur,"\ts"',hook]
\end{tikzcd} \label{eq:homotopy-inverse-setup}
\end{equation}
where $s$ is defined by $s^*(t)=1$, $q$ is defined by $q^*(t)=r$, and $\tilde s\defas is$.  We thus have the compositions
\begin{align*}
q i &= \id{\Sig \times [0)} & ps =\tilde p\tilde s &= \id{\Sig}. 
\end{align*}
so that the maps in \eqref{eq:homotopy-inverse-setup} are one-sided inverses for the maps in \eqref{eq:homotopy-setup}.  We claim that on the level of cohomology, these maps becomes two-sided inverses. Indeed, we have the following stronger statement.
\begin{theorem}\label{thm:deRham-homotopy}
    The three operators
    \[
    p^*s^* \in \End{\cAlog{}(\Sig\times [0))}
    \]
    and
    \[
    \tp^*\ts^*,q^* i^* \in \End{\cAlog{}(\Sig\times\halfspc{})}
    \]
    are canonically cochain homotopic to the identity, and hence the morphisms \eqref{eq:homotopy-setup} and \eqref{eq:homotopy-inverse-setup} give a commutative diagram of mutually inverse isomorphisms
    \[
\begin{tikzcd}
    \HdR{\Sig \times [0)}\ar[rr,"q^*"]\ar[dr,"s^*"]  & &\HdR{\Sig\times \halfspc{}} \ar[ll,bend right=20,"i^*"']  \ar[dl,"\ts^*"]\\
    & \HdR{\Sig} \ar[ul,"p^*",bend left = 30] \ar[ur,"\tp^*"',bend right = 30]
\end{tikzcd}
\]
\end{theorem}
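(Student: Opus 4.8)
The plan is to mimic the classical proof of homotopy invariance of de Rham cohomology (as in \cite[\S{}I.4]{Bott1982}), constructing explicit contracting homotopies by integrating out the fibre coordinate; the role of weak morphisms is precisely to make the sections $s,\ts,q$ available. The key observation is that every logarithmic form on $\Sig\times\halfspc{}$ decomposes uniquely as $\omega=\alpha+\dlog{r}\wedge\beta$, where $\beta$ is the contraction of $\omega$ by the logarithmic vector field $\logcvf{r}$ and $\alpha=\omega-\dlog{r}\wedge\beta$ contains no $\dlog{r}$, and likewise on $\Sig\times[0)$ with $r$ replaced by the phantom coordinate $t$. Writing $u=\log r$ (resp. $u=\log t$), so that $\dlog{r}=\dd u$ and $\logcvf{r}$ becomes $\partial_u$, I would define homotopy operators by the familiar formula
\[
K\omega \defas \int_{0}^{u}\beta\,\dd v,
\]
i.e. integration of the $\dlog{r}$-component from the basepoint $r=1$ (where $u=0$). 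Since $\ts$ lands on $\{r=1\}$ we have $\ts^*\omega=\alpha|_{u=0}$, so that $\tp^*\ts^*$ kills the $\dlog{r}$-part and evaluates $\alpha$ at $u=0$; a direct computation with $\dd=\dd_\Sig+\dd u\wedge\partial_u$ then yields the homotopy identity $\dd K+K\dd=\id{}-\tp^*\ts^*$ on $\Sig\times\halfspc{}$, and the analogous operator $K_0$ on $\Sig\times[0)$ gives $\dd K_0+K_0\dd=\id{}-p^*s^*$.

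For $\Sig\times[0)$ well-definedness is immediate: by \autoref{lem:phantom-log-functions} a form is a polynomial in $\log t$ with coefficients in $\cAlog{\Sig}$, so $\beta=\sum_j\beta_j\log^j t$ and $K_0\omega=\sum_j\beta_j\tfrac{\log^{j+1}t}{j+1}$ is again such a form. The main obstacle is to check that $K$ is well-defined on $\Sig\times\halfspc{}$, i.e. that $\Cinflog{\halfspc{}}$ is closed under the antiderivative $h\mapsto\int_1^r h\,\dlog{r}$; equivalently, I must solve $\logcvf{r}F=h$ with $F\in\Cinflog{\halfspc{}}$ for each $h\in\Cinflog{\halfspc{}}$. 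Writing $h=\sum_{j} h_j(r)\log^j r$ and seeking $F=\sum_j g_j(r)\log^j r$, the equation becomes the triangular system $r g_j'+(j+1)g_{j+1}=h_j$; solving from the top degree down, the requirement that each $g_j$ extend smoothly across $r=0$ forces, via Hadamard's lemma, the choice of the preceding integration constant (one subtracts the boundary value $h_j(0)$ before dividing by $r$), exactly as in the proof of \autoref{lem:flat-times-log}. Since $\int_1^r h\,\dlog{r}$ is an antiderivative differing from this logarithmic solution by a constant, it lies in $\Cinflog{\halfspc{}}$, so $K$ preserves $\cAlog{\Sig\times\halfspc{}}$. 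Verifying $\dd K+K\dd=\id{}-\tp^*\ts^*$ is then the routine Bott--Tu computation, the fundamental theorem of calculus supplying $\int_0^u\partial_v\alpha\,\dd v=\alpha|_{u}-\alpha|_{0}$.

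It remains to produce the third homotopy, for $q^*i^*$, which I would derive from the previous two rather than build by hand, using the identities among the structure maps that are immediate from \eqref{eq:homotopy-setup}--\eqref{eq:homotopy-inverse-setup}: $\tp=pq$ and $\ts=is$ give $\tp^*=q^*p^*$ and $\ts^*=s^*i^*$, whence $\tp^*\ts^*=q^*(p^*s^*)i^*$. Since $q^*$ and $i^*$ are chain maps, applying $q^*(-)i^*$ to $\dd K_0+K_0\dd=\id{}-p^*s^*$ shows that $q^*i^*-\tp^*\ts^*=\dd K_1+K_1\dd$ with $K_1\defas q^*K_0 i^*$; combining with $\dd K+K\dd=\id{}-\tp^*\ts^*$ gives
\[
q^*i^*=\id{}-\dd(K-K_1)-(K-K_1)\dd,
\]
so $K-q^*K_0 i^*$ is the desired canonical cochain homotopy. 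Finally, the cohomological statement follows formally: the strict identities $qi=\id{\Sig\times[0)}$ and $ps=\tp\ts=\id{\Sig}$ give $i^*q^*=\id{}$ and $s^*p^*=\ts^*\tp^*=\id{}$ on the nose, while the three homotopies give $q^*i^*\simeq\id{}$, $p^*s^*\simeq\id{}$ and $\tp^*\ts^*\simeq\id{}$; hence $(q^*,i^*)$, $(p^*,s^*)$ and $(\tp^*,\ts^*)$ are mutually inverse isomorphisms in de Rham cohomology, and commutativity of the displayed triangles is the passage to cohomology of $\ts^*=s^*i^*$ and $\tp^*=q^*p^*$.
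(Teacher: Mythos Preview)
Your proposal contains a genuine error in the computation of $\tp^*\ts^*$.  You write ``Since $\ts$ lands on $\{r=1\}$ we have $\ts^*\omega=\alpha|_{u=0}$'', but this misreads what the weak section $\ts=is$ does.  The underlying smooth map $\underline{\ts}$ sends $\Sig$ to the boundary point $r=0$ (it is the composite of $s\colon\Sig\to\Sig\times[0)$ with the inclusion $i$ of $[0)$ at $r=0$); only the \emph{monoid} pullback satisfies $\ts^*r=1$.  Consequently, for a smooth function $f(r)$ one has $\ts^*f=\underline{\ts}^*f=f(0)$, while $\ts^*\log r=\log(\ts^*r)=0$.  Thus $\ts^*\!\rbrac{f(r)\log^j r}=f(0)\cdot 0^j$, which is \emph{not} the same as evaluation at $r=1$; for the simplest test case $\omega=f(r)$ your operator $K$ yields $K\dd\omega=\int_1^r f'(r')\,\dd r'=f(r)-f(1)$, so $\dd K+K\dd=\id-\tp^*j_1^*$ where $j_1$ is the ordinary inclusion at the interior point $r=1$, not $\id-\tp^*\ts^*$.

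The paper avoids this pitfall by choosing the other pair of homotopies as primitive: it keeps your $K_0$ for $p^*s^*$, but for the second direct construction it treats $q^*i^*$ rather than $\tp^*\ts^*$.  Using $i^*q^*=\id{}$ one splits $\Alog{\Sig\times\halfspc{}}\cong q^*\Alog{\Sig\times[0)}\oplus\ker i^*$, and the key observation is that forms in $\ker i^*$ have \emph{no pole} in $r$ (since $i^*(\dlog r)=\dlog t$), so one may write $\omega=\omega_0+\omega_1\wedge\dd r$ and integrate $h'\omega\defas\int_0^r\omega_1(r')\,\dd r'$ in the honest variable $r$ from $0$; this is absolutely convergent and stays in $\ker i^*$.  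One checks $\dd h'+h'\dd=\id{}$ on $\ker i^*$, giving $q^*i^*\simeq\id{}$, and then $\tp^*\ts^*=q^*(p^*s^*)i^*$ inherits its homotopy by composition---exactly the reverse of your derivation.  Your antiderivative lemma for $\int_1^r(-)\dlog r$ is correct and interesting, but it furnishes a homotopy to evaluation at an interior point, which is not the operator in the statement; to salvage your route you would still need to produce a further chain homotopy between $\tp^*j_1^*$ and $\tp^*\ts^*$, which essentially brings you back to the paper's construction.
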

Before proving the theorem, let us remark on some immediate consequences of this result.

First, by repeated application of the theorem, we deduce that the cohomology of $\Sig \times \model{n}{k}$ reduces to that of $\Sig$.  In particular, taking $\Sig = \RR^m$ we obtain the following logarithmic version of the Poincar\'e lemma.
\begin{corollary}
    The log de Rham cohomology of $\RR^m \times \model{n}{k}$ is given by
    \[
    \HdR[j]{\RR^m\times \model{n}{k}} \cong \begin{cases}
        \RR & j=0 \\
        0 & j \neq 0
    \end{cases}
    \]
\end{corollary}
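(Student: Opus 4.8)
The plan is to reduce everything to \autoref{thm:deRham-homotopy} by stripping off the factors of $\halfspc{}$ and $[0)$ one at a time, and then invoking the classical Poincar\'e lemma for the remaining factor $\RR^m$.

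First I would rewrite the standard log corner as a product $\RR^m \times \model{n}{k} = \RR^m \times \halfspc{n} \times [0)^k$, and peel off the phantom factors. Setting $\Sig = \RR^m \times \halfspc{n} \times [0)^{k-1}$ in \autoref{thm:deRham-homotopy} gives a canonical isomorphism $\HdR{\Sig \times [0)} \cong \HdR{\Sig}$, so after $k$ applications we obtain $\HdR{\RR^m \times \halfspc{n} \times [0)^k} \cong \HdR{\RR^m \times \halfspc{n}}$. Likewise, peeling off the interval factors via the $\halfspc{}$ part of the theorem (with $\Sig = \RR^m \times \halfspc{n-1}$, then $\RR^m\times\halfspc{n-2}$, and so on), after $n$ further applications we obtain $\HdR{\RR^m \times \halfspc{n}} \cong \HdR{\RR^m}$. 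Each step is legitimate because \autoref{thm:deRham-homotopy} holds for an \emph{arbitrary} manifold with log corners $\Sig$, and the intermediate products are again manifolds with log corners.

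It then remains to compute $\HdR{\RR^m}$. Since $\RR^m$ is a manifold without corners, it carries the trivial log structure, i.e.~$\cM_{\RR^m} = \Cinfpos{\RR^m}$, so by \autoref{lem:formal-vs-actual-log} every formal logarithm $\ulog(f)$ already coincides with the smooth function $\log(f)$. Hence $\Cinflog{\RR^m} = \Cinf{\RR^m}$, the logarithmic de Rham complex agrees with the ordinary smooth de Rham complex $\cA{\RR^m}$, and $\HdR{\RR^m}$ reduces to the usual de Rham cohomology of $\RR^m$. The classical Poincar\'e lemma then yields $\RR$ in degree $0$ and $0$ in every higher degree, as desired.

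I expect no serious obstacle: the entire content is carried by \autoref{thm:deRham-homotopy}, and the corollary is a purely formal consequence of iterating it. The only point requiring a moment's care is the base-case identification $\Cinflog{\RR^m} = \Cinf{\RR^m}$, which rests precisely on the fact that $\RR^m$ has trivial log structure, so that there are no nontrivial formal logarithms to keep track of.
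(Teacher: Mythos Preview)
Your proposal is correct and follows essentially the same approach as the paper, which simply says ``by repeated application of the theorem, we deduce that the cohomology of $\Sig \times \model{n}{k}$ reduces to that of $\Sig$; in particular, taking $\Sig = \RR^m$\ldots''. You have merely spelled out the iteration and the base case $\HdR{\RR^m}\cong\HdR{\underline{\RR^m}}$ in more detail than the paper does.
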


Since every point in a manifold with log corners has a basis of neighbourhoods isomorphic to $\RR^m\times\model{n}{k}$ for some $n,k$, we deduce the following logarithmic counterpart of de Rham's theorem, with or without compact suport.

\begin{corollary}\label{cor:log-de-Rham}
The inclusions 
\[
\RR_{\uSig} \hookrightarrow \cA{\uSig} \hookrightarrow \cAlog{\Sigbas} \hookrightarrow \cAlog{\Sig}\]
are quasi-isomorphisms.  Hence they induce natural isomorphisms  
\[
\mathsf{H}^\bullet_{\mathrm{sing}}(\uSig;\RR) \cong \HdR{\uSig} \cong \HdR{\Sigbas} \cong \HdR{\Sig}.
\]
of graded commutative algebras, and natural isomorphisms
 \[
   \mathsf{H}^\bullet_{\mathrm{sing,c}}(\uSig;\RR)\cong \HdRc{\uSig} \cong \HdRc{\Sigbas} \cong \HdRc{\Sig}.
    \]
of their graded modules.
\end{corollary}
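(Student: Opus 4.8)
The plan is to recognize each of the four complexes in the chain as a soft resolution of the constant sheaf $\RR_\Sig$ and then run the abstract de Rham argument, so that the real content of the statement is already contained in \autoref{thm:deRham-homotopy} and the logarithmic Poincar\'e lemma it produces.

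First I would show that the augmentation $\RR_\Sig\hookrightarrow\cAlog{\Sig}$ is a quasi-isomorphism of complexes of sheaves, i.e.\ that the cohomology sheaves of $\cAlog{\Sig}$ vanish in positive degree and equal $\RR_\Sig$ in degree $0$. The cohomology sheaf $\mathcal{H}^j(\cAlog{\Sig})$ is the sheafification of the presheaf $U\mapsto\HdR[j]{U}$, so its stalks may be computed over any cofinal system of neighbourhoods. As noted just before the statement, the models $\RR^m\times\model{n}{k}$ form a basis of neighbourhoods of each point, and the preceding (logarithmic Poincar\'e) corollary gives $\HdR[j]{\RR^m\times\model{n}{k}}=0$ for $j>0$ and $=\RR$ for $j=0$. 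Hence $\mathcal{H}^j(\cAlog{\Sig})=0$ for $j>0$, while $\mathcal{H}^0$ is the sheaf of locally constant $\RR$-valued functions, i.e.\ $\RR_\Sig$.

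Next I would invoke softness. Each $\cAlog[j]{\Sig}$ is a sheaf of $\Cinf{\Sig}$-modules, and $\Cinf{\Sig}$ admits smooth partitions of unity subordinate to any open cover, the underlying manifold with corners being second countable Hausdorff, hence paracompact and locally compact; thus the $\cAlog[j]{\Sig}$ are soft (indeed c-soft), as already recorded in the text. A soft resolution of $\RR_\Sig$ computes both ordinary and compactly supported sheaf cohomology, so
\[
\HdR{\Sig}\cong\coH{\uSig;\RR}\qquad\text{and}\qquad\HdRc{\Sig}\cong\coHc{\uSig;\RR}.
\]
Since manifolds with corners are locally contractible and paracompact, sheaf cohomology of the constant sheaf coincides with singular cohomology, yielding $\coH{\uSig;\RR}\cong\mathsf{H}^\bullet_{\mathrm{sing}}(\uSig;\RR)$ and, with compact supports, $\coHc{\uSig;\RR}\cong\mathsf{H}^\bullet_{\mathrm{sing,c}}(\uSig;\RR)$.

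To handle the whole chain, I would run the identical argument for $\cA{\uSig}$ (using the classical smooth Poincar\'e lemma) and for $\cAlog{\Sigbas}$ (using the $k=0$ case of the corollary), so that all four complexes are soft resolutions of $\RR_\Sig$. Each inclusion commutes with the augmentation from $\RR_\Sig$, and a morphism of resolutions of the same sheaf respecting the augmentations is automatically a quasi-isomorphism; hence each inclusion induces isomorphisms on cohomology compatible with the common identification with $\mathsf{H}^\bullet_{\mathrm{sing}}(\uSig;\RR)$. For the ring structure, each complex is a commutative differential graded algebra and the inclusions are algebra maps; as the de Rham isomorphism carries wedge products to cup products, these are isomorphisms of graded commutative algebras, and the compactly supported groups are the corresponding graded modules, with the maps respecting the module action. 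I expect the only point needing genuine care to be the compactly supported case: one must check that softness here really delivers acyclicity for $\Gamma_c$ (c-softness on the locally compact paracompact $\uSig$) and that compactly supported sheaf cohomology of $\RR_\Sig$ agrees with compactly supported singular cohomology, which is precisely where the local structure of manifolds with corners is used.
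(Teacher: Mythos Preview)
Your proof is correct and follows essentially the same approach as the paper: the paper states this as an immediate corollary of the logarithmic Poincar\'e lemma, relying on the standard abstract de Rham argument (local acyclicity on a basis of neighbourhoods plus softness, already recorded in the text), which you have spelled out in full detail. The only addition in your write-up beyond what the paper makes explicit is the careful treatment of the compactly supported case and the algebra/module structures, which are exactly the routine verifications one expects.
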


The classical K\"unneth formula then gives the following.
\begin{corollary}
    If $\uSig$ is of finite type (e.g.~compact), then the natural map
    \[
    \HdR{\Sig} \otimes_\RR \HdR{\Psi} \to \HdR{\Sig\times\Psi}
    \]
    is an isomorphism of graded commutative algebras.
\end{corollary}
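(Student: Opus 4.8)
The plan is to deduce this from the classical Künneth theorem for singular cohomology, using the de Rham comparison of \autoref{cor:log-de-Rham}. First I would make the natural map explicit: on forms it is the exterior product $[\alpha]\otimes[\beta]\mapsto[p_\Sig^*\alpha\wedge p_\Psi^*\beta]$, where $p_\Sig\colon\Sig\times\Psi\to\Sig$ and $p_\Psi\colon\Sig\times\Psi\to\Psi$ are the two projections; these are ordinary (hence weak) morphisms, so the pullbacks on $\cAlog{}$ are defined and the map lands in $\HdR{\Sig\times\Psi}$.

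Next I would set up the comparison square. By \autoref{cor:log-de-Rham} there are natural isomorphisms of graded commutative algebras $\HdR{\Sig}\cong\mathsf{H}^\bullet_{\mathrm{sing}}(\uSig;\RR)$, and likewise for $\Psi$ and for $\Sig\times\Psi$ (whose underlying manifold with corners is $\uSig\times\uPsi$). Because this comparison is natural for the projections $p_\Sig,p_\Psi$ and multiplicative for the wedge and cup products, the exterior product of log forms corresponds under these identifications to the topological cohomology cross product $\mathsf{H}^\bullet_{\mathrm{sing}}(\uSig;\RR)\otimes_\RR\mathsf{H}^\bullet_{\mathrm{sing}}(\uPsi;\RR)\to\mathsf{H}^\bullet_{\mathrm{sing}}(\uSig\times\uPsi;\RR)$. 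Concretely, writing $D$ for the comparison isomorphism and using that it is a natural ring map, one has $D(p_\Sig^*\alpha\wedge p_\Psi^*\beta)=p_\Sig^*(D\alpha)\cup p_\Psi^*(D\beta)=D\alpha\times D\beta$.

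It then remains to invoke the classical Künneth theorem. Over the field $\RR$ there are no Tor corrections, and the cohomology cross product is an isomorphism as soon as one factor has finite-dimensional cohomology in each degree; this is exactly what the finite-type hypothesis on $\Sig$ (satisfied in particular when $\Sig$ is compact) provides. Hence the natural map is an isomorphism, and since both the de Rham comparison and the cross product preserve the ring structure, it is an isomorphism of graded commutative algebras.

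I expect the only point requiring care to be the identification in the middle step: that the exterior product of logarithmic forms corresponds to the singular cross product under the comparison of \autoref{cor:log-de-Rham}. Since that corollary already records the comparison as an isomorphism of graded commutative algebras induced by inclusions of sheaves of differential graded algebras, naturality for the projections and compatibility with products hold formally, so this step is bookkeeping rather than a genuine obstacle; everything else is the standard topological Künneth input.
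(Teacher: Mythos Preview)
Your proposal is correct and is exactly the argument the paper intends: the paper's entire proof is the single sentence ``The classical K\"unneth formula then gives the following,'' which amounts to transporting the problem to singular cohomology via \autoref{cor:log-de-Rham} and applying the topological K\"unneth theorem over $\RR$. Your write-up simply unpacks the naturality and multiplicativity bookkeeping that the paper leaves implicit.
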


We now turn to the proof of \autoref{thm:deRham-homotopy}.  We will deal with each of the three operators in the statement in \autoref{sec:deRham-htpy1} through \autoref{sec:deRham-htpy3} below.

\subsubsection{Contracting homotopy for $p$ and $s$}\label{sec:deRham-htpy1}
We have an isomorphism of commutative differential graded $\Alog{\Sig}$-algebras:
\[
\Alog{\Sig\times[0)} \cong \Alog{\Sig}\left[\log(t), \dd\log(t)\right].
\]
The graded $\Alog{\Sig}$-linear operator
\[
h \colon \Alog{\Sig\times[0)} \to \Alog[\bullet-1]{\Sig\times[0)} \; , \; \begin{cases} \log^j(t)\mapsto 0 \\ \log^j(t)\,\dd\log(t) \mapsto \tfrac{1}{j+1}\log^{j+1}(t) \end{cases}
\]
is easily seen to satisfy
\[
\dd h + h \dd = \id{} - p^*s^*
\]
since $s^*$ sends $\log(t)$ and $\dd\log(t)$ to zero.

\subsubsection{Contracting homotopy for $q$ and $i$}\label{sec:deRham-htpy2}

Since $i^*q^* = \id{}$, we have a splitting
\[
\Alog{\Sig \times \halfspc{}} \cong \Alog{\Sig\times[0)} \oplus \ker i^*
\]
where
\[
\ker i^* \subset \Alog{\Sig\times\halfspc{}}
\]
is the subcomplex of forms that vanish on $[0)$. Under the splitting, the operator $q^*i^*$ corresponds to the projection onto $\Alog{\Sig\times[0)}$ and hence it suffices to produce a contracting homotopy for the complex $\ker i^*$.

To this end, note that since $i^* (\dd\log(r))=\dd\log(t),$ any form $\omega \in \ker i^*$ has no poles in $r$, i.e.~it can be written as 
\[
\omega = \omega_0 + \omega_1\wedge \dd r
\]
where $\omega_0$ and $\omega_1$ are logarithmic forms that do not involve $\dd r$, although their coefficient functions may depend smoothly on $r$ and polynomially on $\log r$.  Since $\log^j(r)\dd r$ is absolutely integrable near $r=0$ for all $j\ge 0$, we may define an operator
\[
h' \colon \ker i^* \to \ker i^*[-1]
\]
by the formula
\[
h'\omega \defas  \int_0^r \omega_1(r') \dd r'
\]
so that
\[
(\dd h' + h' \dd )\omega = \omega,
\]
as desired.  

\subsubsection{Contracting homotopy for $\tp,\ts$}\label{sec:deRham-htpy3}

Since $\ts = is$ and $\tp = p q$, we may compose the homotopy equivalences from the previous two subsections to obtain a homotopy between $\tp^*\ts^*$ and the identity.  This completes the proof of \autoref{thm:deRham-homotopy}.

\subsection{Relative de Rham cohomology}
\label{sec:relative-cohomology}

We may also define a version of de Rham cohomology relative to the boundary.  Here, as for classical manifolds with corners, a complication arises: since the topological boundary is not itself a submanifold, we need to replace it with its natural simplicial resolution to obtain a sensible de Rham complex.

\subsubsection{Symmetric semi-simplicial objects}\label{sec:semi-simplicial-objects}
As explained in \cite{ChanGalatiusPayne} in the algebro-geometric context, the combinatorics of boundary strata are most naturally organized using a variant of semi-simplicial sets, which they refer to as symmetric $\Delta$-complexes.  We recall the basics here to set our notation and terminology, which differs somewhat from \emph{op.~cit.}.

Let $\ascat$ denote the category of finite sets and injective maps, and $\scat\subset\ascat$ the full subcategory of nonempty sets.  For a category $\mathscr{C}$, a \defn{symmetric semi-simplicial object in $\mathscr{C}$} is a functor $Y\colon \scat^\op \to \mathscr{C}$.  From such a functor we may extract the objects $Y_n \defas Y(\{1,\ldots,n\})$, which by functoriality carry an action of the symmetric groups $\symgp{n}$, and a collection of morphisms $d_j^n \colon Y_n \to Y_{n-1}$ for $j \in \{1,\ldots, n\}$, induced by  the unique increasing map $\{1,\ldots,n-1\} \to \{1,\ldots,n\}$ whose image omits the element $j$. Since all injective maps are conjugate to increasing maps by permutations, this data determines $Y$ up to isomorphism.  Thus, as a shorthand, we denote the $\scat$-object by 
\[
Y_\bullet = \rbrac{\simp{Y_1}{Y_2}{Y_3}}
\]
where the names of the maps $d_j^n$ and symmetric group actions are left implicit.

Similarly, functors $(\ascat)^\op \to \mathscr{C}$, called \defn{augmented symmetric semi-simplicial objects}, are determined by the data
\[
(X,Y_\bullet) = \rbrac{\augsimp{X}{Y_1}{Y_2}{Y_3}}
\]
where $X$ is the image of the empty set, $Y_\bullet$ encodes the $\scat$-object defined by the restriction of the functor to $\scat\subset\ascat$, and the morphism $Y_1 \to X$, which could be denoted $d^1_1$, is obtained by functoriality from the inclusion of the empty set in $\{1\}$.

\begin{remark}
    Our indexing differs from the standard indexing of semi-simplicial sets, in that we start from one instead of zero.  This convention is chosen to better match the indexing of boundary strata.
\end{remark}

\begin{remark}
    The notion of a symmetric semi-simplicial object is a version ``without degeneracies'' of the notion of symmetric simplicial object \cite{grandis}, also known as ``oversimplicial object'' \cite{saitowild}. 
\end{remark}

\subsubsection{Symmetric semi-simplicial manifolds with log corners}

Specializing to the case in which $\mathscr{C}$ is the category of manifolds with log corners with virtual morphisms, we have the following definition.

\begin{definition}
An \defn{$\scat$-manifold with log corners} is a symmetric semi-simplicial object $\Psi_\bullet$ in the category of manifolds with log corners and virtual morphisms. 

Similarly, an \defn{$\ascat$-manifold with log corners} is an augmented symmetric semi-simplicial object $(\Sigma,\Psi_\bullet)$ in the category of manifolds with log corners and virtual morphisms.
\end{definition}

We emphasize that the structure maps of an $\scat$- or $\ascat$-manifold with log corners are virtual morphisms by default in this definition. When we want to explicitly say that the  morphisms are virtual (resp. ordinary) we will talk about virtual (resp. ordinary) $\scat$-manifolds with log corners and similarly for the augmented case.

Underlying every $\scat$-manifold with log corners $\Psi_\bullet$ is an $\scat$-manifold with corners $\underline{\Psi}_\bullet$, and similarly in the augmented case.  Our primary source of (ordinary) $\ascat$-manifolds with log corners is given by the following construction.

\begin{example}
    If $\Sig$ is a manifold with log corners then its boundary strata assemble into an ordinary $\scat$-manifold with log corners, which we denote by 
    \[
    \bdSig[\bullet] \defas \rbrac{\simp{\bdSig}{\bdSig[2]}{\bdSig[3]}}
    \]
    and an ordinary $\ascat$-manifold with log corners
    \[
    (\Sig,\bdSig[\bullet]) = \rbrac{\augsimp{\Sig}{\bdSig}{\bdSig[2]}{\bdSig[3]}}
    \]
    where the augmentation is the natural immersion $\bdSig \to \Sig$ of the boundary.
\end{example}

\subsubsection{The relative de Rham complex}\label{sec:relative-de-Rham}
If $\Psi_\bullet$ is an $\scat$-manifold with log corners, the logarithmic de Rham complexes of its components assemble into a symmetric semi-cosimplicial dg algebra, and we may define the logarithmic de Rham complex of $\Psi_\bullet$ to be its totalization, given by the homotopy limit
\begin{align*}
\Alog{\Psi_\bullet} &\defas \holim_{\scat} \Alog{\Psi_\bullet}.
\end{align*}
In concrete terms, we have a canonical quasi-isomorphism
\[
\Alog{\Psi_\bullet} \cong \bigoplus_{n \ge 1} \rbrac{\Alog{\Psi_n} \otimes \sgn{n}}^{\symgp{n}}[1-n]
\]
where $[-]$ is the usual degree shift functor, the differential is the alternating sum of the de Rham differential and the pullbacks along the face maps, $\sgn{n}$ denotes the sign representation of $\symgp{n}$, and $(-)^{\symgp{n}}$ denotes $\symgp{n}$-invariants.

Similarly for a virtual $\ascat$-manifold with log corners $(\Sig,\Psi_\bullet)$ with augmentation map denoted $i \colon \Psi_1 \to \Sig$, we define its relative logarithmic de Rham complex
\[
\Alog{\Sig,\Psi_\bullet} \defas \fibre\rbrac{i^*\colon  \Alog{\Sig} \to \Alog{\Psi_\bullet}}
\]
Concretely, we have
\[
\Alog{\Sig,\Psi_\bullet} \cong \Alog{\Sig} \oplus \bigoplus_{n \ge 1}(\Alog{\Psi_n} \otimes \sgn{n})^{\symgp{n}}[-n]
\]
where once again the differential is the sum of the de Rham and \v{C}ech differentials.

We also have the de Rham complex $\A{\uPsi_\bullet}$ and $\A{\uSig,\uPsi_\bullet}$ of the underlying symmetric semi-simplicial manifolds with corners, and the inclusion of smooth forms into logarithmic forms give quasi-isomorphisms
\[
\begin{tikzcd}
    \A{\uPsi_\bullet} \ar[r,hook,"\sim"] &\Alog{\Psi_\bullet} &\A{\uSig,\uPsi_\bullet} \ar[r,hook,"\sim"] &\Alog{\Sig,\Psi_\bullet}
\end{tikzcd}
\]
which also induce quasi-isomorphisms of the compactly supported forms.

In the special case in which $\Psi_\bullet = \bdSig[\bullet]$ is the semi-simplicial boundary, a standard inclusion/exclusion argument gives the following.
\begin{proposition}\label{prop:BdR-comparison-manifolds-with-log-corners}
    The de Rham cohomology of $\bdSig[\bullet]$ with/without compact supports is naturally isomorphic to the corresponding singular cohomology groups of the topological boundary $\partial_{\mathrm{top}}\uSig$: 
    \begin{align*}
    \HdR{\partial^\bullet\Sig}&\cong \mathsf{H}^\bullet_{\mathrm{sing}}(\partial_{\mathrm{top}}\uSig;\RR) &
    \HdRc{\partial^\bullet\Sig}&\cong \mathsf{H}^\bullet_{\mathrm{sing,c}}(\partial_{\mathrm{top}}\uSig;\RR).
    \end{align*}
    and similarly for the relative cohomology
    \begin{align*}
    \HdR{\Sig,\partial^\bullet\Sig}&\cong \mathsf{H}^\bullet_{\mathrm{sing}}(\uSig,\partial_{\mathrm{top}}\uSig;\RR) &
    \HdRc{\Sig,\partial^\bullet\Sig}&\cong \mathsf{H}^\bullet_{\mathrm{sing,c}}(\uSig,\partial_{\mathrm{top}}\uSig;\RR).
    \end{align*}
\end{proposition}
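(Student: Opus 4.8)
The plan is to establish the four isomorphisms by first reducing logarithmic forms to smooth forms level by level, then realizing the ``inclusion/exclusion'' as a sheaf resolution of the constant sheaf on the topological boundary, and finally deducing the relative statements from the absolute ones by the five lemma. For the first reduction I would use the level-wise quasi-isomorphisms $\A{\underline{\partial^n\Sig}}\hookrightarrow\Alog{\partial^n\Sig}$ provided by \autoref{cor:log-de-Rham}. These are compatible with the face maps (which are weak morphisms) and with the $\symgp{n}$-actions, so after passing to sign-twisted invariants and totalizing they induce a quasi-isomorphism $\A{\underline{\partial^\bullet\Sig}}\xrightarrow{\sim}\Alog{\partial^\bullet\Sig}$, and likewise with compact supports; this is already recorded in \autoref{sec:relative-de-Rham}. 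It therefore suffices to identify the smooth totalization with the singular cohomology of $Z\defas\partial_{\mathrm{top}}\uSig$.

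The heart of the argument is a sheaf resolution on $Z$. The immersions $\pi_n\colon\underline{\partial^n\Sig}\to\uSig$ land in $Z$ and carry free $\symgp{n}$-actions over $Z$, and I claim the complex of sheaves
\[
\RR_Z\longrightarrow(\pi_1)_*\RR\longrightarrow\big((\pi_2)_*\RR\otimes\sgn{2}\big)^{\symgp{2}}\longrightarrow\big((\pi_3)_*\RR\otimes\sgn{3}\big)^{\symgp{3}}\longrightarrow\cdots,
\]
with differential the alternating sum of pullbacks along the face maps, is exact. Exactness is a stalk computation, and by the local model $\uSig\cong\halfspc{n}\times\RR^m$ it reduces to the following: near a depth-$d$ point, $Z$ is the union of the $d$ coordinate faces through that point, the fibre of $\pi_n$ is the set of ordered $n$-tuples of distinct such faces, and taking the sign-twisted $\symgp{n}$-invariants turns the stalk complex into the augmented simplicial cochain complex of the full simplex $\Delta^{d-1}$ on the $d$ faces. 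Since that simplex is contractible, its reduced cohomology vanishes, so the augmented stalk complex is exact; this is precisely the classical inclusion/exclusion.

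Granting the resolution, I would replace each term by the (soft) de Rham complex of the corresponding face, so that hypercohomology computes both $\mathsf{H}^\bullet_{\mathrm{sing}}(Z;\RR)=\mathsf{H}^\bullet(Z;\RR_Z)$ and, via the totalization, $\HdR{\partial^\bullet\Sig}$; this yields the first isomorphism. The compactly supported version follows from the same resolution once one knows that the finite immersions $\pi_n$ are proper, so that $(\pi_n)_!=(\pi_n)_*$ and compactly supported sections compute $\mathsf{H}^\bullet_c(Z)$. I expect the main obstacle to lie exactly here: carefully tracking the $\symgp{n}$-action and the sign twist so that the stalk complex is genuinely the normalized (reduced) cochain complex of $\Delta^{d-1}$, and, for the compact-support statement, verifying properness of the $\pi_n$.

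For the relative statements I would compare long exact sequences. The fibre sequence defining $\Alog{\Sig,\partial^\bullet\Sig}$ produces a long exact sequence relating $\HdR{\Sig}$, $\HdR{\partial^\bullet\Sig}$, and $\HdR{\Sig,\partial^\bullet\Sig}$, and I would map it to the long exact sequence of the topological pair $(\uSig,Z)$ using the absolute isomorphism $\HdR{\Sig}\cong\mathsf{H}^\bullet_{\mathrm{sing}}(\uSig;\RR)$ from \autoref{cor:log-de-Rham} together with the boundary isomorphism just established. Naturality of the de Rham comparison makes the restriction squares commute, whereupon the five lemma delivers the relative isomorphism; running the identical argument with compact supports handles $\HdRc{\Sig,\partial^\bullet\Sig}$.
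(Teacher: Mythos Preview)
Your proposal is correct and follows precisely the approach the paper has in mind: the paper states the proposition without proof, attributing it to ``a standard inclusion/exclusion argument,'' and what you have written is exactly a careful fleshing-out of that argument (level-wise reduction to smooth forms via \autoref{cor:log-de-Rham}, the stalkwise-exact resolution of $\RR_Z$ by pushforwards from the iterated boundaries, and the five lemma for the relative case). Your concerns about tracking the sign twist and properness of the $\pi_n$ are well placed but easily resolved: the sign-twisted invariants at a depth-$d$ point are canonically identified with alternating functions on ordered tuples of faces, hence with cochains on the full simplex $\Delta^{d-1}$, and the $\pi_n$ are finite maps (the fibre over a depth-$d$ point has at most $d!/(d-n)!$ elements), hence proper.
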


There is also a smaller model for the relative cohomology.  Namely, for $p \ge 0$, let 
\[
\Alog[p]{\Sig,\bdSig[\bullet]}_0 := \ker (i^* \colon \Alog[p]{\Sig} \to \Alog[p]{\bdSig})
\]
be the set of $p$-forms that vanish identically on the boundary. Note that since $i^*$ commutes with $\dd$, we may identify $(\Alog[\bullet]{\Sig,\bdSig[\bullet]},\dd)$ with a subcomplex of the total complex computing the relative cohomology.  We then have the following.
\begin{lemma}
\label{rem:continuous-relative-cohomology}
The \v{C}ech complex
\[
\Alog[p]{\Sig,\bdSig[\bullet]} = \rbrac{\Alog[p]{\Sig} \to \Alog[p]{\bdSig} \to \Alog[p]{\bdSig[2]} \to \cdots}
\]
is a resolution of $\Alog[p]{\Sig}_0 \subset \Alog[p]{\Sig}$. Hence the inclusion of $\Alog{\Sig,\bdSig[\bullet]}_0$ in the total complex $\Alog{\Sig,\bdSig[\bullet]}$ is a quasi-isomorphism.
\end{lemma}
\begin{proof}
The problem is local, so it reduces to the case in which $\Sig$ is an open subset of $\halfspc{n}\times[0)^k$.  In this case, the pullback to boundary faces identifies the standard coordinate basis of $\Alog{\Sig}$ with the corresponding bases of $\Alog{\bdSig[j]}$ for all $j > 0$.  Hence these bases decompose the \v{C}ech complex  $\Alog[p]{\Sig,\bdSig[\bullet]}$ as a direct sum of ${n+k \choose p}$ copies of the \v{C}ech complex $\Alog[0]{\Sig,\bdSig[\bullet]} = \Cinflog{}(\bdSig[\bullet])$ of the algebras of logarithmic functions.  By \autoref{lem:boundary-log-principal-ideal}, the latter is identified with the \v{C}ech complex of the regular sequence of elements $r_1,\ldots,r_n$ in the ring $\Cinflog{}(\Sig)$, which is therefore a resolution by standard commutative/homological algebra.
\end{proof}

\section{Regularized integration}
\label{sec:integration}

\subsection{Overview of the strategy}
In this section, we use the cohomological formalism developed thus far to define the regularized integral of forms with logarithmic singularities.  The strategy is as follows. 

\subsubsection{Classical case}\label{sec:classical-integral} Recall that if $\uSig$ is an \emph{oriented} manifold with corners, and $\uomega$ is a compactly supported, smooth, top-degree form on $\uSig$, the integral $\int_{\uSig} \uomega$ can be defined locally using the Lebesgue measure in charts, and then globalized using a partition of unity. 

For degree reasons, such a form $\uomega$ is closed, and its pullback to $\ubdSig$ is zero, so that it defines a class in the relative cohomology $\HdRc[n]{\uSig,\ubdSig}$.  By Stokes' formula, the integral only depends on this class, i.e.~it descends to a functional
\[
\int_{\uSig} : \HdRc[n]{\uSig,\ubdSig}  \to \RR,
\]
which is, moreover, an isomorphism if $\uSig$ is connected.  

Indeed, we could alternatively \emph{define} the integral as the functional on $\Ac{\uSig}$ obtained by first applying the projection $\Ac{\uSig} \to \HdRc[n]{\uSig,\ubdSig}$, then applying de Rham's isomorphism $\HdRc[n]{\uSig,\ubdSig}\cong \mathsf{H}^\bullet_\mathrm{c}(\uSig,\partial_{\mathrm{top}}\uSig;\RR)$, and finally applying the functional $\mathsf{H}^\bullet_\mathrm{c}(\uSig,\partial_{\mathrm{top}}\uSig;\RR) \to \RR$ given by pairing against the fundamental class of $(\uSig,\ubdSig)$ in Borel--Moore homology.  The advantage of this more abstract definition is that it makes sense for any cochain complex computing the relative cohomology.

For instance, using the description of the relative cohomology via totalization of the  $C^\infty$ de Rham complexes on strata $\Ac{\uSig,\ubdSig[\bullet]}$ similar to \autoref{sec:relative-de-Rham}, a relative cohomology class may be represented by a tuple $(\uomega_j)_{j \ge0}$ with $\uomega_j \in (\Ac[n-j]{\ubdSig[j]}\otimes\sgn{j})^{\symgp{j}}$ a smooth form of top degree on the $j$th boundary, equivariant for the sign-twisted action of the symmetric group.  The integral of such a tuple is expressed in terms of the integrals over strata by the formula
\[
\int_{\uSig}\uomega_\bullet= \sum_{j \ge 0} \frac{(-1)^j}{j!} \int_{\ubdSig[j]}\uomega_j,
\]
which reduces to the integral of the top form $\omega = \omega_0$ if $\omega_j = 0$ for $j>0$.  
\subsubsection{Logarithmic case}
Now if $\Sig$ is a manifold with log corners and $\omega$ is a form with logarithmic singularities, it is not clear how to define the integral in coordinates due to possible divergences, but the logarithmic de Rham theorem still gives an isomorphism $\HdRc{\Sig,\bdSig} \cong \mathsf{H}^\bullet_{\mathrm{c}}(\uSig,\partial_{\mathrm{top}}\uSig;\RR)$, suggesting that the cohomological definition of the integral still makes sense.  However, the presence of phantoms on the boundary means that a top-degree form need no longer vanish on the boundary, e.g.~the form $\dlog{r}$ on $\halfspc{}$ pulls back to the phantom form $\dlog{t}\neq 0$ on $\partial\halfspc{} = [0)$.  Hence \emph{a priori} not all logarithmic forms represent relative cohomology classes, so that the cohomological approach only gives an obvious definition of the integral for logarithmic forms whose pullback to the boundary is zero.   In fact, as we explain  in \autoref{prop:convergent-integrals} below, these are exactly the forms for which the  ordinary integral of $\omega$ over the interior $\intSig$ converges absolutely, and in this case, the cohomological formalism recovers this absolutely convergent integral.  This extends the cohomological definition of the integral only to forms which are absolutely integrable but not smooth, such as $\log r\, \dd r$. 

Therefore, in order to tackle (possibly divergent) integrals of arbitrary logarithmic forms, the relative cohomology group $\HdRc{\Sig,\bdSig}$ is not the right object to consider. This should not be surprising since as explained in the introduction, the value of a ``regularized'' integral depends on extra tangential data on the boundary of $\Sig$, namely a scale on $\partial\Sigma$ in the sense of \autoref{def:scale}. 
Moreover, to have a good theory, with Stokes formula, etc., we need a compatible family of scales along boundary strata of all codimensions.  We package these data and compatibilities below in the notion of a ``regularization''.  Using the latter, we define a diagram of manifolds with log corners and virtual morphisms whose log de Rham cohomology still compares to $\mathsf{H}^\bullet_{\mathrm{c}}(\uSig,\partial_{\mathrm{top}}\uSig;\RR)$ and which contains the classes of \emph{all} logarithmic forms of degree $\dim \uSig$. This allows us to  define their regularized integrals consistently. For forms that are absolutely integrable, we recover the classical integral, which is independent of the regularization.  However, in general, the regularized integral does depend on the choice of regularization.

\subsection{Regularization via scales}
The precise definition of a regularization is as follows.

\begin{definition}\label{def:regularization}
Let $\Sig$ be a manifold with log corners. A \defn{regularization of $\Sig$} is a tuple $s = (s_{\bdSig[j]})_{j \ge 0}$ consisting of a scale $s_{\bdSig[j]}$ on $\bdSig[j]$ for each $j \ge 0$,  which is invariant under the natural action of $\symgp{j}$ and such that all the natural morphisms in the diagram
\[
\begin{tikzcd}[column sep = .8cm, row sep = 1cm,baseline=-0.9cm]
(\Sig,\bdSig[\bullet]): &[-.5cm] \Sig &[.5cm] \bdSig \ar[l,"i"'] &[.4cm] \bdSig[2] \ar[l,shift left] \ar[l,shift right]  &[.5cm] \bdSig[3] \ar[l,shift left = 2]\ar[l,shift left = 0]\ar[l,shift left = -2] &[.5cm] \cdots \ar[l,shift left = 3]\ar[l,shift left = 1]\ar[l,shift left = -1]\ar[l,shift left = -3] 
\end{tikzcd}
\]
are scale-preserving. It is \defn{nondegenerate} if all of the scales $s_{\partial^j\Sig}$ are.  
A \defn{regularized manifold with log corners} is a pair $(\Sig,s)$ where $\Sig$ is a manifold with log corners and $s$ is a regularization of $\Sig$.
\end{definition}

Let us unpack this definition by applying \autoref{def:scale-preserving} and the discussion following it to every morphism in the diagram $(\Sig,\bdSig[\bullet])$. We see that a regularization $s$ of $\Sig$ gives rise to a diagram which we denote by
\[
\begin{tikzcd}[column sep = .4cm, row sep = 1cm,baseline=-0.9cm]
(\Sigbas,\bas{(\bdSig[\bullet])};s): &[-.5cm] \Sigbas &[.3cm] \bas{(\bdSig)}\ar[l,"i'"'] &[.4cm] \bas{(\bdSig[2])} \ar[l,shift left] \ar[l,shift right]  &[.4cm] \bas{(\bdSig[3])} \ar[l,shift left = 2]\ar[l,shift left = 0]\ar[l,shift left = -2] &[.4cm] \cdots \ar[l,shift left = 3]\ar[l,shift left = 1]\ar[l,shift left = -1]\ar[l,shift left = -3] 
\end{tikzcd}
\]
where each structure morphism $\bas{(\bdSig[j+1])}\to \bas{(\bdSig[j])}$ is induced by the scale $s_{\bdSig[j+1]}$ as in \eqref{eq:scale-preserving-concrete}. This diagram is a $\ascat$-manifold with log corners, and the scales $s_{\bdSig[j]}$ assemble into a morphism of $\ascat$-manifolds with log corners
\begin{equation}
\begin{tikzcd}[column sep = .5cm, row sep = 1cm]
(\Sig,\bdSig[\bullet]): &[-.5cm] \Sig &[.3cm] \bdSig \ar[l,"i"'] &[.4cm] \bdSig[2] \ar[l,shift left] \ar[l,shift right]  & \bdSig[3] \ar[l,shift left = 2]\ar[l,shift left = 0]\ar[l,shift left = -2] \cdots \\ 
(\Sigbas,\bas{(\bdSig[\bullet])};s): \ar[u,"s_\bullet"]& \Sigbas \ar[u,"s_\Sig"'] & \bas{(\bdSig)} \ar[l,"i'"']\ar[u,"s_{\bdSig}"'] & \bas{(\bdSig[2])}\ar[l,shift left] \ar[l,shift right]\ar[u,"s_{\bdSig[2]}"']  & \bas{(\bdSig[3])} \ar[l,shift left = 2]\ar[l,shift left = 0]\ar[l,shift left = -2]\ar[u,"s_{\bdSig[3]}"']  \cdots 
\end{tikzcd}
\label{eq:regularization-diagram}
\end{equation}
The datum of the regularization $s$ of $\Sig$ is equivalent to the structure of $\ascat$-manifold with log corners $(\Sigbas,\bas{(\bdSig[\bullet])};s)$ and the morphism $s_\bullet$.
In particular, note that the boundary of a regularized manifold with log corners admits a unique regularization such that the canonical maps $\partial^j\bdSig \to \bdSig[j+1]$ are scale-preserving. Similarly, if $(\Sig,s)$ and $(\Sig',s')$ are regularized manifolds with log corners, then the product inherits a canonical regularization which we denote by $(\Sig\times\Sig',s\times s')$.

\begin{example}
For $\Sig=\halfspc{}$, since $\Sig=\Sigbas$ and $\partial^j\Sig=\varnothing$ for $j\ge 2$, a regularization for $\Sig$ is the same thing as a scale for $\partial\Sig=[0)$, i.e.~a virtual morphism $s:\{0\}\to [0)$, i.e.~a tangential basepoint $c\,\partial_r|_0$ with $c>0$, by \autoref{prop:tangential-basepoints-as-weak-morphisms}. It gives rise to the following morphism of $\ascat$-manifolds with log corners.
\[
\begin{tikzcd}[row sep = 1cm,baseline=-0.9cm]
(\Sig,\partial^\bullet\Sig): &[-.5cm] {[0,\infty)} &[.5cm] {[0)} \ar[l,"i"'] &[.4cm] \varnothing \ar[l,shift left] \ar[l,shift right] \cdots \\
(\Sigbas,\bas{(\bdSig[\bullet])};s): \ar[u,"s_\bullet"]& {[0,\infty)} \ar[u,equal] & \{0\} \ar[l,"i'=i\circ s"']\ar[u,"s"'] & \varnothing \ar[l,shift left] \ar[l,shift right] \cdots
\end{tikzcd}\qedhere
\]
\end{example}

When unpacked in an example, the compatibility conditions relating the scales on the various boundary faces exhibit some subtleties; these are visible already in the case of a quadrant, as follows.
  \begin{figure}
        \centering
        \begin{subfigure}[t]{0.3\textwidth}
        \centering
        \def\La{0.3}
        \def\Lb{0.4}
        \begin{tikzpicture}[scale=1]
            \draw[white,fill=white!90!black] (0,0) rectangle (2.1,2.1);
            \draw[thick,->] (0,0) -- (2.2,0);
            \draw[thick,->] (0,0) -- (0,2.2);
            \draw[fill] (0,0) circle (0.05);
            \draw[dashed] (\La,\Lb) -- (\La,2.1);
            \draw[dashed] (\La,\Lb) -- (2.1,\Lb);
            \foreach \x in {0.6,0.9,...,2.1} {
            \draw[blue,thick,->] (\x,0) -- (\x,\Lb);
            \draw[blue,thick,->](0,\x) -- (\La,\x);
            }
            \draw[red,thick,->] (0,0) -- (\La,\Lb);
        \end{tikzpicture}
\caption{Nondegenerate}\label{fig:quadrant-regularization-nondegen}
        \end{subfigure}
        \begin{subfigure}[t]{0.3\textwidth}
        \centering
        \def\La{0.4}
        \def\Lb{0.3}
        \begin{tikzpicture}[scale=1]
            \draw[white,fill=white!90!black] (0,0) rectangle (2.1,2.1);
            \draw[thick,->] (0,0) -- (2.3,0);
            \draw[thick,->] (0,0) -- (0,2.3);
            \draw[fill] (0,0) circle (0.05);
            \draw[dashed] (0,0) -- (2,\Lb/\La*2);
            \draw[red,thick,->] (0,0) -- (\La,\Lb);
            \clip (0,0) rectangle (2.1,2.1);
            \foreach \x in {0.5,0.8,...,2.4} {
            \draw[blue,thick,->] (\x,0) -- (\x,{\Lb/\La*\x});
            \draw[blue,thick,->](0,\x) -- ({\La/\Lb*\x},\x);
            }
        \end{tikzpicture}
        \caption{Linear}\label{fig:quadrant-regularization-linear}
        \end{subfigure}
        \begin{subfigure}[t]{0.3\textwidth}
        \centering
        \def\La{1}
        \def\Lb{1}
        \def\Aa{2}
        \def\Ab{2}
        \begin{tikzpicture}[scale=2]
            \draw[white,fill=white!90!black] (0,0) rectangle (1.05,1.05);
            \draw[thick,->] (0,0) -- (1.1,0);
            \draw[thick,->] (0,0) -- (0,1.1);
            \draw[fill] (0,0) circle (0.025);
            \foreach \x in {0.3,0.5,...,1} {
            \draw[blue,thick,->] (\x,0) -- (\x,{1/(1+\x^4)*\Lb/\La^\Aa*\x^\Ab});
            \draw[blue,thick,->](0,\x) -- ({1/(1+\x^2)*\La/\Lb^\Ab*\x^\Aa},\x);
            }
            \draw[red,thick,->] (0,0) -- (\La,\Lb);
    \draw[dashed,domain=0:1.1,smooth,variable=\t] plot ({\t},{1/(1+\t^4)*\Lb/\La^\Aa*\t^\Ab});
    \draw[dashed,domain=0:1,smooth,variable=\t] plot ({1/(1+\t^2)*\La/\Lb^\Ab*\t^\Aa},\t);

        \end{tikzpicture}    \caption{Nonlinear}\label{fig:quadrant-regularization-nonlin}
        \end{subfigure}
        \caption{Regularizations of the quadrant $\Sig = \halfspc{2}$, showing the scale on  $\bdSig$ in blue, and the scale on $\bdSig[2]$ in red.}
        \label{fig:quadrant-regularization}
   \end{figure}

\begin{example}
Consider the standard corner $\Sig \defas \halfspc{2}$ with coordinates $(r_1,r_2)$.  Let $t_1,t_2$ be the phantom coordinates obtained by restricting $r_1,r_2$ to their vanishing loci. The boundary inclusions have the form
\[
\augsimpshort{\halfspc{2}_{r_1,r_2}}{{[0)_{t_1}\times\halfspc{}_{r_2} \sqcup \halfspc{}_{r_1}\times[0)_{t_2}}}
{{[0)^2_{t_1,t_2} \sqcup [0)^2_{t_2,t_1}}}{\varnothing}
\]
where we have used a subscript to denote the coordinates on each space.  Then as illustrated in \autoref{fig:quadrant-regularization}, a regularization of  $\Sig$ consists of the following data:
\begin{itemize}
    \item A scale on $\Sig$;   this must be the identity map since $\Sig=\Sigbas$ is basic.
    \item A scale on $\bdSig = {[0)_{t_1}\times\halfspc{}_{r_2} \sqcup \halfspc{}_{r_1}\times[0)_{t_2}}$, given by 
    \[
    s_{\bdSig} = \begin{cases}
f_2(r_2)r_2^{a_2}\cvf{t_1} & \textrm{on }[0)_{t_1} \times \halfspc{}_{r_2} \\
 f_1(r_1)r_1^{a_1}\cvf{t_2} & \textrm{on }[0,\infty)_{r_1}\times [0)_{t_2}
 \end{cases}
    \]
    where $f_1,f_2$ are strictly positive functions on $\halfspc{}$ and $a_1,a_2 \in \ZZ$.  Thus $s_{\bdSig}^* t_1 = f_2(r_2) r_2^{a_2}$ and $s_{\bdSig}^*t_2 = f_1(r_1) r_1^{a_1}$ on the respective components.
    \item A scale on $\bdSig[2] = [0)^2_{t_1,t_2} \sqcup [0)^2_{t_2,t_1}$ that is invariant under the $\symgp{2}$-action induced by the involution that identifies the two components.  It is thus given on both components by 
    \[
    s_{\bdSig[2]} = \lambda_1 \cvf{t_1} + \lambda_2 \cvf{t_2}
    \]
    where $\lambda_1,\lambda_2 > 0$, so that $s_{\bdSig[2]}^*t_j = \lambda_j$.
\end{itemize}
These data must satisfy the consistency condition that the boundary inclusions are scale-preserving.  For the map $\bdSig \to \Sig$ this is vacuous, but for the maps $i \colon \bdSig[2]\to\bdSig$ this is a nontrivial condition; we require $i^*s_{\bdSig}^*t_j = s_{\bdSig[2]}^*i^*t_j$ for $j=1,2$.  This is equivalent to the following equation of phantom tangent vectors:
\[
f_1(0) \lambda_1^{a_1}\cvf{t_2} + f_2(0)\lambda_2^{a_2}\cvf{t_1} = \lambda_1 \cvf{t_1} + \lambda_2 \cvf{t_2},
\]
By linear independence of $\cvf{t_1},\cvf{t_2}$, this is equivalent to the equations 
\begin{align*}
f_1(0) &= \frac{\lambda_2}{\lambda_1^{a_1}}  &
f_2(0) = \frac{\lambda_1}{\lambda_2^{a_2}}
\end{align*}
of positive real numbers, or equivalently to the linear system
\[
\begin{pmatrix}
    \log f_1(0) \\ \log f_2(0)
\end{pmatrix} = \begin{pmatrix}
    -a_1 & 1 \\
    1 & -a_2
\end{pmatrix}\begin{pmatrix}
    \log \lambda_1 \\ 
    \log \lambda_2
\end{pmatrix}
\]
for their logarithms.  Since $\bdSig[3]=\varnothing$, there are no further constraints. 

Note that if $a_1a_2\neq 1$, the constants $\lambda_1,\lambda_2$ are uniquely determined by $f_1(0),f_2(0)$, for any values of the latter.  This applies, in particular, to the case $a_1=a_2=0$ in which the regularization is nondegenerate (\autoref{fig:quadrant-regularization-nondegen}); in this case, $s_{\bdSig}$ is an arbitrary positive trivialization of the normal bundle of $\ubdSig$, and it completely determines the regularization.  

On the other hand, if $a_1=a_2 = \pm 1$, the equations have a solution if and only if $f_1(0)f_2(0)^{\pm 1}=1$; when this condition holds, the constant $\lambda_1$ can be chosen arbitrarily, and $\lambda_2 = \lambda_1^{\pm 1}f_1(0)$.  Hence, in this case, the scale on $\bdSig$ is not arbitrary, and does not completely determine the regularization.  Geometrically, the scales bisect the quadrant into triangles and the tangential basepoint at zero points along the ``diagonal'' edge; see \autoref{fig:quadrant-regularization-linear} for the case $(a_1,a_2)=(1,1)$.
\end{example}

The (non)degeneracy of a regularization will not play any role in our main results below.    However, the above example illustrates a useful aspect of nondegerate regularizations: they are specified by the scales assigned to faces of codimension $\le 1$, making them relatively easy to construct, as follows.
\begin{proposition}
Let $\Sig$ be a manifold with log corners, and let $N$ denote the normal line bundle of the immersion $\ubdSig \to \uSig$.  Then there is a natural bijection between nondegenerate regularizations of $\Sig$ and pairs $(s,s')$ where $s$ is a nondegenerate scale on $\Sig$ and $s'$ is a positive section of $N$. 
\end{proposition}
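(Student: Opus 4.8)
The plan is to peel the regularization apart by codimension, showing that in the nondegenerate case all scales above codimension one are forced, so that the only free data are the scale $s=s_{\bdSig[0]}$ on $\Sig=\bdSig[0]$ and a single new datum on $\bdSig=\bdSig[1]$. I would begin with $\bdSig[1]$. A scale on $\bdSig$ is a section of $\tbposphan{\bdSig}$, and since $\dim\bdSig=(n-1,k+1)$, its phantom tangent bundle splits, after the structure-group reduction of \autoref{sec:phantom-tangent}, into the pullback of the $k$ phantom lines of $\Sig$ together with exactly one new line; by \autoref{prop:normal-cone}, applied to the underlying manifolds with corners, this new line is canonically the normal bundle $N$ of $\ubdSig\to\uSig$. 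Scale-preservation of the augmentation $\bdSig\to\Sig$ (\autoref{def:scale-preserving}) forces the $k$ inherited components of $s_{\bdSig}$ to be the pullbacks of the corresponding components of $s$, while imposing no condition on the new component. Thus, once $s$ is fixed, a nondegenerate scale $s_{\bdSig}$ compatible with $s$ is the same as a positive section $s'$ of $N$, which gives the assignment $(s_{\bdSig[j]})_{j\ge0}\mapsto(s,s')$ and recovers $s_{\bdSig[0]},s_{\bdSig[1]}$ from $(s,s')$.

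The crux is to show that the higher scales $s_{\bdSig[j]}$, $j\ge2$, are uniquely determined by $s_{\bdSig[1]}$. The combinatorial input is that for each $m\in\{1,\ldots,j\}$ the injection $\{1\}\hookrightarrow\{1,\ldots,j\}$ with $1\mapsto m$ is a morphism in $\scat$, inducing a structure map $\partial^{(m)}\colon\bdSig[j]\to\bdSig[1]$ under which the $m$-th new phantom line of $\bdSig[j]$ is the pullback of $N$. I would check that scale-preservation of the face maps $d_l$ forces, for each $m$ and any $l\neq m$ (which exists because $j\ge2$), the identity $s^*_{\bdSig[j]}t_{b_m}=\underline{d_l}^*\bigl(s^*_{\bdSig[j-1]}t_{b_m}\bigr)$, with the inherited components handled the same way; note that the face $d_m$ deleting $b_m$ imposes no condition on the now-basic $b_m$-direction. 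Iterating down to $\bdSig[1]$, this says precisely that the $b_m$-component of $s_{\bdSig[j]}$ equals $(\partial^{(m)})^*s'$ and its inherited components are pulled back from $s$, pinning down $s_{\bdSig[j]}$ uniquely.

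For existence I would run this in reverse: given $(s,s')$, define $s_{\bdSig[j]}$ by declaring its $b_m$-component to be $(\partial^{(m)})^*s'$ and its inherited components to be the pullbacks of $s$. Since these are pullbacks of fixed data along $\scat$-morphisms, $\symgp{j}$-invariance is automatic (the symmetric group permutes the maps $\partial^{(m)}$), each $s_{\bdSig[j]}$ is a nondegenerate scale because $s$ and $s'$ are positive, and scale-preservation of every $d_l$ reduces to the functoriality relations among the $\scat$-morphisms $\partial^{(m)}$ and $d_l$. This produces a genuine nondegenerate regularization restricting to $(s,s')$, giving the inverse assignment.

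Finally I would observe that the two maps are mutually inverse by construction and manifestly compatible with isomorphisms of manifolds with log corners, yielding the claimed natural bijection. The main obstacle is the second step: making the splitting of $\tbphan{\bdSig[j]}$ and the bookkeeping of the face maps precise enough to see simultaneously that different choices of $l\neq m$ give the same restriction—this uses the compatibility of $s_{\bdSig[j-1]}$ over $\bdSig[j-2]$, i.e.\ the simplicial identities—and that the reconstructed $s_{\bdSig[j]}$ is scale-preserving for all faces at once, including $d_m$.
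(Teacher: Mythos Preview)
Your approach is essentially the same as the paper's. Both arguments rest on the decomposition
\[
\tbphan{\bdSig[k]} \cong i^* \tbphan{\Sig}\oplus i_1^* N \oplus \cdots \oplus i_k^* N,
\]
where $i\colon\ubdSig[k]\to\uSig$ and $i_1,\ldots,i_k\colon\ubdSig[k]\to\ubdSig$ are the canonical immersions, and then set $s_{\bdSig[k]}=i^*s+i_1^*s'+\cdots+i_k^*s'$. The paper states this formula directly and asserts uniqueness in one line; you unwind the same content through the $\scat$-combinatorics of the face maps, which makes the uniqueness argument more explicit. One small point of phrasing: when you speak of ``the $k$ phantom lines of $\Sig$'' and their individual pullbacks, bear in mind (as noted in \autoref{sec:phantom-tangent}) that these lines need not be globally defined as separate summands---only the rank-$k$ bundle $\tbphan{\Sig}$ is; it is cleaner to say that the inherited part of $s_{\bdSig[k]}$ is the pullback $i^*s$ of the whole section, rather than component by component.
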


\begin{proof}
Given a pair $(s,s')$ we construct a scale on all iterated boundaries $\bdSig[k]$ as follows.  Note that the phantom tangent bundle is given by
\[
\tbphan{\bdSig[k]} \cong i^* \tbphan{\Sig}\oplus i_1^* N \oplus \cdots \oplus  i_k^* N
\]
where $i \colon \ubdSig[k] \to \uSig$  and $i_1,\ldots,i_k \colon \ubdSig[k] \to \ubdSig$ are the canonical immersions.  The section $s_k \defas i^* s + i_1^*s' + \cdots + i_k^* s'$ then gives a nondegenerate scale on $\bdSig[k]$ such that the structure maps of the diagram $(\Sig,\partial^\bullet\Sig)$ are scale-preserving, so that the tuple
$(s_0,s_1,s_2,\ldots)$ defines a regularization.  It is, moreover, the unique regularization having  $s_0 =s$ and $s_1 = i^*s + s'$ as the scales on $\Sig$ and $\partial^\bullet\Sig$, respectively.
\end{proof}

\begin{corollary}
Every manifold with log corners admits a nondegenerate regularization.
\end{corollary}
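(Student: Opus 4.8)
The plan is to reduce the statement to the preceding proposition and then invoke \autoref{prop:scales-exist}. By that proposition, a nondegenerate regularization of $\Sig$ is the same thing as a pair $(s,s')$ in which $s$ is a nondegenerate scale on $\Sig$ and $s'$ is a positive section of the normal line bundle $N$ of the boundary immersion $\ubdSig \to \uSig$. The existence of the first component $s$ is exactly the content of \autoref{prop:scales-exist}, so the only remaining task is to produce a positive section $s'$ of $N$, after which the corresponding pair $(s,s')$ determines the desired regularization.

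To construct $s'$, the key observation is that the inward-pointing direction is \emph{canonically} distinguished along $\ubdSig$: at each point $(x,b)$, the boundary tangent hyperplane $b$ singles out a positive half-line $N^{>0}_{(x,b)} \subset N_{(x,b)}$, so that the positive rays form a sub-bundle $N^{>0}\subset N$ with fibre $(0,\infty)$, and a positive section is by definition a global section of $N^{>0}$. I would produce one concretely by endowing $\uSig$ with a Riemannian metric (which exists by a partition of unity) and letting $s'(x,b)$ be the unit normal vector at $x$ pointing into the half-space determined by $b$; this is a smooth, nowhere-vanishing, inward-pointing section of $N$. Equivalently, and in closer analogy with the proof of \autoref{prop:scales-exist}, I would observe that $N^{>0}$ is a bundle of positive rays with structure group $(\RR_{>0},\cdot)\cong(\RR,+)$, so that taking fibrewise logarithms turns it into a torsor under the soft sheaf $\Cinf{\ubdSig}$; such torsors are classified by $H^1(\ubdSig,\Cinf{\ubdSig}) = 0$, whence $N^{>0}$ is trivial and admits a section.

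I do not expect any serious obstacle: the content of the statement is entirely carried by the preceding proposition and by \autoref{prop:scales-exist} (whose own proof already supplies the relevant softness argument). The only point requiring a moment's care is that one should work with the canonical positive cone $N^{>0}$ rather than attempting to trivialize $N$ by hand; because the inward direction is canonical, the relevant bundle has contractible fibres and no monodromy, so the usual partition-of-unity argument applies directly.
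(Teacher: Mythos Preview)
Your proposal is correct and is exactly the argument the paper intends: the corollary is stated without proof, as it follows immediately from the preceding proposition together with \autoref{prop:scales-exist} and the existence of a positive section of $N$, which you supply. Your two constructions of $s'$ (via a Riemannian metric, or via the vanishing of $H^1$ with coefficients in the soft sheaf $\Cinf{\ubdSig}$) are both valid ways to fill in the only detail the paper leaves implicit.
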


\subsection{The regularized integral} 
Let $(\Sig,s)$ be a regularized manifold with log corners; we make no assumption about the (non)degeneracy of $s$.  We have an induced morphism of $\ascat$-manifolds with log corners
\[
s_\bullet \colon (\Sigbas,\bas{(\bdSig[\bullet])};s) \to (\Sig,\bdSig[\bullet]),
\]
and arguing as in \S\ref{sec:relative-de-Rham}, we obtain an isomorphism at the level of compactly supported relative de Rham cohomology
\[
\HdRc{\Sigbas,\bas{(\bdSig[\bullet])};s} \cong \HdRc{\uSig,\ubdSig}.
\]
We use this isomorphism to define regularized integral, as follows.

Let $n \defas \dim \uSig$ and let
\[
i' := p_{\Sig} \circ i \circ s_{\bdSig} : \bas{(\bdSig)} \to \Sigbas
\]
be the immersion induced by the scale $s_{\bdSig}$ on $\bdSig$, the projection $p_\Sig : \Sig \to \Sigbas$ and the natural immersion $i : \bdSig \to \Sig$.  If $\omega \in \Alogc[n]{\Sig}$ is a compactly supported form on $\Sig$, then the form
\[
\omega' \defas s_\Sig^*\omega \in \Alogc[n]{\Sigbas}
\]
satisfies
\[
\dd \omega' =0 \qquad \textrm{and} \qquad (i')^*\omega'= 0
\]
for degree reasons.  Hence, it defines a cocycle in the compactly supported relative de Rham double complex $\Alogc{\Sigbas,\bas{(\bdSig[\bullet])};s}$, giving a class in relative cohomology which we denote by
\[
[\omega,s] \in \HdRc[n]{\uSig,\ubdSig}.
\]

\begin{definition}\label{def:regularized-integral}
    Let $(\Sig,s)$ be an oriented and regularized manifold with log corners whose underlying manifold with corners $\uSig$ has dimension $n$.  The \defn{regularized integral} is the linear functional
    \[
    \int_{(\Sig,s)} \colon \Alogc[n]{\Sig} \to \RR
    \]
    defined by 
    \[
    \int_{(\Sig,s)} \omega \defas \int_{\uSig}[\omega,s]
    \]
    for all $\omega \in \Alogc[n]{\Sig}$, where $\int_{\uSig}$ is the integration functional on $\HdRc{\uSig,\ubdSig}$ recalled in \autoref{sec:classical-integral}.      If $\omega$ is a form of degree different from $n$, we set  $\int_{(\Sig,s)}\omega \defas 0$ as usual.
\end{definition}

Unpacking the definition, this means that the regularized integral can be computed as follows.  In general, an element in $\Alogc[n]{\Sig,\bas{(\bdSig[\bullet])};s}$ is a tuple $(\omega_j)_{j \ge 0}$, where $\omega_j \in (\Alogc[n-j]{\bas{(\bdSig[j])}}\otimes \sgn{j})^{\symgp{j}}$ is a top-degree form on the $j$th basic boundary $\bas{(\bdSig[j])}$, equivariant for the sign-twisted action of $\symgp{j}$.  In particular, we may view our given form $\omega \in \Alogc[n]{\Sig}$ as such a tuple with $\omega_0 = \omega$ and $\omega_j = 0$ for $j > 0$.

The logarithmic de Rham theorem implies that any such tuple is cohomologous to a tuple $(\uomega_j)_{j \ge0}$ lying in the subcomplex of smooth forms $\Ac{\uSig,\partial^\bullet\uSig}$, i.e.~that there exists a tuple $(\eta_j)_{j \ge 0}$ with $\eta_j \in  (\Alogc[n-j-1]{\bas{\bdSig[j]}}\otimes \sgn{j})^{\symgp{j}}$ such that
\begin{align*}
\omega_0 &= \uomega_0 + \dd \eta_0  \\
\omega_1 &= \uomega_1 + \dd \eta_1 + (i'_1)^*\eta_0 \\
\omega_2 &= \uomega_2 + \dd \eta_2 + (i'_2)^*\eta_1 \\
&\ \ \vdots
\end{align*}
where $i'_j : \bas{(\bdSig[j])} \to \bas{(\bdSig[j-1])}$ is the virtual morphism induced by the boundary immersion and the scale on $\bdSig[j]$.  Then by definition of the integration function on $\HdRc{\uSig,\ubdSig}$, we have
\[
\int_{(\Sig,s)} \omega = \int_{\uSig} \uomega = \sum_j \frac{(-1)^j}{j!} \int_{\ubdSig[j]} \uomega_j
\]
where the right hand side is the ordinary integral of compactly supported smooth forms over the boundary strata with their induced orientations.

Note that this formula requires computing integrals along strata of all codimensions $j$ for which $\uomega_j \neq 0$.  The simplest case is when $\uomega_j =0$ for $j >0$, for which we obtain the following.

\begin{proposition}\label{lem:how-to-compute-regularized-integral}
Let $\Sig = \Sigbas$ be a basic manifold with log corners of dimension $n$, let $s$ be a regularization of $\Sig$, and let $i' : \bas{\bdSig} \to \Sig$ denote the virtual morphism induced by the boundary immersion and the scale $s_{\bdSig}$. Let $\omega \in \Alogc[n]{\Sig}$ be a compactly supported logarithmic form of top degree. Suppose that $\eta\in \Alogc[n-1]{\Sigbas}$ is such that $(i')^*(\eta)=0$ and $\uomega := \omega + \dd \eta$ is smooth. Then we have the equality
        \[
\int_{(\Sig,s)}\omega = \int_{\uSig}\underline{\omega},
        \]
        i.e.\ the regularized integral of $\omega$ is the ordinary integral of $\uomega$.
\end{proposition}

\begin{remark}
Note that if a form $\eta$ as in \autoref{lem:how-to-compute-regularized-integral} can be found, then the regularized integral depends only on the scale $s_{\bdSig}$ on the boundary.  Since degree considerations force $(i')^*\omega = 0$, the existence of such $\eta$ is immediate whenever the natural inclusion $(\ker (i')^*,\dd) \hookrightarrow  \Alogc{\Sig,\bdSig[\bullet]}$ is a quasi-isomorphism.  For instance, we were able to establish this quasi-isomorphism  under the additional assumption that the regularization is nondegenerate.  However, for degenerate regularizations, we do not know whether a form $\eta$ as in the proposition can always be found.  The difficulty is that the natural analogue of \autoref{rem:continuous-relative-cohomology} can fail: the  \v{C}ech complex associated with the $\ascat$-manifold with log corners $(\Sig,\bdSig[\bullet];s)$  depends on the choice of regularization, and can fail to be a resolution when the regularization is degenerate.  This highlights a virtue of the full double complex model for the relative cohomology: it gives a systematic theory of regularized integration that treats all regularizations in the same manner.
\end{remark}

Since the isomorphism between smooth and logarithmic de Rham cohomology is natural and compatible with products, we immediately deduce that the basic identities of integration remain true for our regularized integral.

\begin{corollary}[Change of variables]
    For an open embedding $j : \Psi \hookrightarrow \Sig$  and a form $\omega \in \Alogc{\Psi}$,  let   $j_*\omega \in \Alogc{\Sig}$ be the extension by zero of $\omega$.  Then
    \[
    \int_{(\Psi,j^*s)} \omega = \int_{(\Sig,s)}j_*\omega.
    \]
\end{corollary}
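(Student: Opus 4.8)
The plan is to reduce both sides to ordinary integrals of smooth compactly supported forms by means of \autoref{lem:how-to-compute-regularized-integral}, and then to invoke the classical change of variables formula for the extension by zero of a smooth form along an open embedding. By linearity and the degree convention $\int\omega := 0$ for $\deg\omega \neq n$, it suffices to treat the case $\deg\omega = n$, where $n = \dim\uSig = \dim\uPsi$. First I would record the elementary geometry of the situation: since $j$ is an open embedding, its underlying map $\underline{j}\colon\uPsi\to\uSig$ is an (orientation-preserving) diffeomorphism onto an open subset $U\subseteq\uSig$, this open set inherits the restricted log structure, scale and regularization (so that $j^*s$ is literally the restriction of $s$ to $U$), and $U\cap\bdSig$ is identified with $\partial\Psi$, and more generally $U\cap\ubdSig[k]$ with $\partial^k\uPsi$. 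Crucially, the underlying map of the scale $s\colon\Sigbas\to\Sig$ is the identity on $\uSig$, so the pullback $s^*$ is a local operation that preserves supports and restricts over $U$ to $(j^*s)^*$.

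Next I would transport a representative from the $\Psi$-side to the $\Sig$-side. By \autoref{lem:how-to-compute-regularized-integral} applied to $(\Psi,j^*s)$, there exist a compactly supported smooth form $\underline{\omega}\in\Ac[n]{\uPsi}$ and a compactly supported $\eta\in\Alogc[n]{\Psibas}$ with $\eta|_{\partial\Psi}=0$ such that $\underline{\omega}=(j^*s)^*\omega+\dd\eta$, and then $\int_{(\Psi,j^*s)}\omega=\int_{\uPsi}\underline{\omega}$. Both $\underline{\omega}$ and $\eta$ are compactly supported on $\uPsi\cong U$, so their extensions by zero $j_*\underline{\omega}\in\Ac[n]{\uSig}$ and $j_*\eta\in\Alogc[n]{\Sigbas}$ are well defined. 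I would then check that $(j_*\underline{\omega},j_*\eta)$ is exactly the data \autoref{lem:how-to-compute-regularized-integral} requires for $(\Sig,s)$ and $j_*\omega$. Indeed, because $j_*\omega$ vanishes off $U$ and agrees with $\omega$ on $U$, locality of $s^*$ gives $s^*(j_*\omega)=j_*\bigl((j^*s)^*\omega\bigr)$, while locality of $\dd$ and its commutation with extension by zero give $\dd(j_*\eta)=j_*(\dd\eta)$; hence $j_*\underline{\omega}=s^*(j_*\omega)+\dd(j_*\eta)$. Moreover $(j_*\eta)|_{\bdSig}=0$, since it vanishes off $U$ by construction and on $\bdSig\cap U=\partial\Psi$ it equals $\eta|_{\partial\Psi}=0$. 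Therefore $\int_{(\Sig,s)}j_*\omega=\int_{\uSig}j_*\underline{\omega}$.

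Finally I would close the loop with the ordinary integral. Since $\underline{\omega}$ is compactly supported in the open set $U$ and $\underline{j}$ is an orientation-preserving diffeomorphism onto $U$, the classical identity $\int_{\uSig}j_*\underline{\omega}=\int_{U}\underline{\omega}|_U=\int_{\uPsi}\underline{\omega}$ holds. Stringing together the three displayed equalities yields $\int_{(\Sig,s)}j_*\omega=\int_{(\Psi,j^*s)}\omega$, as desired. The only genuinely non-formal ingredient is this last classical change-of-variables statement for smooth compactly supported forms on oriented manifolds with corners; everything else is the locality of $\dd$ and of the scale pullback together with the identification of $\partial\Psi$ with $\bdSig\cap U$. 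I expect the main obstacle to be purely bookkeeping: making sure the homotopy $\eta$ furnished on the $\Psi$-side is automatically supported inside $U$ (which it is, being compactly supported on $\uPsi$), so that its extension by zero is legitimate and the boundary-vanishing condition $\eta|_{\partial\Psi}=0$ transports to $(j_*\eta)|_{\bdSig}=0$ without any further adjustment.
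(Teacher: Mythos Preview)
Your argument is correct. The paper does not spell out a proof, deducing the corollary in one line from the naturality of the quasi-isomorphism between smooth and logarithmic compactly supported de Rham complexes under open embeddings; your proof is precisely the concrete unwinding of that naturality via the representative description in \autoref{lem:how-to-compute-regularized-integral}, reducing to the classical extension-by-zero identity for smooth compactly supported top forms.
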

\begin{corollary}[Fubini's formula]
    Suppose that $(\Sig,s)$ and $(\Sig',s')$ are oriented, regularized manifolds with log corners, $\omega \in \Alogc{\Sig}$ and $\omega' \in \Alogc{\Sig'}$.  Then
    \[
\int_{(\Sig\times\Sig',s\times s')}\omega \wedge \omega' = \rbrac{\int_{(\Sig,s)} \omega } \cdot \rbrac{\int_{(\Sig',s')}\omega'}
    \]
\end{corollary}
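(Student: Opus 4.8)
The plan is to reduce the statement to the classical smooth Fubini theorem by means of \autoref{lem:how-to-compute-regularized-integral}. First I would dispose of the degenerate cases: since $\dim\uSig = n$ and $\dim\uSig' = n'$, the exterior product $\omega\wedge\omega'$ can only contribute in top degree $n+n'$, which forces $\deg\omega = n$ and $\deg\omega' = n'$; in every other case both sides vanish by the degree conventions of \autoref{def:regularized-integral}. So I may assume $\omega\in\Alogc[n]{\Sig}$ and $\omega'\in\Alogc[n']{\Sig'}$. Then, applying \autoref{lem:how-to-compute-regularized-integral} to each factor, I choose compactly supported smooth representatives $\underline{\omega}\in\Ac[n]{\uSig}$ with $\underline{\omega} = s^*\omega + \dd\eta$ and $\underline{\omega'}\in\Ac[n']{\uSig'}$ with $\underline{\omega'} = (s')^*\omega' + \dd\eta'$, where $\eta$ and $\eta'$ are logarithmic forms on $\Sigbas$ and $\bas{\Sig'}$ whose restrictions to the respective boundaries vanish.

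The goal is then to show that $\underline{\omega}\wedge\underline{\omega'}$ is itself a legitimate smooth representative for $\omega\wedge\omega'$ relative to the product scale, i.e.\ that
\[
(s\times s')^*(\omega\wedge\omega') = \underline{\omega}\wedge\underline{\omega'} + \dd\zeta
\]
on $\bas{(\Sig\times\Sig')} = \Sigbas\times\bas{\Sig'}$ for some $\zeta$ vanishing on the boundary. Granting this, \autoref{lem:how-to-compute-regularized-integral} computes the left-hand side of the corollary as $\int_{\uSig\times\uSig'}\underline{\omega}\wedge\underline{\omega'}$; the smooth Fubini theorem (with the product orientation) factors this as $\rbrac{\int_{\uSig}\underline{\omega}}\cdot\rbrac{\int_{\uSig'}\underline{\omega'}}$, and a final application of the proposition to each factor identifies this with the right-hand side.

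The heart of the matter is the displayed identity. I would first record the compatibility of the product scale with the projections, namely $p_\Sig\circ(s\times s') = s\circ p_{\Sigbas}$ and the analogous statement for $\Sig'$; this is immediate from the universal property of the product and the description of $s\times s'$ in phantom coordinates. By contravariance it gives $(s\times s')^*(\omega\wedge\omega') = (s^*\omega)\wedge((s')^*\omega')$ as an exterior product on $\Sigbas\times\bas{\Sig'}$. Substituting $s^*\omega = \underline{\omega}-\dd\eta$ and $(s')^*\omega' = \underline{\omega'}-\dd\eta'$ and expanding produces $\underline{\omega}\wedge\underline{\omega'}$ together with three correction terms built from $\dd\eta$ and $\dd\eta'$. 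Since $\underline{\omega}$ and $\underline{\omega'}$ are top-degree smooth forms on $\uSig$ and $\uSig'$, they are $\dd$-closed, so each correction term is exact: $\dd\eta\wedge\underline{\omega'} = \dd(\eta\wedge\underline{\omega'})$, and similarly for the other two, collecting into $\dd\zeta$ with $\zeta = -\eta\wedge\underline{\omega'} \pm \underline{\omega}\wedge\eta' + \eta\wedge\dd\eta'$.

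Finally I must check that $\zeta$ vanishes on $\partial(\Sigbas\times\bas{\Sig'}) = (\partial\Sigbas\times\bas{\Sig'})\sqcup(\Sigbas\times\partial\bas{\Sig'})$. On each component, the terms involving $\eta$ (resp.\ $\eta'$) vanish because $\eta|_{\partial\Sigbas} = 0$ (resp.\ $\eta'|_{\partial\bas{\Sig'}}=0$), while the terms involving $\underline{\omega}$ or $\underline{\omega'}$ vanish for degree reasons, the restriction of a top-degree form to a boundary of one lower dimension being zero. I expect this bookkeeping — tracking which factor survives on each boundary stratum and pinning down the signs of the three exact primitives — to be the only delicate point; everything else is formal once the scale-compatibility $p_\Sig\circ(s\times s') = s\circ p_{\Sigbas}$ is established.
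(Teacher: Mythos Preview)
Your argument is correct. The paper does not spell out a proof of this corollary; it simply remarks that the regularized integral is defined through a string of quasi-isomorphisms (the inclusion of smooth forms in log forms, the scale pullback $s_\bullet^*$, and the passage to cohomology) which are natural and compatible with products, so Fubini follows from the classical K\"unneth formula and the ordinary smooth Fubini theorem. Your approach is a chain-level unpacking of the same idea: rather than invoking K\"unneth abstractly, you use \autoref{lem:how-to-compute-regularized-integral} to pick explicit smooth representatives $\underline{\omega}$, $\underline{\omega'}$ and verify by hand that $\underline{\omega}\wedge\underline{\omega'}$ is an admissible smooth representative for $\omega\wedge\omega'$ on the product. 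The key checks---that the product scale intertwines the projections, that the cross terms are exact because top-degree smooth forms are closed, and that the resulting primitive $\zeta$ vanishes on each boundary component (one factor killed by $\eta|_{\partial\Sigbas}=0$ or $\eta'|_{\partial\bas{\Sig'}}=0$, the other by degree)---are all sound. Your route is more explicit and self-contained; the paper's is shorter but leans on the functorial machinery already set up.
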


\begin{corollary}[Stokes' formula]\label{cor:reg-stokes}
    If $\eta \in \Alogc[\bullet]{\Sig}$ then 
    \[
    \int_{(\Sig,s)} \dd \eta = \int_{\partial(\Sig,s)} \eta.
    \]
\end{corollary}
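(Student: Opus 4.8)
The plan is to reduce the regularized Stokes formula to the classical Stokes theorem for smooth forms on the underlying manifold with corners $\uSig$, using \autoref{lem:how-to-compute-regularized-integral} together with the algebraic structure of the relative de Rham complex. Throughout write $n = \dim\uSig$, fix $\eta \in \Alogc[n-1]{\Sig}$, and read the right-hand side as $\int_{(\partial\Sig,\partial s)} i^*\eta$, where $i\colon\bdSig\to\Sig$ is the boundary immersion and $\partial s$ is the induced regularization of the boundary (which exists by the discussion following \autoref{def:regularization}). Set $\theta \defas s^*\eta \in \Alogc[n-1]{\Sigbas}$, so that $s_\bullet^*(\dd\eta,0,0,\ldots) = (\dd\theta,0,0,\ldots)$; by \autoref{def:regularized-integral} the left-hand side is then $\int_{\uSig}$ of the class of $(\dd\theta,0,0,\ldots)$ in $\HdRc[n]{\Sigbas,\bdSigbas[\bullet];s} \cong \HdRc[n]{\uSig,\ubdSig[\bullet]}$.

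The first key step is a formal identity in the relative complex $\Alogc{\Sigbas,\bdSigbas[\bullet]}$, whose concrete description (\autoref{sec:relative-de-Rham}) exhibits the total differential $D$ as the sum of the de Rham differential and the alternating restriction (\v{C}ech) maps. Applying $D$ to the simplicial-degree-zero element $(\theta,0,0,\ldots)$ gives $D(\theta,0,\ldots) = (\dd\theta,\ \pm\, i^*\theta,\ 0,\ldots)$, because $\dd\theta$ is already of top degree and the higher \v{C}ech terms land in degrees that are forced to vanish. Since this is a coboundary, it is zero in cohomology, so $[(\dd\theta,0,\ldots)] = \mp[(0,i^*\theta,0,\ldots)]$. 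Scale-preservation of the augmentation $\bdSig\to\Sig$ (\autoref{def:scale-preserving}) yields $s_1^*\, i^* = i^*\, s^*$, where $s_1 = \partial s$; hence $i^*\theta = s_1^*\, i^*\eta$ is exactly the regularized restriction of $\eta$ to the boundary. Thus the left-hand side of Stokes becomes $\mp\int_{\uSig}$ of the class represented by the boundary term $(0,\beta,0,\ldots)$, with $\beta \defas s_1^*\, i^*\eta$.

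On the other side, unwinding \autoref{def:regularized-integral} for $(\bdSig,\partial s)$ shows that $\int_{(\partial\Sig,\partial s)} i^*\eta = \int_{\ubdSig}$ of the class of $(\beta,0,0,\ldots)$ in $\HdRc[n-1]{\partial\Sigbas,\partial^\bullet(\partial\Sigbas)} \cong \HdRc[n-1]{\ubdSig,\partial^\bullet\ubdSig}$. The identification $\partial^j(\partial\Sigbas) = \partial^{j+1}\Sigbas$ matches the simplicial-degree-$\ge 1$ tail of the relative complex of $\Sig$ with the relative complex of its boundary, up to the degree shift and the change of symmetric group actions. Everything has now been reduced to a purely classical statement about the two fundamental-class pairings of smooth forms on manifolds with corners, namely that for a compactly supported $(n-1)$-form $\beta$ on $\ubdSig$ one has $\int_{\ubdSig}[\beta] = \mp\int_{\uSig}[(0,\beta,0,\ldots)]$; here one may replace $\beta$ by an honest smooth representative using the smaller model of \autoref{rem:continuous-relative-cohomology} for the basic manifolds involved, and the topological meaning of all the groups is supplied by \autoref{prop:BdR-comparison-manifolds-with-log-corners}. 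Combining this with the previous paragraph gives $\int_{(\Sig,s)}\dd\eta = \int_{(\partial\Sig,\partial s)} i^*\eta$.

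The main obstacle is precisely this last classical compatibility, together with the bookkeeping of signs and orientations: it is the cohomological incarnation of the identity $\partial[\uSig,\partial\uSig] = [\ubdSig,\partial\ubdSig]$ of fundamental classes, and establishing it requires tracking the \v{C}ech sign $\pm$ against the boundary-orientation convention through the symmetric semi-simplicial resolution $\ubdSig[\bullet]$. The delicate point is that these two signs must cancel exactly, so that the regularized formula carries no extra sign---precisely the phenomenon flagged in the discussion of $I_2$ in \autoref{sec:motivation}. I expect this to reduce, facet by facet, to the standard Stokes theorem for smooth forms on manifolds with corners (e.g.\ in the formulation of \cite{Joyce:ManifoldsWithCorners}), once the orientation on each codimension-one face is normalized by the outward normal.
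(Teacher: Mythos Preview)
Your argument is correct and follows the same route the paper (implicitly) takes: reduce to the relative de Rham complex of the underlying manifold with corners via $s_\bullet^*$, then invoke the classical Stokes identity there. The paper compresses all of this into the single remark preceding the corollary, that the isomorphism between ordinary and logarithmic cohomology is natural; your Čech--de Rham computation $D(\theta,0,\ldots)=(\dd\theta,\pm(i')^*\theta,0,\ldots)$ together with the scale-preservation identity $(i')^*s^*=s_1^*i^*$ is exactly what that naturality means once unpacked, and the ``main obstacle'' you flag is nothing more than the classical Stokes theorem for smooth compactly supported forms on $\uSig$ expressed through the symmetric semi-simplicial model---no new content is needed beyond fixing the standard outward-normal orientation convention.
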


\begin{corollary}
    The regularized integral defines a map of cochain complexes
    \[
    \int_{(\Sig,s)} \colon \Alogc{\Sig} \to \RR[-n]
    \]
    If, in addition, $\Sig$ is connected, this gives an isomorphism $\HdRc[n]{\Sig} \cong \RR.$
\end{corollary}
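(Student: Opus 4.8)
The plan is to prove the two assertions in turn, deriving everything from the already-established regularized Stokes formula (\autoref{cor:reg-stokes}) and the de Rham comparison theorems, together with classical Poincar\'e--Lefschetz duality.

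For the cochain-map statement, I would extend $\int_{(\Sig,s)}$ by zero outside degree $n$; since the target $\RR[-n]$ is concentrated in degree $n$ and carries the zero differential, being a cochain map amounts to the single identity $\int_{(\Sig,s)}\dd\eta = 0$ for $\eta\in\Alogc[n-1]{\Sig}$. This is precisely the regularized Stokes formula (\autoref{cor:reg-stokes}), which gives $\int_{(\Sig,s)}\dd\eta=\int_{\partial(\Sig,s)}\eta$: the right-hand side is the regularized integral of the boundary restriction $i^*\eta$, which drops out once one passes to the relative complex $\Alogc{\Sig,\bdSig[\bullet]}$ through which $\int_{(\Sig,s)}$ is constructed (\autoref{def:regularized-integral}); for basic $\Sig$ one sees this concretely in the model of forms vanishing on $\bdSig$ (\autoref{rem:continuous-relative-cohomology}), where $i^*\eta=0$. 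Equivalently, and more structurally, $\int_{(\Sig,s)}$ is by definition the composite of the projection onto the cohomology group $\HdRc[n]{\Sigbas,\bdSigbas[\bullet];s}$ with a fixed linear map to $\RR$, and any such composite annihilates the coboundaries of that complex.

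For the isomorphism, I would unwind \autoref{def:regularized-integral} at the level of cohomology. There $\int_{(\Sig,s)}$ is visibly the composite of the comparison isomorphisms $\HdRc[n]{\Sig,\bdSig[\bullet]}\xrightarrow{\ \sim\ }\HdRc[n]{\Sigbas,\bdSigbas[\bullet];s}\xrightarrow{\ \sim\ }\HdRc[n]{\uSig,\ubdSig[\bullet]}$ furnished by \autoref{cor:log-de-Rham} (via $s_\bullet^*$ and the inclusion of smooth forms) with the classical integration map $\int_{\uSig}$. By \autoref{prop:BdR-comparison-manifolds-with-log-corners} the source is identified with the singular group $\mathsf{H}^n_{\mathrm{sing,c}}(\uSig,\partial_{\mathrm{top}}\uSig;\RR)$, so it remains to see that classical integration against the fundamental class is an isomorphism onto $\RR$. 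This is Poincar\'e--Lefschetz duality: for a connected oriented $n$-manifold with corners (compact supports, boundary allowed), $\mathsf{H}^n_{\mathrm{sing,c}}(\uSig,\partial_{\mathrm{top}}\uSig;\RR)\cong H_0(\uSig;\RR)=\RR$, integration realizing the duality pairing; the corners, of codimension $\ge 2$, play no role. This yields $\HdRc[n]{\Sig,\bdSig[\bullet]}\cong\RR$, which reduces to the stated $\HdRc[n]{\Sig}\cong\RR$ exactly when $\partial_{\mathrm{top}}\uSig=\varnothing$.

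The two comparison theorems are formal inputs, and the only genuinely load-bearing analytic fact, the regularized Stokes formula, is already in hand. Accordingly the main obstacle is the boundary bookkeeping: making precise that $\int_{(\Sig,s)}$ descends to the relative complex so that the Stokes boundary term drops (equivalently, that one integrates relative to $\bdSig[\bullet]$), and checking that under the comparison isomorphisms — which are induced by the inclusion of smooth forms — the abstract functional of \autoref{def:regularized-integral} coincides with the classical integral realizing the duality pairing. Both reduce to naturality of \autoref{cor:log-de-Rham} and \autoref{prop:BdR-comparison-manifolds-with-log-corners} with respect to integration of smooth forms, which I would verify through the explicit computation recorded in \autoref{lem:how-to-compute-regularized-integral}.
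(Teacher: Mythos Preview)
The paper gives no proof of this corollary; it is listed as an immediate consequence of the regularized Stokes formula and the de Rham comparison. You supply essentially the intended argument, and---importantly---you spot what the paper leaves implicit: both assertions require $\partial\Sig=\varnothing$. Under that hypothesis everything is immediate: Stokes gives $\int_{(\Sig,s)}\dd\eta=\int_{\partial(\Sig,s)}\eta=0$ trivially, and \autoref{cor:log-de-Rham} together with Poincar\'e duality for a closed oriented $n$-manifold yields $\HdRc[n]{\Sig}\cong H^n_{\mathrm{sing,c}}(\uSig;\RR)\cong\RR$.

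However, your attempt to salvage the cochain-map statement when $\partial\Sig\neq\varnothing$ has a real gap. Your structural observation---that $\int_{(\Sig,s)}$ factors through $\HdRc[n]{\Sigbas,\bdSigbas[\bullet];s}$ and hence kills coboundaries of the \emph{relative} complex---is correct but inapplicable: the image of $\dd\eta\in\Alogc[n]{\Sig}$ under the inclusion into $\Alogc[n]{\Sig,\bdSig[\bullet]}$ is the tuple $(\dd\eta,0,0,\ldots)$, which is \emph{not} a relative coboundary unless $i^*\eta=0$, since $\dd(\eta,0,\ldots)=(\dd\eta,\pm i^*\eta,0,\ldots)$. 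Indeed the claim is simply false with nonempty boundary: for $\Sig=[0,1]$ and $\eta=r$ one has $\int_{(\Sig,s)}\dd\eta=1$. You already flag the analogous obstruction for the isomorphism statement in your last paragraph; the same caveat applies to the first assertion. With the missing hypothesis $\partial\Sig=\varnothing$ in place, your proof is correct and matches what the paper has in mind.
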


\begin{example}\label{ex:regint-interval}
    Let $a>0$, let $r$ be the standard coordinate on $\Sig = [0,a]$, and consider a logarithmic one-form of the form
    \[
    \omega = f(r)\dlog{r} = f(0)\dlog{r} + \tf(r)\dd r
    \]
    where $f(r) = f(0) + r \tf(r)$ for a smooth function $\tf$.     If $f(0)=0$, the form $\omega$ is smooth and we have
    \[
    \int_{(\Sig,s)} \omega = \int_0^a \tf(r)\,\dd r 
    \] 
    for any choice of regularization $s$ of $\Sig$.

    On the other hand, if $f(0) \neq 0$, the integral is divergent and will depend on the regularization.  The latter is determined by a scale on the boundary, or equivalently virtual morphisms $s_0 = \lambda\,\cvf{r}|_0 \colon \{0\} \to [0,a]$ and $s_a = -\mu\,\cvf{r}\colon \{a\} \to [0,a]$  for some $\lambda,\mu>0$.  We then have the regularized pullbacks
   \begin{align*}
   s_0^* \log(r) &= \log(\lambda),  & s_a^*(\log(r)) &= \log(a),
   \end{align*}
   where the second of these is independent of the scale since $\log(r)$ is smooth at $a$.  
   The divergent part of the regularized integral can be computed using the regularized Stokes formula  
   \begin{align*}
   \int_{(\Sig,s)} \dlog{r} = \int_{(\Sig,s)} \dd(\log(r)) 
   = \int_{\partial(\Sig,s)}  \log(r)
   = s_a^*\log(r)-s_0^*\log(r)
   = \log(a/\lambda),
   \end{align*}
   so that by linearity of the integral we have
   \[
   \int_{(\Sig,s)}\omega = f(0) \int_{(\Sig,s)} \dlog{r} + \int_{(\Sig,s)} \tf(r)\,\dd r = f(0) \log(a/\lambda) + \int_0^a \tf(r)\,\dd r 
   \]   
   Classically, the same result would be obtained by introducing a cutoff parameter $\epsilon>0$, computing the convergent integral
   $$\int_{\lambda\epsilon}^{a-\mu\epsilon} \dlog{r} = \log(a/\lambda)-\log(\epsilon) + O(\epsilon),$$
   and formally discarding $\log(\epsilon)$ in the limit as $\epsilon\to 0$.  Either way, the result depends on the parameter $\lambda$ determining the scale at $r=0$, but is independent of the parameter $\mu$ determining the scale at $r=a$.  The reason is that $\omega$ has a pole at $r=0$ but is smooth at $r=a$. 
\end{example}

\begin{example}
Let $\Sig = \model{}{}$ with coordinates $(r,u)$ and let $(t,u)$ be the induced phantom coordinates on $\bdSig = [0)^2$.  For a smooth function $f(r)$ with compact support, an integral of the form
\[
\int_{\Sig} f(r)\dlog{u}
\]
has no classical meaning due to the phantom $u$; rather, the definition of the regularized integral in this context requires us to first convert the phantoms to functions using a scale on $\Sig$, as follows.

A regularization of $\Sig$ consists of a scale on $\Sig$ and a compatible scale on $\bdSig$; these must have the form
\[
s_\Sig = g(r) r^j \cvf{u} \qquad s_{\bdSig} = \lambda \cvf{t} + g(0)\lambda^j \cvf{u}
\]
where $g > 0$ is a positive smooth function, $j \in \ZZ$, and $\lambda > 0$ is a constant.  By definition, the regularized integral is given by
\[
\int_{(\Sig,s)}f(r)\dlog{u} = \int_{(\Sigbas,\bas{s})} s_\Sig^*\rbrac{f\dlog{u}} =  \int_{(\halfspc{},s)} f(r) \rbrac{j \dlog{r} + \dlog{g(r)}}
\]
The rightmost integral is a classical regularized integral, and can be computed as in \autoref{ex:regint-interval}, taking the length of the interval to $\infty$. 
\end{example}

\subsection{Non-smooth convergent integrals}

As we have just seen, the regularized integral reduces to the usual integral when the integrand is a smooth form.  We will now prove a stronger statement: it reduces to the ordinary integral whenever the latter converges absolutely.

\begin{proposition}\label{prop:convergent-integrals}
    Let $\Sig = \Sigbas$ be a basic, oriented manifold with log corners of dimension $n$, and let $\omega \in \Alogc[n]{\Sig}$ be a top-degree form.  Let $j \colon \uSigo \to \Sig$ be the inclusion of the interior (a smooth manifold), and let $i \colon \bdSig \to \Sig$ be the canonical immersion of the boundary.
    Then the following statements are equivalent:
    \begin{enumerate}
        \item The integral $\int_{\uSigo} j^*\omega$ converges absolutely.
        \item The form $\omega$ has no poles on $\bdSig$.
        \item We have $i^*\omega = 0 \in \Alog[n]{\bdSig}$.
    \end{enumerate}
    Moreover, under these conditions, we have
    \[
    \int_{(\Sig,s)}\omega = \int_{\uSigo} j^*\omega
    \]
    for any regularization $s$ of $\Sig$.
\end{proposition}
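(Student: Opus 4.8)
The plan is to reduce everything to a local computation in a chart adapted to a boundary point, and to isolate one genuinely analytic input --- a Stokes-type vanishing statement --- at the very end. Fix coordinates $r_1,\dots,r_n$ around a point of depth $k$, so that the boundary faces through the chart are the hyperplanes $\{r_\ell=0\}$ for $\ell\le k$. Since $\Sig=\Sigbas$ is basic, \autoref{thm:injectivity-of-j*} lets me write the top form uniquely as
\[
\omega = g\,\dlog{r_1}\wedge\cdots\wedge\dlog{r_n},\qquad g=\sum_I g_I(r)\log^I(r),
\]
with $g_I$ smooth and only finitely many nonzero. Pulling back to the face $\{r_\ell=0\}$ replaces $\dlog{r_\ell}$ by the phantom form $\dlog{t_\ell}$ and $g$ by $\sum_I g_I|_{r_\ell=0}\log^I(\cdots)$; by the injectivity of $\Cinflog{}$ and the polynomial independence of the phantom logarithms, $i^*\omega=0$ is therefore equivalent to $g_I|_{r_\ell=0}=0$ for all $I$ and all $\ell\le k$, i.e.\ to each $g_I$ being divisible by $r_1\cdots r_k$. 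This divisibility is exactly the statement that $\omega$ carries no logarithmic pole along any boundary face, giving $(2)\Leftrightarrow(3)$.

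For $(1)\Leftrightarrow(3)$ I would examine $j^*\omega=g\,(r_1\cdots r_n)^{-1}\,\dd r_1\cdots\dd r_n$ on the interior. Under (3) the factor $g/(r_1\cdots r_k)$ equals $\sum_I\tilde g_I\log^I(r)$ with $\tilde g_I$ smooth, and since each $\prod_\ell|\log r_\ell|^{I_\ell}$ is locally integrable near the corner, $\int_{\uSigo}|j^*\omega|<\infty$; this proves $(3)\Rightarrow(1)$. Conversely, if $i^*\omega\neq0$ along some face $\{r_\ell=0\}$, I would pick out the largest power $L$ of $\log r_\ell$ whose coefficient does not vanish at $r_\ell=0$. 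That coefficient is a nonzero logarithmic function on the face, hence bounded below in modulus on a set of positive measure; there $|j^*\omega|\gtrsim |\log r_\ell|^{L}/r_\ell$, and since $\int_0|\log r_\ell|^{L}\,r_\ell^{-1}\,\dd r_\ell=\infty$, Tonelli forces $\int_{\uSigo}|j^*\omega|=\infty$. This gives $\neg(3)\Rightarrow\neg(1)$.

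For the integral formula I would invoke \autoref{lem:how-to-compute-regularized-integral}. As $\Sig=\Sigbas$ is basic, the scale on $\Sig$ itself is the identity, so $s^*\omega=\omega$ and the proposition furnishes a smooth, compactly supported $\underline\omega=\omega+\dd\eta$ with $\eta$ an $(n-1)$-form vanishing on the boundary, together with $\int_{(\Sig,s)}\omega=\int_{\uSig}\underline\omega$. Because $\underline\omega$ is smooth this equals $\int_{\uSigo}j^*\underline\omega$, and writing $j^*\underline\omega=j^*\omega+\dd(j^*\eta)$, where $j^*\omega$ is integrable by (1), the whole claim comes down to the Stokes-type identity $\int_{\uSigo}\dd(j^*\eta)=0$. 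The key structural observation is that $i^*\eta=0$ forces every smooth coefficient of $\eta$ to vanish on all faces $\{r_\ell=0\}$, whatever the index set of the corresponding $\dlog$-monomial, so that after dividing by the $r_\ell$ appearing in that monomial one is left with bounded-times-logarithmic coefficients. Thus $\eta$, like $\dd\eta$ (which also lies in $\ker i^*$ since $i^*\dd\eta=\dd i^*\eta=0$), has no poles, and $j^*\eta$, $j^*\dd\eta$ are absolutely integrable.

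The remaining identity is the main obstacle, and I would establish it by a cutoff argument. Choosing cutoffs $\chi_\delta$ equal to $1$ outside a $\delta$-collar of $\partial_{\mathrm{top}}\uSig$ and to $0$ on a thinner collar, the form $\chi_\delta\,j^*\eta$ is compactly supported in $\uSigo$, so ordinary Stokes gives
\[
\int_{\uSigo}\chi_\delta\,\dd(j^*\eta)+\int_{\uSigo}\dd\chi_\delta\wedge j^*\eta=0.
\]
As $\delta\to0$ the first integral tends to $\int_{\uSigo}\dd(j^*\eta)$ by dominated convergence. In the second, $\dd\chi_\delta$ is supported in the collar of a face $\{r_\ell=0\}$ and is of size $O(1/\delta)$ in the $\dd r_\ell$ direction, while only the $\dd r_\ell$-free part of $j^*\eta$ survives the wedge; by the divisibility above its coefficients are $O(r_\ell)$ times bounded-logarithmic functions that are integrable in the remaining variables. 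Hence that contribution is $O\!\big(\delta|\log\delta|^{p}\big)\to0$, uniformly over the finitely many faces and corners. It follows that $\int_{\uSigo}\dd(j^*\eta)=0$, and combining the steps yields $\int_{(\Sig,s)}\omega=\int_{\uSigo}j^*\omega$, manifestly independent of the regularization $s$.
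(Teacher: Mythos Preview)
Your argument is correct and follows the same overall strategy as the paper: a local coordinate analysis for the equivalence of (1)--(3), followed by writing $\omega=\underline\omega+\dd\eta$ with $\eta$ a log $(n-1)$-form vanishing on the boundary, and reducing to a Stokes-type identity $\int_{\uSigo}\dd(j^*\eta)=0$.  The main difference is that the paper outsources the two analytic steps --- the equivalence (1)$\Leftrightarrow$(2) and the vanishing of the boundary contribution --- to \cite[Lemma~4.9 and Theorem~4.11]{Brown2009}, whereas you supply self-contained proofs (the Tonelli divergence estimate and the cutoff argument); your cutoff computation is essentially the content of Brown's continuous Stokes theorem.  One small point: \autoref{lem:how-to-compute-regularized-integral} does not itself ``furnish'' the primitive $\eta$ with $i^*\eta=0$ --- its existence requires the quasi-isomorphism of \autoref{rem:continuous-relative-cohomology}, which the paper invokes explicitly and which you should cite as well (a generic primitive in the total relative complex need not have vanishing $\bdSig$-component).
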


\begin{proof}
    The equivalence of (1) and (2) is the content of \cite[Lemma~4.9]{Brown2009}; the statement in \emph{op.~cit.}~refers to analytic forms, but the argument only uses the existence of an expansion in powers of logarithms, and corresponding bounds on the integral, and hence it applies equally well in our setting.  To see the equivalence with (3), note that $\omega$ can be written in local coordinates $(r_1,\ldots,r_n)$ as 
    \[
    \omega = \sum_I \omega_I(r)\log^I(r) \dlog{r_1} \wedge \cdots\wedge \dlog{r_n}
    \]
    for some smooth functions $\omega_I$.  Then $\omega$ is free of poles if and only if each $\omega_I$ is divisible by $r_1\cdots r_n$.  But $i^*\omega$ is computed along the boundary stratum $r_l = 0$ by setting $r_l=0$ in each coefficient function $\omega_I$, and making the substitutions $\log(r_l) \mapsto \log(t_l)$ and $\dlog{r_l}\mapsto \dlog{t_l}$ where  $t_l = i^*r_l|_{r_l=0}$ is the corresponding phantom.  Hence $i^*\omega = 0$ if and only if $\omega_I$ is divisible by $r_l$ for all $I$ and $l$, as desired.

    Now suppose that the equivalent conditions (1)--(3) hold.   
 By \autoref{rem:continuous-relative-cohomology}, it follows that there exists a form $\eta \in \Alogc[n-1]{\Sig}$ and a smooth form $\uomega \in \Ac[n]{\uSig}$ such that $i^*\eta = 0 \in \Alogc[n-1]{\bdSig}$ and $\omega - \underline{\omega} = \dd\eta$. We will deduce the equality $\int_{(\Sig,s)}\omega = \int_{\uSigo} j^*\omega$ by computing both sides using different versions of Stokes' formula.

 On the one hand, by our regularized Stokes formula (\autoref{cor:reg-stokes}) we have
 \begin{align}
 \int_{(\Sig,s)}\omega = \int_{(\Sig,s)}\underline{\omega} + \int_{\partial(\Sig,s)}i^*\eta = \int_{\uSig}\underline{\omega}+0 = \int_{\uSigo}j^*\underline{\omega},\label{eq:convergent-eq1}
 \end{align}
 where in the last step we use that the boundary has measure zero.  

 On the other hand, we claim that $\eta$ extends continuously to $\ubdSig$ and the resulting continuous form $\underline{i^*\eta}$ on $\ubdSig$ is zero.  Indeed, in local coordinates, $\eta$ has the form
 \begin{align*}
 \eta = \sum \eta_{I,k}(r) \log^I(r) \dlog{r_1}\wedge  \cdots \wedge \widehat{\dlog{r_k}} \wedge \cdots \wedge \dlog{r_n} 
 \end{align*}
 for some smooth functions $\eta_{I,k}$, and  the condition $i^*\eta=0$ is equivalent to the condition that each function $\eta_{I,k}(r)$ vanishes on the boundary, from which the claim follows immediately.  It then follows by applying the classical Stokes formula to the continuous form $\eta$ as in \cite[Theorem 4.11]{Brown2009}, that the ordinary integral is given by
 \[
 \int_{\uSigo}j^*\omega = \int_{\uSigo}j^*{\underline{\omega}} + \int_{\ubdSig} \underline{i^*\eta} = \int_{\uSigo}j^*{\underline{\omega}} + 0 = \int_{\uSigo}j^*{\underline{\omega}},
 \]
 and hence it agrees with the regularized integral by \eqref{eq:convergent-eq1}.
\end{proof}

\begin{example}
    For $a > 0$, let $\Sig = [0,a]$ with coordinate $r$ and consider the logarithmic one-form $\omega = \log(r)\dd r$, which is absolutely integrable despite the singularity at $r=0$; the classical argument is to introduce a cutoff around zero, compute the integral $\int_{\epsilon}^a \log(r)\dd r$ using the fundamental theorem of calculus, and take the limit as $\epsilon \to 0$. 
 Hence for any choice of regularization of $\Sig$ we have    
    \[
    \int_{(\Sig,s)} \omega = \int_0^a \log(r) \dd r = a \log(a) - a
    \]
     Alternatively, this can be derived using our regularized Stokes's formula.  Indeed, the logarithmic function $\eta \defas r \log(r) - r \in \Cinflog{\Sig}$ is a primitive for $\omega$.  Adopting the notation of \autoref{ex:regint-interval}, the regularized Stokes formula gives
    \begin{align*}
    \int_{(\Sig,s)} \omega &= \int_{\partial(\Sig,s)}\eta\\
    &= s_a^* (r\log(r) -r ) - s_0^*(r\log(r) - r) \\
    &= (a\log(a) -a ) - (0\log(\lambda) - 0) \\
    &= a\log(a) - a,
    \end{align*}
    explicitly exhibiting the independence of  the choice of regularization.
\end{example}

\section{Periods of logarithmic varieties}
\label{sec:periods}
We now turn to the application of our results to the study of period integrals on logarithmic algebraic varieties.  In this section we assume basic familiarity with logarithmic algebraic geometry, as treated for instance in \cite{Kato1989a,KatoNakayama,Ogus}.

\subsection{Varieties with log corners}  Here and throughout, by a \defn{variety}, we mean a separated scheme of finite type over the field $\KK =\RR$ or $\CC$.  A \defn{log variety} is a tuple $X = (\uX,\cM_X,\alpha)$ where $\uX$ is a variety, $\cM_X$ is a sheaf of monoids in the \'etale topology on $\uX$, and $\alpha \colon \cM_X \to \cO{X}$ is a morphism of sheaves of monoids that identifies $\alpha^{-1}(\cOx{X})$ with $\cOx{X}$.    We denote by $\uX(\KK)$ the set of $\KK$-points of $\uX$, equipped with the classical analytic topology.

If $Y$ is a variety and $D \subset Y$ is a divisor, we denote by $X = Y \log D$ the \defn{divisorial log variety}, for which $\uX =Y$ and $\alpha \colon \cM_{Y \log D} \to \cO{Y}$ is the inclusion of the subsheaf of regular functions on $Y$ that are invertible on $Y \setminus D$.  If $Z \to Y$ is a locally closed immersion (which may have components in common with $D$) we endow it with the restricted log structure, giving a log variety we denote by $Z \log D$.

\begin{example}
Let $z$ be the standard coordinate on $\AF^1$.  The log structure on $\logcyl$ is given by the monoid $\cM_{\logcyl}=\cOx{\AF^1} z^\NN \subset \cO{\AF^1}$ of monomials in $z$ with invertible coefficients.  Its restriction to the origin gives the log variety $\logcirc$ with $\cM_{\logcirc} = \KK^\times w^\NN$ where $w = z|_{\logcirc}$ is a phantom; this log variety is often called the ``standard log point''. 
\end{example}

\begin{definition}
    A log variety $X$ is a \defn{variety with log corners} if \'{e}tale locally it admits a strict \'{e}tale morphism to $(\logcyl)^n \times (\logcirc)^k$ for some $n,k \in \NN$.
\end{definition}
Thus a variety with log corners is covered by \'etale charts consisting of functions $z_1,\ldots,z_n \in \cO{\uX}$ cutting out a normal crossing $\uD\subset \uX$, together with phantom elements $w_1,\ldots,w_k \in \cM_X$.  Globally, a variety with log corners is a log variety that is isomorphic to one of the form $X = Z \log D$ where $Z \to Y$ is the immersion of a union of strata of a normal crossing divisor $D$ in a smooth variety $Y$.

Ordinary and virtual morphisms $X \to Y$ of varieties with log corners are defined in the same fashion as for manifolds with log corners. An ordinary morphism consists of a map $\uphi \colon  \uX \to \uY$ of varieties over $\KK$ together with a pullback morphism of sheaves of monoids $\phi^* : \uphi^{-1}\cM_{Y} \to \cM_X$, which we require to be compatible with $\alpha_X$ and $\alpha_Y$. A virtual morphism consists of a map $\uphi\colon \uX\to \uY$ of varieties over $\KK$ together with a pullback morphism of sheaves of groups $\phi^*\colon \uphi^{-1}\cMgp_Y\to \cMgp_X$ which behaves as expected on $\cOx{Y} \subset \cMgp_Y$. The notion of virtual morphism was first defined in logarithmic algebraic geometry by Howell \cite{Howell2017}; see \cite{DPP:WeakMorphisms} for more details.

Every variety with log corners $X$ has a \defn{boundary $\bdX$}, given by the pullback of the log structure along the map $\widetilde{\uD} \to\uX$, where $\widetilde{\uD}$ is the normalization of $\uD$.  It comes equipped with a canonical morphism $\bdX \to X$, so that the iterated boundaries $\bdX[\bullet]$ form an (ordinary) $\ascat$-variety with log corners.

\subsection{Kato--Nakayama spaces}\label{sec:KN}

In \cite{KatoNakayama}, Kato--Nakayama associate topological spaces to a class of log varieties over $\CC$, which includes all varieties with log corners. In \cite{GillamMolcho}, Gillam--Molcho explained how to endow these spaces with differentiable positive log structures. As we explain in \cite{DPP:WeakMorphisms}, this construction is functorial for virtual morphisms.  The prototype is the following example.

\begin{example}
    The Kato--Nakayama space of $\logcyl$ is the real-oriented blowup of $\AF^1(\CC) = \CC$ at the origin; it is a manifold with boundary equipped with its basic log structure.  If $z$ is the standard coordinate on $\AF^1$, then the polar coordinates $r = |z|$ and $\theta = \arg z$ give an isomorphism of manifolds with log corners
    \[
    \KN{\logcyl} \cong \halfspc{} \times \unitcirc.
    \]
    The inclusion $\logcirc \hookrightarrow \logcyl$ then gives an isomorphism
    \[
    \KN{\logcirc} \cong \partial\, \KN{\logcyl}  \cong [0) \times \unitcirc
    \]
    of manifolds with log corners.
\end{example}

In general, if $X$ is a variety with log corners and $z_1,\ldots,z_n,w_1,\ldots,w_k$ are coordinates identifying an analytic open set $U$ in $X$ with an  analytic open set in $(\logcyl)^n\times(\logcirc)^k$, then $\KN{U}$ is identified with the corresponding open set in the manifold with log corners $(\halfspc{} \times \unitcirc)^n \times ([0)\times\unitcirc)^k$.  The functions $r_i = |z_i|$, $\theta_i = \arg(z_i)$ and $\phi_i = \arg(w_i)$ give basic coordinates, while $t_i \defas |w_i|$ are phantom coordinates on the factor $[0)^k$. We therefore have the following.

\begin{proposition}
    If $X$ is a variety with log corners of dimension $(n,k)$, then $\KN{X}$ is a manifold with log corners of dimension $(2n+k,k)$, and the natural map $\KN{X} \to \uX(\CC)$ is a morphism of manifolds with log corners, where $\uX(\CC)$ is viewed as a smooth manifold with the trivial positive log structure.
\end{proposition}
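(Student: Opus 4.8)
The plan is to reduce everything to the local model $(\logcyl)^n\times(\logcirc)^k$, for which the Kato--Nakayama space has already been computed in the example above, and then to glue. First I would record the three standard compatibilities of the Kato--Nakayama functor that make such a reduction possible: it commutes with finite products, it sends open immersions (Zariski or analytic) to open immersions, and it sends strict \'etale morphisms to local isomorphisms on $\CC$-points (an \'etale morphism of varieties being a local analytic isomorphism). These are established in \cite{KatoNakayama, GillamMolcho}, with the refinement to positive log structures and weak-morphism functoriality recorded in \cite{DPP:WeakMorphisms}, so I would simply cite them. Combined with the identifications $\KN{\logcyl}\cong\halfspc{}\times\unitcirc$ and $\KN{\logcirc}\cong[0)\times\unitcirc$ from the example above, these give, for the model $M \defas (\logcyl)^n\times(\logcirc)^k$, a canonical isomorphism of spaces with positive log structure
\[
\KN{M}\;\cong\;(\halfspc{}\times\unitcirc)^n\times([0)\times\unitcirc)^k.
\]

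Next I would observe that the right-hand side is a manifold with log corners: each factor $\halfspc{}$, $\unitcirc$ (a manifold without corners with its trivial positive log structure), and $[0)$ is one, and a finite product of manifolds with log corners is again one. A dimension count gives $(2n,0)$ from the $n$ copies of $\halfspc{}\times\unitcirc$ and $(k,k)$ from the $k$ copies of $[0)\times\unitcirc$, so the total is $(2n+k,k)$, as claimed. Since $X$ is \'etale-locally isomorphic to a Zariski open log subvariety $V\subset M$, applying $\KN$ yields an open cover of $\KN{X}$ by local isomorphisms onto open subsets $\KN{V}\subset\KN{M}$, each of which is a manifold with log corners; by functoriality of $\KN$ the transition maps are isomorphisms of manifolds with log corners, so these charts assemble $\KN{X}$ into a manifold with log corners of dimension $(2n+k,k)$.

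For the natural map, I would first note that the underlying variety $\uX$ is smooth of dimension $n$: \'etale-locally it is an open subvariety of $\underline{M}=\AF^n$. Hence $\uX(\CC)$ is a complex manifold of real dimension $2n$ with no corners, which with its trivial positive log structure is a (basic) manifold with log corners. In the local model the projection $\KN{X}\to\uX(\CC)$ is the polar-to-rectangular map $(r,\theta)\mapsto r\,e^{\iu\theta}$ on each $\halfspc{}\times\unitcirc$ factor and the collapse $[0)\times\unitcirc\to\{0\}$ on the phantom factors; this is smooth as a map of manifolds with corners. Because the target carries the trivial log structure, a morphism in the sense of \autoref{def:morph} amounts to nothing more than the pullback of positive smooth functions: for $f\in\Cinfpos{\uX(\CC)}$ one sets $\phi^*f \defas \uphi^*f$, which is again strictly positive and hence lies in $\Cinfpos{\KN{X}}\subset\cM_{\KN{X}}$, and the square over $\alpha$ commutes tautologically. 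This exhibits $\KN{X}\to\uX(\CC)$ as an ordinary morphism.

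The hard part will be verifying that the Gillam--Molcho positive log structure on $\KN{M}$ is literally the product manifold-with-log-corners structure---that is, that in the chart above the functions $|z_i|,\arg z_i,\arg w_i$ are basic while the $|w_i|$ are genuine phantoms---and that these local identifications are compatible under the \'etale transition maps. The first point is exactly the content of the example above, and the second is the functoriality of $\KN$ for strict \'etale isomorphisms; I would lean on \cite{GillamMolcho, DPP:WeakMorphisms} for both, so that the only genuinely new work is the bookkeeping of dimensions and the routine check that the collapse map is a morphism.
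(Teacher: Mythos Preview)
Your proposal is correct and follows essentially the same approach as the paper: reduce to the local model $(\logcyl)^n\times(\logcirc)^k$, invoke the identification $\KN{M}\cong(\halfspc{}\times\unitcirc)^n\times([0)\times\unitcirc)^k$, and read off the dimension and chart structure. The paper in fact does not give a separate proof---it simply records the local coordinate description in the paragraph preceding the proposition and states the result as an immediate consequence---so your write-up supplies the routine bookkeeping (functoriality of $\KN$ under products, open immersions, and \'etale maps; the dimension count; the triviality of the target log structure) that the paper leaves implicit.
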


    \begin{figure}
    \centering
    \begin{subfigure}[t]{.32\linewidth}
    \centering\begin{tikzpicture}[scale=0.8]
    \centering
    \draw[white] (-2,-2) -- (2,2);
    \draw[thick] (-2,0) -- (2,0);
    \draw[blue,thick] plot[mark=x] (0,0);
    \end{tikzpicture}
    \caption{$X(\RR)$}
    \end{subfigure}
    \hfill
    \begin{subfigure}[t]{.32\linewidth}
    \centering
    \begin{tikzpicture}[scale=0.8]
    \draw[white,fill=white!90!black] (-2,-2) rectangle  (2,2);
    \draw[blue,thick] plot[mark=x] (0,0);
    \end{tikzpicture}
    \caption{$X(\CC)$}
    \end{subfigure}
    \hfill
    \begin{subfigure}[t]{.32\linewidth}
    \centering
    \begin{tikzpicture}[scale=0.8]
    \draw[white,fill=white!90!black] (-2,-2) rectangle  (2,2);
    \draw[thick] (0.5,0) -- (2,0);
    \draw[thick] (-2,0) -- (-0.5,0);
    \draw[blue] plot[mark=x,thick] (0.5,0);
    \draw[blue] plot[mark=x,very thick] (-0.5,0);
    \draw[blue,fill=white,thick] (0,0) circle (0.5);
    \end{tikzpicture}
    \caption{Kato--Nakayama spaces, real and complex}\label{fig:a1-real-KN-log-corner}
    \end{subfigure}
    \caption{Geometry of the log scheme $X = \logcyl$, which is defined over any ring $\KK$.  The portions of the diagram shown in blue correspond to the log subscheme $\bdX = \logcirc$, which gives the boundary of the Kato--Nakayama space.}\label{fig:a1-real-KN}
    \label{fig:enter-label}
\end{figure}

\subsection{Real points}\label{sec:real-KN} We now discuss a version of Kato--Nakayama's construction for varieties over $\RR$.  If $X$ is a variety with log corners defined over $\RR$, then we have an inclusion $\uX(\RR)\subset \uX(\CC)$ expressing the real points as the fixed locus of the anti-holomorphic involution given by complex conjugation.  This lifts to an involution $\sigma \colon \KN{X} \to \KN{X}$ of manifolds with log corners.  \autoref{thm:fixed-locus} then allows us to make the following construction.

\begin{definition}
    Let $X$ be a variety with log corners over $\RR$.  The \defn{real Kato--Nakayama space of $X$} is the manifold with log corners $\KNR{X} \subset \KN{X}$ defined as the fixed locus of the complex conjugation involution.
\end{definition}

Concretely, let $z_i,w_j$ be coordinates on $X$ defined over $\RR$, and  $(r_i,\theta_i,t_j,\phi_j)$ be the induced coordinates on $\KN{X}$ as above, with the angles $\theta_i$ and $\phi_j$ defined modulo $2\pi$. Then the conjugation is given by $\theta_i \mapsto - \theta_i$ and $\phi_i \mapsto -\phi_i$, so that the fixed locus is given by $\theta_i,\phi_j \in \ZZ\pi$, with coordinates induced by the basic radial coordinates $r_i$ and the phantom radial coordinates $t_j$.  From this we deduce the following.

\begin{corollary}
    If $X$ is a variety with log corners of dimension $(n,k)$ over $\RR$, then $\KNR{X}$ is a manifold with log corners of dimension $(n,k)$.
\end{corollary}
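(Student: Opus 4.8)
The plan is to realize $\KNR{X}$ as the fixed locus of a finite group action and then extract its dimension from the explicit coordinates described above. Let $G=\ZZ/2\ZZ=\{1,\sigma\}$ be the group generated by the complex conjugation involution $\sigma\colon\KN{X}\to\KN{X}$. As $G$ is finite it is a compact Lie group, and by the preceding proposition $\KN{X}$ is a manifold with log corners of dimension $(2n+k,k)$ on which $\sigma$ acts by an automorphism of manifolds with log corners. Since $\KNR{X}=\KN{X}^G$ by definition, \autoref{thm:fixed-locus} shows at once that $\KNR{X}$ carries a canonical structure of manifold with log corners and that the inclusion $\KNR{X}\hookrightarrow\KN{X}$ is a morphism. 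It therefore only remains to compute the dimension, and this I would do by linearizing the action near a fixed point.

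In the coordinates $(r_i,\theta_i,t_j,\phi_j)$ above, $\sigma$ fixes each modulus $r_i=|z_i|$ and $t_j=|w_j|$ and reflects each argument, $\theta_i\mapsto-\theta_i$ and $\phi_j\mapsto-\phi_j$. By \autoref{cor:local-action}, near a fixed point $x$ of depth $n_0$ the manifold with log corners $\KN{X}$ is $G$-equivariantly isomorphic to a product $\RR^l\times\model{n_0}{k}$ on which $G$ acts linearly on $\RR^l$ and by permutations of the coordinates of $\model{n_0}{k}$. Matching this against the coordinate picture, the reflected $(-1)$-eigendirections are exactly the $n+k$ angular coordinates $\theta_i,\phi_j$, all of which sit in the Euclidean factor $\RR^l$; the coordinates of $\model{n_0}{k}$ are the $n_0$ vanishing radial coordinates and the $k$ phantoms, each of which is individually fixed by $\sigma$, so the induced permutation action is trivial and the remaining $l-(n+k)=n-n_0$ directions of $\RR^l$ — the strictly positive radial coordinates — are fixed as well.

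Taking $G$-fixed points then annihilates the $n+k$ angular directions, since the $(-1)$-eigenspace meets the fixed locus only at the origin, and retains everything else. Hence near $x$ one finds $\KNR{X}\cong\RR^{\,n-n_0}\times\model{n_0}{k}$, a manifold with log corners whose $n$ basic coordinates are the $n-n_0$ positive and $n_0$ vanishing radial coordinates $r_i$, and whose $k$ phantom coordinates are the $t_j$. Therefore $\KNR{X}$ has dimension $(n,k)$, as claimed.

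The structural heart of the argument — that the fixed locus of a compact group action on a manifold with log corners is again such a manifold — is already provided by \autoref{thm:fixed-locus}, so no genuinely new difficulty arises. The only point demanding care is the bookkeeping of tangent directions under linearization: one must verify that the reflected directions are precisely the $n+k$ angular coordinates and that conjugation fixes, \emph{without permuting}, the $n$ radial and $k$ phantom coordinates generating the log structure, so that passing to the fixed locus deletes exactly the angular directions and leaves the $(n,k)$ log-corner structure intact.
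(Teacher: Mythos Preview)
Your proof is correct and follows essentially the same approach as the paper. The paper's argument is the paragraph immediately preceding the corollary: in local coordinates defined over $\RR$, conjugation sends $\theta_i\mapsto-\theta_i$ and $\phi_j\mapsto-\phi_j$ while fixing the moduli, so the fixed locus is cut out by $\theta_i,\phi_j\in\ZZ\pi$ and inherits the $n$ basic radial coordinates $r_i$ and the $k$ phantom coordinates $t_j$, giving dimension $(n,k)$. You reach the same conclusion but route it more formally through \autoref{thm:fixed-locus} and \autoref{cor:local-action}, carrying out the depth-$n_0$ bookkeeping explicitly; this is a faithful elaboration of the paper's one-line coordinate observation rather than a genuinely different argument.
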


    Note that the proper morphism $\KN{X}\to \uX(\CC)$ restricts to a finite morphism of manifolds with log corners $\KNR{X}\to \uX(\RR)$, whose fibers are of the form $(\mathsf{S}^0)^r$.

\begin{example}
    The log variety $X = \logcyl$ is defined over $\RR$.  Its real Kato--Nakayama space is the fixed locus of complex conjugation on the real oriented blowup of $\CC$ at the origin.  It therefore consists of two semi-infinite intervals, which map to the non-negative and non-positive real axes, giving an isomorphism
    \[
    \KNR{\logcyl} \cong (-\infty,0] \sqcup [0,\infty)
    \]
    of manifolds with log corners, with boundary given by two standard ends
    \[
    \partial\,\KNR{\logcyl} \cong \KNR{\logcirc} \cong (0] \sqcup [0).
    \]
    See \autoref{fig:a1-real-KN-log-corner} for an illustration.
\end{example}

\subsection{Tangential basepoints: algebraic vs. differential geometry}
\label{sec:KN-basepoints}
Let $Y$ be a smooth variety over $\CC$ and $D \subset Y$ a normal crossing divisor.  If $p \in Y$ is a point, a \defn{tangential basepoint} at $p$, in the sense of Deligne~\cite[\S15]{Deligne1989}, is a choice of a nonzero normal vector for each local irreducible component of $D$ passing through $p$.  We refer to such basepoints as \defn{algebraic} to distinguish them from the $C^\infty$ tangential basepoints for manifolds with corners in this paper. They correspond to virtual morphisms from a point to the log scheme $Y\log D$ via the algebro-geometric analogue of \autoref{prop:tangential-basepoints-as-weak-morphisms}; see \cite{DPP:WeakMorphisms}.

\begin{figure}
    \begin{subfigure}{0.4\textwidth}
    \centering\begin{tikzpicture}
    \draw[white,fill=white!90!black] (-2,-2) rectangle (2,2);
    \draw[->,thick] (0,0)--(0.5,0);
    \draw[->,thick] (0,0) -- (65:1);
    \draw[blue,thick] plot[mark=x] (0,0);
    \end{tikzpicture}
    \caption{$\logcyl$}
    \end{subfigure}
    \begin{subfigure}{0.4\textwidth}
    \centering\begin{tikzpicture}
    \draw[white,fill=white!90!black] (-2,-2) rectangle (2,2);
    \draw[thick,blue,fill=white] (0,0) circle (0.5);
    \draw[->,thick] (0.5,0)--(1,0);
    \draw[->,thick] (65:0.5) -- (65:1.5);
    \end{tikzpicture}
    \caption{$\KN{\logcyl}$}
    \end{subfigure}
    \caption{Algebraic vs.~$C^\infty$ tangential basepoints at the origin in $\AF^1$.}
    \label{fig:baspoints-alg-vs-diff}
\end{figure}

The differential of the canonical map $\KN{Y\log D} \to Y(\CC)$ gives a bijection between $C^\infty$ tangential basepoints on the Kato--Nakayama space and algebraic tangential basepoints for $(Y,D)$, illustrated in \autoref{fig:baspoints-alg-vs-diff}.  Concretely, using the relation $z = r e^{\iu \theta}$ between holomorphic coordinates on $Y$ and polar coordinates on $\KN{Y\log D}$, the correspondence is given by
\[
c\, \cvf{z}|_{z=0} \qquad \longleftrightarrow \qquad |c| \, \cvf{r}|_{(r=0,\theta=\arg c)}
\]
for $c\in \CC^\times$.  We thus have the following:

\begin{proposition}\label{prop:basepoints-alg-vs-diff}
For a normal crossing divisor $D$ in a smooth variety $Y$ over $\CC$, the following are in canonical bijection:
    \begin{itemize}
        \item virtual morphisms $\Spec{\CC} \to Y \log D$ of varieties with log corners;
        \item virtual morphisms $* \to \KN{Y\log D}$ of manifolds with log corners;
        \item algebraic tangential basepoints for $(Y,D)$;
        \item $C^\infty$ tangential basepoints  for $\KN{Y \log D}$.
    \end{itemize}
\end{proposition}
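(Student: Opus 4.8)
The plan is to display the four sets as the vertices of a commutative square of bijections, three of whose edges come from results already at hand and whose fourth edge is the explicit differential of the Kato--Nakayama map. Write $X = Y\log D$ and $\pi\colon \KN{X}\to\uX(\CC)$ for the canonical morphism. The four vertices are: (A) weak morphisms $\Spec{\CC}\to X$; (B) weak morphisms $*\to\KN{X}$; (C) algebraic tangential basepoints for $(Y,D)$; and (D) $C^\infty$ tangential basepoints for $\KN{X}$. My strategy is to produce the edges (A)$\leftrightarrow$(C), (B)$\leftrightarrow$(D), and (C)$\leftrightarrow$(D) directly, compose them to get all four sets in bijection, and then check that the resulting identification is compatible with the fourth edge (A)$\to$(B) furnished by functoriality of $\KN{-}$.

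First I would record the two ``tangential basepoints $=$ weak morphisms from a point'' correspondences. On the differential-geometric side, since $X=Y\log D$ is divisorial the space $\KN{X}$ is the real oriented blowup of $\uX(\CC)$ along $\uD$, hence a \emph{basic} manifold with log corners (no phantoms); thus \autoref{prop:tangential-basepoints-as-weak-morphisms} applies verbatim and gives the canonical bijection (B)$\leftrightarrow$(D). On the algebraic side, the identification of weak morphisms $\Spec{\CC}\to Y\log D$ with algebraic tangential basepoints is the algebro-geometric analogue of the same proposition, which I would cite from \cite{DPP:WeakMorphisms}; this is the edge (A)$\leftrightarrow$(C).

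Next comes the edge (C)$\leftrightarrow$(D), the only genuinely computational input, realised by $\dd\pi$. This is local in the standard model $\logcyl$ with $z = r\,e^{\iu\theta}$. A point $\tilde p$ of $\KN{X}$ over $p\in\uD$ records, for each local component of $D$ through $p$, an angle $\theta_i$, and the positive normal cone $N^{>0}_{\tilde p}\KN{X}$ consists of the vectors $\sum_i v_i\cvf{r_i}$ with $v_i>0$. Since $\dd z = e^{\iu\theta}\,\dd r + \iu r\,e^{\iu\theta}\,\dd\theta$ restricts at $r=0$ to $e^{\iu\theta}\,\dd r$, the differential $\dd\pi$ sends the positive normal vector $v_i\cvf{r_i}$ carried at angle $\theta_i$ to the nonzero normal vector $c_i\cvf{z_i}|_{z_i=0}$ with $c_i = v_i e^{\iu\theta_i}$. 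As $(v_i,\theta_i)$ ranges over $\RR_{>0}\times(\RR/2\pi\ZZ)$ it covers $\CC^\times$ bijectively via $c_i = |c_i|e^{\iu\arg c_i}$, reproducing exactly the formula $c\,\cvf{z}|_{z=0}\leftrightarrow |c|\,\cvf{r}|_{(r=0,\theta=\arg c)}$. Running this over the finitely many components of $D$ through $p$ shows that $\dd\pi$ carries $C^\infty$ tangential basepoints of $\KN{X}$ bijectively onto algebraic tangential basepoints of $(Y,D)$. I would then observe that both notions are defined intrinsically, as nonzero normal vectors to strata, so the bijection is independent of the chosen normal-crossing coordinates and glues over $\uX$.

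Composing these three edges already puts all four sets in bijection. The remaining task, which I expect to be the main obstacle, is to check that this square \emph{commutes}: that the composite (A)$\leftrightarrow$(C)$\leftrightarrow$(D)$\leftrightarrow$(B) agrees with the map (A)$\to$(B) given by functoriality of the Kato--Nakayama construction for weak morphisms (using $\KN{\Spec{\CC}}=*$), so that the four bijections are mutually compatible and ``canonical'' is unambiguous. This is a naturality statement requiring one to track all constructions simultaneously through the local model: one takes a weak morphism $\Spec{\CC}\to\logcyl$, computes its image $*\to\KN{\logcyl}\cong\halfspc{}\times\unitcirc$ explicitly, and verifies that the induced $C^\infty$ tangential basepoint coincides with the one assigned by $\dd\pi$ to the underlying algebraic basepoint. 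Since every variety with log corners is étale-locally a product of copies of $\logcyl$ and $\logcirc$, and since all four constructions are compatible with products and with restriction to opens, commutativity in the model propagates to the general case, completing the proof.
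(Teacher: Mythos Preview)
Your proposal is correct and follows essentially the same approach as the paper. The paper does not give a separate proof: the proposition is stated immediately after the paragraph computing the differential of $\pi$ (yielding (C)$\leftrightarrow$(D)), and implicitly relies on \autoref{prop:tangential-basepoints-as-weak-morphisms} for (B)$\leftrightarrow$(D) together with the cited companion paper for (A)$\leftrightarrow$(C); your additional care about the commutativity of the square is a welcome refinement that the paper leaves tacit.
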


A similar correspondence holds for varieties with log corners defined over $\RR$, replacing $\Spec{\CC}$ with $\Spec{\RR}$ and $\KN{Y\log D}$ with $\KNR{Y\log D}$. 
 
 \begin{remark}\label{rmk:basepoints-over-Z}
     More generally, one may consider log schemes over a base ring $\KK\subset \CC$, in which case the resulting tangential basepoints must be defined over $\KK$ as well, which may greatly rigidify the geometry.  For instance, if $X = \logcyl$ with coordinate $z$, defined over $\KK = \ZZ$, the only tangential basepoints at $0$ are $\pm\cvf{z}$; this gives a natural notion of a tangential basepoint having ``unit length''. See \cite{DPP:WeakMorphisms} for a discussion of this general notion of ``virtual point'' of log schemes.
 \end{remark}

\subsection{Betti and de Rham cohomology}
For a class of log varieties over $\CC$, Kato--Nakayama~\cite{KatoNakayama} defined the \defn{Betti cohomology} of $X$ to be the singular cohomology of the Kato--Nakayama space
\[
\HB{X} \defas \mathsf{H}^\bullet_{\mathrm{sing}}(\KN{X};\ZZ),
\]
and the \defn{(algebraic) de Rham cohomology} to be the hypercohomology (in the Zariski or \'{e}tale topology) of the algebraic logarithmic de Rham complex $(\forms{X},\dd)$, generated by the logarithmic derivatives of elements of $\cM_X$:
\[
\HdR{X} \defas \HH{\uX,(\forms{X},\dd)}.
\]
Furthermore, they established a \defn{comparison isomorphism}
\begin{align}
\HdR{X}  \stackrel{\sim}{\longrightarrow} \HB{X} \otimes_\ZZ \CC, \label{eq:comparison}
\end{align}
under certain assumptions on $X$ that are satisfied in the case of a variety with log corners. In the special case of varieties with log corners, this boils down to the classical fact that the log complex $\forms{Y}(\log D)$ of a normal crossing divisor $D \subset Y$ computes the cohomology of $Y \setminus D$.  We prove in \cite{DPP:WeakMorphisms} that these cohomology groups are functorial for virtual morphisms, and the comparison isomorphism is natural. 

Concretely, the real and imaginary parts of every algebraic log form $\omega \in \forms{X}$ define $C^\infty$ log forms $\Re \omega, \Im \omega \in \cAlog{\KN{X}}$ on the Kato--Nakayama space, viewed as a manifold with log corners.  This gives a canonical map
\[
\HdR{X} \to \HdR{\KN{X}}\otimes_\RR \CC
\]
which induces the isomorphism \eqref{eq:comparison} via our log de Rham theorem (\autoref{cor:log-de-Rham}).

\begin{remark}\label{rmk:dR-over-K}
If $X$ is the extension of scalars of a log variety $X_\KK$ defined over a subfield $\KK\subset \CC$, then the algebraic de Rham cohomology of $X_\KK$ is a $\KK$-structure on the algebraic de Rham cohomology of $X$: $\HdR{X}\cong \HdR{X_\KK}\otimes_\KK\CC$.
\end{remark}

\subsection{Logarithmic periods}

We now turn to the definition and cohomological interpretation of regularized period integrals in logarithmic algebraic geometry.

By a \defn{(virtual) $\ascat$-variety with log corners} we mean an $\ascat$-object 
\[
(X,Y_\bullet) = \rbrac{\augsimp{X}{Y_1}{Y_2}{Y_3}}
\]
in the category of virtual morphisms of varieties with log corners over $\CC$.  By the discussion above, the Betti and de Rham cohomology of log schemes extend immediately to such objects by totalizing the relevant symmetric coaugmented cosimplicial complexes as in \autoref{sec:relative-de-Rham}, and the comparison isomorphism gives a canonical pairing
\begin{equation}\label{eq:BdR-pairing-diagrams-varieties-log-corners}
\abrac{-,-} \colon \HlgyB{X,Y_\bullet} \otimes_\ZZ \HdR{X,Y_\bullet} \to \CC
\end{equation}
which becomes nondegenerate after tensoring with $\CC$.

Note that this construction includes the absolute cohomology $\coH{X}$ as the special case $Y_j = \varnothing$ for all $j > 0$, and the relative cohomology $\coH{X,Y}$ of a morphism $Y\to X$ as the special case $Y_1=Y$ and $Y_j = \varnothing$ for $j > 1$.

\begin{definition}\label{def:log-cycle}
    A \defn{logarithmic cycle in $(X,Y_\bullet)$} is the data of a compact oriented regularized manifold with log corners $(\Sig,s)$ and a morphism of $\ascat$-manifolds with log corners
    \[
    \phi \colon (\Sigbas,\bas{(\bdSig[\bullet])};s) \to \KN{X,Y_\bullet}.
    \]
\end{definition}

A logarithmic cycle has an underlying map of symmetric semi-simplicial spaces $\uphi:(\uSig,\partial^\bullet\uSig) \to \KN{\uX,\uY_\bullet}$ inducing a morphism in homology
\[
\underline{\phi}_* \colon \mathsf{H}^{\mathrm{sing}}_\bullet(\uSig,\partial_{\mathrm{top}}\uSig) \to \HlgyB{X,Y_\bullet}
\]
Since $\uSig$ is compact and oriented of dimension $n$, the image of its fundamental class gives the \defn{cycle class}, which we denote simply by
\[
[\phi] \defas \uphi_*[\uSig] \in \HlgyB[n]{X,Y_\bullet}.
\]
Such cycles arise naturally as integration domains in many interesting situations. 

As for integrands, let $\omega\in \sect{X,\Omega^n_X}$ be a global closed logarithmic $n$-form on $X$ whose pullback to $Y_\bullet$ is zero. It defines a class $[\omega]\in\HdR[n]{X,Y_\bullet}$, and we denote by 
\[
\int_\phi \omega \defas \abrac{[\phi],[\omega]} \in \CC
\]
the corresponding period, induced by the Betti--de Rham pairing \eqref{eq:BdR-pairing-diagrams-varieties-log-corners}. It can be computed as a regularized integral as in \autoref{sec:integration}, as the following result shows.

\begin{proposition}\label{prop:how-to-compute-log-periods}
The pairing \eqref{eq:BdR-pairing-diagrams-varieties-log-corners} between the cycle class $[\phi]\in\HlgyB[n]{X,Y_\bullet}$ and the class $[\omega]\in \HdR[n]{X,Y_\bullet}$ equals the regularized integral
\[
\int_\phi \omega = \int_{(\Sig,s)}\phi^*\omega.
\]
\end{proposition}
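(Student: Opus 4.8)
The plan is to unwind both sides until each is visibly the integral over the fundamental class $[\uSig]$ of a smooth representative of one and the same relative cohomology class, at which point the two representatives differ by an exact form and their integrals coincide. First I would make the pairing \eqref{eq:BdR-pairing-diagrams-varieties-log-corners} explicit. Via the comparison isomorphism \eqref{eq:comparison}, the de Rham class $[\omega]\in\HdR[n]{X,Y_\bullet}$ corresponds to a class in the complexified Betti cohomology, that is, in the singular cohomology of $\KN{X,Y_\bullet}$; under the logarithmic de Rham isomorphism (\autoref{cor:log-de-Rham} and \autoref{prop:BdR-comparison-manifolds-with-log-corners}) this class is represented by the $C^\infty$ logarithmic form $\Re\omega+\iu\,\Im\omega$ on $\KN{X,Y_\bullet}$. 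By definition of the pairing, $\abrac{[\phi],[\omega]}$ is the evaluation of this singular class against the homology class $[\phi]=\uphi_*[\uSig]$, equivalently the integral over the cycle $\uphi_*[\uSig]$ of any smooth logarithmic representative.

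The key step is naturality. Since $[\phi]$ is a pushforward of the fundamental class, the adjunction between homology pushforward and cohomology pullback under the evaluation pairing gives
\[
\abrac{[\phi],[\omega]}=\abrac{\uphi_*[\uSig],[\omega]}=\abrac{[\uSig],\phi^*[\omega]}.
\]
Because the comparison isomorphism and the logarithmic de Rham identifications are natural for weak morphisms (as recorded after \eqref{eq:comparison}), the class $\phi^*[\omega]\in\HdR[n]{\Sigbas,\bdSigbas[\bullet];s}\otimes_\RR\CC$ is represented by the pulled-back logarithmic form $\phi^*\omega$; moreover, since $\omega$ pulls back to zero on $Y_\bullet$, the form $\phi^*\omega$ restricts to zero on $\bdSigbas[\bullet]$, so $[\phi^*\omega]$ is genuinely a relative class. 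Thus the right-hand side is the Alexander--Lefschetz--Poincar\'e duality pairing of $[\uSig]$ with $[\phi^*\omega]$.

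It remains to identify this duality pairing with the regularized integral, which is exactly how the latter was built in \autoref{sec:integration}. Concretely, \autoref{lem:how-to-compute-regularized-integral} lets me write $\int_{(\Sig,s)}\phi^*\omega=\int_{\uSig}\underline{\phi^*\omega}$ for a smooth form $\underline{\phi^*\omega}$ representing the same relative class $[\phi^*\omega]$ in smooth relative de Rham cohomology. Since the ordinary integral over $\uSig$ of a relative smooth form depends only on its class and computes the evaluation against $[\uSig]$, it equals $\abrac{[\uSig],[\phi^*\omega]}$; chaining the equalities then yields $\int_\phi\omega=\int_{(\Sig,s)}\phi^*\omega$.

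I expect the main difficulty to be bookkeeping rather than any single hard estimate. One must verify that the natural identifications in play --- the comparison isomorphism \eqref{eq:comparison}, the logarithmic de Rham isomorphism (\autoref{cor:log-de-Rham}), the semi-simplicial comparison (\autoref{prop:BdR-comparison-manifolds-with-log-corners}), and the construction of the regularized integral (\autoref{def:regularized-integral}) --- are mutually compatible and natural for the weak morphism $\phi$, so that the displayed chain of equalities commutes at the level of pairings, including in the relative boundary directions where the scale $s$ enters through the structure maps of the domain $(\Sigbas,\bdSigbas[\bullet];s)$. Ensuring that the scale already built into $\phi$ is not double-counted against the scale defining $\int_{(\Sig,s)}$ is the delicate point.
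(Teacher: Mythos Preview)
Your proposal is correct and follows essentially the same route as the paper's proof, which invokes only the functoriality of the Betti--de Rham comparison for $\ascat$-manifolds with log corners (\autoref{prop:BdR-comparison-manifolds-with-log-corners}) together with the definition of the regularized integral; you have simply unpacked these two ingredients in detail. Your closing worry about ``double-counting'' the scale is unfounded: since $\phi$ is already a morphism out of $(\Sigbas,\bdSigbas[\bullet];s)$, the form $\phi^*\omega$ lies in $\Alog{\Sigbas}$, and the composite $s^*p^*$ acts as the identity there, so the scale enters exactly once.
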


\begin{proof}
This follows from the functoriality of the Betti--de Rham comparison for $\ascat$-manifolds with log corners (\autoref{prop:BdR-comparison-manifolds-with-log-corners}) and the definition of the regularized integral.  
\end{proof}

\begin{remark}\label{rmk:periods-over-K}
    If $(X,Y_\bullet)$ is the extension of scalars of an $\ascat$-log scheme $(X_\KK,Y_{\KK\bullet})$ over a subfield $\KK\subset \CC$, then since the Betti homology is a finite rank $\ZZ$-module, the pairing \eqref{eq:BdR-pairing-diagrams-varieties-log-corners} restricts to a pairing $\HlgyB{X,Y_\bullet} \otimes_\ZZ \HdR{X_\KK,Y_{\KK\bullet}} \to \CC$ whose image is a finite-dimensional $\KK$-vector subspace of $\CC$, where we have used the base change for de Rham cohomology from \autoref{rmk:dR-over-K}.  In this way, we obtain arithmetically interesting periods from regularized integrals on log varieties defined over $\KK$.  Note that the fact that the object $(X,Y_\bullet)$ is defined over $\KK$ imposes a sort of $\KK$-rationality constraint on the induced maps of Kato--Nakayama spaces, and hence also on the scales involved in any regularized cycle. For example, let $X_\KK=C\,\log\,\{p\}$,  where $C$ is a smooth curve over $\KK$ and $p\in C$ is a closed point, let $Y_{\KK\bullet}=\Spec{\KK}$, and consider the diagram $Y_{\KK\bullet}\to X_\KK$ corresponding to a $\KK$-rational tangential basepoint $\vec{v}\in (T_pC)^\times$.  Then a regularized cycle $([0,1],\bas{\partial[0,1]};s) \to (\KN{X},\KN{Y_\bullet})$ must map the tangential basepoints at $0$ and $1$ (defining the scale on $\partial[0,1]$) to the tangential basepoint of $\KN{X}$ corresponding to the $\KK$-rational tangential basepoint $\vec{v}$. See \autoref{rmk:kummer-over-K} below for an explicit example, which illustrates how this constrains the possible periods of the diagram. 
\end{remark}

\subsection{Examples of logarithmic periods}\label{sec:period-examples}
We now explain how the constructions in this section recover and unify some classical examples of periods.

\subsubsection{The residue theorem with ``radius zero''}
Let $z$ be the standard coordinate on $X = \logcyl$. The class of the logarithmic form $\tfrac{\dd z}{z}$ is a basis of the first de Rham cohomology group $\HdR[1]{X}$. The Kato--Nakayama space of $X$ is $\KN{X} \cong \halfspc{} \times \unitcirc$ with radial coordinate $r = |z|$ and angular coordinate $\theta = \arg z$. For $\epsilon \ge 0$, let  
\[
\mapdef{\underline{\gamma_\epsilon}}{\unitcirc}{\KN{X}\cong [0,\infty)\times\unitcirc}{
e^{\iu\theta}}{(\epsilon, e^{\iu\theta})}
\]
denote the circle of radius $\epsilon$ in the Kato--Nakayama space, oriented counterclockwise. The homology classes of these cycles are all equal and form a basis of $\HlgyB[1]{X}$. To compute the period pairing $\HlgyB[1]{X}\otimes_\ZZ \HdR[1]{X}\to\CC $ one can therefore assume $\varepsilon>0$ and we get the usual result
\[
\abrac{[\underline{\gamma_\epsilon}],\sbrac{\tfrac{\dd z}{z}}} = \int_{\underline{\gamma_\epsilon}} \dlog{z} = \tipi.
\]
However, this computation does not make sense classically for $\epsilon=0$ since $\tfrac{\dd z}{z}=\tfrac{\dd r}{r}+\iu\,\dd\theta$ is ill defined at $r=0$.

This issue is solved using our formalism by lifting each $\gamma_\epsilon$ to a logarithmic cycle (\autoref{def:log-cycle}) with domain $\Sig=\unitcirc$, i.e.~a virtual morphism
\[
\gamma_\epsilon\colon\unitcirc\to \KN{X}\cong [0,\infty)\times\unitcirc.
\]
For $\epsilon>0$ there is nothing to add to the datum of $\underline{\gamma_\epsilon}$, but for $\varepsilon=0$, since $\underline{\gamma_0}$ lands in the boundary $\{0\}\times\unitcirc$, one also needs to specify the pullback by $\gamma_0$ of the coordinate $r$, which may be any positive smooth function $\lambda(e^{\iu\theta})$ on $\unitcirc$. (The choice of the constant function $\lambda=1$ is somewhat canonical, but our formalism allows more flexibility.) This choice can be thought of as a family of tangential basepoints at $0$ on $[0,\infty)$ indexed by $\unitcirc$, or alternatively as a scale for the manifold with log corners $\partial\KN{X}\cong [0)\times \unitcirc$. Using our notation for tangential basepoints from \autoref{sec:tangential-basepoints} we write
\[
\mapdef{\gamma_0}{\unitcirc}{\KN{X}\cong [0,\infty)\times\unitcirc}{
e^{\iu\theta}}{(\,\lambda(e^{\iu\theta})\,\partial_r|_0 \, , \, e^{\iu\theta}\, ).}
\]
The pullback of $\dlog{z}$ via $\gamma_0$ is now a well-defined smooth $1$-form on $\unitcirc$,
\[
\gamma_0^*\left(\mathrm{dlog}(z)\right)=\gamma_0^*\left(\mathrm{dlog}(r)+\iu\,\dd\theta\right)=\mathrm{dlog}(\lambda(e^{\iu\theta}))+\iu\,\dd\theta,
\]
and \autoref{prop:how-to-compute-log-periods} implies that the pairing between $[\gamma_0]$ and $[\tfrac{\dd z}{z}]$ is equal to
\[
\int_{\gamma_0}\dlog{z} = \int_{\unitcirc}\mathrm{dlog}(\lambda(e^{\iu\theta}))+\iu\,\dd \theta =2\pi\iu,
\]
which shows that $[\gamma_0]$ is independent of the choice of the function $\lambda(e^{\iu\theta})$.

\subsubsection{The logarithm as a regularized Kummer period}\label{sec:kummers} 
For real numbers $0 < \epsilon < a$, consider the integral
\[
I(\epsilon,a) = \int_\epsilon^a \dlog{z} = \log(a) - \log(\epsilon) = \log(a/\epsilon).
\]
It is classically interpreted as a period of the ``Kummer motive'' $\coH[1]{\AF^1\setminus \{0\},\{\epsilon,a\}}$, whose algebraic de Rham cohomology and Betti homology are given by
\[
\HdR[1]{\AF^1\setminus\{0\},\{\epsilon,a\}} = \CC \cdot \cbrac{[\dd z],\sbrac{\tfrac{\dd z}{z}}} \;\; \mbox{ and } \;\;
\HlgyB[1]{\AF^1\setminus\{0\},\{\epsilon,a\}} = \ZZ \cdot \{ [\gamma_\epsilon], [\eta_\epsilon] \}
\]
respectively, where $\gamma_\epsilon$ is the circle of radius $\epsilon$ as before and $\eta_\epsilon$ is the interval $[\epsilon,a]$.  We then have $I(\epsilon,a) = \int_{\eta_\epsilon}\dlog{z}$, and more generally we have the period matrix
\[
\begin{pmatrix}
\int_{\eta_\epsilon} \dd z & \int_{\eta_\epsilon} \dlog{z} \\
\int_{\gamma_\epsilon} \dd z & \int_{\gamma_\epsilon} \dlog{z}
\end{pmatrix} = \begin{pmatrix}
    a-\epsilon & \log(a/\epsilon)\\
    0 & \tipi
\end{pmatrix}
\]
which determines the Betti--de Rham pairing completely.

In the limit $\epsilon \to 0$, the integral $I(0,a)=I_1$ is the divergent integral discussed at the beginning of the paper, which must be regularized by choosing a tangential basepoint of $\AF^1$ at $0$. For simplicity, we choose a tangential basepoint that points in the positive real direction, i.e.\ a tangent vector 
\[
\vec{v} = \lambda \, \cvf{z}|_{z=0} \in (\tb[0]{\AF^1})^\times
\]
with $\lambda > 0$, and view it as a virtual morphism $\{0\}\to \logcyl$  by \autoref{prop:basepoints-alg-vs-diff}. Combined with the ordinary inclusion of $a \in \AF^1 \setminus \{0\}$ we obtain a diagram
\begin{equation}\label{eq:diagram-regularized-kummer}
\begin{tikzcd}
\logcyl & \{0\} \sqcup \{a\} \ar[l,shift right] \ar[l,shift left]
\end{tikzcd}
\end{equation}
which we view as a (virtual) $\ascat$-variety with log corners. Its relative cohomology
\[
\coH[1]{\logcyl,\{\vec{v},a\}}
\]
deserves the name ``regularized Kummer motive''.
The classes of $\dd z$ and $\tfrac{\dd z}{z}$ still form a basis of relative de Rham cohomology. In order to describe a basis of Betti homology, we equip the interval $[0,a]$ with the regularization $s$ given by the tangential basepoint $\lambda \, \cvf{r}|_{r=0}$ at $0$, and any tangential basepoint at $a$. This gives the following logarithmic cycle (\autoref{def:log-cycle}) for \eqref{eq:diagram-regularized-kummer}, denoted by $\eta_0$.
\[
\begin{tikzcd}
\KN{\logcyl} & \{0\} \sqcup \{a\} \ar[l,shift right] \ar[l,shift left] \\
{[0,a]} \ar[u,"\eta_0"] & \{0\} \sqcup \{a\} \ar[u,equal] \ar[l,shift right] \ar[l,shift left]
\end{tikzcd}
\]
and the relative Betti homology is given by
\[
\HlgyB[1]{\logcyl,\{\vec{v},a\}} = \ZZ \cdot \cbrac{[\gamma_0],[\eta_0]}
\]
where $\gamma_0$ is the class of the boundary circle as above; see \autoref{fig:Kummer-Betti}.  By \autoref{prop:how-to-compute-log-periods}, the period corresponding to $I(0,a)$ is then the classical regularized integral computed in \autoref{ex:regint-interval},
\[
\int_{\eta_0}\dlog{z} = \int_{([0,a],s)}\dlog{r} = \log(a/\lambda).
\] 
It gives the value $I_1 = \log(a)$ from the introduction when $\lambda=1$, corresponding to the case in which the tangential basepoint is defined over $\ZZ$ as in \autoref{rmk:basepoints-over-Z}.  The full period matrix of $\coH[1]{\logcyl,\{\vec{v},a\}}$ is given by
\[
\begin{pmatrix}
\int_{\eta_0} \dd z & \int_{\eta_0} \dlog{z} \\
\int_{\gamma_0} \dd z & \int_{\gamma_0} \dlog{z}
\end{pmatrix} = \begin{pmatrix}
    a & \log(a/\lambda)\\
    0 & \tipi
\end{pmatrix}.
\]
\begin{figure}
\begin{subfigure}{0.4\textwidth}
    \centering\begin{tikzpicture}
    \draw[white,fill=white!90!black] (-0.5,-2) rectangle (3,2);
    \draw plot[mark=x] (0.5,0);
    \draw[->] (0.5,0) -- (1,0);
    \draw[fill=black] (2,0) circle (0.05);
    \draw (0.5,-0.35) node {$\vec v$};
    \draw (2,-0.35) node {$a$};
    \end{tikzpicture}
    \caption{The pair $(\logcyl,\{\vec v,a\})$}
    \end{subfigure}
    \begin{subfigure}{0.4\textwidth}\begin{tikzpicture}
    \draw[white,fill=white!90!black] (-1.5,-2) rectangle (3,2);
    \draw[thick,fill=white,decoration={markings, mark=at position 0.5 with {\arrow{>}}},
        postaction={decorate}] (0,0) circle (0.5);
    \draw[thick,decoration={markings, mark=at position 0.5 with {\arrow{>}}},
        postaction={decorate}] (0.5,0) -- (2,0);
    \draw[fill=black] (0.5,0) circle (0.05);
    \draw[fill=black] (2,0) circle (0.05);
    \draw (1.25,0.25) node {$\eta_0$};
    \draw (0,0.75) node {$\gamma_0$};
    \draw (2,-0.3) node {$a$};
    \end{tikzpicture}
    \caption{Betti chains}
    \end{subfigure}
    \caption{The geometry of the ``regularized Kummer motive'' $\coH[1]{\logcyl,\{\vec v,a\}}$}
    \label{fig:Kummer-Betti}
\end{figure}

\begin{remark}\label{rmk:kummer-over-K}
    Suppose that $\KK \subset \CC$ is a subfield, and that the tangential basepoint $\vec v = \lambda \cvf{z}|_{z=0}$ and the point $a$ are also defined over $\KK$, i.e. $\lambda,a \in \KK$. Then the pair $(\AF^1\log \{0\},\{\vec v,a\})$ lifts to a pair defined over $\KK$. Note that $\lambda$ also determines the scale $\lambda\cvf{t}$ on $[0)$ required to make the interval $[0,a]$ into a regularized cycle.  The resulting collection of periods is the $\KK$-vector space
    \[
    \KK +  \tipi\KK + \log(a/\lambda) \KK \subset \CC
    \]
    spanned by the entries of the period matrix, illustrating the general principles discussed in \autoref{rmk:periods-over-K}.
\end{remark}

\subsection{Single-valued integration and the double-copy formula}

The periods we have considered so far involve the integration of algebraic log forms on a complex variety with log corners $X$ over subspaces of $\uX(\CC)$.  In many applications, one is interested in integrals of products of holomorphic and antiholomorphic forms over $\uX(\CC)$ itself, like the integral $I_2$ from the introduction.  Such integrals can be reduced to holomorphic periods of $X$ by way of the ``double copy'' formula for single-valued integration from \cite{BrownDupont}.  We now explain how that recipe can be recovered using our formalism.

\subsubsection{Doubling and the twisted diagonal}
The key point is that the integrals in question can be thought of in purely holomorphic/algebraic terms, as the integral of holomorphic forms on $X \times \overline{X}$ over the diagonal copy of $X$, where $\overline{X}$ denotes the complex conjugate of $X$.  Thus $\overline{X}$ is given by the same underlying log variety, but with the conjugate complex structure.  Equivalently, we replace the structure map $X \to \Spec{\CC}$ with its complex conjugate.

Note that $X$  is not a complex subvariety of $X \times \cX$; rather it is the fixed locus of the antiholomorphic involution of $X \times \overline{X}$ which interchanges the factors, and is thus totally real.  Note further that if $X$ is disconnected, the product $X \times\overline{X}$ will have connected components that do not intersect the diagonal; these may be ignored for the purposes of studying such integrals.  This motivates the following definition
 \begin{definition}
     Let $X$ be a complex variety with log corners whose connected components are denoted by $X_i$. The \defn{double of $X$} is the complex variety with log corners defined as the disjoint union  $\bigsqcup_i X_i\times \overline{X_i} \subset X\times\cX$.
 \end{definition}
 
  The diagonal $X(\CC) \to X(\CC) \times \cX(\CC)$ lifts canonically to a morphism of manifolds with log corners
 \[
 \KN{X} \to \KN{\Dbl{X}} \subset   \KN{X} \times \KN{\cX},
 \]
 which we call the \defn{twisted diagonal}; it identifies $\KN{X}$ with the fixed points of the involution that swaps the factors in the product.

\begin{remark}
More abstractly, we may consider the Weil restriction $W(X)$; it is a variety with log corners over $\RR$ whose $\RR$-points are in bijection with the $\CC$-points of $X$.  We have $W(X)\times_\RR\CC \cong X \times \cX$, so that the twisted diagonal is the inclusion $\KNR{W(X)} \to \KN{W(X)}$ of the real Kato--Nakayama space in the sense of \autoref{sec:real-KN}.
\end{remark}

\subsubsection{Polar smooth forms}
We now restrict to the case in which $X = Y \log D$ is the variety with log corners associated to a connected smooth proper complex variety $Y$ of dimension $n$ and a normal crossing divisor $D$.  Applying the doubling construction to $X$ and its iterated boundaries, we obtain an ordinary $\ascat$-variety with log corners $(\Dbl{X},\Dbl{\partial^\bullet X})$.  The twisted diagonal then gives a morphism of $\ascat$-manifolds with log corners
\[
\KN{X,\bdX[\bullet]} \to \KN{\Dbl{X,\bdX[\bullet]}}
\]
whose class in Betti homology we denote by
\[
[\KN{X}] \in \HlgyB[2n]{\Dbl{X},\Dbl{\partial^\bullet X}}.
\]

To understand the periods we must examine the differential forms on the open subscheme $\Dbl{X} \subset X\times\overline{X}$ relative to $\Dbl{\partial X} \subset \bdX\times\cbdX$.  For this note that since the restriction map $\forms{X\times \cX} \to \forms{\partial X \times \overline{\partial X}}$ is surjective, the relative de Rham complex is modelled by its kernel
\[
\forms{\Dbl{X},\Dbl{\bdX}} \subset \forms{\Dbl{X}} = \left.\forms{X\times\cX}\right|_{\Dbl{X}}.
\]
Following \cite{BrownDupont}, we use the following terminology:
\begin{definition}A form $\omega \in \forms{\Dbl{X}}$ is \defn{polar smooth} if its pullback to $\KN{X}$ is smooth, i.e.~it lies in the sheaf $\cA{\underline{\KN{X}}}$ of $C^\infty$ forms on the underlying manifold with corners.
\end{definition}

\begin{lemma}
For a section $\omega \in \sect{\Dbl{X},\forms[2n]{\Dbl{X}}}$, the following are equivalent.
\begin{enumerate}
    \item  The form $\omega$ lies in the subsheaf $\forms[2n]{\Dbl{X},\Dbl{\bdX}}$.
    \item The form $\omega$ is polar smooth.
    \item For every open subset $U \subset \KN{X}$ with compact closure, the integral $\int_U\omega$ converges absolutely.
    \item If $z \in \cO{Y}$ is a local defining equation for any irreducible component of $D$, then the double residue along $z=\zb=0$ vanishes:
    \[
    \res_{z=0}\res_{\zb=0}\omega = 0.
    \]
\end{enumerate}
\end{lemma}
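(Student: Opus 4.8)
The plan is to reduce the whole statement to a local computation in holomorphic coordinates near $\uD$, and to extract the equivalence $(2)\Leftrightarrow(3)$ directly from \autoref{prop:convergent-integrals}. All four conditions are local on $\KN{X}$ (conditions (2)--(4) manifestly, and (1) because it asserts membership in a subsheaf), so it suffices to work near a point of $\uD$. First I would choose holomorphic coordinates $z_1,\dots,z_n$ on $Y$ with $\uD=\{z_1\cdots z_m=0\}$, let $\zb_1,\dots,\zb_n$ be the conjugate holomorphic coordinates on $\cX$, and use that $\forms[2n]{\Dbl{X}}$ is locally free of rank one over $\cO{\Dbl{X}}$ to write
\[
\omega = h\cdot \bigwedge_{i=1}^m \dlog{z_i}\wedge\dlog{\zb_i}\wedge \bigwedge_{i=m+1}^n \dd z_i\wedge\dd\zb_i
\]
for a regular function $h=h(z,\zb)$. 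Expanding $h=\sum_{a,b\ge 0} c^{(i)}_{a,b}\, z_i^a\zb_i^b$ in the $i$-th pair of variables will isolate the relevant constant term $c^{(i)}_{0,0}$.

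Next I would compute the pullback $\phi^*\omega$ along the twisted diagonal $\phi\colon\KN{X}\to\KN{\Dbl{X}}$. Writing $z_i=r_ie^{\iu\theta_i}$ and $\zb_i=r_ie^{-\iu\theta_i}$ gives the key identity
\[
\dlog{z_i}\wedge\dlog{\zb_i} = -2\iu\,\dlog{r_i}\wedge\dd\theta_i,
\]
which exhibits the only possible singularity of $\phi^*\omega$, namely the $\tfrac{1}{r_i}$ pole in $\dlog{r_i}$. Under the substitution $z_i^a\zb_i^b\mapsto r_i^{a+b}e^{\iu(a-b)\theta_i}$, the restriction of the pulled-back coefficient to the face $r_i=0$ is exactly $c^{(i)}_{0,0}$, so $\phi^*\omega$ is smooth along $r_i=0$ (the $\dlog{r_i}$-pole cancels) if and only if $c^{(i)}_{0,0}=0$; this proves $(2)\Leftrightarrow(4)$ once one observes that $\res_{z_i=0}\res_{\zb_i=0}\omega$ picks out the coefficient of $\dlog{z_i}\wedge\dlog{\zb_i}$ and sets $z_i=\zb_i=0$, yielding $c^{(i)}_{0,0}$ times the analogous form in the remaining variables. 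The same constant governs condition (1): by the doubling construction $\Dbl{\bdX}$ consists of the diagonal products $D_i\times\overline{D_i}$, and the algebraic restriction of $\omega$ there replaces $z_i,\zb_i$ by their phantoms and produces the coefficient $c^{(i)}_{0,0}$, so (1) also reduces to $c^{(i)}_{0,0}=0$ for all $i\le m$, giving $(1)\Leftrightarrow(4)$.

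It remains to fold in $(3)$. Since $X=Y\log D$ has no phantoms, $\KN{X}$ is the real oriented blowup of $Y(\CC)$ along $\uD$, a \emph{basic} manifold with log corners of dimension $2n$, so \autoref{prop:convergent-integrals} applies to $\phi^*\omega$ (cut off by a compactly supported bump, as the pole conditions there are local). That proposition gives the equivalence of absolute convergence of $\int_U\phi^*\omega$ (our condition (3)), absence of poles of $\phi^*\omega$ on $\partial\KN{X}$ (our condition (2), by definition of polar smoothness), and vanishing of the $C^\infty$ boundary restriction $i^*\phi^*\omega$. Combining this with the local identity closes the cycle of implications; I would finish by noting that, the conditions being local, the global statement follows by patching over charts covering $\uD$, the open part away from $\uD$ and the components of $X\times\cX$ disjoint from the twisted diagonal contributing nothing.

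The point requiring the most care is the dictionary between the \emph{algebraic} operations in (1) and (4) — the iterated residue $\res_{z=0}\res_{\zb=0}$ and the restriction to the algebraic boundary $\Dbl{\bdX}$ — and the \emph{$C^\infty$} boundary restriction $i^*$ on $\KN{X}$ that enters \autoref{prop:convergent-integrals}. The wedge identity $\dlog{z_i}\wedge\dlog{\zb_i}=-2\iu\,\dlog{r_i}\wedge\dd\theta_i$ is precisely what reconciles them: it turns the holomorphic double residue along $z_i=\zb_i=0$ into the single $C^\infty$ residue along the real-codimension-one face $r_i=0$, so that all three notions of ``vanishing on the boundary'' collapse to the one algebraic condition $c^{(i)}_{0,0}=0$.
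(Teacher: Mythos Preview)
Your proposal is correct and follows essentially the same line as the paper's proof: both localize, write $\omega$ against the volume form $\bigwedge\dlog{z_i}\wedge\dlog{\zb_i}$, use the identity $\dlog{z_i}\wedge\dlog{\zb_i}=-2\iu\,\dlog{r_i}\wedge\dd\theta_i$ to pass to polar coordinates, and then observe that conditions (1), (2), and (4) each reduce to the vanishing of the coefficient at $z_i=\zb_i=0$. The only cosmetic differences are that the paper reduces (via ``invariant under products with smooth varieties'') to the case where every coordinate is logarithmic, while you keep track of $m\le n$; and for $(2)\Leftrightarrow(3)$ you invoke \autoref{prop:convergent-integrals} explicitly, whereas the paper argues the integrability directly in coordinates---the underlying content is the same.
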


\begin{proof}
    The problem is local and invariant under taking products  with smooth varieties, so we may assume without loss of generality that $X = (\logcyl)^n$ with coordinates $z_j$, so that $\forms{X\times\cX}$ is generated by $\dlog{z_j}$ and $\dlog{\zb_j}$.  A general element of $\forms[2n]{X\times\cX}$ thus has the form
    \[
    \omega = f \dlog{z_1} \wedge \dlog{\zb_1} \wedge \cdots \wedge \dlog{z_n}\wedge\dlog{\zb_n} 
    \]
    for a polynomial $f \in \CC[z_1,\zb_1,\ldots,z_n,\zb_n]$.
    
    The log variety $\Dbl{\partial X}$  has $n$ connected components $Z_1,\ldots,Z_j$, each of codimension two, identified with the loci $z_j=\zb_j = 0$ for $1 \le j \le n$. 
  The restriction of $\omega$ to such a component is given by
    \[
    \omega|_{Z_j} = f|_{z_j=\zb_j=0} \dlog{t_j}\wedge\dlog{\overline{t_j}} \wedge \dlog{z_1}\wedge\dlog{\zb_1}  \wedge \cdots \wedge \widehat{\dlog{z_j}}\wedge\widehat{\dlog{\zb_j}} \wedge \cdots\wedge \dlog{z_n}\wedge\dlog{\zb_n}
    \]
    where $t_j,\overline{t}_j$ are the phantom coordinates corresponding to $z_j,\zb_j$.  From this we deduce that $\omega|_{Z_j} = 0$ if and only if $f$ vanishes on the linear subspace $z_j=\zb_j = 0$ in $\AF^n$ or equivalently the double residue $\res_{z_j=0}\res_{\zb_j=0} \omega$ is zero.  This gives the equivalence of conditions (1) and (4).

   On the other hand, the pullback to $\KN{X}$ is computed by converting to polar coordinates $z_j = r_j e^{\iu \theta_j}$ and $\zb_j = r_j e^{-\iu\theta_j}$, giving
    \[
    \omega = (-2 \iu)^n f \dlog{r_1}\wedge \dd\theta_1 \wedge \cdots \wedge \dlog{r_n}\wedge \dd\theta_n.
    \]
    Considering the behaviour of $\omega$ as $r_j \to 0$, we see that $\omega$ is smooth on $\KN{X}$ if and only if the pullback of $f$ vanishes on the boundary component $r_j=0$ for every $j$, but this is evidently equivalent to the vanishing of $f$ when $z_j=\zb_j = 0$, and also to the absolute integrability, as desired.
\end{proof}

\subsubsection{The double copy formula}
Now suppose further that the divisor $D \subset Y$ is decomposed as union $D = A \cup B$, where $A$ and $B$ have  no common irreducible components.  We have canonical morphisms of varieties with log corners
\[
\begin{tikzcd}
 X_A := Y \log A & X_D := Y \log D \ar[r]\ar[l] & X_B := Y \log B 
\end{tikzcd}
\]
induced by the inclusions of divisors.  On the level of forms, these maps induce the inclusion of forms with logarithmic poles on $A$ or $B$ into the forms with poles on $D = A \cup B$.   Applying the doubling construction, we obtain a morphism of $\ascat$-varieties with log corners
\begin{align}
(\Dbl{X_D},\Dbl{\bdX[\bullet]_D}) \to \rbrac{X_A\times \cX_B, \partial^\bullet(X_A \times \cX_B)}
\label{eq:double-copy-pairs}
\end{align}
We also have maps
\[
A_B := A \log B \to X_B \qquad B_A := B \log A \to X_A
\]
induced by the pullback of log structures along the normalization maps of $A$ and $B$.  Since $D = A \cup B$,  Alexander--Lefschetz--Poincar\'e duality implies that the intersection pairing
\[
\HlgyB[n]{X_A,B_A} \times \HlgyB[n]{X_B,A_B} \to \HlgyB[2n]{X_D,\bdX_D} \cong \ZZ
\]
is perfect.  Let
\begin{align*}
\{\gamma_i\}_i \subset \HlgyB[n]{X_A,B_A} \qquad \{\gamma_i^\vee\}_i \in \HlgyB[n]{X_B,A_B}
\end{align*}
be dual bases.  The  cycle class of the twisted diagonal in the relative homology $\HlgyB[2n]{X_A\times\cX_B,\partial^\bullet(X_A\times\cX_B)}$ is then given, under the K\"unneth decomposition, by
\[
[\KN{X_D}] = \sum \gamma_i \otimes \gamma_i^\vee,
\]
from which we deduce the following.
\begin{proposition}[{Double copy formula, \cite[Corollary 1.5]{BrownDupont}}]
    If $\omega \in \sect{Y,\forms[n]{Y \log A}}$ and $\nu \in \sect{Y,\forms[n]{Y \log B}}$ are global logarithmic forms, then $\omega \wedge \overline{\nu}$ is polar smooth, and its period over the twisted diagonal is given by
    \[
    \abrac{[\KN{X_D}],[\omega\wedge\overline{\nu}]} = \int_{Y(\CC)}\omega \wedge \overline{\nu} = \sum_i \int_{\gamma_i}\omega \int_{\gamma_i^\vee} \overline{\nu}.
    \]
\end{proposition}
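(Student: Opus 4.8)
The plan is to verify the three assertions in the order they are stated: the polar smoothness of $\omega\wedge\overline{\nu}$, the identification of its period with the ordinary integral, and the double-copy decomposition.

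First I would establish polar smoothness by checking condition (4) of the preceding lemma, namely the vanishing of all double residues. Since $A$ and $B$ have no common irreducible components, any local defining equation $z$ of a component of $D = A\cup B$ cuts out a component of exactly one of $A$ or $B$. If it belongs to $A$, then $\omega$ may have a logarithmic pole along $z=0$, but $\nu$---and hence $\overline{\nu}$---has no pole along $\zb=0$, so $\res_{\zb=0}(\omega\wedge\overline{\nu}) = 0$ and a fortiori $\res_{z=0}\res_{\zb=0}(\omega\wedge\overline{\nu}) = 0$; the case of a component of $B$ is symmetric. Thus all double residues vanish and the lemma gives that $\omega\wedge\overline{\nu}$ is polar smooth, so that it defines a class in $\HdR[2n]{\Dbl{X_D},\Dbl{\bdX[\bullet]_D}}$. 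Moreover, since $\omega$ has top holomorphic degree $n$ while the boundary piece $B_A$ has complex dimension $n-1$, the pullback of $\omega$ to $B_A$ vanishes for dimension reasons; hence $[\omega]$ refines canonically to a relative class in $\HdR[n]{X_A,B_A}$, and symmetrically $[\overline{\nu}]$ refines to a class relative to $A_B$.

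For the middle equality, I would apply \autoref{prop:how-to-compute-log-periods} to the logarithmic cycle supplied by the twisted diagonal, whose domain is $\Sig = \KN{X}$ (compact, since $Y$ is proper): the period $\abrac{[\KN{X_D}],[\omega\wedge\overline{\nu}]}$ equals the regularized integral $\int_{(\Sig,s)}\phi^*(\omega\wedge\overline{\nu})$. As the pulled-back form is polar smooth, it is absolutely integrable by the preceding lemma, so \autoref{prop:convergent-integrals} identifies the regularized integral with the ordinary integral of $\phi^*(\omega\wedge\overline{\nu})$ over the interior of $\underline{\KN{X}}$. The proper map $\KN{X}\to\uX(\CC)$ is a real oriented blowup, hence a diffeomorphism away from a set of measure zero, so this integral equals $\int_{Y(\CC)}\omega\wedge\overline{\nu}$.

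Finally, for the double-copy formula I would use the morphism \eqref{eq:double-copy-pairs} to regard $[\omega\wedge\overline{\nu}]$ as the image of the external product $[\omega]\otimes[\overline{\nu}]$ of the relative classes produced above. Combining the K\"unneth decomposition $[\KN{X_D}] = \sum_i \gamma_i\otimes\gamma_i^\vee$ recalled before the statement with the multiplicativity of the Betti--de Rham pairing \eqref{eq:BdR-pairing-diagrams-varieties-log-corners} under products, the period factorizes as
\[
\abrac{[\KN{X_D}],[\omega\wedge\overline{\nu}]} = \sum_i \abrac{\gamma_i,[\omega]}\,\abrac{\gamma_i^\vee,[\overline{\nu}]} = \sum_i \int_{\gamma_i}\omega \int_{\gamma_i^\vee}\overline{\nu}.
\]
I expect the main obstacle to be the bookkeeping in this last step: one must confirm that the relative de Rham and Betti classes land in the intended K\"unneth factors of the totalized $\ascat$-complexes, and that the period pairing is genuinely multiplicative across the product, with the orientation conventions for the twisted diagonal tracked correctly. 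The duality input itself---perfectness of the intersection pairing that underlies the decomposition of $[\KN{X_D}]$---is Alexander--Lefschetz--Poincar\'e duality, which is taken as given.
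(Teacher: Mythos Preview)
Your proposal is correct and matches the paper's approach: the paper gives no explicit proof but writes ``from which we deduce the following'' after stating the K\"unneth decomposition $[\KN{X_D}] = \sum_i \gamma_i\otimes\gamma_i^\vee$, leaving the three steps you spell out (double-residue check for polar smoothness, reduction to an ordinary integral via \autoref{prop:how-to-compute-log-periods} and \autoref{prop:convergent-integrals}, and the K\"unneth factorization of the pairing) as the evident unpacking. Your observation that $[\omega]$ and $[\overline{\nu}]$ automatically refine to relative classes for degree reasons is exactly the detail needed to make the K\"unneth argument go through.
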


\begin{example}We now explain how to treat the integral $I_2$ from the introduction as a logarithmic period. 
 Let $Y = \PP^1$, suppose that $a \in \PP^1 \setminus \{0,1,\infty\}$ and consider the divisors
\[
A = \{1,a\} \qquad B = \{0,\infty\} \qquad D = A \cup B = \{0,1,a,\infty\}.
\]
Setting $\omega = \tfrac{\dd z}{z-a} - \tfrac{\dd z}{z-1}$ and $\nu = \dlog{z}$, we have
\[
\iint_{Y(\CC)} \omega \wedge \overline{\nu} = \iint_{\PP^1(\CC)}\rbrac{\frac{\dd z}{z-a} - \frac{\dd z}{z-1}} \wedge \dlog{\zb} = I_2
\]
As explained in the introduction, we can compute its value by applying the regularized Stokes formula on $\Sig = \KN{Y\log D}$ (with respect to any choice of regularization), yielding $I_2 = \tipi \log|a|^2$. This kind of computation has appeared in the literature under the name ``Cauchy--Stokes formula'', see e.g.~\cite{KhesinRosly}. On the other hand in \cite[Example 1.6]{BrownDupont}, the same result is obtained by the double copy formula, which reduces the computation to the periods of the Kummer motives from \autoref{sec:kummers}.
\end{example}

\bibliographystyle{hyperamsalpha}
\bibliography{log-comp-biblio}

\end{document}